\theoremstyle{plain}
 \newtheorem{thm}{Theorem}[section]
 \newtheorem{prop}[thm]{Proposition}
 \newtheorem{lem}[thm]{Lemma}
 \newtheorem{cor}[thm]{Corollary}
 \newtheorem{theorem}{Theorem}
\theoremstyle{definition}
 \newtheorem{dfn}[thm]{Definition}
 \newtheorem{exm}[thm]{Example}
 \newtheorem{rmk}[thm]{Remark}
 \newtheorem{fact}[thm]{Fact}
 \newtheorem{notat}[thm]{Notation}
 \newtheorem{question}[thm]{Question}
\newcommand{\ind}{\makebox[1em]{\raisebox{-.5ex}[0ex][0ex]{\makebox[0em]%
{$\smile$}}\raisebox{.4ex}[0ex][0ex]{\makebox[-.02em]{$|$}}}}
\newcommand{\dep}{\makebox[1em]{\raisebox{.3ex}[0ex][0ex]%
{$\not$}\makebox[.7em]{\ind}}}
\def\Mon{{\mathfrak C}}
\def\p{{\mathfrak p}}
\def\q{{\mathfrak q}}
\def\r{{\mathfrak r}}
\def\D{{\mathcal D}}
\def\Dp{{\mathcal D}}
\def\Dpf{{\mathcal D^*}}
\def\Rp{{\mathcal R}}
\def\Lp{{\mathcal L}}
\def\forces{\vdash}
\def\tp{{\rm tp}}
\def\dcl{{\rm dcl}}
\def\Aut{{\rm Aut}}
\def\Inv{{\rm Inv}}
\def\so{{\it so}}
\def\oo{{\it o}}
\def\cl{{\rm cl}}
\def\strok{\upharpoonright}
\def\at{{\rm at}}
\def\Pii{{\bigsqcup}}
\renewcommand{\leq}{\leqslant}
\renewcommand{\geq}{\geqslant}
\renewcommand{\emptyset}{\varnothing}
\DeclareMathOperator{\wor}{\perp\hspace*{-0.4em}^{\mathit w}}
\DeclareMathOperator{\nwor}{\not\perp\hspace*{-0.4em}^{\mathit w}}
\DeclareMathOperator{\fwor}{\perp\hspace*{-0.4em}^{\mathit f}}
\DeclareMathOperator{\nfor}{\not\perp\hspace*{-0.4em}^{\mathit f}}
\begin{document} 

\title{Stationarily ordered types  and the number of countable models}

\author{Slavko Moconja}
\address[S.\ Moconja]{Instytut Matematyczny, Uniwersytet Wroc\l{}awski, pl.\ Grunwaldzki 2/4, 50-384 Wroc\l{}aw, Poland\\ and
University of Belgrade, Faculty of Mathematics, Studentski trg 16, 11000 Belgrade, Serbia}
\email[S.\ Moconja]{slavko@matf.bg.ac.rs}
\thanks{The first author is supported by the Narodowe Centrum Nauki grant no.\ 2016/22/E/ST1/00450, and by the Ministry of Education, Science and Technological Development of Serbia grant no.\ ON174018}

\author{Predrag Tanovi\'c}
\address[P.\ Tanovi\'c]{Mathematical Institute SANU, Knez Mihailova 36, Belgrade, Serbia\\ and University of Belgrade, Faculty of Mathematics, Studentski trg 16, 11000 Belgrade, Serbia}
\email[P.\ Tanovi\'c]{tane@mi.sanu.ac.rs}
\thanks{The second author is supported by the Ministry of Education, Science and Technological Development of Serbia grant no.\ ON174026}

\subjclass[2010]{03C15,  03C64, 03C45, 06A05.}
\keywords{coloured order, weakly quasi-$o$-minimal theory,  dp-minimality, Vaught's conjecture, stationarily ordered type, shuffling relation.}

\maketitle

\begin{abstract} 
We introduce the notions of  stationarily ordered types  and theories; the latter generalizes weak   \oo-minimality and the former is a relaxed version of weak \oo-minimality localized at the locus of a single type. 
We show that forking, as a binary relation on elements realizing stationarily ordered types, is an equivalence relation and   that each stationarily ordered type in a model determines some order-type  as an invariant of the model. We study weak and forking non-orthogonality of stationarily ordered types, show that they are equivalence relations and prove that invariants of non-orthogonal types are closely related. The techniques developed are applied to prove that in the case of a binary, stationarily ordered theory with fewer than $2^{\aleph_0}$ countable models,  the isomorphism type of a countable model is determined by a certain sequence of  invariants of the model. In particular, we confirm Vaught's conjecture for binary, stationarily ordered theories.  
 \end{abstract}
\section*{Introduction}

In 1961.\ Robert Vaught conjectured that every countable, complete first order theory with infinite models   has either continuum or at most $\aleph_0$ countable models, independently of the Continuum Hypothesis. Concerning such theories of linearly ordered structures, there are three major results confirming the conjecture in special cases:
\begin{itemize}
\item Rubin in 1973.\   for theories of coloured orders (linear orders with unary predicates, \cite{Rub});
\item Shelah in 1978.\ for theories of linearly ordered structures with Skolem functions (\cite{Sh});
\item Mayer in 1988.\   for \oo-minimal theories (\cite{May}).
\end{itemize}
The next class of ordered theories for which the conjecture has been considered is the class of weakly \oo-minimal theories. It turned out to be a significantly harder problem to deal with and, by now, only very partial results in that direction were obtained by  Baizhanov, Kulpeshov, Sudoplatov and others (see for example \cite{AB}, \cite{ABK} and \cite{KuSu}).

Among the three major results,  the one that required   deepest analysis of  the structures in question is definitely Rubin's. One may say that the core of his analysis lies  in disassembling  the structure into minimal type-definable convex pieces (loci of interval types) and studying  possible isomorphism types of the pieces. As a result of this analysis Rubin  proved that the number of countable models of a complete theory of coloured orders is either continuum or finite  (even equals 1 if the language is finite). This analysis was recently modified by  Rast in \cite{Rast1}, where he improved Rubin's theorem by classifying the isomorphism relation for countable models of a fixed complete theory of coloured orders up to Borel bi-reducibility. 
The original motivation for our work was finding a broader, syntax-free context of theories that would include  theories of  coloured orders and  allow  isomorphism types of (countable) models to be studied in a  similar way.   In this article we    
do it in the context of binary, stationarily ordered  theories; these include all  complete theories of coloured orders and all binary weakly  $o$-minimal theories.  

The key notion in this paper is that of a stationarily ordered type.
A type $p\in S_1(A)$ (in any theory)   is stationarily ordered if there is an $A$-definable linear order $(D,<)$ containing the locus of $p$ such that any $\Mon$-definable set   is constant at the locus of $p$ at $\pm\infty$: it either contains  or is disjoint from some initial   part of $(p(\Mon),<)$  and the same   for some final part;  in this case we say that $\mathbf{p}=(p,<)$ is an \so-pair.  This property may be viewed as a very weak, local version of weak \oo-minimality,  it is also related to   dp-minimal ordered types that are stationary in the sense that Lstp=tp; this is    explained in Section \ref{s dp}.  A complete theory $T$ is stationarily ordered if every   $p\in S_1(\emptyset)$ is so.

Let $T$ be a complete binary, stationarily ordered theory.
We  develop techniques  for analysing   models of $T$  by, roughly speaking,   combining  Rubin's ideas with some of Shelah's  ideas (from his proof of Vaught's conjecture for $\omega$-stable theories in \cite{SHM}, say). As a result of the analysis, under some additional assumptions on  $T$  we will be able to code the isomorphism type of a  countable model $M$ by a certain sequence of order-types and prove:


\begin{theorem}\label{conclusion} Suppose that $T$ is a complete, countable, binary, stationarily ordered theory. Then:
\begin{enumerate}[(a)]
\item  $I(\aleph_0,T)=2^{\aleph_0}$ if and  only if at least one of the following conditions holds:
\begin{enumerate}[(C1)]
\item $T$ is not small;
\item there is a non-convex type $p\in S_1(\emptyset)$;
\item there is a non-simple type $p\in S_1(\emptyset)$;
\item there is a non-isolated forking extension of some $p\in S_1(\emptyset)$ over  1-element domain;
\item there are infinitely many weak non-orthogonality classes of non-isolated types in $S_1(\emptyset)$.
\end{enumerate}

\item $I(\aleph_0,T)=\aleph_0$ if and only if none of (C1)-(C5) holds and there are infinitely many forking non-orthogonality classes of non-isolated types in $S_1(\emptyset)$; 

\item $I(\aleph_0,T)<\aleph_0$ if and only if none of (C1)-(C5)  holds and there are finitely many forking non-orthogonality classes of non-isolated types in $S_1(\emptyset)$.
\end{enumerate}
\end{theorem}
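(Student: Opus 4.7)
The plan is to split the theorem into the sufficiency direction of (a) (each of (C1)--(C5) independently yields $2^{\aleph_0}$ countable models) and a structural classification when none of (C1)--(C5) holds, which simultaneously gives the converse of (a) together with (b) and (c). The core of the classification is to assign to each countable model $M$ a sequence of order-type invariants, indexed by the forking non-orthogonality classes of non-isolated types in $S_1(\emptyset)$, and to prove that this sequence is a complete isomorphism invariant. The existence of such invariants of types (and their good behaviour under non-orthogonality) is already supplied by the results announced in the abstract.

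For the $2^{\aleph_0}$ direction, I would treat each condition separately. Non-smallness (C1) is classical: infinitely many types over some finite parameter set allow an Ehrenfeucht-style construction. For a non-convex type (C2), the locus $p(\Mon)$ splits into several convex SO pieces whose relative orderings can be shuffled using the shuffling relation, giving $2^{\aleph_0}$ distinct configurations. Non-simplicity (C3) provides two distinguishable cut-directions on a locus, each contributing an independent order-type invariant. For (C4), starting from a non-isolated forking extension $q$ of $p$ over a singleton, I would perform a Henkin construction in which, along a sequence of realisations of $p$, the decision whether to realise $q$ is made independently, yielding $2^{\aleph_0}$ non-isomorphic countable models. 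For (C5), infinitely many weakly orthogonal non-isolated types can be realised or omitted independently, giving $2^{\aleph_0}$ combinations.

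For the converse, assume that none of (C1)--(C5) holds. Then by $\neg$(C1) the theory is small, and by $\neg$(C5) there are only finitely many weak non-orthogonality classes of non-isolated types in $S_1(\emptyset)$. Using that forking on SO-type realisations is an equivalence and that invariants of non-orthogonal types are tightly linked, I would show that each forking non-orthogonality class contributes a single order-type invariant to $M$, and that under $\neg$(C2), $\neg$(C3), $\neg$(C4) each such invariant ranges over only finitely many possibilities. A back-and-forth argument, in the spirit of Rubin's disassembly of the model into convex type-definable pieces combined with Shelah-style prime-model amalgamation over countable parameter sets, will then lift a coherent system of local invariant matchings to a global isomorphism. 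This yields $I(\aleph_0,T)\leqslant\aleph_0$, with (c) corresponding to a finite index set of forking classes (finitely many invariants each with finitely many values) and (b) to a countably infinite index set (countably many invariants each with finitely many values, which taken jointly range over exactly $\aleph_0$ combinations because one can always localise the difference to a single coordinate).

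The main technical obstacle will be showing that $\neg$(C4) restricts each order-type invariant to finitely many values: the invariant of a type $p$ encodes how realisations of $p$ accumulate near the $\pm\infty$ ends of its locus, and one must argue that non-isolated forking extensions over singletons are the only mechanism that can produce arbitrarily complicated accumulation behaviour. A secondary delicate point is the back-and-forth itself, since invariant matchings must be lifted simultaneously across all forking classes within a single weak class; ensuring that earlier matches do not obstruct later ones will require treating weak non-orthogonality as the "interaction" relation between forking classes and exploiting the binarity of $T$ to limit such interactions to pairwise ones.
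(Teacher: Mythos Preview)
Your overall plan is close to the paper's, but the counting argument for part (b) contains a genuine error that would make the proof fail. You write that when there are countably many forking non-orthogonality classes, we have ``countably many invariants each with finitely many values, which taken jointly range over exactly $\aleph_0$ combinations.'' This is wrong as stated: countably many \emph{independent} invariants, each with at least two possible values, would yield $2^{\aleph_0}$ combinations, not $\aleph_0$. The missing structural point is that the invariants are \emph{not} independent across $\nfor$-classes lying in the same $\nwor$-class. Under $\lnot$(C5) there are only finitely many $\nwor$-classes $\alpha_0,\ldots,\alpha_{w_T-1}$, and the genuinely independent choice happens only across these. Within a single $\alpha_i$, the $\nfor$-representatives $p\in\alpha_i^{\mathcal F}$ are pairwise shuffled, so their invariants $\Inv_{\mathbf p}(M)$ are pairwise shuffled dense order types (or all $\mathbf 0$, or exactly one is $\mathbf 1$ and the rest $\mathbf 0$); in particular at most one can have a minimum and at most one a maximum. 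This cuts the number of possibilities for the whole tuple $(\Inv_{\mathbf p}(M)\mid p\in\alpha_i^{\mathcal F})$ down to roughly $|\alpha_i^{\mathcal F}|^2$, not an exponential in $|\alpha_i^{\mathcal F}|$. The product over finitely many $\alpha_i$ is then $\aleph_0$ exactly when some $|\alpha_i^{\mathcal F}|$ is infinite. This is why condition (C5) is phrased in terms of $\nwor$-classes rather than $\nfor$-classes: infinitely many $\nwor$-classes give a genuine product of $2^{\aleph_0}$, while infinitely many $\nfor$-classes inside a single $\nwor$-class give only $\aleph_0$.

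Your treatments of (C2) and (C3) are also too vague to carry through. The description ``the locus splits into several convex SO pieces whose relative orderings can be shuffled'' for (C2), and ``two distinguishable cut-directions'' for (C3), do not obviously yield $2^{\aleph_0}$ models. The paper does not argue these directly: it observes (Proposition~\ref{prop_regular}) that in a binary theory each \so-type gives rise to a weakly regular, asymmetric global type, and then invokes an external result (Fact~\ref{Fact_MTcor46}) that such types must be simple and convex whenever $I(\aleph_0,T)<2^{\aleph_0}$. If you intend a self-contained argument you will need to supply the actual construction of continuum many models from a non-convex or non-simple type; what you have sketched is not yet one.
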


In most of the paper we study   binary structure on  loci of stationarily ordered types  in an arbitrary first-order theory $T$. We {\it do not} assume that $T$ admits a definable linear ordering on the universe and, except for the last section where we deal with coloured orders and weakly quasi-$o$-minimal theories, even if it does admit one, we will not fix it. 
We will prove that forking, viewed  as a binary relation  $x\dep_A y$  on the set of all realizations of stationarily ordered types over $A$, is an equivalence relation.
One consequence of this fact is that  relations of weak ($\nwor$) and forking non-orthogonality ($\nfor$) are equivalence relations on the set of stationarily ordered types over $A$, with $\nfor$ refining   $\nwor$. Another consequence is the existence  of certain order-types, $\mathbf p$-invariants of models.  
They arise in the following way: The mentioned forking relation, when restricted to the locus of a single type, is a convex equivalence relation, meaning that the classes are convex (with respect to some, or as it will turn out, any definable order). Then for a given model $M$ and \so-pair $\mathbf p=(p,<)$, the order-type of any maximal set $I_p(M)$ of pairwise forking-independent elements of $p(M)$ is fixed (does not depend on the particular choice of $I_p(M)$); it is the $\bf p$-invariant of $M$, denoted by $\Inv_{\mathbf p}(M)$. 
We show that for a fixed $p$ and $M$, the invariants corresponding to distinct   orderings of $p(M)$ are either equal or reverse of each other. We prove that invariants behave very well under  non-orthogonality of convex (see Definition 1.4) stationarily ordered types: if $p\nfor q$, then the invariants $\Inv_{\mathbf p}(M)$ and $\Inv_{\mathbf p}(M)$ are either equal or reverse of each other; if $p\nwor q$ and $p\fwor q$, then  $\Inv_{\mathbf p}(M)$ and   $\Inv_{\mathbf q}(M)$ (or its reverse  $\Inv_{\mathbf p}(M)^*$) are shuffled; the latter means that they correspond to (topologically) dense disjoint subsets of some dense, complete linear order, where the correspondence is witnessed by a certain shuffling relation between the orders (defined in Definition 2.3). 
By applying the developed techniques to the context of a binary, stationarily ordered theory satisfying none of the conditions (C1)-(C4) from Theorem 1(a), we will be able to completely analyse the isomorphism type of a countable model  and prove:

\begin{theorem}\label{Thm_intro_main}
Suppose that $T$ is a complete, countable, binary, stationarily ordered theory satisfying none of the conditions (C1)--(C4) from Theorem 1(a). Let $\mathcal F_T$ be a set of representatives of all $\nfor$-classes of  non-isolated types from $S_1(\emptyset)$ and let $(\mathbf{p}=(p,<_p) \mid p\in\mathcal F_T)$ be a  sequence of \so-pairs. Then for all countable models $M$ and $N$:
\begin{center}
$M\cong N$ \   if and only if \ 
$(\Inv_{\mathbf{p}}(M)\mid p\in \mathcal F_T)=(\Inv_{\mathbf{p} }(N)\mid p\in \mathcal F_T)$.
\end{center}
\end{theorem}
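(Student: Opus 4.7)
The forward direction is routine: any isomorphism $f\colon M\to N$ restricts to an $<_p$-preserving bijection $p(M)\to p(N)$ for each $p\in\mathcal F_T$ and preserves forking over $\emptyset$, hence carries any maximal pairwise forking-independent set $I_p(M)$ to such a set in $p(N)$ by an order-isomorphism, giving $\Inv_{\mathbf p}(M)=\Inv_{\mathbf p}(N)$. I focus on the converse, which I would prove by a back-and-forth construction along enumerations of $M$ and $N$. At each stage a finite partial $\emptyset$-elementary map $f\colon A\to A'$ is extended by picking $m\in M\setminus A$ (or $n\in N\setminus A'$, alternating) and finding a matching image. Binarity reduces finding $n$ to matching $\tp(m)=\tp(n)$ and every 2-type $\tp(n,f(a))=f_\ast\tp(m,a)$. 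The case that $\tp(m)$ is isolated is handled by smallness (negation of (C1)), so I may assume $m\in p_m(M)$ for some non-isolated $p_m\in S_1(\emptyset)$ lying in the $\nfor$-class of some $q\in\mathcal F_T$.

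The first substantive task is to show that the 2-type $\tp(m,a)$ is determined by combinatorial data read off from the $\mathbf p$-invariants. Negation of (C2) makes each forking class of $p_m$ convex in $(p_m(\Mon),<_{p_m})$; the hypothesis $\Inv_{\mathbf q}(M)=\Inv_{\mathbf q}(N)$, combined with the ``equal or reverse'' relation among invariants inside a single $\nfor$-class (established earlier in the paper), yields a canonical order-preserving bijection between the forking-class quotients of $p_m(M)$ and $p_m(N)$. Interactions across distinct $\nfor$-classes occur only when $\tp(a)\nwor p_m$ but $\tp(a)\fwor p_m$; in that case the shuffling relation from Section~2 canonically embeds the relevant invariants of $M$ and of $N$ into the same dense complete linear order, determining the relative position of $m$'s forking class with respect to each such $a$. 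From this data I can read off the forking class of $p_m(N)$ in which $n$ must sit, as well as the positions it must occupy relative to $f(A)$ across all weak-non-orthogonality classes.

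It remains to produce $n$ in the prescribed forking class realising the correct 2-types with every $a\in A$. Let $A_0=\{a\in A\mid a\dep_\emptyset m\}$. Failure of (C4) makes each 2-type $\tp(m,a_0)$ for $a_0\in A_0$ isolated, and binarity propagates isolation to $\tp(m/A_0)$; this isolated type is realised in the designated forking class of $N$. Failure of (C3) keeps the forking geometry tame enough for this matching to succeed. I expect the \emph{main obstacle} to be verifying that $n$ can \emph{simultaneously} satisfy the positional constraints imposed by the remaining parameters $a\in A\setminus A_0$ (those forking-orthogonal but possibly weakly-non-orthogonal to $m$): this is where the density and completeness of the shuffling embeddings are needed, to show that a position in $p_m(N)$ compatible with all such constraints exists. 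Once this is done the back-and-forth step closes, and $\bigcup_k f_k$ is the required isomorphism $M\to N$.
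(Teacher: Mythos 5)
Your proposal takes a genuinely different route from the paper, and the comparison is worth spelling out. The paper does \emph{not} run a single back-and-forth on $M$ and $N$; instead it (i) fixes a $\Dp_p^M$-transversal $I_p(M)$ for each $p\in\mathcal F_T$ and proves (Lemma \ref{Lema M prime over I(M)}) that $M$ is prime over $I(M)=\bigcup I_p(M)$, (ii) shows that equal invariants force $\tp(I(M))=\tp(I(N))$ via Proposition \ref{Prop projekcije izomorfne iff} applied to the shuffled families $(I_p(M),<_p)$, and then (iii) concludes $M\cong N$ from uniqueness of prime models. The burden of ``finding the right position'' is thus concentrated in one application of Proposition \ref{Prop projekcije izomorfne iff} (itself a back-and-forth on the limit structures), after which the remainder of $M$ is recovered automatically by atomicity. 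Your back-and-forth directly on $M,N$ is in principle possible, but the sketch interleaves constructing the transversal bijection with realising isolated types over it, and that merger creates the gaps below.

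First, you assert that $\Inv_{\mathbf q}(M)=\Inv_{\mathbf q}(N)$ ``yields a canonical order-preserving bijection between the forking-class quotients of $p_m(M)$ and $p_m(N)$.'' There is no canonical such bijection: equal order-types (say both $\bm{\eta}$) give continuum-many isomorphisms, and across an $\nwor$-class the bijections for the different $p\in\alpha_i^{\mathcal F}$ must be chosen \emph{coherently} with the shuffling relations $S_{\mathbf p,\mathbf q}$, which is precisely the content of the implication $(3)\Rightarrow(2)$ in Proposition \ref{Prop projekcije izomorfne iff}. This coherence does not come for free; it must be constructed. Second, and more seriously, the proposed back-and-forth does not explain why it never gets stuck. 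Matching 2-types with the finite set $A$ at each stage is not sufficient: one must also preserve the order-type (and shuffling data) of what remains unenumerated. For example, if $\Inv_{\mathbf p}(M)=\Inv_{\mathbf p}(N)=\mathbf 1+\bm{\eta}$ and the forth-step sends a non-minimum $\Dp_p$-class of $p(M)$ onto the minimum class of $p(N)$, that partial map extends all the constraints you list over the current $A$ yet cannot be continued. Your appeal to ``density and completeness of the shuffling embeddings'' at the final step is exactly where this issue lives, but it is deferred rather than resolved. A correct direct back-and-forth would need an explicit invariant of partial maps (roughly: the restriction to each $\alpha_i$ already extends to a coherent isomorphism of the limit structures of Section~2) and a proof that it is preserved; that is not a cosmetic omission. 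Finally, a small point: the handling of isolated elements requires more than smallness --- one needs that an isolated $\tp(\bar a)$ forces $\tp(\bar a/B)$ over any set $B$ of elements with non-isolated convex types (Corollary \ref{Cor_isolated forces over non-isolated}), which uses the failure of (C2), and the atomicity of each $\Dp^M(a)$ over $a$ uses (C4) together with almost $\aleph_0$-categoricity (Lemma \ref{Prop almost aleph0 cat}), not (C4) alone.
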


\smallskip
The paper is organized as follows. Section 1 contains preliminaries. In Section 2  shuffling relations are introduced and studied; in particular, we describe isomorphism types of structures consisting of   sequences of linear orders with   shuffling relations. In Section 3 we introduce   stationarily ordered types and prove basic properties of forking $\dep$, $\nfor$ and $\nwor$ as binary relations. In Section 4, we distinguish direct (denoted by $\delta(\mathbf p,\mathbf q)$) and indirect non-orthogonality of \so-pairs $\mathbf p$ and $\mathbf q$ and show that $\delta$ is an equivalence relation.  Roughly speaking, $\delta(\mathbf p,\mathbf q)$ means that $\mathbf p$ and $\mathbf q$ have the same direction; for convex types $\delta(\mathbf p,\mathbf q)$ implies $\Inv_{\mathbf{p}}(M)=\Inv_{\mathbf{q}}(M)$, while $\lnot\delta(\mathbf p,\mathbf q)$ implies $\Inv_{\mathbf{p}}(M)=\Inv_{\mathbf{q}}(M)^*$.

 Section 5 is independent of the rest of the paper. There we establish a link with dp-minimality and prove that if every complete 1-type (over any domain)  is stationarily ordered, then the theory is dp-minimal  and tp=Lstp; the converse (for ordered theories) remains open.
Invariants are introduced in Section 6  and  a relationship between invariants of non-orthogonal types is studied. 
In Section 7 we start studying stationarily ordered types in binary theories and  
prepare several technical results needed in Section 8, where the proofs of Theorems  1 and 2  are completed.  
In Section 9 we apply Theorems 1 and 2 to  binary, weakly quasi-\oo-minimal theories and reprove Rubin's theorem. We also sketch an example of a binary, weakly \oo-minimal theory having $\aleph_0$ countable models; such a theory  was first found  by Alibek and Baizhanov in \cite{AB}.

\section*{Acknowledgement} We are deeply grateful to the anonymous referee for careful reading, whose numerous comments and suggestions have greatly improved the presentation of the paper.

\section{Preliminaries}

We will use standard model theoretic concepts and terminology. Usually we work in a large, saturated (monster) model $\Mon$ of a complete first order theory $T$ in a language $L$.  By $a,b,\ldots$ we will denote elements and by $\bar a,\bar b,\ldots$ tuples of elements of the monster. Letters $A,B,\ldots$ are reserved for small subsets and $M,N,\ldots$ for small elementary submodels, where `small' means of cardinality less than $|\Mon|$. By $\phi(\Mon)$ we will denote the solution set of the formula $\phi(\bar x)$ in (the appropriate power of) $\Mon$ and similarly for (incomplete) types. A subset $D\subseteq \Mon^n$ is called $A$-definable, or definable over $A$, if it is of the form $\phi(\Mon)$ for some $\phi(\bar x)\in L(A)$. 
$D\subseteq \Mon^n$ is {\it type-definable}   over $A$ if it is the intersection   of   $A$-definable sets. If $D'\subseteq D$ (and $D$ is usually type-definable over $A$), then $D'$ is {\it relatively definable  over $A$ within $D$} if $D'$ is the intersection of $D$ with some $A$-definable set; in this case if $D'=D\cap \phi(\Mon)$, then $\phi(x)$ is a {\it relative definition} of $D'$ within $D$.  

Complete types over small sets of parameters  are denoted by $p,q,\ldots$, and global types (types over $\Mon$) are denoted by $\p,\q,\ldots$;  $S_n(A)$ is the   space of all complete $n$-types over $A$. If $\phi(\bar x)$ is a formula with parameters from $A$ and $|\bar x|=n$, then $[\phi]\subseteq S_n(A)$ is the set of all $p\in S_n(A)$ 
containing $\phi(\bar x)$; $[\phi]$ is a clopen subset of $S_n(A)$. A type $p\in S_n(A)$ is {\it isolated} if there is a formula $\phi(\bar x)\in p$ such that $[\phi]=\{p\}$, 
or $\phi(\bar x)\vdash_T p(\bar x)$.  Transitivity of isolation holds:  $\tp(\bar a, \bar b/C)$ is isolated if and only if both $\tp(\bar a/C)$ and $\tp(\bar b/C\bar a)$ are isolated.
 We say that the set $B$ is atomic over $A$  if $\tp(\bar b/A)$ is isolated for any $\bar b\in B$. A weak version of transitivity of atomicity holds: if   $A$ is atomic over $C$ and  $B$ is atomic over  $AC$, then $AB$ is atomic over $C$. A theory  $T$ is {\it small} if $S_n(\emptyset)$ is at most countable for all $n$. The following fact is well known and can be found in any  basic model theory textbook (e.g. \cite{Marker}). 
 
\begin{fact}\label{Fact_smalltheory}
Suppose that $T$ is a countable complete theory.

(a) $T$ is small if and only if there is a countable saturated (equivalently universal) model $M\models T$.

(b) If $T$ is not small, then $I(\aleph_0,T)=2^{\aleph_0}$.

(c) If $T$ is small and $A$ is finite, then isolated types are dense in $S_n(A)$ for all $n$.  

(d) If $T$ is small and $A$ is finite, then a   prime model over $A$ exists; it is unique up to isomorphism. 
\end{fact}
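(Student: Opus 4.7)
For part (a), the plan is to use a standard elementary chain construction. First I would show that smallness of $T$ implies $|S_n(B)|\leq\aleph_0$ for every finite $B$, by identifying $S_n(B)$ with the fiber of the restriction map $S_{n+|B|}(\emptyset)\to S_{|B|}(\emptyset)$ above $\tp(B/\emptyset)$. Then, starting from any countable $M_0\models T$, inductively build a countable elementary extension $M_k\prec M_{k+1}$ realizing every type over every finite subset of $M_k$; the union $M=\bigcup_k M_k$ is countable and $\aleph_0$-saturated, hence saturated. That a countable saturated model is universal is the classical back-and-forth argument using saturation. Conversely, a countable universal (in particular, a countable saturated) model $M$ realizes every type in $S_n(\emptyset)$, so $|S_n(\emptyset)|\leq |M|=\aleph_0$.

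For part (b), I would pass to the topological viewpoint: $S_n(\emptyset)$ is a compact Hausdorff space with a countable clopen basis (indexed by the countably many formulas $\phi(\bar x)$ up to $T$-equivalence), hence a Polish space. If $T$ is not small, some $S_n(\emptyset)$ is uncountable, and by Cantor--Bendixson it contains a Cantor set; thus $|S_n(\emptyset)|=2^{\aleph_0}$. Since each countable model realizes at most countably many $n$-types, exhausting $2^{\aleph_0}$ types forces $2^{\aleph_0}$ pairwise non-isomorphic countable models.

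For parts (c) and (d), density of isolated types in $S_n(A)$ for finite $A$ comes from the same Cantor--Bendixson circle: by the argument from (a), $S_n(A)$ is countable, and a countable compact Hausdorff space with countable base cannot contain a non-empty perfect closed subset, so the isolated points form a dense subset. Existence of a prime model over $A$ is then the standard Henkin-style construction: enumerate pairs consisting of a formula with a tuple from the partial structure, and at each stage use density of isolated types (over the current finite extension of $A$) to realize an isolated strengthening of the given formula; the union is a countable atomic, hence prime, model over $A$. Uniqueness is a back-and-forth argument between two such atomic models, where each step succeeds because isolated types are realized in every model. The one delicate point is transitivity of isolation, needed to guarantee that each finitely generated substructure of the partial model remains atomic over $A$; this is precisely the property recorded just above the statement of the fact, so no real obstacle arises.
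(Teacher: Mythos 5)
The paper gives no proof of this Fact; it simply cites it as standard (referring the reader to Marker's textbook), so there is nothing internal to compare against. Your argument is a correct rendering of the usual textbook treatment: part (a) via the elementary-chain construction and the observation that smallness transfers from $S_n(\emptyset)$ to $S_n(B)$ for finite $B$, part (b) via Cantor--Bendixson on the Polish space $S_n(\emptyset)$ together with the counting argument that a countable model realizes only countably many types, part (c) again via Cantor--Bendixson (no clopen perfect subset in a countable space), and part (d) via the Henkin-style atomic construction with a back-and-forth uniqueness argument resting on transitivity of isolation and realizability of isolated types. All the steps are sound, and you correctly flag the one point that needs care (transitivity of isolation to keep the constructed set atomic over $A$); nothing is missing.
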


A partial type $\pi(x)$ over $A$ is {\it finitely satisfiable} in $D\subseteq \Mon$ if every finite conjunction of formulae from $\pi(x)$ has a realization in $D$.  

\begin{lem}\label{Lema_fs_vs_nonisolation}
Suppose that  $p\in S_1(A)$, $\phi(x)\in p$ and $A\subseteq B$. 

(a) If $p$ is finitely satisfiable in $D$, then there is $q\in S_1(B)$  finitely satisfiable in $\phi(\Mon)\cap D$ with $p\subseteq q$.

(b) $p$  is non-isolated   if and only if it is finitely satisfiable in $D=\phi(\Mon)\smallsetminus p(\Mon)$. 

(c) If $p$ is non-isolated, then there is a non-isolated  $q\in S_1(B)$ extending $p$.
\end{lem}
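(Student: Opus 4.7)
The plan is to prove the three parts in the order stated, with (c) following quickly from (a) and (b). Throughout, $D$ denotes the set given in each part.

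For (a), I would use the standard extension argument for finitely satisfiable partial types. Set $\Sigma_0(x) := p(x) \cup \{\phi(x)\}$. Every finite conjunction from $\Sigma_0$ is a finite conjunction of formulas from $p$, hence by finite satisfiability of $p$ in $D$ is realized by some $d \in D$; since $\phi \in p$ is one of the conjuncts, any such $d$ lies in $\phi(\Mon) \cap D$. So $\Sigma_0$ is finitely satisfiable in $\phi(\Mon) \cap D$. By Zorn's lemma, extend $\Sigma_0$ to a maximal partial type $\Sigma(x)$ over $B$ that is finitely satisfiable in $\phi(\Mon) \cap D$. Maximality forces completeness: if some $L(B)$-formula $\chi(x)$ were missing together with its negation, then by maximality there would be $\sigma_1,\ldots,\sigma_n \in \Sigma$ with $\sigma_1(\Mon)\cap\cdots\cap\sigma_n(\Mon)\cap\chi(\Mon)\cap\phi(\Mon)\cap D=\emptyset$ and an analogous equation for $\neg\chi$, so that the conjunction $\sigma_1\wedge\cdots\wedge\sigma_n$ itself would already fail to be realized in $\phi(\Mon)\cap D$, contradicting $\Sigma$ being finitely satisfiable there. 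Thus $q := \Sigma \in S_1(B)$ is as required.

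For (b), let $D = \phi(\Mon)\smallsetminus p(\Mon)$. For the forward direction, assume $p$ is non-isolated and pick any finite $\psi_1,\ldots,\psi_n \in p$. The formula $\phi \wedge \bigwedge_i \psi_i$ belongs to $p$ but, being non-isolating, lies in some $r \in S_1(A)$ with $r \neq p$; any realization of $r$ witnesses finite satisfiability in $D$. For the converse, suppose some $\chi(x) \in p$ isolated $p$; then $\chi(\Mon) \subseteq p(\Mon)$, so $\chi \wedge \phi \in p$ would have no realization in $D$, contradicting the finite satisfiability assumption.

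For (c), apply (b) to conclude that $p$ is finitely satisfiable in $D = \phi(\Mon)\smallsetminus p(\Mon)$, then apply (a) to obtain $q \in S_1(B)$ extending $p$ and finitely satisfiable in $\phi(\Mon)\cap D = D$. Since $p \subseteq q$ gives $q(\Mon) \subseteq p(\Mon)$, we have $D \subseteq \phi(\Mon)\smallsetminus q(\Mon)$, and $q$ is finitely satisfiable in $\phi(\Mon)\smallsetminus q(\Mon)$; invoking (b) for $q$ over $B$ yields that $q$ is non-isolated. The whole argument is routine; the only point demanding a bit of care is ensuring in (a) that the starting partial type $p\cup\{\phi\}$ is finitely satisfiable in $\phi(\Mon)\cap D$ rather than merely in $D$, which is exactly the reason $\phi$ is prescribed to lie in $p$.
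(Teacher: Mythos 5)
Your proof is correct and follows essentially the paper's approach: for part (a) you extend $p$ to a complete type over $B$ finitely satisfiable in $\phi(\Mon)\cap D$ (the paper writes the completion down explicitly as $p(x)$ together with the negations of all $B$-formulas not finitely satisfiable in $\phi(\Mon)\cap D$, while you reach the same completion via Zorn's lemma, but the two are interchangeable), and (b) and (c) are exactly the routine deductions the paper leaves unstated. The only cosmetic slip is in your maximality step, where the finite families blocking $\Sigma\cup\{\chi\}$ and $\Sigma\cup\{\neg\chi\}$ need not be the same $\sigma_1,\ldots,\sigma_n$; one should take the union of the two families before forming the offending conjunction, which does not affect the argument.
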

\begin{proof}
(a) If $p$ is finitely satisfiable in $D$, then the partial type $\Sigma(x)$ consisting of $p( x)$ and the negations of all the formulas $\psi_i( x)$ over $B$ ($i\in I$) that  are not finitely satisfied in $D\cap \phi(\Mon)$ is consistent: otherwise, there would be $\theta( x)\in p( x)$ and a finite $I_0\subseteq I$ such that $\theta( x)\vdash \bigvee_{i\in I_0}  \psi_i( x)$ and then the formula $\theta(x)\land\phi(x)\in p(x)$ would not be satisfied in $D$. Clearly, any   $q\in S_1(B)$ containing $\Sigma(x)$ is finitely satisfiable in $D\cap \phi(\Mon)$. 
This proves part (a); part (b) is easy and (c) follows from (a) and (b).
\end{proof}   

 For  a type $p(\bar x)$ over $B$ and $A\subseteq B$ we denote by $p_{\strok A}(\bar x)$ the restriction of $p$ to parameters from $A$. Similarly for the restrictions of global types. A global type $\p$ is $A$-invariant, or invariant over $A$, if $\phi(\bar x,\bar a)\in\p$ implies $\phi(\bar x,f(\bar a))\in\p$ for all $f\in\Aut(\Mon/A)$ (the group of automorphisms of $\Mon$ fixing $A$ point-wise). For an $A$-invariant type $\p(\bar x)$, a sequence $(\bar a_i\mid i\in I)$ where $(I,<)$ is a linear order, is a Morley sequence in $\p$ over $B\supseteq A$ if $\bar a_i\models\p_{\strok B\bar a_{<i}}$ for all $i\in I$.

For $\phi(\bar x,\bar y)\in L(A)$ and $\bar b\in \Mon$, the formula $\phi(\bar x,\bar b)$ divides over $A$ if there exists a sequence $(\bar b_i\mid i\in\omega)$ of realizations of $\tp(\bar b/A)$ such that the set $\{\phi(\bar x,\bar b_i)\mid i\in\omega\}$ is $k$-inconsistent for some $k\in\omega$. A formula $\phi(\bar x,\bar b)$ forks over $A$ if it implies a finite disjunction of formulae dividing over $A$. A (partial) type divides (forks) over $A$ if it implies a formula dividing (forking) over $A$; $\bar a\dep_A\bar b$ means that $\tp(\bar a/A\bar b)$ forks over $A$.

\begin{dfn}
Let $p,q$ be complete types over $A$.

(a) $p$ and $q$ are {\it weakly orthogonal}, denoted by $p\wor q$, if $p(\bar x)\cup q(\bar y)$ determines a complete type over $A$;

(b) $p$ is  {\it forking-orthogonal} to $q$, denoted by $p\fwor q$, if $\bar a\ind_A \bar b$ for all    $\bar a\models p$ and $\bar b\models q$.
\end{dfn}

Let $(D,<)$ be a linear order and $X,Y\subseteq D$. By $X<Y$ we mean $x<y$ for all $x\in X$ and $y\in Y$ and $\{x\}<Y$ is denoted by $x<Y$. $X$ is a {\it convex} subset of $D$ if $x,x'\in X$ and $x<y<x'$ imply $y\in X$.
$X$ is a {\it bounded} subset  if there are $x,y\in D$ such that $x<X<y$. 

\begin{dfn}
(a) A type $p\in S_1(A)$ is {\it ordered} if there exists an $A$-definable linear order $(D_p,<_p)$ such that  $p(\Mon)\subseteq D_p$; in this case we say that that $(D_p,<_p)$ witnesses that $p$ is ordered. 

(b) An ordered type $p\in S_1(A)$ is {\em convex} if the witnessing  order $(D_p,<_p)$ can be chosen so that $p(\Mon)$ is a convex subset of $D_p$;  in this case we say that that $(D_p,<_p)$ witnesses the convexity of $p$. 

(c) A complete first-order theory $T$ is {\it ordered} if all types from $S_1(\emptyset)$ are ordered.
\end{dfn}

If $T$ is ordered, then by compactness  there exists an $L$-formula defining a linear ordering on $\Mon$.  

\begin{dfn}
Suppose that $(D_p,<_p)$ is an $A$-definable linear order   witnessing that $p\in S_1(A)$ is ordered, $\mathbf{p}=(p,<_p)$ and   $D\subseteq \Mon$.
\begin{enumerate}[(a)]  \item $D$  is {\em $\mathbf{p}$-left-bounded}    if there exists $a\models p$ such that $a<_pD\cap p(\Mon)$; analogously, $\mathbf p$-right-bounded and $\mathbf{p}$-bounded sets are defined.
\item  $D$ is {\em strongly $\mathbf{p}$-left-bounded}  if $D\subseteq D_p$ and there exists $a\models p$ such that $a<_p D$;  analogously, strongly $\mathbf p$-right-bounded and strongly $\mathbf{p}$-bounded sets are defined.
\item  A formula (with parameters) in one free variable is {\em (strongly) $\mathbf{p}$-(left/right-)bounded} if the set that it defines is. 
\item $D$ is {\it left (right)  eventual} in $(p(\Mon),<_p)$  if it contains an initial (final) part of $(p(\Mon),<_p)$. 
\end{enumerate}
\end{dfn}

\begin{lem}\label{fact osnovni} Let $p\in S_1(A)$ be an ordered type, witnessed by $(D_p,<_p)$ and let $\mathbf{p}=(p,<_p)$.
\begin{enumerate}[(a)]
\item If $\phi(x)\in L(\Mon)$ is $\mathbf{p}$-(left/right-)bounded, then there exists $\psi(x)\in p(x)$ such that $\phi(x)\wedge\psi(x)$ is strongly $\mathbf{p}$-(left/right-)bounded.
\item If $D\subseteq p(\Mon)$ is a relatively definable $\mathbf{p}$-(left/right-)bounded set, then it has a strongly $\mathbf{p}$-(left/right-) bounded definition.
\item A type $q\in S_1(B)$ extending $p$  contains a $\mathbf{p}$-bounded formula if and only if it contains a strongly $\mathbf{p}$-bounded formula.
\end{enumerate}
\end{lem}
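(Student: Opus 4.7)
The plan is to reduce everything to a single compactness argument yielding the one-sided case of (a); the other cases follow by symmetry, conjunction, and intersection with $p(\Mon)$.

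For part (a) in the left-bounded case, let $a\models p$ witness that $a<_p \phi(\Mon)\cap p(\Mon)$. The key observation is that the partial type $p(x)\cup\{\phi(x),\, x\leq_p a\}$ is inconsistent: any realization $c$ would lie in $\phi(\Mon)\cap p(\Mon)$ and would thus satisfy $a<_p c$, contradicting $c\leq_p a$. Since $\Mon$ is saturated and the parameter set is small, by compactness there is a single formula $\psi_0(x)\in p(x)$ (obtained by conjoining finitely many formulas of $p$) such that $\psi_0(x)\wedge\phi(x)\wedge(x\leq_p a)$ has no realization. Setting $\psi(x):=\psi_0(x)\wedge(x\in D_p)$, we still have $\psi\in p$ because $p(\Mon)\subseteq D_p$, and now $\phi(\Mon)\cap\psi(\Mon)\subseteq D_p$ with $a<_p\phi(\Mon)\cap\psi(\Mon)$, witnessing strong $\mathbf p$-left-boundedness of $\phi\wedge\psi$. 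The right-bounded case is symmetric. The two-sided case is handled by applying the two one-sided results to obtain $\psi_L,\psi_R\in p$ and taking their conjunction, which remains in $p$.

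For part (b), the given relative definition $\phi$ of $D$ within $p(\Mon)$ is itself $\mathbf p$-bounded in the appropriate sense, since $\phi(\Mon)\cap p(\Mon)=D$ is bounded. So (a) supplies $\psi\in p$ with $\phi\wedge\psi$ strongly $\mathbf p$-bounded in the same sense. Since $\psi\in p$ implies $p(\Mon)\subseteq\psi(\Mon)$, we have $p(\Mon)\cap(\phi\wedge\psi)(\Mon)=p(\Mon)\cap\phi(\Mon)=D$, so $\phi\wedge\psi$ is the desired strongly bounded relative definition of $D$.

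Part (c) reduces immediately to (a): the ``strongly $\mathbf p$-bounded $\Rightarrow$ $\mathbf p$-bounded'' direction is immediate from the definitions, and conversely if $\phi\in q$ is $\mathbf p$-bounded then (a) produces $\psi_L,\psi_R\in p\subseteq q$ with $\phi\wedge\psi_L$ strongly $\mathbf p$-left-bounded and $\phi\wedge\psi_R$ strongly $\mathbf p$-right-bounded; hence $\phi\wedge\psi_L\wedge\psi_R\in q$ is strongly $\mathbf p$-bounded.

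The only subtle point is in part (a): one must remember to intersect the formula produced by compactness with the defining formula of $D_p$, without which the conclusion that $\phi\wedge\psi$ is contained in $D_p$ (and hence \emph{strongly} rather than merely $\mathbf p$-bounded) would fail. Once that is handled, the remainder is formal bookkeeping.
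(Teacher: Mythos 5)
Your proof is correct and uses essentially the same compactness argument as the paper: you encode the inconsistency of $p(x)\cup\{\phi(x),\,x\leq_p a\}$ and extract a finite conjunction, whereas the paper states the equivalent $p(x)\cup\{\phi(x)\}\vdash x\in D_p\wedge a<_p x<_p a'$ and invokes compactness directly, with (b) and (c) following formally in both cases.
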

\begin{proof} 
 (a)  is an easy application of compactness. For example, if   $\phi(x)$ is $\mathbf{p}$-bounded, then there exist $a,a'\models p$ such that $p(x)\cup\{\phi(x)\}\forces x\in D_p\land a<_px<_pa'$; the desired $\psi(x)\in p(x)$ is obtained by compactness. 
 (b) and (c) easily follow from (a). 
\end{proof}

We finish this section with few examples that will be useful later. By $\mathbf 1$ we will denote the   one-element 
order type and by $\bm{\eta}, \mathbf{1}+\bm{\eta}, \bm{\eta}+\mathbf{1}$ and $  \mathbf{1}+\bm{\eta}+\mathbf{1}$   the order types of the rational numbers from the intervals $(0,1),[0,1),(0,1]$ and $[0,1]$ respectively. 

\begin{exm}\label{exm ehr}(A variant of Ehrenfeucht's theory with 3 countable models) 
Consider the language $L_0= \{<\}\cup\{C_n\mid n\in\omega\}$ where $<$ is binary and each $C_n$  is a unary predicate symbol called a {\em convex color}. The following (first-order expressible) properties determine a complete theory $T_0$:
\begin{itemize}
\item $<$ is a dense linear order without endpoints;
\item $(C_n\mid n\in\omega)$ is a sequence of open convex subsets such that $C_0<C_1<\ldots$ and $\bigcup_{n\in\omega}C_n$ is an initial part of the domain.
\end{itemize}
The theory $T_0$  eliminates quantifiers and has a unique non-isolated 1-type $p(x)$ which is determined by $\{\lnot C_n(x)\mid n\in\omega\}$. The isomorphism type of any countable model $M$ is determined by the order-type of $(p(M),<)$, hence we have three countable models: $M_\emptyset$ omitting $p$, $M_\infty$ with  $(p(M_\infty),<)$ of order-type $\bm{\eta}$, and $M_\bullet$ with $(p(M_\bullet),<)$ of order-type $\bm{1}+\bm{\eta}$. 
\end{exm}

\begin{exm}\label{exm dense colors}
Consider the language $\{<\}\cup \{D_i\mid i\in\omega\}$, where each $D_i$ is a unary predicate symbol   called a {\em dense color}. Let $T$ be the theory of a dense linear order without endpoints in which the sets $D_i$  are mutually disjoint and each $D_i$ is a (topologically) dense subset of the domain. 
$T$ is complete, eliminates quantifiers and has a unique non-isolated 1-type: the type of an uncoloured element $q(x)=\{\neg D_i(x)\mid i\in\omega\}$. By a standard back-and-forth argument, one proves that $T$ has a unique   {\em completely coloured} model (i.e.\ omitting $q(x)$). However, $T$ has $2^{\aleph_0}$  countable models because  $(q(M),<)$ can have an arbitrary  countable order-type. 
\end{exm}

\begin{exm}\label{exm ehr b}
Consider now the theory $T_1\supset T_0$ in the language $L_1= L_0\cup \{D_i\mid i\in\omega\}$,   where the unary predicate symbols $D_i$ are interpreted as dense, pairwise disjoint colors in  models of $T_0$. 
The theory $T_1$ is complete and eliminates quantifiers.
As in Example \ref{exm dense colors},  we have a type of an uncoloured element $q(x)=\{\neg D_i(x)\mid i\in\omega\}$ and we will say that a model of $T_1$ is   completely coloured  if it omits this type. 
We also have the  type $p(x)=\{\lnot C_n(x)\mid n\in\omega\}$; both $p(x)$ and $q(x)$ are incomplete types. If $M\models T_1$ and $p(M)\neq \emptyset$, then $p(M)$ is a dense linear order without right end, and from the basic relations, besides the order, only the $D_n$ are interpreted in the substructure on $p(M)$. The isomorphism type of a completely coloured countable model $M\models T_1$ is determined by the order-type of $(p(M),<)$ and the colour of its minimum (if the minimum exists). Hence $T_1$ has $\aleph_0$ countable, completely coloured models: $M_{\emptyset}$ omitting $p$, $M_\infty$ with $(p(M_\infty),<)$ of order-type $\bm{\eta}$, and $M_i$ ($i\in\omega$)  with $(p(M_i),<)$ of order-type $\mathbf 1+\bm{\eta}$ and the minimum coloured by $D_i$.
\end{exm}

\section{Shuffling relations}

In this section we introduce and study  shuffling relations between (arbitrary) linear orders. The main result is Proposition \ref{Prop projekcije izomorfne iff} in which  the isomorphism type  of a  sequence of pairwise shuffled, countable linear orders is described. 
We  use the following notation: for a binary relation $S\subseteq A\times B$, $a\in A$ and $b\in B$,  \begin{center}
$S(a,B)=\{y\in B\mid (a,y)\in S\}$ \ and \  $S(A,b)=\{x\in A\mid (x,b)\in S\}$
\end{center} denote its fibers. Let us emphasize that  by $\subset$ we denote the strict inclusion.

By \cite{SimLO}, a {\em monotone relation} between linear orders $(A,<_A)$ and $(B,<_B)$  is a   binary relation $S\subseteq A\times B$
such that $x'\leqslant_A x$, $(x,y)\in S$ and $y\leqslant_B y'$ imply $(x',y')\in S$. Basic examples of monotone relations are  those induced by 
certain increasing functions: if $(C,<_C)$ is a linear order and $f:A\to C$, $g:B\to C$  increasing functions, 
then $S=\{(a,b)\in A\times B\mid f(a)<_Cg(b)\} $ is a monotone relation. In particular,  if $(A,<_A)$ and $(B,<_B)$ are suborders of $(C,<_C)$, 
then   $S=\{(x,y)\in A\times B\mid x<_Cy\}$ is a monotone relation. The following is easy to prove:  

\begin{lem}\label{Fact_monotonicity}
If
$(A,<_A)$ and $(B,<_B)$ are linear orders and  $S\subseteq A\times B$, then $S$ is a monotone relation if and only if  either of the following equivalent conditions holds:

(1) \   $(S(A,b)\mid b\in B)$    is an increasing sequence of initial parts  of $A$: \ $b<_B b'$ implies $S(A,b)\subseteq S(A,b')$;  

(2) \    $(S(a,B)\mid a\in A)$ is a   decreasing sequence  of final parts  of $B$.\qed
\end{lem}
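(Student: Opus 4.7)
The statement amounts to the three-way equivalence of monotonicity, (1), and (2), and my plan is to verify it by unwinding definitions. The monotonicity hypothesis ``$x' \leq_A x$, $(x,y) \in S$, and $y \leq_B y'$ imply $(x',y') \in S$'' decomposes naturally into two special cases: fixing $y = y' = b$ and varying $x$, which says that each fiber $S(A,b)$ is an initial part of $A$; and fixing $x = x'$ and varying $y$, which says that $b \leq_B b'$ implies $S(A,b) \subseteq S(A,b')$. These are exactly the two clauses of (1), so monotonicity $\Rightarrow$ (1) is immediate. For the converse, given $x' \leq_A x$, $(x,y) \in S$, and $y \leq_B y'$, the second clause of (1) gives $x \in S(A,y')$, and then the first clause, applied to $S(A,y')$ together with $x' \leq_A x$, gives $x' \in S(A,y')$, i.e.\ $(x',y') \in S$. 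This closes the loop and establishes monotonicity $\Leftrightarrow$ (1).

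The equivalence monotonicity $\Leftrightarrow$ (2) I would handle symmetrically: fix $x = x' = a$ and vary $y$ to see that each $S(a,B)$ is a final part of $B$, and fix $y = y'$ and vary $x$ to see that $a \leq_A a'$ forces $S(a',B) \subseteq S(a,B)$; recombining these two special cases in the obvious order (first shrink the $y$-fiber, then shift $x$) recovers monotonicity. Alternatively, one may observe that passing from $S \subseteq A \times B$ to the transposed relation $S^{-1} \subseteq B \times A$ between the reverse orders $(B,>_B)$ and $(A,>_A)$ preserves monotonicity and converts condition (2) for $S$ into condition (1) for $S^{-1}$; the second equivalence then follows from the first by this duality.

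The main (and only) obstacle in a lemma of this shape is bookkeeping: keeping straight which variable is fixed and which varies in each specialization, and matching each resulting half-statement to the correct clause of (1) or (2). No idea beyond definition-chasing is required, which is consistent with the paper's remark that the lemma is ``easy to prove''.
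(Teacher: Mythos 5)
Your proof is correct. The paper omits the proof entirely (marking the lemma with \qed after the statement ``The following is easy to prove''), and your argument is exactly the definition-chasing verification the authors had in mind: specialize the monotonicity implication to extract the two clauses of each of (1) and (2), and recombine them in two steps for the converse, with the duality via the transposed relation between reversed orders being a clean way to dispatch the second equivalence.
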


Later on we will meet with situations in which the orders are type-definable subsets of some first order structure  and the monotone relation between them is  relatively definable. 

\begin{lem}\label{Fact_rel definable monotone relation}
Suppose that $(D_1,<_1)$ and $(D_2,<_2)$ are $A$-definable linear orders, $P_1\subseteq D_1$ and $P_2\subseteq D_2$ are sets type-definable  over $A$,  and that $\sigma(x,y)$  is a formula over $A$   relatively defining a monotone relation on $P_1\times P_2$. Then there are $A$-definable sets $D_1'$ and $D_2'$ such that $P_1\subseteq D_1'\subseteq D_1$ and $P_2\subseteq D_2'\subseteq D_2$ and $\sigma(x,y)$ defines a monotone relation between  $(D_1',<_1)$ and $(D_2',<_2)$.
\end{lem}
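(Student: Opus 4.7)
The plan is to use a standard compactness argument. Write the type-definable sets as $P_1=\bigcap_{i\in I}\phi_i(\Mon)$ and $P_2=\bigcap_{j\in J}\psi_j(\Mon)$, where each $\phi_i(x)$ and $\psi_j(y)$ is an $L(A)$-formula which, without loss of generality, implies $x\in D_1$ and $y\in D_2$ respectively. The target is to produce single formulas $\phi(x)$ and $\psi(y)$ over $A$ with $P_1\subseteq\phi(\Mon)\subseteq D_1$ and $P_2\subseteq\psi(\Mon)\subseteq D_2$ such that monotonicity holds on $\phi(\Mon)\times\psi(\Mon)$.

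To this end, consider the partial type in four variables over $A$
\[
\Sigma(x_1,x_2,y_1,y_2)=\bigl\{\phi_i(x_1),\phi_i(x_2)\mid i\in I\bigr\}\cup\bigl\{\psi_j(y_1),\psi_j(y_2)\mid j\in J\bigr\}\cup\Theta,
\]
where $\Theta=\{x_2\leq_1 x_1,\ \sigma(x_1,y_1),\ y_1\leq_2 y_2,\ \neg\sigma(x_2,y_2)\}$ expresses a violation of monotonicity. Any realization of $\Sigma$ would yield $x_1,x_2\in P_1$ and $y_1,y_2\in P_2$ contradicting the assumption that $\sigma$ relatively defines a monotone relation on $P_1\times P_2$; hence $\Sigma$ is inconsistent.

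By compactness there exist finite subsets $I_0\subseteq I$ and $J_0\subseteq J$ such that already
$\bigl\{\phi_i(x_1),\phi_i(x_2)\mid i\in I_0\bigr\}\cup\bigl\{\psi_j(y_1),\psi_j(y_2)\mid j\in J_0\bigr\}\cup\Theta$ is inconsistent. Define
\[
\phi(x):=\bigwedge_{i\in I_0}\phi_i(x),\qquad \psi(y):=\bigwedge_{j\in J_0}\psi_j(y),
\]
and set $D_1':=\phi(\Mon)$ and $D_2':=\psi(\Mon)$. The key point in taking the same finite conjunction for both $x_1,x_2$ (and for both $y_1,y_2$) is exactly that $I_0$ and $J_0$ are the \emph{unions} of the pieces used for the two copies, which is automatic since we enumerate one partial type. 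The inclusions $P_1\subseteq D_1'\subseteq D_1$ and $P_2\subseteq D_2'\subseteq D_2$ are immediate from the choices of $\phi_i,\psi_j$, and the inconsistency above translates exactly to
\[
\phi(x_1)\wedge\phi(x_2)\wedge\psi(y_1)\wedge\psi(y_2)\wedge x_2\leq_1 x_1\wedge\sigma(x_1,y_1)\wedge y_1\leq_2 y_2\ \forces\ \sigma(x_2,y_2),
\]
i.e., $\sigma$ defines a monotone relation between $(D_1',<_1)$ and $(D_2',<_2)$. There is no real obstacle; the only mild subtlety is the routine bookkeeping just noted, namely collecting both copies of $x$-formulas (and of $y$-formulas) into one finite conjunction before defining $\phi$ and $\psi$.
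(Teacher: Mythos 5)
Your proof is correct and takes essentially the same approach as the paper: express (the failure of) monotonicity of $\sigma$ on $P_1\times P_2$ as an inconsistent partial type over $A$ and extract finite pieces by compactness. The only difference is cosmetic: the paper phrases the hypothesis as two separate entailments (the initial-part condition and the increasing-family condition, matching Lemma~\ref{Fact_monotonicity}(1)) and applies compactness to each, whereas you package the whole monotonicity condition into a single inconsistency in four variables; your bookkeeping remark about taking the union of the finitely many formulas appearing on the two $x$-copies (and the two $y$-copies) is exactly the right observation.
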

\begin{proof}Let $p_i(x)$ be (a possibly incomplete) type defining $P_i$ for $i=1,2$. The fact that $\sigma(P_1,y)$  for $y\in P_2$  is an initial part of $P_1$ is expressed by:
\begin{center}
$p_1(x_1)\cup p_1(x_2)\cup\{x_1<_1x_2\}\cup p_2(y)\vdash \sigma(x_2,y)\Rightarrow\sigma(x_1,y)$,  
\end{center}
and that $(\sigma(P_1,y)\mid y\in P_2)$ is an increasing family by:
\begin{center}
$p_2(y_1)\cup p_2(y_2)\cup\{y_1<_2y_2\}\cup p_1(x)\vdash \sigma(x,y_1)\Rightarrow\sigma(x,y_2)$. 
\end{center}
By compactness the conclusion follows.
\end{proof}

A monotone relation is {\em strictly monotone} if the sequence $(S(A,b)\mid b\in B)$    strictly increases. For our purposes a subclass of the class of strictly monotone relations is particularly important:  

\begin{dfn}\label{Defin_shuffling} A {\em shuffling relation} of  linear orders  $(A,<_A)$ and $(B,<_B)$ is a non-empty relation $S\subseteq A\times B$ satisfying:
 \begin{enumerate}[\hspace{1em} (1)]
 \item  $(S(A,b)\mid b\in B)$ is a strictly increasing sequence of  initial parts  of $A$  none of which has a supremum in $A$;
\item  $(S(a,B)\mid a\in A)$ is a strictly decreasing family of  final parts  of $B$.   
\end{enumerate}
\end{dfn}

Before continuing, we note that it is not hard to see  that conditions (1) and (2) imply that $(A,<_A)$ and $(B,<_B)$ are dense linear orders and that none of the fibers $S(a,B)$, for $a\in A$, has an infimum in $B$.

As an example of a shuffling relation, consider the ordered rational and irrational numbers as suborders of the real line $(\mathbb R, <)$. They are shuffled by the restriction of $<$ to $\mathbb Q\times (\mathbb R\smallsetminus \mathbb Q)$. Similarly, if we  take any  dense linear order and   two topologically dense, mutually disjoint suborders, then the appropriate restriction of the ordering is a shuffling relation between the suborders. 
We will show that any shuffling relation of two orders may be obtained in a similar way by embedding them into the Dedekind completion of one of them. 
  Recall that any linear order $(A,<_A)$ has a unique, up to isomorphism, completion: a complete  (every subset has a supremum) order in which $(A,<_A)$ is embedded as a topologically dense subset; let us emphasize that by $\sup(\emptyset)=a$ we will mean   that $a\in A$ is the minimal element of $A$.  Usually, the completion is obtained by considering all initial parts ordered by the inclusion and identifying those of them that have the same supremum; in that way each initial part of $A$ is identified with at most one other;  in fact, two initial parts are identified  if and only if they are of the form 
  $(-\infty,a)_A$ and $(-\infty,a]_A$ for some  $a\in A$ with no  immediate predecessor.   Here we will deal only with Dedekind completions of a dense linear orders.

By  the  completion of  a dense linear order $(A,<_A)$ we will mean the order $(\frak D(A),\subset)$, where $\frak  D(A)$ consists of all (topologically) open initial parts of $A$,  including $\emptyset$ and excluding $A$ if it has maximum.    $(\frak  D(A),\subset)$ is a complete dense linear order and the canonical mapping $\pi_A:A\to \frak  D(A)$ defined by $\pi_A(a)=(-\infty,a)_A$ is an order embedding identifying $A$ with a   dense subset of $\frak  D(A)$. 

\begin{lem}\label{Lema_shufling 2 orders}
Suppose that the relation $S$ shuffles  orders $(A,<_A)$ and $(B,<_B)$. Let    $\pi_A:A\to \frak D(A)$ be defined by $\pi_A(a)=(-\infty,a)_A$ and let $\pi_B:B\to \frak D(A)$ be defined by $\pi_B(b)=S(A,b)$. Then:  
\begin{enumerate}[(a)]
\item $\pi_A$ and $\pi_B$ are embeddings of the corresponding orders into $(\frak D(A),\subset)$.
\item The images  of $A$ and $B$ are mutually disjoint, topologically  dense subsets of the completion.  
\item $(A,<_A)$ and $(B,<_B)$ are dense linear orders that have isomorphic Dedekind completions. At most one of them has a minimum (maximum).
\item For all $a\in A$ and $b\in B$: \ $\pi_A(a)\subset \pi_B(b)$ \ if and only if \ $(a,b)\in S$. 
\end{enumerate}
\end{lem}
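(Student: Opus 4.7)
The plan is to prove the four parts in the order (d), (a), (b), (c), since (d) supplies the algebraic bridge between membership in $S$ and inclusion in $\frak D(A)$ that drives the rest, and I shall freely use the observation (stated after Definition \ref{Defin_shuffling}) that the shuffling conditions (1) and (2) already imply that $(A,<_A)$ and $(B,<_B)$ are dense linear orders. For (d), I simply unfold definitions: if $(a,b)\in S$, then $a\in S(A,b)$ and initiality gives $(-\infty,a)_A\subseteq S(A,b)$, with strictness witnessed by $a$ itself; conversely, $(-\infty,a)_A\subsetneq S(A,b)$ forces the existence of some $a'\geq_A a$ in $S(A,b)$, and by initiality $a\in S(A,b)$.

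For (a), the map $\pi_A$ is the canonical order embedding of any dense linear order into its Dedekind completion, so the only real content is to check that $\pi_B$ lands in $\frak D(A)$ and is an order embedding. Each $S(A,b)$ is, by the shuffling hypothesis, an initial part of $A$ with no supremum in $A$. In a dense order, the absence of a supremum forces the absence of a maximum in $S(A,b)$, which is exactly the condition that $S(A,b)$ be topologically open; moreover, if $S(A,b)=A$, then $A$ cannot have a maximum (otherwise $a_{\max}$ would be that supremum), so in all cases $S(A,b)\in\frak D(A)$. That $b<_B b'$ is equivalent to $\pi_B(b)\subsetneq\pi_B(b')$ is precisely the strict monotonicity in Definition \ref{Defin_shuffling}(1).

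For (b), disjointness is immediate: $\pi_A(a)=\pi_B(b)$ would make $a$ the supremum of $S(A,b)$ in $A$, contradicting Definition \ref{Defin_shuffling}(1). Density of $\pi_A(A)$ in $\frak D(A)$ is the standard property of the completion. For density of $\pi_B(B)$, given a non-empty open interval $(I_1,I_2)$ of $\frak D(A)$, I use density of $\pi_A(A)$ to produce $a_1<_A a_2$ with $I_1\subseteq(-\infty,a_1)_A\subsetneq(-\infty,a_2)_A\subseteq I_2$; then the strict decrease of the fibers $S(a,B)$ from Definition \ref{Defin_shuffling}(2) supplies $b\in S(a_1,B)\setminus S(a_2,B)$, and part (d) yields $I_1\subsetneq\pi_B(b)\subsetneq I_2$, as required.

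For (c), the density of $A$ and $B$ was already invoked at the outset. The isomorphism $\frak D(A)\cong\frak D(B)$ then follows from the uniqueness of the Dedekind completion of a dense linear order, applied to the dense suborder $\pi_B(B)\subseteq\frak D(A)$. Finally, if both $A$ and $B$ had minimums $a_0,b_0$, then $\pi_A(a_0)=\emptyset$ would be the minimum of $\frak D(A)$ while $\pi_B(b_0)\neq\emptyset$ by disjointness; the non-empty open interval $(\emptyset,\pi_B(b_0))$ of $\frak D(A)$ would then contain no element of $\pi_B(B)$, contradicting density. The maximum case is symmetric. The main delicate point, I expect, is keeping straight the bookkeeping of the extremal initial parts $\emptyset$ and $A$ in (a) and verifying in (b) that the interpolated $b$ indeed lies strictly inside $(I_1,I_2)$; everything else is essentially forced by the definitions.
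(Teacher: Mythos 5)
Your proof is correct and tracks the paper's own argument closely, with a sensible reordering: you establish (d) first and then cite it to produce the interpolating $\pi_B(b)$ in the density half of (b), whereas the paper inlines that same reasoning directly inside its proof of (b). The one real variation is in (c): the paper constructs the order-isomorphism $\frak D(B)\to\frak D(A)$ explicitly as $J\mapsto\bigcup_{b\in J}S(A,b)$, while you deduce it abstractly from the uniqueness of the Dedekind completion applied to the dense suborder $\pi_B(B)\subseteq\frak D(A)$. Both are valid; the explicit map has the small advantage of being ready to use downstream, while your version is shorter and transfers the burden to a standard uniqueness fact. You also spell out, more carefully than the paper does, why each $S(A,b)$ actually lies in $\frak D(A)$ (open initial part, including the boundary cases $S(A,b)=\emptyset$ and $S(A,b)=A$), which the paper leaves implicit in the single sentence ``$\pi_B$ is an order embedding since $S$ is a shuffling relation.'' No gaps.
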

\begin{proof} (a) As we discussed above $\pi_A$ is an order embedding directly by the definition, while $\pi_B$ is an order embedding since $S$ is a shuffling relation.

To prove part (b), notice that 
for any $b\in B$ condition (1) says that the set $\pi_B(b)=S(A,b)$ has no supremum in $A$, hence $\pi_A(a)=(-\infty,a)_A\neq \pi_B(b)$ holds for all $a\in A$ because $\sup(-\infty,a)_A=a$. We conclude  that the images $\pi_A(A)$ and $\pi_B(B)$ are disjoint.
As for the density, since the image of $A$ is (topologically) dense in $\frak D(A)$,  it suffices to prove that between the images of any two elements $a_1<_Aa_2$,   an element of $\pi_B(B)$ can be found. Condition (2) (from the definition) implies that $S(a_1,B)\supset S(a_2,B)$ holds, so there is $b\in S(a_1,B)\smallsetminus S(a_2,B)$.  Consider the set $S(A,b)$. It is an initial part of $A$ containing $a_1$, so $(-\infty,a_1)_A\subseteq S(A,b)$ holds. Since the images are disjoint we have  $(-\infty,a_1)_A\subset  S(A,b)$. Similarly, $a_2\notin S(A,b)$ implies $S(A,b)\subset  (-\infty, a_2)_A$. Therefore $(-\infty,a_1)_A\subset S(A,b)\subset (-\infty,a_2)_A$, proving the density of the image of $B$.

\smallskip
(c) We have already noticed that $(A,<_A)$ and $(B,<_B)$ are dense linear orders as there is a shuffling relation between them. It is not hard to check that by $J\mapsto\bigcup_{b\in J}S(A,b)$ is defined an order isomorphism between $(\D(B),\subset)$ and $(\D(A),\subset)$.
To prove the last clause notice that, by density of the images,  a minimal (maximal) element of either $A$ or $B$ is mapped by the corresponding projection to a minimal (maximal) element of $\frak  D(A)$. Since the images are disjoint, at most one of the orders has a minimum (maximum).

\smallskip
(d) For right-to-left direction, assume that $(a,b)\in S$. Then $a\in S(A,b)=\pi_B(b)$ holds  and, because $S(A,b)$ is an initial part with no supremum in $A$, we derive $\pi_A(a)=(-\infty,a)_A\subset \pi_B(b)$, proving the first direction. The other direction is even easier:  $\pi_A(a)=(-\infty,a )_A\subset S(A,b)=\pi_B(b)$,   since $S(A,b)$ is an initial part and the inclusion is strict, implies $(a,b)\in S$.  
\end{proof}

\begin{rmk} \label{Remark posssible order-types}
Suppose that $((A_i,<_i)\mid i\in \alpha)$ is a family of countable linear orders  any pair of which is linked by a shuffling relation. By  Lemma \ref{Lema_shufling 2 orders} they are dense linear orders, at most one of them has a minimum and at most one has maximum.  Here we have $(|\alpha|+1)^2$ possibilities:

 (1) \ All of them have order-type $\bm{\eta}$;
 
(2) \ One  has   order-type $\mathbf 1+\bm{\eta}$ and all  the others order-type $\bm{\eta}$;

(3) \  One   has   order-type $\bm{\eta}+\mathbf 1$ and all  the others order-type $\bm{\eta}$;

(4) \  One  has  order-type $\mathbf 1+\bm{\eta}+\mathbf 1$ and all  the others order-type $\bm{\eta}$;

(5) \ One  has  order-type $\mathbf 1+\bm{\eta}$, some other  $\bm{\eta}+\mathbf 1$  and  all the others order-type $\bm{\eta}$.
 \end{rmk}

Now we consider shuffling many orders. The idea is to embed them into the completion of one of them so that the images are dense and mutually disjoint there.
 Recall that the composition of binary relations $R\subseteq A\times B$ and $S\subseteq B\times C$ is defined by: $S\circ R:=\{(a,c)\in A\times C\mid (a,b)\in R \mbox{ and } (b,c)\in S\mbox{ for some $b\in B$}\}$.

Let  $\alpha$ be an ordinal.  We say that a family of relations $(S_{i,j}\subseteq A_i\times A_j\mid i<j\in\alpha)$ {\it shuffles the sequence} of dense linear orders $((A_i,<_i)\mid i\in\alpha )$  if  each $S_{i,j}$ shuffles $(A_i,<_i)$ and $(A_j,<_j)$, and the relations $S_{i,j}$  satisfy the {\it coherence condition}: $S_{j,k}\circ S_{i,j}=S_{i,k}$ (for all $i<j<k<\alpha$).  
In this situation we define canonical embeddings $\pi_i:A_i\to \frak D(A_0)$ by $\pi_0(x)=(-\infty,x)_{A_0}$ and $\pi_i(x)=S_{0,i}(A_0,x)$ for $i>0$.

\begin{lem}\label{Lema_shufle many orders}
Suppose that $((A_i,<_i)\mid i\in\alpha )$  is a sequence of dense linear orders shuffled by a family $(S_{i,j}\subseteq A_i\times A_j\mid i<j\in\alpha)$. 

(a) The sets $(\pi_i(A_i)\mid i\in\alpha)$ are mutually disjoint, topologically dense subsets of  $\frak D(A_0)$.

(b) $((A_i,<_i)\mid i\in\alpha )$ is a sequence of  dense linear orders that have isomorphic Dedekind completions. At most one of them has a minimum (maximum).

(c) For each $i<j$ we have: $\pi_i(a_i)\subset \pi_j(a_j)$ if and only if $(a_i,a_j)\in S_{i,j}$. 
\end{lem}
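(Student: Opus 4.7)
The plan is to reduce parts (a), (b), (c) to the corresponding pairwise statements in Lemma~\ref{Lema_shufling 2 orders}, using the coherence condition $S_{j,k}\circ S_{i,j}=S_{i,k}$ to glue the completions arising from different pairs.

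First, I apply Lemma~\ref{Lema_shufling 2 orders} to the pair $((A_0,<_0),(A_i,<_i))$ with shuffling relation $S_{0,i}$ for each $i>0$. The canonical embeddings there coincide with $\pi_0$ and $\pi_i$, so I get immediately: $\pi_0(A_0)$ and $\pi_i(A_i)$ are disjoint topologically dense subsets of $\frak D(A_0)$; each $(A_i,<_i)$ is a dense linear order whose Dedekind completion is isomorphic to $\frak D(A_0)$; and at most one of $A_0, A_i$ has a minimum (respectively maximum). This already yields the density half of (a), the completion part of (b), and, by iterating pairwise over all indices, the ``at most one minimum/maximum'' clause of (b).

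The core step is disjointness $\pi_i(A_i)\cap\pi_j(A_j)=\varnothing$ for $0<i<j$. Introduce canonical embeddings $\sigma_i\colon A_i\to\frak D(A_i)$, $\sigma_i(a)=(-\infty,a)_{A_i}$, and $\sigma_{i,j}\colon A_j\to\frak D(A_i)$, $\sigma_{i,j}(b)=S_{i,j}(A_i,b)$. Lemma~\ref{Lema_shufling 2 orders} applied to $S_{i,j}$ shows that $\sigma_i(A_i)$ and $\sigma_{i,j}(A_j)$ are disjoint dense subsets of $\frak D(A_i)$. Since both $\pi_i$ and $\sigma_i$ embed $(A_i,<_i)$ as a dense subset of a complete dense linear order, there is a unique order isomorphism $\Phi_i\colon \frak D(A_i)\to\frak D(A_0)$ extending $\pi_i\circ\sigma_i^{-1}$; in particular $\Phi_i\circ\sigma_i=\pi_i$. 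The key identity to verify is
\[
\Phi_i\circ\sigma_{i,j}=\pi_j,
\]
from which $\pi_i(A_i)\cap\pi_j(A_j)=\Phi_i\bigl(\sigma_i(A_i)\cap\sigma_{i,j}(A_j)\bigr)=\varnothing$ follows at once. To establish the identity, the coherence condition rewrites
\[
\pi_j(a_j)=S_{0,j}(A_0,a_j)=\bigcup\{\pi_i(b)\mid b\in S_{i,j}(A_i,a_j)\},
\]
while in $\frak D(A_i)$ one has $\sigma_{i,j}(a_j)=\sup\{\sigma_i(b)\mid b\in S_{i,j}(A_i,a_j)\}$; this uses that a shuffling fiber is an initial part of $A_i$ with no supremum in $A_i$, hence has no maximum (or is empty, in which case both sides equal $\varnothing$, the bottom of $\frak D(A_i)$). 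Since $\Phi_i$ preserves suprema, the identity follows.

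Part (c) is then a direct transfer. When $i=0$ it is Lemma~\ref{Lema_shufling 2 orders}(d) applied to $S_{0,j}$. For $0<i<j$, using $\pi_i=\Phi_i\circ\sigma_i$, $\pi_j=\Phi_i\circ\sigma_{i,j}$ together with order-preservation of $\Phi_i$, the claim reduces to $\sigma_i(a_i)\subset\sigma_{i,j}(a_j)\iff(a_i,a_j)\in S_{i,j}$, which is Lemma~\ref{Lema_shufling 2 orders}(d) applied to $S_{i,j}$. I expect the principal obstacle to be the clean verification of $\Phi_i\circ\sigma_{i,j}=\pi_j$: this is exactly the point at which the coherence condition is essential, for without it the canonical embeddings $(\pi_i)_{i\in\alpha}$ arising from the different pairs would not assemble consistently within a single copy of $\frak D(A_0)$.
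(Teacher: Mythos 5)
Your proof is correct, and it takes a genuinely different route from the paper's. The paper proves disjointness in (a) for $0<i<j$, and part (c), by a direct element-level case analysis inside $A_0$: given $a_i\in A_i$ and $a_j\in A_j$, it splits on whether $(-\infty,a_i)_{A_i}\subset S_{i,j}(A_i,a_j)$ or the reverse inclusion holds, and in each case uses coherence $S_{0,j}=S_{i,j}\circ S_{0,i}$ to exhibit an explicit element of $A_0$ separating $\pi_i(a_i)$ from $\pi_j(a_j)$. You instead construct, for each $i$, the canonical isomorphism $\Phi_i\colon\frak D(A_i)\to\frak D(A_0)$ extending $\pi_i\circ\sigma_i^{-1}$, verify the commutation $\Phi_i\circ\sigma_{i,j}=\pi_j$ by a supremum computation, and then transport the two-order Lemma \ref{Lema_shufling 2 orders} along $\Phi_i$ to get both disjointness and (c). The structural route makes the role of the coherence condition conceptually transparent (it is exactly what makes the diagram commute) and avoids the case split, at the modest cost of invoking the standard uniqueness-of-extension fact for isomorphisms between dense subsets of complete orders and of one further observation you leave implicit: the supremum $\sup\{\pi_i(b)\mid b\in S_{i,j}(A_i,a_j)\}$ in $\frak D(A_0)$ coincides with the union because that union, being $S_{0,j}(A_0,a_j)$, is already an open initial part of $A_0$. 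It is worth noting that the paper itself records (in the proof of Lemma \ref{Lema_shufling 2 orders}(c)) the map $J\mapsto\bigcup_{b\in J}S(A,b)$ as an isomorphism $\frak D(B)\to\frak D(A)$; your $\Phi_i$ is precisely that map, so your argument can be seen as exploiting a tool the paper introduces but does not reuse when proving this lemma.
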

\begin{proof}(a) 
By Lemma \ref{Lema_shufling 2 orders}(b) each $\pi_i(A_i)$ is a topologically dense subset of $\frak D(A_0)$, so it remains to show that $\pi_i(A_i)$ and $\pi_j(A_j)$ are disjoint for $i\neq j$. By Lemma \ref{Lema_shufling 2 orders}(b) this holds when $i=0$, so assume that $0<i<j$,  
 $a_i\in A_i$ and $a_j\in A_j$. We will prove that $\pi_i(a_i)\neq\pi_j(a_j)$.
By Lemma \ref{Lema_shufling 2 orders} applied to $A_i$ and $A_j$,  we have $S_{i,j}(A_i,a_j)\neq (-\infty,a_i)_{A_i}$. We distinguish two cases here. The first is when $(-\infty,a_i)_{A_i}\subset S_{i,j}(A_i,a_j)$. 
Then  $a_i\neq \sup S_{i,j}(A_i,a_j)$ implies $a_i\in S_{i,j}(A_i,a_j)$. Since $S_{i,j}(A_i,a_j)$ is an initial part of $A_i$ with no supremum and contains $a_i$, there is $a_i'>_ia_i$ such that $(a_i',a_j)\in S_{i,j}$.
 Then  $S_{0,i}(A_0,a_i)\subset S_{0,i}(A_0,a_i')$ implies that  we can find $b\in A_0$ such that $(b,a_i)\notin S_{0,i}$ and $(b,a_i')\in S_{0,i}$.  On the one hand,  $(b,a_i')\in S_{0,i}$, $(a_i',a_j)\in S_{i,j}$ and $S_{0,j}= S_{i,j}\circ S_{0,i}$ together imply  $b\in S_{0,j}(A_0,a_j)=\pi_j(a_j)$. On the other hand,  $(b,a_i)\notin S_{0,i}$ implies  $b\notin\pi_i(a_i)$, which combined  with   $b\in\pi_j(a_j)$ proves $\pi_i(a_i)\neq\pi_j(a_j)$. 

The proof in the second case is similar. Assuming  $S_{i,j}(A_i,a_j)\subset (-\infty,a_i)_{A_i}$ we derive $(a_i,a_j)\notin S_{i,j}$; hence $S_{i,j}(A_i,a_j)<_i a_i$. Since $a_i$ is not the supremum of $S_{i,j}(A_i,a_j)$ we can find $a_i''<_ia_i$ such that $(a_i'',a_j)\notin S_{i,j}$.
Then $S_{0,i}(A_0,a_i'')\subset S_{0,i}(A_0,a_i)$ implies that there is  $c\in A_0$ such that $(c,a_i)\in S_{0,i}$ and $(c,a_i'')\notin S_{0,i}$. We prove that $c$ witnesses $\pi_i(a_i)\neq\pi_j(a_j)$. Clearly, $c\in\pi_i(a_i)$ holds.  If  $c\in\pi_j(a_j)$ were true, then $S_{0,j}= S_{i,j}\circ S_{0,i}$   would imply that   some $d\in A_i$ satisfies $(b,d)\in S_{0,i}$ and $(d,a_j)\in S_{i,j}$; then  $(c,d)\in S_{0,i}$ and $(b,a_i'')\notin S_{0,i}$ would imply $a_i''<_i d$ by monotonicity, which combined  with $(d,a_j)\in S_{i,j}$  implies $(a_i'',a_j)\in S_{i,j}$; contradiction. Therefore,  $c\notin\pi_j(a_j)$ holds  and  $\pi_i(a_i)\neq\pi_j(a_j)$. This completes the proof  of part (a).

\smallskip
(b) This follows directly from Lemma \ref{Lema_shufling 2 orders}(c).

\smallskip
(c) If $ (a_i,a_j)\notin S_{i,j}$, then $S_{i,j}(A_i,a_j)<_ia_i$, so exactly by the same calculation  as in the proof of part (a) we get $\pi_j(a_j)\subset\pi_i(a_i)$, and thus $\pi_i(a_i)\not\subset\pi_j(a_j)$. On the other hand, if $ (a_i,a_j)\in S_{i,j}$ and $x\in\pi_i(a_i)$, then $(x,a_i)\in S_{0,i}$, hence $(x,a_j)\in S_{i,j}\circ S_{0,i}$, i.e.\ $(x,a_j)\in S_{0,j}$ and thus $x\in\pi_j(a_j)$; this proves  $\pi_i(a_i)\subseteq\pi_j(a_j)$ from which,  by part (a),  $\pi_i(a_i)\subset\pi_j(a_j)$ follows. 
\end{proof}

For a sequence $\mathcal A= ((A_i,<_i)\mid i\in\alpha )$    of dense linear orders shuffled by $\mathcal S=(S_{i,j}\mid i<j<\alpha)$ we  form its {\it limit structure} \  
$(\Pii_{\mathcal S}\mathcal A,\subset, \pi_i(A_i))_{i\in\alpha}$,
\ 
where $\Pii_{\mathcal S}\mathcal A= \bigcup_{i\in\alpha}\pi_i(A_i) \subseteq \frak D(A_0)$.   

By Lemma \ref{Lema_shufle many orders}, the limit structure  is a dense linear order completely coloured in $\alpha$ dense colors; it is a model of the theory from Example \ref{exm dense colors}. Assuming that the $A_i$ are pairwise disjoint, each $\pi_i:A_i\to\Pii_{\mathcal S}\mathcal A$   is an  order-isomorphism with the order of  the corresponding  color   and we can  consider a natural bijection $\pi: \bigcup_{i\in\alpha}A_i\to\Pii_{\mathcal S}\mathcal A$ given by $\pi=\bigcup_{i\in\alpha}\pi_i$ (that it is a bijection follows  by Lemma \ref{Lema_shufle many orders}).

\begin{prop}\label{Prop projekcije izomorfne iff} Suppose that $\mathcal A= ((A_i,<_{A_i})\mid i\in\alpha)$ and $\mathcal B= ((B_i,<_{B_i})\mid i\in\alpha)$ are countable sequences of countable linear orders   shuffled by families $\mathcal S_A=(S_{i,j}^A\mid i<j<\alpha)$ and $\mathcal S_B(S_{i,j}^B\mid i<j<\alpha)$ respectively. Then the following statements are equivalent:
\begin{enumerate}[(1)]
\item $(\Pii_{\mathcal S_A}\mathcal A,\subset, \pi^A_i(A_i))_{i\in\alpha}\cong  (\Pii_{\mathcal S_B}\mathcal B,\subset, \pi^B_i(B_i))_{i\in\alpha}$; 
\item There are order-isomorphisms $f_i:A_i\to B_i$ such that for all $i<j<\alpha$, $x\in A_i$ and $y\in A_j$: \begin{center}
$(x,y)\in S_{i,j}^A$ \  if and only if \ $(f_i(x),f_j(y))\in S_{i,j}^B$.
\end{center}
\item $(A_i,<_{A_i})\cong (B_i,<_{B_i})$ for all $i\in\alpha$.
\end{enumerate}
\end{prop}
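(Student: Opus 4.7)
The plan is to dispatch the equivalences $(1)\Leftrightarrow(2)$ and $(2)\Rightarrow(3)$ as direct applications of Lemma~\ref{Lema_shufle many orders}, and to do the real work in $(3)\Rightarrow(1)$ via a Cantor-style back-and-forth on the limit structures.

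For the easy implications, $(2)\Rightarrow(3)$ is immediate since each $f_i$ is by hypothesis an order-isomorphism. For $(2)\Rightarrow(1)$, given $f_i\colon A_i\to B_i$ preserving the shuffling relations, define $F\colon \Pii_{\mathcal S_A}\mathcal A\to \Pii_{\mathcal S_B}\mathcal B$ by $F(\pi^A_i(a))=\pi^B_i(f_i(a))$. Pairwise disjointness of the colour classes (Lemma~\ref{Lema_shufle many orders}(a)) makes $F$ a well-defined bijection which preserves colours by construction. For order preservation with $i<j$, Lemma~\ref{Lema_shufle many orders}(c) converts $\pi^A_i(a)\subset \pi^A_j(b)$ to $(a,b)\in S^A_{i,j}$; the hypothesis turns this into $(f_i(a),f_j(b))\in S^B_{i,j}$; Lemma~\ref{Lema_shufle many orders}(c) on the $B$-side closes the loop. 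The case $i=j$ uses that $\pi^A_i$ is an order embedding and $f_i$ preserves order. The converse $(1)\Rightarrow(2)$ restricts an isomorphism $F$ to each colour class and composes with the identifications $\pi^A_i\colon A_i\to \pi^A_i(A_i)$ and $\pi^B_i\colon B_i\to \pi^B_i(B_i)$ to produce $f_i$; preservation of shuffling follows from Lemma~\ref{Lema_shufle many orders}(c) as before.

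The content is $(3)\Rightarrow(1)$. By Lemma~\ref{Lema_shufle many orders}(a) each limit structure is a countable dense linear order in which every colour class is topologically dense. Before the back-and-forth I would analyse endpoints. The completion $(\frak D(A_0),\subset)$ has minimum $\emptyset$, and from Definition~\ref{Defin_shuffling} one checks that $\pi^A_i(a)=\emptyset$ forces $a=\min A_i$: otherwise there is $y<_i a$ with $S^A_{0,i}(A_0,y)\subset S^A_{0,i}(A_0,a)=\emptyset$, impossible. Combined with Lemma~\ref{Lema_shufle many orders}(b), this yields that $\Pii_{\mathcal S_A}\mathcal A$ has a minimum if and only if a unique $A_i$ has one, and in that case the minimum has colour $i$. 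The dual analysis handles maxima. Assumption (3), via fixed isomorphisms $A_i\cong B_i$, then guarantees that the two limit structures share the endpoint profile with matching colours at any endpoints.

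With this in hand, enumerate both limit structures, pre-match any endpoints (sending $\min \Pii_{\mathcal S_A}\mathcal A$ to $\min \Pii_{\mathcal S_B}\mathcal B$ when they exist, and similarly for maxima), and extend a finite partial colour-preserving order-isomorphism $h$ one element at a time. To add a new element $x$ of colour $i$, its image must lie in a specific open interval of the opposite limit structure determined by the position of $x$ relative to the current domain; this interval is non-empty because the linear order is dense and endpoints have been handled, and it meets the $i$-th colour class because that class is topologically dense. Alternating sides produces the required isomorphism. The main obstacle is the endpoint bookkeeping: once it is verified that the existence and colour of min/max in each limit structure is read off correctly from the $A_i$ (respectively $B_i$), density of every colour class makes each back-and-forth step unobstructed.
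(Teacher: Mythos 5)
Your overall structure matches the paper's: the equivalence $(1)\Leftrightarrow(2)$ is a translation through Lemma~\ref{Lema_shufle many orders}(c), $(2)\Rightarrow(3)$ is trivial, and $(3)\Rightarrow(1)$ is an endpoint check followed by a back-and-forth on the coloured limit structures. The back-and-forth step itself is fine. There is, however, a gap in your endpoint bookkeeping that you should close. You prove one implication, namely that $\pi^A_i(a)=\emptyset$ forces $a=\min A_i$, and then assert that ``this yields that $\Pii_{\mathcal S_A}\mathcal A$ has a minimum if and only if a unique $A_i$ has one.'' Neither direction of that biconditional actually follows from what you wrote. For the ``$\Rightarrow$'' direction you need to know that any minimum of $\Pii_{\mathcal S_A}\mathcal A$ must equal $\emptyset$; this is true because $\pi^A_0(A_0)$ is topologically dense in the dense order $\frak D(A_0)$ (so nothing in $\Pii_{\mathcal S_A}\mathcal A$ can sit strictly between $\emptyset$ and a purported nonzero minimum), but you never say this. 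For the ``$\Leftarrow$'' direction you would need the converse of your claim, that $a=\min A_i$ forces $\pi^A_i(a)=\emptyset$, which you also do not establish.

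The paper avoids both issues with a cleaner observation: $\pi^A_i$ is an order-isomorphism of $A_i$ onto $\pi^A_i(A_i)$, so $A_i$ has a minimum iff $\pi^A_i(A_i)$ does, and since $\pi^A_i(A_i)$ is topologically dense in the dense linear order $\Pii_{\mathcal S_A}\mathcal A$, any minimum of $\pi^A_i(A_i)$ is automatically the minimum of the whole limit structure (nothing can lie strictly below it by density, and nothing equals it by disjointness of the colour classes). This single density argument gives both directions at once, together with the colour of the minimum. Replacing your $\emptyset$-computation with this argument closes the gap; the rest of your proof, including the back-and-forth, then goes through as you describe.
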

\begin{proof}
(1)$\Rightarrow$(2) \ 
Suppose that  $\Pii_{\mathcal S_A}\mathcal A \cong\Pii_{\mathcal S_B}\mathcal B$ and let $f:\Pii_{\mathcal S_A}\mathcal A\longrightarrow\Pii_{\mathcal S_B}\mathcal B$ be an isomorphism. Since $f$ respects colors, $f_{\strok \pi_i(A_i)}$ is an order-isomorphism $(\pi^A_i(A_i),\subset)\longrightarrow(\pi^B_i(B_i),\subset)$, thus we can define an order-isomorphism $f_i:(A_i,<_{A_i})\longrightarrow (B_i,<_{B_i})$ by: \begin{center}
\ $f_i(a_i)=b_i$ \ iff \ $f(\pi^A_i(a_i))= \pi^B_i(b_i)$ \ (i.e.\ $f_i= (\pi_i^B)^{-1}\circ f_{\strok \pi_i^A(A_i)}\circ \pi_i^A$).
\end{center}
For $x\in A_i, y\in A_j$ we have the following equivalences: \begin{center}
$(x,y)\in S^A_{i,j}$ \ iff \ $\pi_i^A(x)\subset \pi_j^A(y)$ \  \ iff \  $f(\pi_i^A(x))\subset f(\pi_j^A(y))$ \  \ iff  \ $\pi_i^B(f_i(x))\subset \pi_i^B(f_j(y))$ \  \ iff \ $(f_i(x),f_j(y))\in S^B_{i,j}$ ;
\end{center}
The first and the last one hold by Lemma \ref{Lema_shufle many orders}(c), the second because   $f$ is an isomorphism and the third follows by the definition of $f_i$ and $f_j$. This proves (2).

\smallskip
(2)$\Rightarrow$(3) \ is trivial, so we prove (3)$\Rightarrow$(1).  Assume that $(A_i,<_{A_i})\cong (B_i,<_{B_i})$ holds for all $i$. 
Note that $A_i$ has minimum iff $\Pii_{\mathcal S_A}\mathcal A$ has minimum which is coloured by the $i$-th color  $\pi_i^A(A_i)$: Since $(A_i,<_{A_i})$ is order-isomorphic to $(\pi_i^A(A_i),\subset)$, the minimal element of $A_i$ is mapped onto the minimal element of $\pi_i^A(A_i)$, and this one is the minimal element of $\Pii_{\mathcal S_A}\mathcal A$ since by Lemma \ref{Lema_shufle many orders}  $\pi_i^A(A_i)$ is dense within $\Pii_{\mathcal S_A}\mathcal A$. Thus (3) implies that one of $\Pii_{\mathcal S_A}\mathcal A$ and $\Pii_{\mathcal S_B}\mathcal B$   has minimum iff both of them do, in which case these minimums are equally coloured. Similarly for maximums.

We can now construct an isomorphism by the usual back-and-forth argument: Both $\Pii_{\mathcal S_A}\mathcal A$ and $\Pii_{\mathcal S_B}\mathcal B$ are countable dense linear orders. By the previous discussion, if they exist, we map minimum (maximum) of $\Pii_{\mathcal S_A}\mathcal A$ to minimum (maximum) of $\Pii_{\mathcal S_B}\mathcal B$. The rest of the construction is standard and uses that every point in these orders is coloured, and every colour is dense within them.
\end{proof} 

\begin{exm}\label{Exm_shuffled _elquant}
Suppose that $\mathcal A= ((A_i,<_{i})\mid i\in\omega)$ is a countable sequence of disjoint  linear orders  of order type $\bm{\eta}$  shuffled by the family $\mathcal S=(S_{i,j}\mid i<j<\omega)$. Form a first order structure in the language $\{C_i,<_i,S_{i,j}\mid i<j<\omega\}$ in which each $C_i$ is interpreted as $A_i$, while $<_i$ and $S_{i,j}$ are interpreted in an obvious way. Let the $L$-theory $T$ describe that $\mathcal S$ shuffles the sequence of (order-type $\bm{\eta}$) orders $\mathcal A$. For $M\models T$, define $C(M):=\bigcup_{i<\omega}C_i(M)$ and $p(M)=M\smallsetminus C(M)$. By Proposition \ref{Prop projekcije izomorfne iff} for all countable $M,N\models T$ we have $C(M)\cong C(N)$ (as substructures) and $M\cong N$ iff $|p(M)|=|p(N)|$. By a standard argument one shows that any countable $M'\models T$ has a countable  elementary extension $M^*\succ M'$   with $|p(M^*)|=\aleph_0$; this implies that $T$ is complete, small and that  $M^*$ is saturated. We {\em claim} that any two tuples $\bar a,\bar b$ of $M^*$ having the same quantifier-free type are conjugated by an $M^*$-automorphism. To justify it, note that if $\bar a\subset C(M^*)$, then 
the images $\pi(\bar a)$ and $\pi(\bar b)$ have the same quantifier-free type in the limit structure (which eliminates quantifiers), so are conjugates by an automorphism $f$ of the limit structure;   $f$ naturally (as in the proof of $(1)\Rightarrow(2)$ in Proposition \ref{Prop projekcije izomorfne iff}) induces an automorphism of $C(M^*)$ moving $\bar a$ to $\bar b$. This proves the claim in the case $\bar a\subset C(M^*)$. In the general case  $\bar a$ and $\bar b$ may contain elements of $p(M)$; here it suffices to note that any permutation of $p(M)$ when extended by the identity map on $C(M^*)$ becomes an automorphism of $M^*$. 
Consequently, $T$ eliminates quantifiers. 
\end{exm}
 
\begin{exm}\label{Exm_shuffled _elquant2}
Consider the language $\{C_i,<,S_{i,j}\mid i<j<\omega\}$ and let $T$ be theory describing  that $(M,<)$ is a dense linear order without endpoints  with $C_0(M)<C_1(M)<...$ (topologically) open convex subsets  and $C(M):=\bigcup_{i\in\omega} C_i(M)$ its initial part, while the sequence $(S_{i,j}^M\mid i<j<\omega)$ shuffles the sequence of orders $((C_i(M),<)\mid i\in \omega)$. Put $p(M)=M\smallsetminus C(M)$. For all countable $M,N\models T$ we have $C(M)\cong C(N)$ (as substructures) and $M\cong N$ iff $(p(M),<)\cong (p(N),<)$. Arguing similarly as in the previous example, one shows that if $M^*$ is countable and $(P(M^*),<)$ has order type $\bm{\eta}$, then $M^*\models T$ is countably saturated and that   $T$ eliminates quantifiers. It follows that $T$ is binary and weakly \oo-minimal.
\end{exm}

\section{Stationarily ordered types}

In this section we introduce and start investigating stationarily ordered types in an arbitrary $T$.  Notably, we prove in Corollary \ref{Cor_ordered_nezavisi_odordera} that  stationarity of an ordered type does not depend on the choice of a witnessing order. Then we study the binary relation $x\dep_A y$; in Corollary \ref{Cor_forking_transitivity}  we prove that it is an equivalence relation on the set of all realizations of all stationarily ordered types over the fixed domain $A$. In Proposition \ref{cor_nwor_fwor_transitivity}  we deduce that both $\nwor$ and $\nfor$ are equivalence relations on the set of all stationarily ordered types over $A$.
Recall that  a type $p\in S_1(A)$ is ordered if there is an $A$-definable linear order $(D_p,<_p)$ with $p(\Mon)\subseteq D_p$, in which case we say that  $(D_p,<_p)$ witnesses that $p$ is ordered.
 
\begin{dfn}\label{Dfn_stat_type} Suppose that   $p\in S_1(A)$ is an ordered  type witnessed by   $(D_p,<_p)$. 

(a) We say that $<_p$ {\em stationarily orders} $p$, or that $\mathbf p=(p,<_p)$ is an {\it \so-pair} over $A$, if 
for every $\Mon$-definable set $D$  one of the sets $D$ and $\Mon\smallsetminus D$ is left eventual,  and one of them  is  right eventual in  $(p(\Mon),<_p)$.

(b) A complete type is {\em stationarily ordered} if there exists an ordering  which stationarily orders it.

(c) A complete theory $T$ is {\em stationarily ordered} if every   type from $S_1(\emptyset)$ is stationarily ordered.
\end{dfn}

Clearly,    weakly \oo-minimal theories are   stationarily ordered and in Section 9 we will show that weakly quasi \oo-minimal theories and theories of coloured orders are stationarily ordered, too. 

\begin{exm} (An $\aleph_0$-categorical, stationarily ordered theory having the independence property)\\
Consider  the complete theory $T$ of $(\mathbb Q,<,E,R)$, where $E$ is a convex equivalence relation on the ordered rationales such that the factor order $(\mathbb Q/E,<)$ is dense without endpoints, $R\subseteq E$ is a symmetric binary relation and on each $E$-class $R$ induces a random graph which is `independent' of $<$. This structure is $\aleph_0$-categorical and eliminates quantifiers. There is a unique complete 1-type and it is stationarily ordered. On the other hand, no complete type in $S_1(a)$ containing  $E(x,a)\land x\neq a$ is stationarily ordered. The theory $T$ has the independence property since the random graph is interpretable in our structure.
\end{exm}


\begin{rmk}
We have defined classes of ordered and stationarily ordered types of 1-types in 1-sorted theories.  However, there is an obvious way of extending the definition to an arbitrary multi-sorted context: it suffices to require that the witnessing order is chosen within the adequate sort. Note that for types in the real sort the property of being stationarily ordered is preserved under passing from $T$ to $T^{eq}$ and vice versa; the same holds for the relations $p\nwor q$ and $p\nfor q$. Although we will work mostly with types in the real sort, 
all the results from this section (and sections 4 and 6) describing properties of the relations of weak and forking non-orthogonality of stationarily ordered types  in an arbitrary first-order theory $T$ remain valid in the multi sorted context, too.   For example, Corollary  \ref{cor_nwor_fwor_transitivity} remains valid in $T^{eq}$:  $\nwor$ and $\nfor$ are equivalence relations on the set of all stationarily ordered types over a fixed domain  in $\Mon^{eq}$. 
\end{rmk}

\begin{dfn}For an \so-pair $\mathbf p=(p,<_p)$ over $A$  define its left ($\p_l$)  and right ($\p_r$) globalization:
\begin{itemize}
\item  \ $\p_l= \{\phi(x)\in L(\Mon)\mid \phi(\Mon)\mbox{ is left eventual in }p(\Mon)\}$, 
\item  \ $\p_r= \{\phi(x)\in L(\Mon)\mid \phi(\Mon)\mbox{ is right eventual in }p(\Mon)\}$.
\end{itemize}
\end{dfn}

 \begin{rmk}\label{Rmk notin p_l}
Let $\mathbf p=(p,<_p)$ be  an \so-pair over $A$ witnessed by $(D_p,<_p)$ and $\phi(x)\in L(\Mon)$. 

- Both $\p_l$ and $\p_r$ are complete, $A$-invariant  global types extending $p$. 

- The set $\phi(\Mon)$  is either $\mathbf{p}$-left-bounded or  is left-eventual in $(p(\Mon),<_p)$. 
In particular, $\phi(x)$ is $\mathbf p$-left-bounded if and only if $\phi(x)\notin \frak p_l$; similarly,  $\phi(x)$ is $\mathbf p$-right-bounded if and only if $\phi(x)\notin \frak p_r$. 

- If $q\in S_1(B)$ extends $p$, then $q\not\subseteq\p_l$ if and only if there is a   $\mathbf p$-left-bounded formula $\psi(x)\in q$ if and only if  (by Lemma \ref{fact osnovni}) there exists a strongly $\mathbf p$-left-bounded  $\psi(x)\in q$; similarly for $q\not\subseteq\p_r$.
 \end{rmk}
 
\begin{lem}\label{lem prop_left_right_basics} Suppose that $\mathbf p= (p,<_p)$ is an \so-pair over $A$ witnessed by $(D_p,<_p)$.
\begin{enumerate}[(a)]
\item $\p_l$ and $\p_r$ are the only global $A$-invariant extensions of $p$.
\item For any $B\supseteq A$, $\p_{l\strok B}(\Mon)$ is an initial part and $\p_{r\strok B}(\Mon)$ is a final part of $(p(\Mon),<_p)$. 
\item For all $a,b\models p$: \ $a\models\p_{l\strok Ab}$ \ if and only if  \ $b\models\p_{r\strok Aa}$. In other words, $(b,a)$ is a Morley sequence in $\p_l$ over $A$ \ if and only if \ $(a,b)$ is a Morley sequence in $\p_r$ over $A$.
\item If the order $(D,<)$ witnesses that  $p$ is ordered, then $\mathbf p'= (p,<)$ is an \so-pair    and   $\{\p_l',\p_r'\}= \{\p_l,\p_r\}$.
\end{enumerate}
\end{lem}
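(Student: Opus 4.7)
For part (a), both $\p_l$ and $\p_r$ are easily verified to be $A$-invariant complete global types extending $p$, completeness following from the \so-dichotomy. For uniqueness, suppose $\q$ is an $A$-invariant global extension of $p$ with $\q \neq \p_l$; then some $\phi(x) \in \q$ is not in $\p_l$, so $\phi$ is $\mathbf{p}$-left-bounded. I claim $\q = \p_r$: otherwise some $\psi(x) \in \q$ is $\mathbf{p}$-right-bounded, hence $\phi \wedge \psi \in \q$ is $\mathbf{p}$-bounded, and Lemma \ref{fact osnovni} yields a strongly $\mathbf{p}$-bounded formula $\chi(x) \in \q$ with $a <_p \chi(\Mon) <_p a'$ for some $a,a' \models p$. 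So both $a <_p x$ and $x <_p a'$ lie in $\q$; using $A$-invariance of $\q$ together with the transitive action of $\Aut(\Mon/A)$ on $p(\Mon)$ by saturation, one obtains $b <_p x$ and $x <_p b$ in $\q$ for any single $b \models p$, a contradiction.

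For part (b), fix $z \models \p_{l\strok B}$ and $w \in p(\Mon)$ with $w <_p z$, and let $\psi(x) \in L(B) \cap \p_l$. Assume toward contradiction $\neg\psi(w)$. Since $\neg\psi$ is $\mathbf{p}$-left-bounded, Lemma \ref{fact osnovni}(a) gives $\xi(x) \in p$ making $(\neg\psi \wedge \xi)(\Mon)$ strongly $\mathbf{p}$-left-bounded, hence bounded below by some $a' \models p$. Consider the $L(B)$-formula
\[\phi(x) := \exists y\,\bigl(\neg\psi(y) \wedge \xi(y) \wedge y <_p x\bigr).\]
Then $\phi(\Mon) \cap p(\Mon) \subseteq \{x : x >_p a'\}$. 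Any nonempty initial part of $(p(\Mon),<_p)$ is $<_p$-downward closed and so contains $a'$, which is not $>_p a'$; therefore $\phi$ is not left-eventual, whence $\neg\phi \in \p_l \cap L(B) \subseteq \p_{l\strok B}$ and $\neg\phi(z)$. But $y := w$ witnesses $\phi(z)$, contradiction. The dual formulation handles $\neg\psi \in \p_l$, and the statement for $\p_{r\strok B}$ is symmetric.

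For part (c), assume $a \models \p_{l\strok Ab}$ with $a,b \models p$. Since $(x <_p b) \in \p_l$ (because $\{x : x <_p b\} \cap p(\Mon)$ is itself an initial part), part (b) gives $a <_p b$. The plan is to apply (a) to $\tp(b/Aa)$: if $\q$ is any $A$-invariant global extension of $\tp(b/Aa)$, then by (a) $\q \in \{\p_l, \p_r\}$; the option $\q = \p_l$ forces, via (b), $b \in \p_{l\strok Aa}(\Mon) \subseteq \{y : y <_p a\}$, contradicting $a <_p b$, so $\q = \p_r$ and hence $b \models \p_{r\strok Aa}$. The principal obstacle is producing such a $\q$, which by compactness reduces to the finite consistency of $\tp(b/Aa) \cup \p_r$. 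For each $\alpha(y) = \psi(a,y) \in \tp(b/Aa)$, one must show $\psi(a,\Mon)$ is right-eventual; otherwise Lemma \ref{fact osnovni} produces a strong $\mathbf{p}$-right-bound $d' \models p$ for $(\psi(a,\cdot) \wedge \xi)(\Mon)$. Taking a Morley sequence $(a_i)_{i<\omega}$ in $\p_l$ over $Ab$, every $a_i \equiv_{Ab} a$ gives $\psi(a_i,b)$; by the $A$-invariance of the strong-boundedness relation, there exist conjugate witnesses $d_i$ with $(a_i,b,d_i) \equiv_A (a,b,d')$, yielding $b <_p d_i$ for all $i$. Playing the Morley sequence against these uniform conjugate bounds drives the desired contradiction. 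The reverse implication $b \models \p_{r\strok Aa} \Rightarrow a \models \p_{l\strok Ab}$ is symmetric.

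For part (d), by (a) the only $A$-invariant global extensions of $p$ are $\p_l$ and $\p_r$. To apply (a) to $(p,<)$ we first need stationarity. Apply $<_p$-stationarity of $\mathbf p$ to the $A$-definable formula $x < y_0$ for $y_0 \models p$: either $\{x : x < y_0\}$ or its complement is $<_p$-left-eventual; by $A$-invariance this dichotomy is constant for $y_0 \models p$, which forces $<$ on $p(\Mon)$ to either coincide with $<_p$ or be its reverse. Consequently $<$-left/right-eventuality on $p(\Mon)$ matches $<_p$-left/right-eventuality (possibly with roles swapped), so $(p,<)$ is indeed an \so-pair; its associated $\p_l', \p_r'$ are then $A$-invariant global extensions of $p$, whence by (a) $\{\p_l', \p_r'\} = \{\p_l, \p_r\}$. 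I expect the main obstacle of the entire lemma to be the existence/consistency step in (c), as the other parts follow fairly cleanly from the \so-dichotomy plus Lemma \ref{fact osnovni}.
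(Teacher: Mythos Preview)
Your arguments for (a) and (b) are essentially correct and parallel the paper's approach (the phrasing in (b) that ``any nonempty initial part contains $a'$'' is not literally true, but the intended point---that no nonempty initial part can lie inside $\{x : x >_p a'\}$---is correct).

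For (c), however, there is a genuine gap. You correctly identify that the crux is showing $\tp(b/Aa)\subseteq \p_r$, i.e.\ that every $\psi(a,y)\in\tp(b/Aa)$ defines a right-eventual subset of $p(\Mon)$. But your attempted contradiction produces bounds $d_i >_p b$ for each $a_i$ in a Morley sequence over $Ab$, and then stops: ``playing the Morley sequence against these uniform conjugate bounds'' is not an argument. The $d_i$ are not controlled relative to one another or to the $a_j$, and no contradiction emerges from merely having $b<_p d_i$ for all $i$. The paper avoids this entirely with a short trick: choose an auxiliary $c\models p$ with $a\models\p_{r\strok Ac}$, so $c<_p a<_p b$; then part (b) gives $c\models\p_{l\strok Ab}$ and $b\models\p_{r\strok Ac}$, whence $\tp(a,c/A)=\tp(b,c/A)=\tp(b,a/A)$, and $A$-invariance of $\p_r$ transports $a\models\p_{r\strok Ac}$ to $b\models\p_{r\strok Aa}$. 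No consistency or compactness argument is needed.

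For (d), your central claim is simply false: knowing that $\{x : x< y_0\}$ is $<_p$-left-eventual for every $y_0\models p$ does \emph{not} force $<$ to coincide with $<_p$ (or its reverse) on $p(\Mon)$. A concrete counterexample: take $(\mathbb{Q},<,E)$ with $E$ a $\emptyset$-definable convex equivalence with infinite classes and dense quotient, and define $x<' y$ by $(\neg E(x,y)\wedge x<y)\vee(E(x,y)\wedge y<x)$. Then $<'$ is a $\emptyset$-definable linear order, $\{x:x<'y_0\}$ is $<$-left-eventual for every $y_0$, yet $<'$ agrees with $<$ between $E$-classes and with $>$ within them, so it is neither $<$ nor $>$. (The correct statement, established later in the paper, is that two witnessing orders agree on \emph{independent} pairs, not on all pairs.) The paper's proof of (d) does not compare the orders pointwise; instead it shows directly that each $\p_{r\strok B}(\Mon)$ is a final part of $(p(\Mon),<)$, using (c) in an essential way, and deduces that $<_p$-right-eventual sets are $<$-right-eventual.
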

\begin{proof}
(a) By Remark \ref{Rmk notin p_l} $\p_l$ and $\p_r$ are $A$-invariant types. 
Let $\p$ be an $A$-invariant global extension of $p$. Then either $x<_pa$ belongs to $\p$ for all $a\models p$, or $a<_px$ does. Suppose that  the first  option holds  and let $\phi(x)\in\p$. Then the type $p(x)\cup\{\phi(x),x<_pa\}\subset \p$ is consistent for any $a\in p(\Mon)$; this means that $\phi(\Mon)$ contains arbitrarily $<_p$-small elements of $(p(\Mon),<_p)$, so $\phi(x)$ is not $\mathbf p$-left-bounded  and  by Remark \ref{Rmk notin p_l}   we have $\phi(x)\in \p_l$. 
Since $\phi(x)\in\p$ was arbitrary we deduce $\p\subseteq \p_l$ and hence $\p=\p_l$. Similarly, if the second option holds,  we get $\p=\p_r$.

\smallskip
(b) We  prove that   $P=p(\Mon)\smallsetminus \p_{l\strok B}(\Mon)$ is a final part  of $(p(\Mon),<_p)$. Assume that  $a\in P$, $a'\in p(\Mon)$ and $a<_pa'$.
Then  $\tp(a/B)\not\subseteq\p_l$, by Remark \ref{Rmk notin p_l}, implies that there is a  strongly $\mathbf p$-left-bounded formula $\phi(x)\in \tp(a/B)$. Then the   formula 
$\psi(x):=x\in D_p\land (\exists t)(\phi(t)\land t\leqslant_p x)$ is also strongly $\mathbf p$-left-bounded, so $\psi(x)\notin \p_l$. Clearly, $\psi(x)\in\tp(a'/B)$ witnesses $\tp(a'/B)\not\subseteq\p_l$, so $a'\in P$. This proves that  
 $P$ is a final part of $(p(\Mon),<_p)$ and it follows that  $\p_{l\strok B}(\Mon)$ is an initial part. Similarly,
 $\p_{r\strok B}(\Mon)$ is a final part of $(p(\Mon),<)$.

\smallskip
(c) Assume that $a\models\p_{l\strok Ab}$  and let $c\models p$ be such that  $a\models\p_{r\strok Ac}$; then $c<_pa<_pb$ holds. By part (b) of the lemma  we have $c\models\p_{l\strok Ab}$ and $b\models\p_{r\strok Ac}$. This implies  $\tp(a,c/A)= \tp(b,c/A)= \tp(b,a/A)$, where the first equality holds because $a,b\models\p_{r\strok Ac}$ and the second one follows by $c,a\models\p_{l\strok Ab}$. Now,   $\tp(a,c/A)= \tp(b,a/A)$, $a\models\p_{r\strok Ac}$ and the $A$-invariance of $\p_r$ imply $b\models\p_{r\strok Aa}$. 

\smallskip
(d) Assume that $(D,<)$ is an $A$-definable linear order and $p(\Mon)\subseteq D$. For all  $a\models p$, either $a<x$ or $x<a$ belongs to $\p_r$. We will continue the proof assuming $(a<x)\in \p_r$; the proof in the other case is obtained by applying the first case to $(D,<^*)$ where $<^*$ is the reverse order of $<$.
If $c\models \p_{r\strok Aa}$, then $(a,c)$ is a Morley sequence in $\p_r$ over $A$ so, by part (c) of the lemma, $(c,a)$ is a Morley sequence in $\p_l$ over $A$ and hence   $(x<a)\in\p_l$. The $A$-invariance of $\p_l$  implies  that $(x<c)\in\p_l$ for all $c\models p$.

We {\it claim} that  any right (left) eventual set in $(p(\Mon),<_p)$ is right (left)  eventual in $(p(\Mon),<)$, too; clearly, this implies that $\mathbf p'$ is an \so-pair over $A$, $\p_r=\p'_r$ and $\p_l=\p_l'$, and complets the proof of the lemma.  
By part (b) the set $R_B=\p_{r\strok B}(\Mon)$  for  $B\supset A$ is a final part of $(p(\Mon),<_p)$. Since any  definable  right $<_p$-eventual set contains   $R_B$ for some $B\supseteq A$,   in order to prove the claim for right eventual sets, it suffices to show that each  $R_B$ is a final part of $(p(\Mon),<)$; the proof for left eventual sets is analogous by considering initial parts $L_B=\p_{l\strok B}(\Mon)$. 
 Suppose, on the contrary, that $R_B$ is not the final part of $(p(\Mon),<)$:
$a\in R_B$, $b\models p$, $a<b$  and   $b\notin R_B$. By saturation, there exists $a'\in R_B$ such that $(a',a)$ is a Morley sequence in $\p_r$ over $A$; by part (c),  $(a,a')$ is a Morley sequence in $\p_l$ over $A$. Then   $a'<_pa$ and $a'<a$ hold; the latter is a consequence of $(x<a)\in\p_l$.   
Since   $R_B$ is a  final part of $(p(\Mon),<_p)$, $b\notin R_B$ and   $a'\in R_B$,   we have $b<_pa'$; this together with the fact that $(a,a')$ is a Morley sequence in $\p_l$ over $A$,  by part (b)   implies that $(a,b)$ is a Morley sequence in $\p_l$ over $A$. In particular,  $b<a$ holds; a contradiction. 
\end{proof}

 As an immediate consequence of part (d) of the previous lemma we have:

\begin{cor}\label{Cor_ordered_nezavisi_odordera} The property ``being stationarily ordered'' for an ordered type does not depend on the choice of the witnessing order. \qed
\end{cor}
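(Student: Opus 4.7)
My plan is straightforward, since the corollary is essentially a direct repackaging of Lemma \ref{lem prop_left_right_basics}(d). The content of the corollary is that if a type $p\in S_1(A)$ is stationarily ordered with respect to \emph{some} $A$-definable linear order on its locus, then it is stationarily ordered with respect to \emph{every} such order. I would argue in one step.

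Let $p\in S_1(A)$ be stationarily ordered, witnessed by an \so-pair $\mathbf{p}=(p,<_p)$ with associated order $(D_p,<_p)$. Suppose $(D,<)$ is any other $A$-definable linear order with $p(\Mon)\subseteq D$, i.e., any other witness to the fact that $p$ is ordered. Lemma \ref{lem prop_left_right_basics}(d) applies verbatim and yields that $\mathbf{p}'=(p,<)$ is itself an \so-pair, so $<$ stationarily orders $p$. Since $(D,<)$ was arbitrary, the property of being stationarily ordered is independent of the choice of witnessing order.

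There is no real obstacle at the level of this corollary; the substantive work has already been absorbed into the proof of Lemma \ref{lem prop_left_right_basics}(d), where one checks that the global types $\p_l,\p_r$ attached to one witnessing order coincide (as an unordered pair) with those attached to any other, and consequently that the initial and final parts $\p_{l\strok B}(\Mon)$ and $\p_{r\strok B}(\Mon)$ of $(p(\Mon),<_p)$ remain initial and final parts of $(p(\Mon),<)$. This guarantees that every $\Mon$-definable set is left or right eventual with respect to $<$, which is exactly the stationarity of $<$ on $p$. Once that lemma is granted, the corollary follows with no additional computation.
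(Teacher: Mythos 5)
Your proof is correct and follows exactly the paper's route: the paper states the corollary as an immediate consequence of Lemma~\ref{lem prop_left_right_basics}(d), and your write-up simply spells out why that lemma, applied to an arbitrary alternative witnessing order $(D,<)$, gives that $(p,<)$ is an \so-pair. No gaps.
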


\begin{lem}\label{lem_forking_iff_non_left_right} Suppose that  $\mathbf{p}=(p,<_p)$ is an \so-pair over $A$, $A\subseteq B$ and  $q\in S_1(B)$ is an extension of $p$. Then the  following conditions are all equivalent:
\begin{enumerate}[(1)]
\item $q(x)$ contains  a (strongly) $\mathbf{p}$-bounded formula.
\item $q(x)$ divides over $A$;
\item $q(x)$ forks over $A$;
\item $q(x)\nsubseteq \frak p_l(x)$ and   $q(x)\nsubseteq \frak p_r(x)$;
\end{enumerate}
\end{lem}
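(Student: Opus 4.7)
My plan is to establish the cycle $(1)\Rightarrow(2)\Rightarrow(3)\Rightarrow(4)\Rightarrow(1)$. Three of these implications reduce quickly to material already in place, and the real content is concentrated in $(1)\Rightarrow(2)$.

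The implication $(4)\Rightarrow(1)$ goes via Remark~\ref{Rmk notin p_l}: from $q\nsubseteq\p_l$ and $q\nsubseteq\p_r$ I extract a $\mathbf p$-left-bounded $\psi_l\in q$ and a $\mathbf p$-right-bounded $\psi_r\in q$, whose conjunction is $\mathbf p$-bounded; Lemma~\ref{fact osnovni}(c) then upgrades this to a strongly $\mathbf p$-bounded member of $q$. The implication $(2)\Rightarrow(3)$ is immediate from the definition of forking. For $(3)\Rightarrow(4)$ I argue contrapositively: if $q\subseteq\p_l$ or $q\subseteq\p_r$, then by Lemma~\ref{lem prop_left_right_basics}(a), $q$ is the restriction to $B$ of a global $A$-invariant type. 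The standard observation that every formula in an $A$-invariant global type is non-dividing (any candidate dividing sequence of $A$-conjugates still lies in the type, so the collection stays consistent), combined with completeness of the global type (which forces one of the dividing disjuncts of a forking formula to also lie in it), rules out forking of $q$ over $A$.

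The main work is $(1)\Rightarrow(2)$. Given a strongly $\mathbf p$-bounded $\phi(x,\bar b)\in q$ with witnesses $a,a'\models p$ satisfying $a<_p\phi(\Mon,\bar b)<_p a'$, I plan to show $\phi(x,\bar b)$ itself divides over $A$ by building a sequence $(\bar b_n,a_n,a_n')_{n<\omega}$ of $A$-conjugates of $(\bar b,a,a')$ with $a_n'<_p a_{n+1}$. Starting from $(\bar b_0,a_0,a_0')=(\bar b,a,a')$, at stage $k+1$ I will realize the partial type
\[
\Sigma(\bar y,z_1,z_2)\;=\;\tp(\bar b,a,a'/A)(\bar y,z_1,z_2)\;\cup\;\{a_k'<_p z_1\}.
\]
Its consistency is the crucial point: for any $\psi(\bar y,z_1,z_2)\in\tp(\bar b,a,a'/A)$, the projection $\exists\bar y\exists z_2\,\psi$ lies in $\tp(a/A)=p(z_1)$, and any $c\models\p_{r\strok Aa_k'}$ realizes $p$ above $a_k'$ and therefore fulfills this existential, producing $(\bar b',c,c')$ witnessing $\psi\wedge(a_k'<_p z_1)$. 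Since each $\phi(\Mon,\bar b_n)\subseteq(a_n,a_n')_{<_p}\subseteq D_p$ and the intervals are disjoint in $(D_p,<_p)$ by the ordering $a_n'<_p a_{n+1}$, the family $\{\phi(x,\bar b_n)\mid n<\omega\}$ is $2$-inconsistent; combined with $\tp(\bar b_n/A)=\tp(\bar b/A)$ this witnesses the desired dividing.

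The main obstacle I anticipate is precisely this construction: converting the geometric bounding hypothesis of (1) into a syntactic dividing witness, which leans on using the $A$-invariant global type $\p_r$ to push the lower-bound coordinate of successive conjugates strictly forward in $<_p$ while preserving $\tp(\bar b,a,a'/A)$. Once this separation of bounding intervals is in hand, the remaining implications require essentially no machinery beyond Remark~\ref{Rmk notin p_l} and the invariance of $\p_l,\p_r$ from Lemma~\ref{lem prop_left_right_basics}.
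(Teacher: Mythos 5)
Your proof is correct, and the implications $(4)\Rightarrow(1)$, $(2)\Rightarrow(3)$, $(3)\Rightarrow(4)$ match the paper's route essentially word for word. The genuine divergence is in $(1)\Rightarrow(2)$. The paper's proof is shorter and slicker: starting from $a_0 <_p \phi(\Mon,\bar b) <_p a_1$ with $a_0,a_1\models p$, it picks a single $f\in\Aut(\Mon/A)$ with $f(a_0)=a_1$ and sets $\bar b_{n+1}=f(\bar b_n)$; since $f$ preserves $<_p$ and $f^{n+1}(a_0)=f^n(a_1)$, the intervals $(f^n(a_0),f^n(a_1))_{<_p}$ containing the successive $\phi(\Mon,\bar b_n)$ are abutting, hence pairwise disjoint, and $2$-inconsistency is immediate. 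You instead build the sequence step by step: at each stage you realize the partial type $\tp(\bar b,a,a'/A)\cup\{a_k'<_p z_1\}$, using a realization of $\p_{r\restriction Aa_k'}$ to witness consistency, and then separation of the intervals follows from $a_n'<_pa_{n+1}$. Both arguments are valid. The paper's version buys economy, hiding the "push the interval to the right" step inside one cleverly chosen automorphism; your version buys transparency, making explicit that the invariant type $\p_r$ is what drives each bounding interval strictly past the previous one, at the cost of a compactness argument at each stage. Either is a complete proof.
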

\begin{proof}
(1)$\Rightarrow$(2) suppose that $q(x)$ contains a $\mathbf p$-bounded formula. Then by Lemma \ref{fact osnovni} it contains a strongly $\mathbf p$-bounded formula $\phi(x,\bar b)$. If $a_0<_p\phi(\Mon,\bar b)<_pa_1$ for $a_0,a_1\models p$, then by considering $f\in \Aut(\Mon/A)$ such that $f(a_0)=a_1$ and a sequence $(\bar b_n)_{n\in\omega}$ defined by $\bar b_0=\bar b$ and $\bar b_{n+1}=f(\bar b_n)$ for $n\in\omega$, one easily sees that $\{\phi(x,\bar b_n)\mid n\in\omega\}$ is $2$-inconsistent, implying that $\phi(x,\bar b)$ and consequently $q(x)$ divides over $A$. 

(2)$\Rightarrow$(3) is trivial and (3)$\Rightarrow$(4) follows by $A$-invariance of $\p_l$ and $\p_r$. We prove   (4)$\Rightarrow$(1): if $q(x)\nsubseteq\p_l(x)$ and $q(x)\nsubseteq \p_r(x)$ then by Remark \ref{Rmk notin p_l} $q(x)$ contains a $\mathbf p$-left-bounded formula $\phi_1(x)$ and a $\mathbf p$-right-bounded formula $\phi_2(x)$. Thus   $q(x)$ contains a $\mathbf p$-bounded formula $\phi_1(x)\wedge\phi_2(x)$; by Lemma \ref{fact osnovni} it contains a strongly $\mathbf p$-bounded formula, too. 
\end{proof}

The following is an immediate consequence of the previous lemma. 

\begin{cor}\label{Cor_existence of nf extensions of so types}
A stationarily ordered type $p\in S_1(A)$ has exactly two global nonforking extensions:  $\p_l$ and $\p_r$ (where $\mathbf{p}=(p,<_p)$ is an \so-pair over $A$). The restrictions of $\p_l$ and $\p_r$ to  $B\supseteq A$  are the only nonforking extensions of $p$ over $B$.\qed
\end{cor}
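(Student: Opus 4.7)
The plan is to deduce the corollary directly from Lemma \ref{lem_forking_iff_non_left_right}, combined with the observation that $\p_l$ and $\p_r$ are themselves nonforking over $A$.

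First I would verify that $\p_l$ and $\p_r$ do not fork over $A$. By Remark \ref{Rmk notin p_l} both are complete, $A$-invariant global extensions of $p$, and any $A$-invariant global type is nonforking over $A$ by the standard argument: if $\phi(x,\bar b) \in \p_l$ and $(\bar b_i \mid i<\omega)$ is a sequence of $A$-conjugates of $\bar b$, then $A$-invariance yields $\phi(x,\bar b_i) \in \p_l$ for every $i$, so $\{\phi(x,\bar b_i) \mid i<\omega\}$ is consistent; hence no formula of $\p_l$ divides over $A$, and since $\p_l$ is complete and a forking formula implies a finite disjunction of dividing formulas, no formula of $\p_l$ forks over $A$ either. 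The same holds for $\p_r$, so $\p_{l\strok B}$ and $\p_{r\strok B}$ are nonforking extensions of $p$ over any $B\supseteq A$.

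For the converse, fix $B\supseteq A$ and let $q\in S_1(B)$ be any nonforking extension of $p$. By the equivalence $(3)\Leftrightarrow(4)$ of Lemma \ref{lem_forking_iff_non_left_right}, failure of $(3)$ for $q$ forces failure of $(4)$: either $q\subseteq \p_l$ or $q\subseteq \p_r$, and completeness of $q$, $\p_{l\strok B}$, $\p_{r\strok B}$ upgrades this inclusion to an equality $q=\p_{l\strok B}$ or $q=\p_{r\strok B}$. The case of global nonforking extensions is handled by exactly the same reasoning with $B$ replaced by $\Mon$, since the proof of Lemma \ref{lem_forking_iff_non_left_right} does not make essential use of the smallness of $B$; so a global nonforking extension $\p$ of $p$ satisfies $\p\subseteq \p_l$ or $\p\subseteq \p_r$, whence $\p=\p_l$ or $\p=\p_r$. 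I do not anticipate any real obstacle; the only point deserving care is the implication \emph{``$A$-invariant $\Rightarrow$ nonforking over $A$''} for $\p_l$ and $\p_r$, which Lemma \ref{lem prop_left_right_basics} records only as invariance.
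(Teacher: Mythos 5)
Your proof is correct and takes the approach the paper evidently intends (the paper records the corollary as an immediate consequence of Lemma \ref{lem_forking_iff_non_left_right} with no written proof). The only point you flagged as deserving care — that $A$-invariance implies nonforking over $A$ — is the standard fact, you argue it correctly, and it is in fact already being used implicitly in the paper's proof of the implication $(3)\Rightarrow(4)$ of Lemma \ref{lem_forking_iff_non_left_right}; your observation that the lemma's proof makes no essential use of smallness of $B$, so it may be applied with $B=\Mon$, is also accurate.
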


\begin{dfn}\label{dfn_dependent_elements} Let $ \mathbf{p}=(p,<_p)$ be an \so-pair over $A$ and $\bar b\in\Mon$.
\begin{itemize}
\item $\Dp_A(\bar b)=\{x\in\Mon\mid x\dep_A\bar b\}$ \ and \ $\Dp(\bar b)=\Dp_{\emptyset}(\bar b)$;
\item $\Dp_p(\bar b)=\Dp_A(\bar b)\cap p(\Mon)$; 
\item $\Lp_{\mathbf{p}}(\bar b)=
 \frak p_{l\strok A\bar b}(\Mon)$;
\item $\Rp_{\mathbf{p}}(\bar b) =\frak p_{r\strok A\bar b}(\Mon)$.
\end{itemize}
\end{dfn}

\begin{rmk}\label{rmk_direct_properties} We list some properties of the introduced notions that follow  easily  by Lemmas \ref{lem prop_left_right_basics} and   \ref{lem_forking_iff_non_left_right}. Suppose that $\mathbf p=(p,<_p)$ is an \so-pair over $A$ and $\bar b\in \Mon$.

\smallskip
(a)   $p(\Mon)= \Lp_{\mathbf p}(\bar b)\cup\Rp_{\mathbf p}(\bar b)\cup\Dp_{p}(\bar b)$ \ and   $\Dp_p(\bar b)= (\Lp_{\mathbf p}(\bar b)\cup \Rp_{\mathbf p}(\bar b))^c$ (the relative complement in $p(\Mon)$).  Indeed, by Lemma \ref{lem_forking_iff_non_left_right} the set $\Lp_{\mathbf p}(\bar b)\cup \Rp_{\mathbf p}(\bar b)$ contains all elements of $p(\Mon)$  realizing non-forking extensions of $p$ in $S_1(A\bar b)$, while $\Dp_p(\bar b)$ contains all those realizing forking extensions.

\smallskip
(b)     $\Lp_{\mathbf p}(\bar b)$ is an initial part and $\Rp_{\mathbf p}(\bar b)$ is a final part of $(p(\Mon),<_p)$   by Lemma \ref{lem prop_left_right_basics}(b).

\smallskip
(c) $\Dp_p(\bar b)$ is a convex, bounded (possibly empty) subset of $(p(\Mon),<_p)$  because  it is obtained from $p(\Mon)$ by deleting its initial part $\Lp_{\mathbf p}(\bar b)$ and its  final part $\Rp_{\mathbf p}(\bar b)$.

\smallskip
(d)  The conditions  $a\ind_A\bar b$,  $a\in\Lp_{\mathbf p}(\bar b)\cup \Rp_{\mathbf p}(\bar b)$ and $a\notin\Dp_p(\bar b)$ are mutually equivalent  for all $a\models p$. 

\smallskip
(e)  For $a,a'\models p$ we have:  \  $a\in\Lp_{\mathbf p}(a')$ iff $a'\in\Rp_{\mathbf p}(a)$; \ this is Lemma \ref{lem prop_left_right_basics}(c), 

\smallskip
(f)  Forking over $A$ is a symmetric relation on  $p(\Mon)$. Indeed, it easily follows from    part (e)  that the relation defined by  $x\in\Lp_{\mathbf p}(y)\cup \Rp_{\mathbf p}(y)$ is symmetric on  $p(\Mon)$. 

\smallskip
(g)   For $a,a'\models p$, $a\ind_Aa'$ implies $a\in\Lp_{\mathbf p}(a')\cup\Rp_{\mathbf p}(a')$ so, by part (e), we have 
that one of $(a,a')$ and $(a',a)$ is a Morley sequence in $\frak p_l$, and the other in $\frak p_r$ over $A$; 
hence 
$\{\tp(a,a'/A),\tp(a',a/A)\}= \{\p^2_{l\strok A}, \p^2_{r\strok A}\}$.
\end{rmk}

\begin{lem}\label{Cor_Lpp_north_partition}
Suppose that  $\mathbf{p}=(p,<_p)$  is  an \so-pair over $A$ and $\bar b\in\Mon$.  

\begin{enumerate}[(a)]
\item \  If  $p\wor \tp(\bar b/A)$, then  $\Lp_{\mathbf{p}}(\bar b)= \Rp_{\mathbf{p}}(\bar b)=p(\Mon)$ and $\Dp_p(\bar b)=\emptyset$;
\item \ If $p\nfor \tp(\bar b/A)$, then $\Lp_{\mathbf{p}}(\bar b)<_p \Dp_p(\bar b)<_p \Rp_{\mathbf{p}}(\bar b)$ is a convex partition of $(p(\Mon),<_p)$; 
\item  \ If $p\nwor \tp(\bar b/A)$ and $p\fwor \tp(\bar b/A)$, then 
$\Lp_{\mathbf{p}}(\bar b)<_p \Rp_{\mathbf{p}}(\bar b)$ is  a convex partition of $(p(\Mon),<_p)$.
\end{enumerate}  
\end{lem}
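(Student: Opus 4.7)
The proof reduces each part to a single auxiliary equivalence, so I would first establish the following \emph{Key Observation}: the conditions $\Lp_{\mathbf p}(\bar b)\cap\Rp_{\mathbf p}(\bar b)\neq\emptyset$, $\Lp_{\mathbf p}(\bar b)=\Rp_{\mathbf p}(\bar b)=p(\Mon)$, and $p\wor\tp(\bar b/A)$ are mutually equivalent. The equivalence of the first two uses that $\Lp_{\mathbf p}(\bar b)$ and $\Rp_{\mathbf p}(\bar b)$ are the realization sets in $p(\Mon)$ of the complete types $\p_{l\strok A\bar b}$ and $\p_{r\strok A\bar b}$ respectively: any common realization forces these complete types to coincide, and their common realization set is then a nonempty subset of $p(\Mon)$ which is simultaneously an initial and a final part of $(p(\Mon),<_p)$, hence all of $p(\Mon)$. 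For the equivalence with $p\wor\tp(\bar b/A)$: the condition $\Lp_{\mathbf p}(\bar b)=p(\Mon)$ says that every realization of $p$ realizes the same completion $\p_{l\strok A\bar b}\in S_1(A\bar b)$, i.e.\ $p$ has a unique extension over $A\bar b$, which by $A$-invariance upgrades to $p\wor\tp(\bar b/A)$; conversely, if $p\wor\tp(\bar b/A)$ then the unique completion $q\in S_1(A\bar b)$ must coincide with both of the non-forking extensions $\p_{l\strok A\bar b}$ and $\p_{r\strok A\bar b}$ provided by Corollary~\ref{Cor_existence of nf extensions of so types}, yielding $\Lp_{\mathbf p}(\bar b)=\Rp_{\mathbf p}(\bar b)=p(\Mon)$.

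Granted the Key Observation, each case reduces to short bookkeeping with Remark~\ref{rmk_direct_properties}. Part (a) is direct: $p\wor\tp(\bar b/A)$ gives $\Lp_{\mathbf p}(\bar b)=\Rp_{\mathbf p}(\bar b)=p(\Mon)$, and then $\Dp_p(\bar b)=(\Lp_{\mathbf p}(\bar b)\cup\Rp_{\mathbf p}(\bar b))^c=\emptyset$ by Remark~\ref{rmk_direct_properties}(a). For part (c), the hypothesis $p\fwor\tp(\bar b/A)$ gives $\Dp_p(\bar b)=\emptyset$, equivalently $\Lp_{\mathbf p}(\bar b)\cup\Rp_{\mathbf p}(\bar b)=p(\Mon)$, and the contrapositive of the Key Observation applied to $p\nwor\tp(\bar b/A)$ yields $\Lp_{\mathbf p}(\bar b)\cap\Rp_{\mathbf p}(\bar b)=\emptyset$; since one of these sets is an initial and the other a final part of $(p(\Mon),<_p)$ by Remark~\ref{rmk_direct_properties}(b), this gives the convex partition $\Lp_{\mathbf p}(\bar b)<_p\Rp_{\mathbf p}(\bar b)$.

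For part (b), the $A$-invariance of forking combined with $p\nfor\tp(\bar b/A)$ produces some $a\models p$ with $a\dep_A\bar b$, so $\Dp_p(\bar b)\neq\emptyset$. The contrapositive of the Key Observation then excludes $\Lp_{\mathbf p}(\bar b)\cap\Rp_{\mathbf p}(\bar b)\neq\emptyset$ (which would force $\Dp_p(\bar b)=\emptyset$), so by Remark~\ref{rmk_direct_properties}(a) the three sets $\Lp_{\mathbf p}(\bar b)$, $\Dp_p(\bar b)$, $\Rp_{\mathbf p}(\bar b)$ partition $p(\Mon)$; the orderings $\Lp_{\mathbf p}(\bar b)<_p\Dp_p(\bar b)<_p\Rp_{\mathbf p}(\bar b)$ are then immediate from $\Lp_{\mathbf p}(\bar b)$ being an initial part and $\Rp_{\mathbf p}(\bar b)$ a final part.

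I do not anticipate a substantial obstacle — the argument is a structural decomposition organized around a single observation. The only mildly delicate point is the reverse direction of the Key Observation, where one must invoke Corollary~\ref{Cor_existence of nf extensions of so types} to identify the unique completion over $A\bar b$ as simultaneously the left and right non-forking extension of $p$; everything else is a direct appeal to the initial/final character of $\Lp_{\mathbf p}(\bar b)$ and $\Rp_{\mathbf p}(\bar b)$ recorded in Remark~\ref{rmk_direct_properties}.
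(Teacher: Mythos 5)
Your proposal is correct and follows essentially the same route as the paper: the paper also establishes the single equivalence $p\wor\tp(\bar b/A)\Leftrightarrow\Lp_{\mathbf p}(\bar b)\cap\Rp_{\mathbf p}(\bar b)\neq\emptyset$ (your Key Observation, with the intermediate condition $\Lp_{\mathbf p}(\bar b)=\Rp_{\mathbf p}(\bar b)=p(\Mon)$ appearing inside its proof), and then reads off (a), (b), (c) from Remark~\ref{rmk_direct_properties}(a)--(c) exactly as you do. The only cosmetic difference is that you cite Corollary~\ref{Cor_existence of nf extensions of so types} for the forward direction, where the paper just declares it ``clear'' (both of $\p_{l\strok A\bar b}$ and $\p_{r\strok A\bar b}$ extend $p$, so weak orthogonality forces them to coincide); this is a matter of detail, not substance.
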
 
\begin{proof} We {\it claim} that   
 $p\wor\tp(\bar b/A)$ holds if and only if  
   $\Lp_{\mathbf p}(\bar b)\cap \Rp_{\mathbf p}(\bar b)\neq\emptyset$. Left-to-right direction is clear, so we prove the other one. Assume that $\Lp_{\mathbf p}(\bar b)\cap \Rp_{\mathbf p}(\bar b)\neq\emptyset$. Since $\Lp_{\mathbf p}(\bar b)$ and $\Rp_{\mathbf p}(\bar b)$ are loci of complete types  we have $\Lp_{\mathbf p}(\bar b)= \Rp_{\mathbf p}(\bar b)$. Here, by Remark \ref{rmk_direct_properties}(b), we have equality of an initial and a final part  of $p(\Mon)$, so $\Lp_{\mathbf p}(\bar b)= \Rp_{\mathbf p}(\bar b)=p(\Mon)$. Thus, $\frak p_{l\,\strok A\bar b}$ is the unique extension of $p$ in $S_1(A\bar b)$ and $p\wor \tp(\bar b/A)$ holds. This proves the {\it claim} and also part (a) of the lemma.  
   
To prove (b), it suffices to note that $p\nfor\tp(\bar b/A)$ implies $\Dp_p(\bar b)\neq \emptyset$, so the desired conclusion follows  by the claim and Remark \ref{rmk_direct_properties}(a)-(c). Similarly for part (c).
\end{proof}

\begin{dfn} Suppose that $p \in S_1(A)$ is a stationarily ordered type and  $D\subseteq p(\Mon)$. We say that $D$  is a {\em $\Dp_p$-closed} set if $a\in D$ implies $\Dp_p(a)\subseteq D$.
\end{dfn}

\begin{lem}\label{lem rmk closed for Dp} Suppose that $\mathbf p=(p,<_p)$ is an \so-pair over $A$ and $\bar b\in \Mon$.
\begin{enumerate}[(a)]
\item   $\Lp_{\mathbf p}(\bar b)$, $\Rp_{\mathbf p}(\bar b)$ and $\Dp_p(\bar b)$ are $\Dp_p$-closed subsets  of $p(\Mon)$.
\item For all $a,a'\models p$: \  $a\dep_A a'$ and $a\ind_A\bar b$ imply $a\equiv a'\,(A\bar b)$.
\item For all $a,a'\models p$ the following conditions  are equivalent:\\  (1)  $\Dp_p(a)<_p\Dp_p(a')$,; \ \ (2) $a<_p\Dp_p(a')$; \ \   (3) $\Dp_p(a)<_pa'$; \ \ (4)  $a\in \Lp_{\mathbf p}(a')$; \ \ (5) $a'\in \Rp_{\mathbf p}(a)$;\\ 
(6) $(a,a')$ is a Morley sequence in $\p_r$ over $A$;  \ \ (7)   $(a',a)$ is a Morley sequence in $\p_l$ over $A$.
\end{enumerate}
\end{lem}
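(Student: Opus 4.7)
The plan is to prove the three parts in the order: first the formal equivalences in (c), then (a) (the technical heart), then (b) as a short consequence, and finally close (c) with the remaining implication. The key observation is that (b) essentially reduces to (a) for $\Lp_{\mathbf p}(\bar b)$, since $\Lp_{\mathbf p}(\bar b)$ is precisely the locus of the complete type $\mathfrak p_{l\strok A\bar b}$, so $a\equiv a'\,(A\bar b)$ amounts to $a$ and $a'$ lying in the same convex piece of the partition from Lemma 3.10.

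For (c), the equivalences $(4)\Leftrightarrow(5)\Leftrightarrow(6)\Leftrightarrow(7)$ are formal: $(4)\Leftrightarrow(5)$ is Remark 3.9(e); $(5)\Leftrightarrow(6)$ is the definition of a length-$2$ Morley sequence in $\mathfrak p_r$ over $A$; $(6)\Leftrightarrow(7)$ is Lemma 3.6(c). For $(2)\Leftrightarrow(4)$ and $(3)\Leftrightarrow(5)$, I will apply Lemma 3.10(b) with $\bar b=a'$: the convex partition $\Lp_{\mathbf p}(a')<_p\Dp_p(a')<_p\Rp_{\mathbf p}(a')$ makes $a\in\Lp_{\mathbf p}(a')$ equivalent to $a<_p\Dp_p(a')$. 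Trivially $(1)\Rightarrow(2),(3)$ since $a\in\Dp_p(a)$ and $a'\in\Dp_p(a')$; the remaining $(2)\Rightarrow(1)$ is deferred to after (a). Within (a), the $\Dp_p$-closedness of $\Rp_{\mathbf p}(\bar b)$ follows from that of $\Lp_{\mathbf p}(\bar b)$ by the symmetric argument (swapping $\mathfrak p_l$ and $\mathfrak p_r$). The closedness of $\Dp_p(\bar b)$ is then formal: if $a\in\Dp_p(\bar b)$, $c\in\Dp_p(a)$, and for contradiction $c\notin\Dp_p(\bar b)$, then $c\in\Lp_{\mathbf p}(\bar b)\cup\Rp_{\mathbf p}(\bar b)$; by their $\Dp_p$-closedness $\Dp_p(c)$ lies in the same set; by symmetry of forking on $p(\Mon)$ (Remark 3.9(f)), $a\in\Dp_p(c)$, so $a\in\Lp_{\mathbf p}(\bar b)\cup\Rp_{\mathbf p}(\bar b)$, contradicting $a\in\Dp_p(\bar b)$ (the sets are disjoint by Lemma 3.10(b); the other cases of Lemma 3.10 render $\Dp_p$-closedness trivial).

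The hard part is (a) for $\Lp_{\mathbf p}(\bar b)$. Given $a\in\Lp_{\mathbf p}(\bar b)$ and $c\in\Dp_p(a)$, I want $c\in\Lp_{\mathbf p}(\bar b)$. My approach is: by saturation pick $a^*\models\mathfrak p_{l\strok A\bar b a c}$; then $a^*\in\Lp_{\mathbf p}(\bar b)\cap\Lp_{\mathbf p}(a)\cap\Lp_{\mathbf p}(c)$, and the already-established $(4)\Leftrightarrow(5)$ of (c) gives $a,c\in\Rp_{\mathbf p}(a^*)$, so $a,c\models\mathfrak p_{r\strok Aa^*}$ and in particular $a\equiv c\,(Aa^*)$. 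The crux is to upgrade this $Aa^*$-conjugacy to an $A\bar b$-conjugacy, equivalently to show $c\models\mathfrak p_{l\strok A\bar b}$. The upgrade will exploit the $A$-invariance of $\mathfrak p_l$ combined with the fact that $a^*$ is a common left witness sitting below $\bar b$, $a$ and $c$ simultaneously, together with a compactness argument using the strongly $\mathbf p$-bounded formula witnessing $c\dep_A a$ (via Lemma 3.7 and Lemma 3.2) to control how $c$ sits relative to any $A\bar b$-definable left-eventual set. I expect this transfer step to be the main technical obstacle, since the $Aa^*$-conjugacy of $a$ and $c$ does not a priori respect parameters from $\bar b$.

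Once (a) is in hand, (b) is immediate: given $a\dep_A a'$ and $a\ind_A\bar b$, by Corollary 3.8 WLOG $a\in\Lp_{\mathbf p}(\bar b)$; by Remark 3.9(f), $a'\in\Dp_p(a)$; by (a), $a'\in\Lp_{\mathbf p}(\bar b)$, so both $a$ and $a'$ realize the complete type $\mathfrak p_{l\strok A\bar b}$ and hence $a\equiv a'\,(A\bar b)$. The remaining $(2)\Rightarrow(1)$ in (c) follows at once: from (2), $a\in\Lp_{\mathbf p}(a')$ by $(2)\Rightarrow(4)$; by (a) applied to $\Lp_{\mathbf p}(a')$, $\Dp_p(a)\subseteq\Lp_{\mathbf p}(a')<_p\Dp_p(a')$, so $\Dp_p(a)<_p\Dp_p(a')$.
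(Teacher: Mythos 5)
Parts (b) and (c) of your plan are sound and essentially match the paper's route: (b) falls out of (a) exactly as you describe, the chain of equivalences in (c) you derive from the cited lemmas is correct, and $(2)\Rightarrow(1)$ via (a) closes the loop. The problem is that (a) for $\Lp_{\mathbf p}(\bar b)$ --- which you correctly call the technical heart --- is not actually proved. The step you flag as the ``main technical obstacle,'' upgrading $a\equiv c\,(Aa^*)$ to $a\equiv c\,(A\bar b)$, really is an obstacle and your sketch does not surmount it. Picking $a^*\models \p_{l\strok A\bar b ac}$ and deducing $a,c\in\Rp_{\mathbf p}(a^*)$ only tells you $a$ and $c$ realize the same type over $Aa^*$; this carries no information about $\tp(c/A\bar b)$, because $a^*$ constrains $c$ only from below, and $\Lp_{\mathbf p}(\bar b)$ being an initial part containing $a^*$ does not propagate upward to $c$. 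The strongly $\mathbf p$-bounded formula witnessing $c\dep_A a$ lives over $Aa$, so it bounds where $c$ sits relative to $a$, not relative to $\bar b$; I do not see a way to combine these ingredients into $c\models\p_{l\strok A\bar b}$.

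The idea you are missing is an automorphism argument over $A\bar b$ which sidesteps the transfer problem entirely. Instead of introducing an auxiliary element below $a$, $c$ and $\bar b$, choose $c'\models p$ whose \emph{entire} class $\Dp_p(c')$ lies strictly $<_p$-below $a$; this is possible by saturation since $\Dp_p(c')$ is a non-empty bounded convex piece. Because $\Lp_{\mathbf p}(\bar b)$ is an initial part of $(p(\Mon),<_p)$ containing $a$, you get $\Dp_p(c')\subseteq\Lp_{\mathbf p}(\bar b)$ for free. Now $c'$ and $a$ both lie in $\Lp_{\mathbf p}(\bar b)=\p_{l\strok A\bar b}(\Mon)$, so some $f\in\Aut(\Mon/A\bar b)$ sends $c'$ to $a$; such $f$ fixes $\Lp_{\mathbf p}(\bar b)$ setwise and maps $\Dp_p(c')$ onto $\Dp_p(a)$, whence $\Dp_p(a)\subseteq\Lp_{\mathbf p}(\bar b)$. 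The point is to conjugate over $A\bar b$ from the outset so that invariance of $\Lp_{\mathbf p}(\bar b)$ is automatic; your $Aa^*$-conjugacy does not respect $\bar b$, and the remedy is to replace it rather than to try to repair it.
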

\begin{proof}
(a) We will prove that $\Lp_{\mathbf p}(\bar b)$ is a $\D_p$-closed set. Let $a\in \Lp_{\mathbf p}(\bar b)$. For any $c\models p$ the set $\Dp_p(c)$ is non-empty (because $c\in\Dp_p(c)$) and bounded in $(p(\Mon),<_p)$; by saturation of the monster there is $c\models p$ satisfying $\Dp_p(c)<_pa$. Then, since $a$ belongs to $\Lp_{\mathbf p}(\bar b)$, which is an initial part of $p(\Mon)$, we have   $\Dp_p(c)\subseteq\Lp_{\mathbf p}(\bar b)$.
 Now $a,c\in \Lp_{\mathbf p}(\bar b)$ implies $a\equiv c\,(A\bar b)$, so there exists an $f\in\Aut(\Mon/{A\bar b})$ with $f(c)=a$. Then $f(\Lp_{\mathbf p}(\bar b))= \Lp_{\mathbf p}(\bar b)$ and $f(\Dp_p(c))=\Dp_p(a)$, so  $\Dp_p(c)\subseteq\Lp_{\mathbf p}(\bar b)$ implies $\Dp_p(a)\subseteq\Lp_{\mathbf p}(\bar b)$; hence $\Lp_{\mathbf p}(\bar b)$ is a $\Dp_p$-closed subset of $p(\Mon)$. 
 In a similar way one can show that $\Rp_{\mathbf p}(\bar b)$ is $\Dp_p$-closed. Finally, the complement of the $\Dp_p$-closed set $\Lp_{\mathbf p}(\bar b)\cup \Rp_{\mathbf p}(\bar b)$, that is  $\Dp_p(\bar b)$, is $\Dp_p$-closed by  symmetry of $x\in\Dp_p(y)$ on $p(\Mon)$.

\smallskip
(b) Suppose that $a\dep_A a'$ realize $p$ and $a\ind_A \bar b$. The latter, by Remark \ref{rmk_direct_properties}(d), implies 
$a\in \Lp_{\mathbf p}(\bar b)\cup\Rp_{\mathbf p}(\bar b)$. Since both sets in the union are $\Dp_p$-closed and $a'\in \Dp_p(a)$ holds (by symmetry), we deduce that either   $a,a' \in \Lp_{\mathbf p}(\bar b)$ or $a,a'\in\Rp_{\mathbf p}(\bar b)$; in either case we have $a\equiv a'(A\bar b)$. This proves part (b).  The proof of part (c)  is straightforward and left to the reader. 
\end{proof}

 \begin{lem}\label{Lema_ Dpa=Dpa' iff a dep a'}
Suppose that  $p$  is  a stationarily ordered type over $A$. For all $a,a'\models p$ we have:
\begin{center}
$a\dep_A a'$ \ \ if and only if \ \ $\Dp_p(a)=\Dp_p(a')$ \ \ if and only if \ \ $\Dp_p(a)\cap \Dp_p(a')\neq \emptyset$. 
\end{center}
\end{lem}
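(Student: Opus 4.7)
The plan is to establish the cycle of implications $a\dep_A a' \Rightarrow \Dp_p(a)=\Dp_p(a') \Rightarrow \Dp_p(a)\cap\Dp_p(a')\neq\emptyset \Rightarrow a\dep_A a'$, using as the two main ingredients facts already proved: the symmetry of $\dep_A$ on $p(\Mon)$ (Remark 3.8(f)) and the $\Dp_p$-closedness of every set of the form $\Dp_p(\bar b)$ (Lemma 3.10(a)). Throughout I will tacitly assume that $p$ is non-algebraic, which is the only interesting case; under this assumption, for each $a\models p$ the formula $x=a$ forks over $A$, so $a\in\Dp_p(a)$.

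The bulk of the work is done by a single observation: \emph{if $a,a'\models p$ and $a'\in\Dp_p(a)$, then $\Dp_p(a)=\Dp_p(a')$.} Indeed, $\Dp_p$-closedness of $\Dp_p(a)$ applied at the point $a'$ gives $\Dp_p(a')\subseteq\Dp_p(a)$; by symmetry of forking on $p(\Mon)$ we also have $a\in\Dp_p(a')$, and then $\Dp_p$-closedness of $\Dp_p(a')$ applied at $a$ yields the reverse inclusion.

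The three implications now fall out. The first, $a\dep_A a' \Rightarrow \Dp_p(a)=\Dp_p(a')$, is exactly the observation, since $a\dep_A a'$ is by definition $a'\in\Dp_p(a)$. The middle implication reduces to checking $\Dp_p(a)\neq\emptyset$, which holds because $a\in\Dp_p(a)$. For the last implication, pick any $c\in\Dp_p(a)\cap\Dp_p(a')$; the observation applied twice gives $\Dp_p(a)=\Dp_p(c)=\Dp_p(a')$, so in particular $a'\in\Dp_p(a')=\Dp_p(a)$, which is $a\dep_A a'$. I do not foresee any real obstacle: the lemma is essentially a tidy repackaging of the $\Dp_p$-closedness and symmetry facts already in hand.
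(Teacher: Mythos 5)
Your proof is correct and follows essentially the same route as the paper's: the key observation ($a'\in\Dp_p(a)$ implies $\Dp_p(a)=\Dp_p(a')$, via $\Dp_p$-closedness applied in both directions using symmetry) is exactly the paper's proof of the first equivalence, and the reduction of the second equivalence to the first via a common element $c$ is likewise identical. Your explicit remark about tacitly assuming non-algebraicity (to get $a\in\Dp_p(a)$) is a small point the paper leaves implicit but also relies on.
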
 
\begin{proof} 
To prove the first equivalence assume $a\dep_A a'$. Then $a'\in \Dp_p(a)$  and, since the set $\Dp_p(a)$ is $\Dp_p$-closed by  Lemma \ref{lem rmk closed for Dp}(a), we have $\Dp_p(a')\subseteq \Dp_p(a)$. Since $\dep_A$ is symmetric on $p(\Mon)$  by Remark \ref{rmk_direct_properties}(f) we have $a'\dep_A a$ and arguing as above we get  $\Dp_p(a)\subseteq \Dp_p(a')$; thus $\Dp_p(a')=\Dp_p(a)$. This proves one direction of the first equivalence; the other one is trivial (because   $a\in\Dp_p(a)$).  

The left-to-right implication of the second equivalence is trivial. For the reverse implication, assume $b\in\Dp_p(a)\cap\Dp_p(a')$. Then  $b\dep_A a$ and $b\dep_A a'$ so, by the proven equivalence,  $\Dp_p(a)= \Dp_p(b)=\Dp_p(a')$.
\end{proof}

\begin{dfn}\label{Defn_D_p_eqrelation}
For a stationarily ordered type  $p\in S_1(A)$ define $\D_p=\{(x,y)\in p(\Mon)\times p(\Mon)\mid x\dep_A y\}$.
\end{dfn}

\begin{cor}\label{Cor_on p forking is equivalence}
Suppose that $(p,<_p)$ is an \so-pair over $A$. Then:

(a) \  $\D_p$ is a $<_p$-convex equivalence relation on $(p(\Mon),<_p)$ and its classes are sets $\Dp_p(a)$ ($a\in p(\Mon)$);

(b) \ If $p$ is non-algebraic, then the factor order $(p(\Mon)/\D_p,<_p)$ is a dense linear order without endpoints. 
\end{cor}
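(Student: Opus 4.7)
My plan is to deduce this almost immediately from Lemma~\ref{Lema_ Dpa=Dpa' iff a dep a'} together with Remark~\ref{rmk_direct_properties}(c). That lemma asserts, for $a,a'\models p$, the equivalences $a\dep_Aa'\Leftrightarrow \Dp_p(a)=\Dp_p(a')\Leftrightarrow\Dp_p(a)\cap\Dp_p(a')\neq\emptyset$. Combining these with $a\in\Dp_p(a)$ (which holds whenever $p$ is non-algebraic; the algebraic case is trivial) will yield reflexivity, symmetry and transitivity simultaneously, and will identify the $\D_p$-class of $a$ as the set $\Dp_p(a)$. Convexity of each such class in $(p(\Mon),<_p)$ has already been recorded in Remark~\ref{rmk_direct_properties}(c), so $\D_p$ is a $<_p$-convex equivalence relation on $p(\Mon)$.

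\textbf{Part (b), no endpoints.} Given $a\models p$, I realize $b\models\p_{r\strok Aa}$ by saturation of $\Mon$. Then $a<_pb$ and $b\ind_Aa$, so the equivalence $(6)\Leftrightarrow(1)$ in Lemma~\ref{lem rmk closed for Dp}(c) delivers $\Dp_p(a)<_p\Dp_p(b)$. The symmetric argument with $\p_l$ in place of $\p_r$ produces $b'\models p$ with $\Dp_p(b')<_p\Dp_p(a)$, so the factor order has neither a greatest nor a least element.

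\textbf{Part (b), density.} Suppose $\Dp_p(a)<_p\Dp_p(a')$. By Lemma~\ref{lem rmk closed for Dp}(c) together with Remark~\ref{rmk_direct_properties}(g) this is the same as saying that $(a,a')$ is a Morley sequence in $\p_r$ over $A$. My strategy is to insert a third element between $a$ and $a'$. I build a length-$3$ Morley sequence $(b_0,b_1,b_2)$ in $\p_r$ over $A$ stepwise using saturation. Since $\p_{r\strok Ab_0b_1}$ extends $\p_{r\strok Ab_0}$, the pair $(b_0,b_2)$ is itself a Morley sequence of length $2$ in $\p_r$ over $A$, and so realizes the same complete type over $A$ as $(a,a')$; I then pick $f\in\Aut(\Mon/A)$ with $f(b_0)=a$ and $f(b_2)=a'$ and set $b:=f(b_1)$. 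By $A$-invariance of $\p_r$, the triple $(a,b,a')$ is itself a Morley sequence in $\p_r$ over $A$, so $a<_pb<_pa'$ and $b$ is $\ind_A$-independent from each of $a$ and $a'$. Lemma~\ref{lem rmk closed for Dp}(c) then translates this into $\Dp_p(a)<_p\Dp_p(b)<_p\Dp_p(a')$, as desired. The delicate step is precisely this Morley insertion: it reduces to the observation that the restriction of a Morley sequence in an $A$-invariant type to its outer coordinates is again a Morley sequence in the same type, which lets the "middle" element be recovered by an automorphism fixing $A$ pointwise.
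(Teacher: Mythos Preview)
Your proof is correct and follows essentially the same approach as the paper: part (a) is deduced from Lemma~\ref{Lema_ Dpa=Dpa' iff a dep a'} and Remark~\ref{rmk_direct_properties}(c), and part (b) relies on Morley-sequence insertion for density together with boundedness of each $\Dp_p(a)$ for the absence of endpoints. The only cosmetic difference is that you spell out the insertion step via an explicit length-$3$ Morley sequence and an automorphism, whereas the paper invokes saturation directly to find $a$ with $(a_1,a,a_2)$ a Morley sequence in $\p_r$; your extra detail is a legitimate way to justify exactly that step.
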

\begin{proof}
(a)  Lemma \ref{Lema_ Dpa=Dpa' iff a dep a'}  implies that $\D_p$ is an equivalence relation; its classes are sets $\Dp_p(a)$ ($a\models p$) and they are convex by Remark \ref{rmk_direct_properties}(c).

(b) Assume that $p$ is non-algebraic, $a_1<_p a_2$ realize $p$ and $\Dp_p(a_1)<_p\Dp_p(a_2)$. By Lemma \ref{Lema_ Dpa=Dpa' iff a dep a'}  we have $ a_1\ind_Aa_2$  which, by Lemma \ref{lem rmk closed for Dp}(c) implies that $(a_1,a_2)$ is a Morley sequence in $\p_r$ over $A$. By saturation of $\Mon$ there is $a\in p(\Mon)$ such that $(a_1,a,a_2)$ is a Morley sequence in $\p_r$ over $A$. Then $\Dp_p(a_1)<_p\Dp_p(a)<_p \Dp_p(a_2)$, so $(p(\Mon)/\D_p,<_p)$ is a dense linear order. It remains to notice that for any $a\models p$ the set $\Dp_p(a)$ is bounded in $(p(\Mon),<_p)$; hence  $\Dp_p(a)$ is not an endpoint.  
\end{proof}

\begin{lem}\label{lem_square_implies_disjoit_Ds} Suppose that $p,q\in S_1(A)$ are stationarily ordered types such that $p\nwor q$, and $a,a'\models p$ are such that $a\ind_Aa'$. Then:
\begin{enumerate}[(a)]
\item $\Dp_q(a)\cap \Dp_q(a')=\emptyset$;
\item $\Lp_{\mathbf q}(a)\neq \Lp_{\mathbf q}(a')$ and $\Rp_{\mathbf q}(a)\neq \Rp_{\mathbf q}(a')$, where $\mathbf q=(q,<_q)$ is any \so-pair  over $A$. 
\end{enumerate}
\end{lem}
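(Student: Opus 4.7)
The plan rests on an $A$-invariance argument: since $\q_l$ and $\q_r$ are $A$-invariant global types (Lemma~\ref{lem prop_left_right_basics}(a)), the loci $\Lp_{\mathbf{q}}(\cdot),\Rp_{\mathbf{q}}(\cdot),\Dp_q(\cdot)$ are equivariant under $\Aut(\Mon/A)$, while $q(\Mon)$ is a single $\Aut(\Mon/A)$-orbit by completeness of $q$, so any proper non-empty $A$-invariant subset of $q(\Mon)$ is impossible.

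\smallskip
\noindent\emph{For part (b):} Suppose, for contradiction, $\Lp_{\mathbf{q}}(a)=\Lp_{\mathbf{q}}(a')$. The equality ``$\Lp_{\mathbf{q}}(x)=\Lp_{\mathbf{q}}(y)$'' depends only on $\tp(xy/A)$, since each $f\in\Aut(\Mon/A)$ satisfies $f(\Lp_{\mathbf{q}}(x))=\Lp_{\mathbf{q}}(f(x))$. By Remark~\ref{rmk_direct_properties}(g) we may assume $(a,a')$ is a Morley pair in $\p_r$ over $A$, so $\tp(aa'/A)=\p_r^2\strok A$. For arbitrary $c\models p$, saturation yields $d\models\p_r\strok Aac$; both $(a,d)$ and $(c,d)$ realize $\p_r^2\strok A$, so $\Lp_{\mathbf{q}}(a)=\Lp_{\mathbf{q}}(d)=\Lp_{\mathbf{q}}(c)$. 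Therefore $\Lp_{\mathbf{q}}(c)=\Lp_{\mathbf{q}}(a)$ for every $c\models p$, and equivariance then makes $\Lp_{\mathbf{q}}(a)$ an $A$-invariant subset of $q(\Mon)$. But $p\nwor q$ forces $\Lp_{\mathbf{q}}(a)$ to be proper and non-empty (if $\Lp_{\mathbf{q}}(a)=q(\Mon)$, every $b\models q$ would realize $\q_l\strok Aa$, giving a unique completion of $p(x)\cup q(y)$ in $S_2(A)$ and contradicting $p\nwor q$), violating single-orbitness of $q(\Mon)$. The case of $\Rp_{\mathbf{q}}$ is symmetric with $\q_r$.

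\smallskip
\noindent\emph{For part (a):} Combine the chaining from (b) with the boundedness of $\Dp_q$. Suppose $b\in\Dp_q(a)\cap\Dp_q(a')$; since ``$\Dp_q(x)\cap\Dp_q(y)\neq\emptyset$'' is also $\tp(xy/A)$-invariant, the same chaining gives $\Dp_q(a)\cap\Dp_q(c)\neq\emptyset$ for every $c\models p$. However, by Remark~\ref{rmk_direct_properties}(c), $\Dp_q(a)$ is $\mathbf{q}$-bounded: fix $q_0,q_1\models q$ with $q_0<_q\Dp_q(a)<_q q_1$. Using Lemma~\ref{lem_forking_iff_non_left_right} I would extract a strongly $\mathbf{q}$-bounded formula $\psi(x,y)\in L(A)$ with $\Dp_q(a)\subseteq\psi(\Mon,a)$ and $\psi(\Mon,a)\cap q(\Mon)$ contained in the open $<_q$-interval from $q_0$ to $q_1$, and run the dividing construction from its proof: pick $f\in\Aut(\Mon/A)$ with $f(q_1)>_q q_1$, whereby the sets $\psi(\Mon,f^n(a))$ are bounded intervals stepping rightward in $<_q$; for $N$ large enough, $\psi(\Mon,f^N(a))>_q q_1$, so $\Dp_q(f^N(a))>_q\Dp_q(a)$ and their intersection is empty, contradicting the chaining conclusion applied at $c:=f^N(a)$.

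\smallskip
The main obstacle is the last step of (a): marrying the orbit-level chaining (producing a witness in every $\Dp_q(c)$) with the order-theoretic dividing construction (producing disjoint bounded regions indexed by the automorphism orbit) so that the two become incompatible. Specifically, one must verify that the automorphism $f$ indeed pushes the bounded regions $\Dp_q(f^n(a))$ uniformly rightward past $q_1$ rather than returning near the original position; in contrast, (b) goes through by pure invariance.
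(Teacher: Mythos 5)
Your argument for part (b) is correct and goes by essentially the same invariance route as the paper; chaining through a common Morley extension $d$ of $a$ and $c$ is a valid alternative to the paper's observation that at least one of $c\ind_A a$, $c\ind_A a'$ must hold.

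For part (a) there is a genuine gap. The (b)-style chaining works because equality is transitive: $\Lp_{\mathbf q}(a)=\Lp_{\mathbf q}(d)$ and $\Lp_{\mathbf q}(c)=\Lp_{\mathbf q}(d)$ give $\Lp_{\mathbf q}(a)=\Lp_{\mathbf q}(c)$. But the predicate ``$\Dp_q(x)\cap\Dp_q(y)\neq\emptyset$'' is not transitive: from $\Dp_q(a)\cap\Dp_q(d)\neq\emptyset$ and $\Dp_q(c)\cap\Dp_q(d)\neq\emptyset$ one cannot conclude $\Dp_q(a)\cap\Dp_q(c)\neq\emptyset$ (picture three convex subsets of a linear order where the two outer ones each meet the middle one but not each other). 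So the claim ``the same chaining gives $\Dp_q(a)\cap\Dp_q(c)\neq\emptyset$ for every $c\models p$'' is false; what the $\tp$-invariance actually yields is only the statement for $c\ind_A a$. This wrecks the concluding contradiction: producing a $c\models p$ with $\Dp_q(c)\cap\Dp_q(a)=\emptyset$, which is what your dividing construction aims at, merely shows $c\dep_A a$ by contraposition, not a contradiction.

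The missing idea is to involve $a'$ symmetrically and then invoke transitivity of $\dep_A$ on the locus of $p$, i.e.\ Corollary~\ref{Cor_on p forking is equivalence}. Since $\Dp_q(a)\cup\Dp_q(a')$ is a bounded subset of $(q(\Mon),<_q)$ by Remark~\ref{rmk_direct_properties}(c), saturation alone (no dividing construction needed) produces $a''\models p$ with $\Dp_q(a)\cup\Dp_q(a')<_q\Dp_q(a'')$; the contrapositive of the correct, independent-pair chaining then gives $a''\dep_A a$ and $a''\dep_A a'$, whence $a\dep_A a'$ by Corollary~\ref{Cor_on p forking is equivalence}, a contradiction. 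As a further technical remark, the condition $f(q_1)>_q q_1$ in your dividing sketch is in any case too weak to force the regions $\psi(\Mon,f^n(a))$ to march rightward past the original (one would at least need $f(q_0)>_q q_1$ so that $f^n(q_0)<_q f^n(q_1)\leq_q f^{n+1}(q_0)$), but this is moot given that the dividing construction is neither needed nor, by itself, sufficient.
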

\begin{proof} 
(a) If $q\fwor p$, then  $\Dp_q(x)=\emptyset$   for all $x\models p$ and the conclusion trivially follows, so let us assume  $q\nfor p$.  Towards a contradiction, suppose that $\Dp_q(a)\cap\Dp_q(a')\neq \emptyset$. By Remark \ref{rmk_direct_properties}(g), for all $c,c'\models p$, $c\ind_Ac'$ implies that one of $(c,c')$ and $(c',c)$ has the same type as $(a,a')$ over $A$. In either case $\Dp_q(c)\cap\Dp_q(c')\neq \emptyset$ follows.

By  Remark \ref{rmk_direct_properties}(c)  the sets  $\Dp_q(a)$ and $\Dp_q(a')$ are non-empty (because $q\nfor p$), convex, bounded subsets of $(q(\Mon),<_q)$. 
Hence  the set $\Dp_q(a)\cup \Dp_q(a')$ is bounded and, by saturation of the monster, there exists $a''\models p$ such that  $\Dp_q(a)\cup \Dp_q(a')<_q \Dp_q(a'')$. By the previous paragraph we get $a''\dep_Aa$ and $a''\dep_Aa'$, which by Corollary \ref{Cor_on p forking is equivalence} imply $a\dep_Aa'$. A contradiction.

\smallskip
(b) We will prove   $\Lp_{\mathbf q}(a)\neq \Lp_{\mathbf q}(a')$; the proof of the second inequality is similar. Towards a  contradiction, assume   $\Lp_{\mathbf q}(a)= \Lp_{\mathbf q}(a')$.
Let  $c\models p$ be arbitrary. Then at least one of $c\ind _A a$ and $c\ind _A a'$ holds (otherwise, by transitivity we would have $a\dep_A a'$). By Remark \ref{rmk_direct_properties}(g) $c\ind _A a$ implies $ac\equiv  aa'(A)$ or $ac\equiv a'a(A)$, and in both cases we have  $\Lp_{\mathbf q}(a)= \Lp_{\mathbf q}(c)$; similarly, $c\ind _A a'$ implies    $\Lp_{\mathbf q}(a)=\Lp_{\mathbf q}(c)$. We have just shown that the set $D=\Lp_{\mathbf q}(c)$ does not depend on the choice of $c\models p$. Clearly, $D\subseteq q(\Mon)$ is a non-empty $A$-invariant set, so $D=q(\Mon)$.   In particular, $\Lp_{\mathbf q}(a)
=q(\Mon)$ and every realization of $q$ realizes $\p_{l\strok Aa}$, so $p\wor q$. A contradiction.
\end{proof}

\begin{lem}\label{Prop_equivalents_of_a dep b}
Suppose that the types $p=\tp(a/A)$ and $q=\tp(b/A)$ are stationarily ordered. Then the following conditions are all equivalent:
\begin{enumerate}[(1)]
\item $a\dep_A b$;
\item $b\dep_A a$;
\item $(\Lp_{\mathbf{r}}(a),\Dp_r(a),\Rp_{\mathbf{r}}(a))=(\Lp_{\mathbf{r}}(b),\Dp_r(b),\Rp_{\mathbf{r}}(b))$  holds for all \so-pairs $\mathbf{r}=(r,<_r)$ over $A$;
\item $\Dp_r(a)=\Dp_r(b)$ holds for all  \so-pairs $\mathbf{r}=(r,<_r)$ over $A$;
\item $\Lp_{\mathbf{r}}(a)=\Lp_{\mathbf{r}}(b)\neq r(\Mon)$ holds for some  \so-pair $\mathbf{r}=(r,<_r)$ over $A$.
\end{enumerate}  
\end{lem}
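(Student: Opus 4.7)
The plan is to treat condition~(4) as the hub: first establish $(1)\Leftrightarrow(4)\Leftrightarrow(2)$, then strengthen $(4)$ to $(3)$, and close the cycle with $(3)\Rightarrow(5)\Rightarrow(1)$. Several implications come cheaply. $(3)\Rightarrow(4)$ is immediate. For $(4)\Rightarrow(1)$, I would take $\mathbf{r}=\mathbf{p}$: since $p$ is non-algebraic, the formula $x=a$ divides over $A$ (along any $A$-indiscernible sequence in $p$), so $a\dep_A a$, and then $a\in\Dp_p(a)=\Dp_p(b)$ yields $a\dep_A b$. Dually $(4)\Rightarrow(2)$ follows by taking $\mathbf{r}=\mathbf{q}$. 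For $(3)\Rightarrow(5)$, set $\mathbf{r}=\mathbf{p}$: (3) gives $\Lp_{\mathbf{p}}(a)=\Lp_{\mathbf{p}}(b)$, and this set is a proper subset of $p(\Mon)$ by Lemma~\ref{Cor_Lpp_north_partition}(a), because $p\nwor p$ holds for any non-algebraic stationarily ordered $p$.

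The core of the argument is $(1)\Rightarrow(3)$, which I would split into two steps. Step~A: for every \so-pair $\mathbf{r}=(r,<_r)$ over $A$, show $\Dp_r(a)=\Dp_r(b)$. Suppose for contradiction that some $c\models r$ lies in $\Dp_r(a)\setminus\Dp_r(b)$, so $c\dep_A a$ while $c\in\Lp_{\mathbf{r}}(b)\cup\Rp_{\mathbf{r}}(b)$; without loss of generality $c\models\r_{l\strok Ab}$. Picking a Morley-type realization $c'\models\r_{l\strok Aabc}$ gives $c'\models\r_{l\strok Aa}$ and $c'\models\r_{l\strok Ab}$, hence $c'\ind_A a$ and $c'\equiv c\,(Ab)$. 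Transporting $c'$ onto $c$ by an automorphism $g\in\Aut(\Mon/Ab)$ and tracking $g(a)$ through the $\Dp_r$-closedness of $\Lp_{\mathbf{r}}(b)\cup\Rp_{\mathbf{r}}(b)$ (Lemma~\ref{lem rmk closed for Dp}(a)) together with the partition from Lemma~\ref{Cor_Lpp_north_partition}(b) should force $a\ind_A b$, contradicting (1). Step~B then derives the equalities of $\Lp_{\mathbf{r}}$ and $\Rp_{\mathbf{r}}$ from $\Dp_r(a)=\Dp_r(b)$: when $r\nfor p$, both triples are the convex partitions of Lemma~\ref{Cor_Lpp_north_partition}(b), uniquely determined by the middle set; when $r\fwor p$, one verifies using (1) that $r\wor p\Leftrightarrow r\wor q$, and Lemma~\ref{Cor_Lpp_north_partition}(a),(c) completes the case analysis.

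For $(5)\Rightarrow(1)$, assume $\Lp_{\mathbf{r}}(a)=\Lp_{\mathbf{r}}(b)\neq r(\Mon)$ for some \so-pair $\mathbf{r}$; by Lemma~\ref{Cor_Lpp_north_partition}(a) this forces $p\nwor r$ (and $q\nwor r$). Suppose toward contradiction $a\ind_A b$, so $\tp(a/Ab)$ equals $\p_{l\strok Ab}$ or $\p_{r\strok Ab}$; say the former. Choose $a'\models\p_{l\strok Aab}$: then $a'\in\Lp_{\mathbf{p}}(a)$ gives $a'\ind_A a$, while $a'\models\p_{l\strok Ab}$ gives $a'\equiv a\,(Ab)$. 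Any $f\in\Aut(\Mon/Ab)$ with $f(a)=a'$ sends $\Lp_{\mathbf{r}}(a)$ to $\Lp_{\mathbf{r}}(a')$ and fixes $\Lp_{\mathbf{r}}(b)$, so from (5) I conclude $\Lp_{\mathbf{r}}(a')=\Lp_{\mathbf{r}}(a)$. But Lemma~\ref{lem_square_implies_disjoit_Ds}(b), applied to $p\nwor r$ and $a\ind_A a'$, yields $\Lp_{\mathbf{r}}(a)\neq\Lp_{\mathbf{r}}(a')$ --- contradiction.

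The main obstacle is Step~A of $(1)\Rightarrow(3)$. Symmetry and transitivity of $\dep_A$ across distinct \so-types is exactly the Corollary~\ref{Cor_forking_transitivity} that the paper will \emph{derive} from this lemma, so one cannot shortcut the argument by a transitivity chain $c\dep_A a\dep_A b\Rightarrow c\dep_A b$. Instead, the argument must be routed entirely through the global invariant types $\r_l,\r_r$ and the rigidity of their restrictions, encoded in the $\Dp_r$-closedness of Lemma~\ref{lem rmk closed for Dp}(a) together with the distinctness statement of Lemma~\ref{lem_square_implies_disjoit_Ds}(b).
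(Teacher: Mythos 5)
Your decomposition is genuinely different from the paper's: the paper proves $(1)\Leftrightarrow(2)$ first and then runs $(1)\Rightarrow(3)\Rightarrow(4)\Rightarrow(5)\Rightarrow(2)$, while you treat $(4)$ as the hub. The peripheral implications in your plan are fine. $(4)\Rightarrow(1)$ and $(4)\Rightarrow(2)$ by specializing $\mathbf{r}$ to $\mathbf{p}$ or $\mathbf{q}$ are correct and more economical than what the paper does, $(3)\Rightarrow(4)$ is immediate, and your $(5)\Rightarrow(1)$ via an $Ab$-automorphism and Lemma~\ref{lem_square_implies_disjoit_Ds}(b) is essentially sound (minor quibble: in $(3)\Rightarrow(5)$ you invoke Lemma~\ref{Cor_Lpp_north_partition}(a), but that gives $p\wor q\Rightarrow\Lp_{\mathbf p}=p(\Mon)$; you want the converse direction, which comes from the ``claim'' inside that lemma's proof, or just from part~(b)).

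The genuine gap is exactly where you flag it: Step~A of $(1)\Rightarrow(3)$. You pick $c\in\Dp_r(a)\smallsetminus\Dp_r(b)$, take $c'\models\r_{l\strok Aabc}$, transport $c'\mapsto c$ by $g\in\Aut(\Mon/Ab)$, and say that tracking $g(a)$ ``should force $a\ind_A b$.'' I don't see how. Writing $a'=g(a)$, what you actually get is $c\ind_A a'$ and $a'\equiv a\,(Ab)$. You now have $c\dep_A a$ and $c\ind_A a'$ with $a,a'\models p$ and $a'\equiv a(Ab)$; but to turn this into a contradiction you would need to know either $a\dep_A a'$ (to apply Lemma~\ref{lem rmk closed for Dp}(b)) or $a\ind_A a'$ (to apply Lemma~\ref{lem_square_implies_disjoit_Ds}(a)), and deciding between these requires exactly the cross-type symmetry/transitivity that the lemma is meant to establish. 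Using $b\in\Dp_q(a)\cap\Dp_q(a')$ would do it, but that requires $b\dep_A a$, i.e.\ condition~$(2)$ --- which in your ordering is only available \emph{after} $(1)\Rightarrow(4)$ is done. There is a second, independent gap in Step~B: when $\Dp_r(a)=\Dp_r(b)=\emptyset$ with $r\nwor p$ (the shuffled case), equality of the (empty) middle sets does not determine the two proper initial parts $\Lp_{\mathbf r}(a)$ and $\Lp_{\mathbf r}(b)$, and Lemma~\ref{Cor_Lpp_north_partition}(a),(c) only gives you the shape of each partition, not that they coincide; you need to use $a\dep_A b$ here, and no mechanism for that is given.

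Compare with the paper: it proves $(1)\Leftrightarrow(2)$ directly first (via Lemma~\ref{lem_square_implies_disjoit_Ds}(a) applied within $q(\Mon)$), and then for $(1)\Rightarrow(3)$ it proves $\Lp_{\mathbf r}(a)=\Lp_{\mathbf r}(b)$ head-on rather than routing through $\Dp_r$. Assuming $\Lp_{\mathbf r}(b)\subset\Lp_{\mathbf r}(a)$, it picks $c\in\Lp_{\mathbf r}(b)$ and $c'\in\Lp_{\mathbf r}(a)\smallsetminus\Lp_{\mathbf r}(b)$, notes $c\equiv c'(Aa)$, chooses $b'$ with $bc\equiv b'c'(Aa)$, and then chains: $a\dep_A b'$ (from $b\equiv b'(Aa)$), hence $b\dep_A b'$ (Lemma~\ref{lem_square_implies_disjoit_Ds}(a)); $c'\ind_A b'$ (transporting $c\ind_A b$); $b'\ind_A c'$ by the already-proved symmetry; hence $bc'\equiv b'c'(A)$ by Lemma~\ref{lem rmk closed for Dp}(b); contradiction with $c\in\Lp_{\mathbf r}(b)\not\ni c'$. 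Working directly with $\Lp$ rather than $\Dp$ avoids the case split that sinks your Step~B, and proving the symmetry $(1)\Leftrightarrow(2)$ first makes the WLOG in the $\Lp$-inclusion legitimate. You would need to either adopt that ordering, or find a different device to break the symmetry dependence in Step~A.
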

\begin{proof}
(1)$\Leftrightarrow$(2) \ It suffices to prove one direction  of the equivalence, so suppose that $a\dep_A b$ holds. Then $a\in\Dp_p(b)$. For any $b'\equiv b\,(Aa)$ we have $a\in\Dp_p(b')$, hence  $a\in \Dp_p(b')\cap\Dp_p(b)\neq\emptyset$ and, by Lemma \ref{lem_square_implies_disjoit_Ds}(a), we get   $b'\dep_A b$ and $b'\in \Dp_q(b)$. This shows that the locus  of $\tp(b/Aa)$ is entirely contained in $\Dp_q(b)$, so $\tp(b/Aa)$ is contained   neither in  $\frak q_l$ nor in $\frak q_r$ and, by Lemma \ref{lem_forking_iff_non_left_right},  it is a forking extension of $q$; $b\dep_A a$ follows. This completes the proof of the equivalence.

\smallskip
(1)$\Rightarrow$(3) \ Let $\mathbf{r}=(r,<_r)$ be an \so-pair over $A$. We will prove  $\Lp_{\mathbf{r}}(a)=\Lp_{\mathbf{r}}(b)$; the equality of $\Rp_{\mathbf{r}}(a)$ and $\Rp_{\mathbf{r}}(b)$ is proved similarly and then the equality of $\Dp_r(a)$ and $\Dp_r(b)$  follows by Remark \ref{rmk_direct_properties}(a). Suppose on the contrary that $a\dep_A b$ and   $\Lp_{\mathbf{r}}(a)\neq \Lp_{\mathbf{r}}(b)$. Since these are initial parts of $r(\Mon)$, one of them is strictly contained in the other. 
We will assume  $\Lp_{\mathbf{r}}(b)\subset  \Lp_{\mathbf{r}}(a)$; the other case is proved analogously because $b\dep_Aa$ holds by the above proved symmetry. Choose $c\in\Lp_{\mathbf{r}}(b)$ and $c'\in\Lp_{\mathbf{r}}(a)\smallsetminus\Lp_{\mathbf{r}}(b)$. 
Then $c,c'\in\Lp_{\mathbf r}(a)$ implies $c\equiv c'\,(Aa)$, so there is $b'\models q$ such that $bc\equiv b'c'\,(Aa)$. 
From  $b\equiv b'\,(Aa)$ and $a\dep_A b$ we get  $a\dep_A b'$, so $a\in\Dp_p(b)\cap \Dp_p(b')$, which further implies $b\dep_Ab'$ by Lemma \ref{lem_square_implies_disjoit_Ds}(a). Since $c\in\Lp_{\mathbf r}(b)$ we have $c\ind_A b$, so    $bc\equiv b'c'\,(Aa)$ implies $c'\ind_A b'$. By the above proved symmetry ((1)$\Leftrightarrow$(2)) we have $b'\ind_Ac'$, which, together with $b\dep_A b'$, implies $bc'\equiv b'c'\,(A)$ by Lemma \ref{lem rmk closed for Dp}(b). By our choice of $b'$   we get $bc\equiv bc'\,(A)$, which  is impossible since $c\in\Lp_{\mathbf r}(b)$ and $c'\notin\Lp_{\mathbf r}(b)$.

\smallskip
(3)$\Rightarrow$(4) is trivial and for   (4)$\Rightarrow$(5) it suffices to note that $\Dp_p(a)=\Dp_p(b)\neq\emptyset$ and Lemma \ref{Cor_Lpp_north_partition} imply $\Lp_{\mathbf{p}}(a)=\Lp_{\mathbf{p}}(b)\neq p(\Mon)$.

\smallskip
(5)$\Rightarrow$(2). Suppose that $\mathbf{r}=(r,<_r)$ is is an \so-pair over $A$  and  $\Lp_{\mathbf{r}}(a)=\Lp_{\mathbf{r}}(b)\neq r(\Mon)$. For any $b'$ realizing $\tp(b/Aa)$ we have $\Lp_{\mathbf{r}}(b')=\Lp_{\mathbf{r}}(a)=\Lp_{\mathbf{r}}(b)$. By Lemma \ref{lem_square_implies_disjoit_Ds}(b) we have $b\dep_Ab'$. Hence, any realization of $\tp(b/Aa)$ belongs to $\Dp_q(b)$ and  $b\dep_Aa$ follows.
\end{proof}

As an immediate corollary of the equivalence of conditions (1) and (4)  we have that forking over $A$ is an equivalence relation on the set of all elements realizing a stationarily ordered type over $A$:

\begin{cor}\label{Cor_forking_transitivity} 
 $x\dep_A y$ is an equivalence relation on the set  $\{a\in\Mon\mid \mbox{$\tp(a/A)$ is an \so-type}\}$.  \qed
\end{cor}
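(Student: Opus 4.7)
The plan is to read off all three properties of an equivalence relation directly from Lemma \ref{Prop_equivalents_of_a dep b}, which has already done the substantive work.

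Symmetry of $\dep_A$ on $\{a\in\Mon\mid \tp(a/A)\text{ is an \so-type}\}$ is exactly the equivalence (1)$\Leftrightarrow$(2) of that lemma: whenever $\tp(a/A)$ and $\tp(b/A)$ are both stationarily ordered, the lemma immediately gives $a\dep_A b\iff b\dep_A a$.

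For transitivity, suppose $a\dep_A b$ and $b\dep_A c$ with all three of $\tp(a/A)$, $\tp(b/A)$, $\tp(c/A)$ stationarily ordered. I would invoke (1)$\Rightarrow$(4) of the same lemma twice: first on the pair $(a,b)$, obtaining $\Dp_r(a)=\Dp_r(b)$ for every \so-pair $\mathbf{r}=(r,<_r)$ over $A$; then on $(b,c)$, obtaining $\Dp_r(b)=\Dp_r(c)$ for every such $\mathbf{r}$. Composing these equalities yields $\Dp_r(a)=\Dp_r(c)$ for every \so-pair over $A$, and then the direction (4)$\Rightarrow$(1), applied to the pair $(a,c)$ (both of whose types are \so), delivers $a\dep_A c$.

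Reflexivity is nontrivial only when $\tp(a/A)$ is non-algebraic: in that case the formula $x=a$ divides over $A$, since for any non-constant sequence $(a_i)_{i\in\omega}$ of realizations of $\tp(a/A)$ the family $\{x=a_i\mid i\in\omega\}$ is $2$-inconsistent, and hence $a\dep_A a$. There is no real obstacle here: condition (4) of Lemma \ref{Prop_equivalents_of_a dep b} re-interprets $\dep_A$ as the equality of the family of invariants $\Dp_r(\cdot)$ across all \so-pairs $\mathbf{r}$ over $A$, and once this re-interpretation is available transitivity becomes a trivial composition of equalities. All the genuine difficulty has already been discharged inside Lemma \ref{Prop_equivalents_of_a dep b} itself.
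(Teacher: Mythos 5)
Your argument is essentially the paper's: the paper simply declares the corollary ``an immediate corollary of the equivalence of conditions (1) and (4)'' of Lemma~\ref{Prop_equivalents_of_a dep b}, which is exactly the re-interpretation of $\dep_A$ as equality of the $\Dp_r(\cdot)$ families that you invoke for transitivity (and from which symmetry also drops out, though your use of (1)$\Leftrightarrow$(2) is equally good). Your added explicit check of reflexivity via dividing of $x=a$ is a sensible supplement that the paper leaves implicit; the only caveat, which affects the paper equally, is the degenerate case $a\in\dcl(A)$, where $x=a$ neither divides nor forks over $A$, so $a\ind_A a$ and reflexivity is strictly speaking false on that fringe.
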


\begin{prop}\label{cor_nwor_fwor_transitivity} 
$\nwor$ and $\nfor$ are equivalence relations on the set of all stationarily ordered types over a fixed domain.
\end{prop}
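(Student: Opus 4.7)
Symmetry is built into both definitions, and reflexivity on non-algebraic stationarily ordered types over $A$ follows from $a \dep_A a$ for any $a \notin \acl(A)$. The substantive content is transitivity, which I would handle separately for the two relations.

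For transitivity of $\nfor$, suppose $p \nfor q$ and $q \nfor r$ are witnessed respectively by $a \dep_A b_1$ (with $a\models p$, $b_1\models q$) and $b_2 \dep_A c$ (with $b_2\models q$, $c\models r$). Choosing $f \in \Aut(\Mon/A)$ with $f(b_2) = b_1$ and setting $c' = f(c) \models r$ gives $b_1 \dep_A c'$. Since $a$, $b_1$, and $c'$ all realize stationarily ordered types over $A$, Corollary \ref{Cor_forking_transitivity} yields $a \dep_A c'$, hence $p \nfor r$.

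For transitivity of $\nwor$, I would use that $\nfor\Rightarrow\nwor$ and split into cases. Assume $p \nwor q$ and $q \nwor r$. If at least one of these is actually a forking non-orthogonality---say $p \nfor q$, witnessed by $a \dep_A b$---then the property ``$\Lp_{\mathbf r}(b') \neq r(\Mon)$'' is $A$-invariant in $\tp(b'/A)=q$ and so holds at $b'=b$ by $q \nwor r$. Applying Lemma \ref{Prop_equivalents_of_a dep b}(3) to $a \dep_A b$ with the \so-pair $\mathbf r$ gives $\Lp_{\mathbf r}(a) = \Lp_{\mathbf r}(b) \neq r(\Mon)$, so $p \nwor r$. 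The case $q \nfor r$ is symmetric, via the symmetry of $\nwor$.

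The remaining case $p \fwor q$, $p \nwor q$, $q \fwor r$, $q \nwor r$ is the principal obstacle. Here Lemma \ref{Cor_Lpp_north_partition}(c) provides, for every $a \models p$ and $b \models q$, proper partitions $q(\Mon) = \Lp_{\mathbf q}(a) \sqcup \Rp_{\mathbf q}(a)$ and $r(\Mon) = \Lp_{\mathbf r}(b) \sqcup \Rp_{\mathbf r}(b)$. I would pick $a \models p$, choose $b \models \q_r\strok Aa$, and then $c \models \r_r\strok Aab$; by restriction $\tp(c/Aa) = \r_r\strok Aa$, so $p \nwor r$ reduces to the inequality $\r_l\strok Aa \neq \r_r\strok Aa$. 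Assuming the contrary, a conjugation over $Aa$ carrying $c$ to a realization of $\r_l\strok Aab$ would produce $b' \models q$ with $b \equiv b' \pmod{Aa}$ but $\Lp_{\mathbf r}(b) \neq \Lp_{\mathbf r}(b')$, which combined with $q \nwor r$ and Lemma \ref{lem_square_implies_disjoit_Ds}(b) must be driven to a contradiction (the subcase $b=b'$ being immediate). Controlling the interaction of the four global invariant types $\q_l,\q_r,\r_l,\r_r$ under these restrictions and conjugations, without any forking to leverage, is the key technical challenge.
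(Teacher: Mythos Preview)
Your treatment of $\nfor$ is correct and essentially what the paper does: transitivity is an immediate consequence of Corollary~\ref{Cor_forking_transitivity}.

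For $\nwor$, however, you have made the problem harder than it is, and your ``hard case'' is genuinely stuck. After you produce $b,b'\models q$ with $b\equiv b'\,(Aa)$ and $\Lp_{\mathbf r}(b)\neq\Lp_{\mathbf r}(b')$, nothing forces a contradiction: if $b\ind_A b'$ then Lemma~\ref{lem_square_implies_disjoit_Ds}(b) \emph{predicts} the inequality rather than contradicting it, and $b\equiv b'\,(Aa)$ places no constraint on $\Lp_{\mathbf r}(\cdot)$ because the automorphism over $Aa$ moving $b$ to $b'$ need not fix $\Lp_{\mathbf r}(b)$ setwise. There is no evident way to close this gap with the tools at hand.

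The paper avoids the case split entirely by working inside $q(\Mon)$ rather than $r(\Mon)$. Since $p\nwor q$ and $r\nwor q$, for every $a\models p$ and $b\models r$ the sets $\Lp_{\mathbf q}(a)$ and $\Lp_{\mathbf q}(b)$ are proper initial parts of $(q(\Mon),<_q)$. One then argues that both strict inclusions $\Lp_{\mathbf q}(a')\subset\Lp_{\mathbf q}(b')$ and $\Lp_{\mathbf q}(b'')\subset\Lp_{\mathbf q}(a'')$ are realized: given $b$, pick $c\in\Lp_{\mathbf q}(b)$ and use $p\nwor q$ to find $a'\models p$ with $c\notin\Lp_{\mathbf q}(a')$; since initial parts are $\subseteq$-comparable this forces $\Lp_{\mathbf q}(a')\subset\Lp_{\mathbf q}(b)$. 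The other inclusion is symmetric. An $A$-automorphism carries one configuration to the other only if it reverses a strict inclusion, so $\tp(a',b'/A)\neq\tp(a'',b''/A)$ and $p\nwor r$. This uniform argument uses only that $\Lp_{\mathbf q}(\cdot)$ takes values in the proper initial parts of $q(\Mon)$ and that these are linearly ordered by inclusion; no distinction between the forking and shuffled situations is needed.
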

\begin{proof} Clearly, both relations are reflexive. Symmetry and transitivity of $\nfor$ follow from Corollary \ref{Cor_forking_transitivity}. Symmetry of $\nwor$ is easy and to verify transitivity   
suppose that $p,q,r\in S_1(A)$ are stationarily ordered types, $p\nwor q$ and $r\nwor q$. Then for any pair of elements  $(a,b)$ where $a\models p$ and $b\models r$ the sets $\Lp_{\mathbf{q}}(a)$ and $\Lp_{\mathbf{q}}(b)$ are proper initial parts of $q(\Mon)$ (where $\mathbf{q}=(q,<_q)$ is any \so-pair over $A$).
 By saturation, for one such pair  we have $\Lp_{\mathbf{q}}(a')\subset\Lp_{\mathbf{q}}(b')$ and for some other pair $\Lp_{\mathbf{q}}(b'')\subset\Lp_{\mathbf{q}}(a'')$. 
 Clearly,  the types $\tp_{x,y}(a',b'/A)$ and $\tp_{x,y}(a'',b''/A)$ are distinct completions of $p(x)\cup r(y)$. Hence  $p\nwor r$ and $\nwor$ is   transitive.  
\end{proof}


\section{Orientation}  

In this section we introduce the relation $\delta$  of direct non-orthogonality  of \so-pairs; roughly speaking,  $\delta(\mathbf p,\mathbf q)$  describes when it is that the \so-pairs $\mathbf p$ and $\mathbf q$ have the same direction (in particular, every definable function between their loci   is increasing modulo forking-dependence). We will show that $\delta$ is an equivalence relation refining the non-orthogonality relation by splitting  each $\nwor$-class  into two $\delta$-classes, so that $(p,<_p)$ has the same direction with either $(q,<_q)$ or $(q,>_q)$. 
  This will allow us to choose an ordering for each \so-pair so that all non-orthogonal \so-pairs   have the same direction,  and that will significantly  simplify the   description of the relationship between invariants  of non-orthogonal types. Let us emphasize that  in the  analysis of invariants, even if the underlying theory is weakly \oo-minimal (with respect to $<$), we will not stick to \so-pairs $(p,<)$, but for each type $p$ choose $<_p$ which is either $<$ or $>$ so that all non-orthogonal pairs become  directly  non-orthogonal.

Recall that
if $(p,<_p)$ is an \so-pair, then $\D_p=\{(x,y)\in p(\Mon)\times p(\Mon)\mid x\dep_A y\}$  is a convex equivalence relation on $(p(\Mon),<_p)$ and the $\D_p$-class of $a\models p$ is  the set $\D_p(a)=\{x\models p\mid x\dep_A a\}$. By Corollary \ref{Cor_on p forking is equivalence} the factor set  $p(\Mon)/\D_p=\{\Dp_p(a)\mid a\in p(\Mon)\}$ is densely linearly ordered by $<_p$ and has no endpoints. 
  Furthermore, if $(q,<_q)$ is an \so-pair, $p\nfor q$ and $b\models q$, then by Lemma \ref{Prop_equivalents_of_a dep b} $\D_p(b)=\D_p(a)$ for any $a\in\D_p(b)$, so $\D_p(b)$ is an element of $p(\Mon)/\D_p$, too.

\begin{lem}\label{Lema_supinf_LpRp} Suppose that $(p,<_p)$ is an \so-pair, $q\in S_1(A)$ is a stationarily ordered type, $p\nwor q$ and $b\models q$. Then:

(a) If $p\fwor q$, then the set $\Lp_{\mathbf p}(b)/\D_p$ is a right bounded initial part of $(p(\Mon)/\D_p,<_p)$ that has no supremum (and   
 $\Rp_{\mathbf p}(b)/\D_p$ is a left bounded final part that has no infimum);

(b) If $p\nfor q$, then  $\Dp_p(b)=\sup (\Lp_{\mathbf p}(b)/\D_p)=\inf (\Rp_{\mathbf p}(b)/\D_p)$ (in $(p(\Mon)/\D_p,<_p)$).
\end{lem}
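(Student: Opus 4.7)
The plan is to start with an observation that structures both parts: both $\Lp_{\mathbf p}(b)$ and $\Rp_{\mathbf p}(b)$ are $\D_p$-closed by Lemma \ref{lem rmk closed for Dp}(a), so they descend to subsets $\Lp_{\mathbf p}(b)/\D_p$ and $\Rp_{\mathbf p}(b)/\D_p$ of the factor order $(p(\Mon)/\D_p,<_p)$, which by Corollary \ref{Cor_on p forking is equivalence}(b) is a dense linear order without endpoints. With this in place, part (b) is immediate: Lemma \ref{Cor_Lpp_north_partition}(b) yields the convex partition $\Lp_{\mathbf p}(b)<_p\Dp_p(b)<_p\Rp_{\mathbf p}(b)$, and Lemma \ref{Prop_equivalents_of_a dep b} applied to any two elements of $\Dp_p(b)$ shows that $\Dp_p(b)$ is a single $\D_p$-class; convexity of the partition then forces every $\D_p$-class strictly $<_p$-below $\Dp_p(b)$ to lie in $\Lp_{\mathbf p}(b)/\D_p$, so $\Dp_p(b)=\sup(\Lp_{\mathbf p}(b)/\D_p)$, and dually for the infimum.

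For (a), Lemma \ref{Cor_Lpp_north_partition}(c) gives a convex partition $\Lp_{\mathbf p}(b)<_p\Rp_{\mathbf p}(b)$ of $p(\Mon)$ with both sides nonempty, so $\Lp_{\mathbf p}(b)/\D_p$ is right bounded, $\Rp_{\mathbf p}(b)/\D_p$ is left bounded, and together they exhaust $p(\Mon)/\D_p$. The remaining work is to show that $\Lp_{\mathbf p}(b)/\D_p$ has no maximum and, symmetrically, $\Rp_{\mathbf p}(b)/\D_p$ has no minimum; once established, the upper bounds of $\Lp_{\mathbf p}(b)/\D_p$ coincide with $\Rp_{\mathbf p}(b)/\D_p$, and the absence of a minimum there rules out any supremum.

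The no-maximum claim, which I expect to be the main obstacle, should be proved by an automorphism trick exploiting that $\Lp_{\mathbf p}(b)=\p_{l\strok Ab}(\Mon)$ is the locus of a single complete type over $Ab$. Assuming $\Dp_p(a_0)$ is the maximum with $a_0\in\Lp_{\mathbf p}(b)$, I would pick $a_0'\models\p_{l\strok Aba_0}$. Then $a_0'\in\Lp_{\mathbf p}(b)$, and since $a_0'\models\p_{l\strok Aa_0}$, Lemma \ref{lem prop_left_right_basics}(c) makes $(a_0',a_0)$ a Morley sequence in $\p_r$ over $A$, whence $\Dp_p(a_0')<_p\Dp_p(a_0)$ by Lemma \ref{lem rmk closed for Dp}(c). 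As $a_0'\equiv a_0\,(Ab)$, I would choose $f\in\Aut(\Mon/Ab)$ with $f(a_0')=a_0$; since $<_p$ is $A$-definable, $\D_p$ is $A$-invariant, and $\Lp_{\mathbf p}(b)$ is $Ab$-invariant (as $\p_l$ is $A$-invariant), $f$ preserves all three. Applying $f$ to the inequality $\Dp_p(a_0')<_p\Dp_p(a_0)$ then gives $\Dp_p(a_0)<_p\Dp_p(f(a_0))$ with $f(a_0)\in\Lp_{\mathbf p}(b)$, contradicting the maximality. The symmetric argument with $\p_r$ handles the no-minimum claim.
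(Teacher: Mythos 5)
Your proof is correct. Part~(b) matches the paper's argument: both reduce to the tripartition $\Lp_{\mathbf p}(b)<_p\Dp_p(b)<_p\Rp_{\mathbf p}(b)$ into $\Dp_p$-closed sets and note that $\Dp_p(b)$ is a single $\D_p$-class (the paper phrases this by replacing $b$ with some $a\models p$, $a\dep_A b$, via Lemma~\ref{Prop_equivalents_of_a dep b}); the conclusion that $\Dp_p(b)$ is the supremum implicitly uses density of the factor order from Corollary~\ref{Cor_on p forking is equivalence}(b) in both treatments, and you flag this up front.

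For part~(a) your route is genuinely different in mechanics, though both hinge on the $Ab$-invariance of $\Lp_{\mathbf p}(b)$. The paper observes that a hypothetical supremum $\Dp_p(c)$ would be setwise $Ab$-invariant, so the locus of $\tp(c/Ab)$ sits inside the bounded set $\Dp_p(c)$, whence (by compactness and Lemma~\ref{lem_forking_iff_non_left_right}) $\tp(c/Ab)$ forks over $A$ and $p\nfor q$ — directly contradicting the hypothesis $p\fwor q$. You instead split into ``no max of $\Lp_{\mathbf p}(b)/\D_p$'' and ``no min of $\Rp_{\mathbf p}(b)/\D_p$'' and refute maximality by exhibiting an explicit $f\in\Aut(\Mon/Ab)$ that moves the putative maximal class strictly up; this is more elementary and avoids invoking the forking characterization, at the cost of the two-case split (which the paper sidesteps by working directly with the supremum). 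Both arguments are sound; the paper's is slightly slicker in that it yields $p\nfor q$ in one stroke, which also makes the dichotomy between (a) and (b) more transparent.
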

\begin{proof}
By Lemma \ref{lem rmk closed for Dp}   $\Lp_{\mathbf p}(b)$ is a $\Dp_p$-closed subset  of $p(\Mon)$, so 
$\Lp_{\mathbf p}(b)/\Dp_p=\{\Dp_p(x)\mid x\in\Lp_p(b)\}\subseteq  p(\Mon)/\Dp_p$. $p\nwor q$ implies that $\Lp_{\mathbf p}(b)$ is a proper initial part of $p(\Mon)$, so 
$\Lp_{\mathbf p}(b)/\Dp_p$ is a proper, initial part of $p(\Mon)/\Dp_p$. 

(a) Assume that $c\models p$ and $\Dp_p(c)=\sup(\Lp_{\mathbf p}(b)/\Dp_p)$. Then the set $\Dp_p(c)$ is fixed (set-wise) by any $Ab$-automorphism and thus  the orbit of $\tp(c/Ab)$ under $Ab$-automorphisms is a bounded subset of $p(\Mon)$. This implies that $\tp(c/Ab)$ forks over $A$ and  hence  $p\nfor q$.

(b)   Assume $p\nfor q$ and choose $a\models p$ such that $b\dep_Aa$.  By Lemma \ref{Prop_equivalents_of_a dep b} we have   $(\Lp_{\mathbf{p}}(a),\Dp_p(a),\Rp_{\mathbf{p}}(a))=(\Lp_{\mathbf{p}}(b),\Dp_p(b),\Rp_{\mathbf{p}}(b))$. Since $\Lp_{\mathbf{p}}(a)<_p\Dp_p(a)<_p\Rp_{\mathbf{p}}(a)$ is a convex partition of $p(\Mon)$ into $\Dp_p$-closed sets, we have $\Dp_p(b)=\sup (\Lp_{\mathbf p}(b)/\D_p)=\inf (\Rp_{\mathbf p}(b)/\D_p)$ completing the proof of the lemma. 
\end{proof}

Stationarily ordered pairs $\mathbf{p}=(p,<_p)$ and $\mathbf{q}=(q,<_q)$ 
over the same domain $A$ are {\it non-orthogonal},  denoted by $\mathbf{p}\nwor\mathbf{q}$, if $p\nwor q$ holds. 
By Proposition \ref{cor_nwor_fwor_transitivity}, non-orthogonality is an equivalence relation on the set of all \so-pairs over $A$.

\begin{lem}\label{Lema discussion direct nonorth}
Suppose that $\mathbf p$ and $\mathbf q$ are \so-pairs over $A$ and $\mathbf{p}\nwor \mathbf{q}$.  Then exactly one of the following two conditions holds:

(1) \ $\Lp_{\mathbf{q}}(a_1)\subseteq \Lp_{\mathbf{q}}(a_2)$   for all $a_1<_pa_2$ realizing $p$ (equivalently: $\Lp_{\mathbf{q}}(a_1)\subset \Lp_{\mathbf{q}}(a_2)$   for all independent $a_1<_pa_2$ realizing $p$);

(2) \  $\Lp_{\mathbf{q}}(a_2)\subseteq \Lp_{\mathbf{q}}(a_1)$   for all $a_1<_pa_2$ realizing $p$ (equivalently: $\Lp_{\mathbf{q}}(a_2)\subset \Lp_{\mathbf{q}}(a_1)$   for all independent $a_1<_pa_2$ realizing $p$). 
\end{lem}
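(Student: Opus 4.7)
The plan is to reduce to the case of independent pairs, where everything is controlled by a single $A$-orbit, and then upgrade to arbitrary pairs using the equivalence relation $\D_p$.

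First I would handle the independent case. By Lemma \ref{lem rmk closed for Dp}(c), whenever $a_1<_p a_2$ realize $p$ and $a_1\ind_A a_2$, the pair $(a_1,a_2)$ is a Morley sequence in $\p_r$ over $A$; in particular all such ordered independent pairs realize the same type $\p_r^{(2)}{\strok}A$ and are therefore $A$-conjugate. Consequently the comparison between the initial parts $\Lp_{\mathbf{q}}(a_1)$ and $\Lp_{\mathbf{q}}(a_2)$ is the same for every such pair. By Lemma \ref{lem_square_implies_disjoit_Ds}(b) these two initial parts of $(q(\Mon),<_q)$ are distinct, and since any two initial parts of a linear order are $\subseteq$-comparable, exactly one of $\Lp_{\mathbf{q}}(a_1)\subset\Lp_{\mathbf{q}}(a_2)$ and $\Lp_{\mathbf{q}}(a_2)\subset\Lp_{\mathbf{q}}(a_1)$ holds; moreover the same one holds uniformly over all independent ordered pairs. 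This yields the dichotomy for the strict-inclusion forms in the two clauses.

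Next I would upgrade this to arbitrary pairs $a_1<_p a_2$. Assume we are in the configuration where $\Lp_{\mathbf{q}}(a_1)\subset\Lp_{\mathbf{q}}(a_2)$ for every independent ordered pair. Given any $a_1<_p a_2$ realizing $p$, either $a_1\ind_A a_2$, in which case the strict inclusion (hence $\subseteq$) is given; or $a_1\dep_A a_2$, in which case Lemma \ref{Prop_equivalents_of_a dep b} (applied to the \so-pair $\mathbf q$) gives $\Lp_{\mathbf{q}}(a_1)=\Lp_{\mathbf{q}}(a_2)$, so $\subseteq$ holds again. The converse direction of the equivalence inside each clause is immediate: if $\Lp_{\mathbf{q}}(a_1)\subseteq\Lp_{\mathbf{q}}(a_2)$ for all $a_1<_p a_2$, then for independent such pairs Lemma \ref{lem_square_implies_disjoit_Ds}(b) forces this inclusion to be strict. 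The case of configuration (2) is completely symmetric.

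Finally, I would verify the \emph{exactly one} part: existence of independent pairs $a_1<_p a_2$ realizing $p$ is guaranteed by saturation (realize $\p_r{\strok}A$ twice successively to obtain a length-two Morley sequence in $\p_r$), so if both (1) and (2) held, Lemma \ref{lem_square_implies_disjoit_Ds}(b) would be contradicted at any such pair. I expect no genuine obstacle here; the only delicate point is making sure that the passage from independent to dependent ordered pairs inserts only equalities and therefore preserves the $\subseteq$-form globally without spoiling the strictness on the independent side.
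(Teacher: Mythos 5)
Your argument is correct and follows essentially the same route as the paper: split into the dependent case (where equality of $\Lp_{\mathbf q}$-fibers follows from Lemma \ref{Prop_equivalents_of_a dep b}) and the independent case (where the fibers are distinct initial parts, hence strictly nested in one of the two possible directions), and then observe that all ordered independent pairs are $A$-conjugate so the direction is uniform. The only minor divergences are that you invoke Lemma \ref{lem_square_implies_disjoit_Ds}(b) directly for the non-equality, where the paper cites the $(1)\Leftrightarrow(5)$ equivalence of Lemma \ref{Prop_equivalents_of_a dep b}, and that you make the conjugacy/uniformity step explicit, which the paper leaves implicit in the parenthetical "(for all independent pairs $a_1<_p a_2$)."
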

\begin{proof} For   $a_1<_pa_2$ realizing $p$ we have two possibilities: The first is   $a_1\dep_A a_2$, in which case we have $\Lp_{\mathbf{q}}(a_1)=\Lp_{\mathbf{q}}(a_2)$  by the equivalence of conditions (1) and (3) from Lemma \ref{Prop_equivalents_of_a dep b}. The second is $a_1\ind_A a_2$, in which case $\Lp_{\mathbf{q}}(a_1)\neq\Lp_{\mathbf{q}}(a_2)$ follows by the equivalence of conditions (1) and (5) from Lemma \ref{Prop_equivalents_of_a dep b}. Then,  since $\Lp_{\mathbf{q}}(a_1)$ and $\Lp_{\mathbf{q}}(a_2)$ are initial parts, we have that exactly one of $\Lp_{\mathbf{q}}(a_1)\subset\Lp_{\mathbf{q}}(a_2)$ and $\Lp_{\mathbf{q}}(a_2)\subset\Lp_{\mathbf{q}}(a_1)$ holds (for all independent pairs $a_1<_p a_2$). Therefore, exactly one  of the conditions (1) and (2)   is satisfied.
\end{proof}

\begin{dfn}\label{dfn direct}
 {\it So}-pairs $\mathbf{p}=(p,<_p)$ and $\mathbf{q}=(q,<_q)$ over the same domain  are {\em directly} non-orthogonal, denoted by $\delta(\mathbf{p},\mathbf{q})$,  if $p\nwor q$ and $\Lp_{\mathbf{q}}(a_1)\subseteq  \Lp_{\mathbf{q}}(a_2)$   for all $a_1<_p a_2$ realizing $p$. 
\end{dfn}

\begin{lem}\label{Lema_R_q_komplement}
 Suppose that $\mathbf p$ and $\mathbf q$ are   \so-pairs over $A$, $\mathbf p\nwor\mathbf q$  and   $a,a'\models p$. Then the following conditions are equivalent:

(1) \ $\Lp_{\mathbf q}(a)\subset \Lp_{\mathbf q}(a')$;

(2) \ $\Lp_{\mathbf q}(a)\cup \Dp_q(a)\subset \Lp_{\mathbf q}(a')$, i.e.\ $\Rp_{\mathbf q}(a)^c\subset\Lp_{\mathbf q}(a')$ (where the complement is taken in $q(\Mon)$).\\
If, in addition, $p\nfor q$ holds, then one more equivalent condition is:

(3) $\Dp_q(a)<_q\Dp_q(a')$.
\end{lem}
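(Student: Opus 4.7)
The plan is to split into the cases $p\fwor q$ and $p\nfor q$ and exploit the convex partition of $q(\Mon)$ from Lemma~\ref{Cor_Lpp_north_partition}. If $p\fwor q$, then $\Dp_q(a)=\emptyset$ by Lemma~\ref{Cor_Lpp_north_partition}(c), so conditions (1) and (2) coincide verbatim and nothing further is needed. From now on assume $p\nfor q$; then by Lemma~\ref{Cor_Lpp_north_partition}(b) both $\Lp_{\mathbf q}(a)<_q\Dp_q(a)<_q\Rp_{\mathbf q}(a)$ and $\Lp_{\mathbf q}(a')<_q\Dp_q(a')<_q\Rp_{\mathbf q}(a')$ are proper convex partitions of $q(\Mon)$, and by Corollary~\ref{Cor_on p forking is equivalence}(b) the factor order $(q(\Mon)/\D_q,<_q)$ is dense.

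The implication (2)$\Rightarrow$(1) is immediate from the chain $\Lp_{\mathbf q}(a)\subset\Lp_{\mathbf q}(a)\cup\Dp_q(a)\subset\Lp_{\mathbf q}(a')$, the first inclusion being strict because $\Dp_q(a)\neq\emptyset$ is disjoint from $\Lp_{\mathbf q}(a)$. For (3)$\Rightarrow$(2), suppose $\Dp_q(a)<_q\Dp_q(a')$; then $\Dp_q(a)$ lies strictly below the final part $\Dp_q(a')\cup\Rp_{\mathbf q}(a')$, and since $\Lp_{\mathbf q}(a')$ is the initial part of $q(\Mon)$ complementary to this final part, we conclude $\Dp_q(a)\subseteq\Lp_{\mathbf q}(a')$; combined with $\Lp_{\mathbf q}(a)<_q\Dp_q(a)$ this yields $\Lp_{\mathbf q}(a)\cup\Dp_q(a)\subseteq\Lp_{\mathbf q}(a')$. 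Strictness follows from the density of the factor order, which supplies a $\D_q$-class strictly between $\Dp_q(a)$ and $\Dp_q(a')$, i.e.\ an element of $\Lp_{\mathbf q}(a')$ outside $\Lp_{\mathbf q}(a)\cup\Dp_q(a)$.

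The central step is (1)$\Rightarrow$(3). Starting from $\Lp_{\mathbf q}(a)\subset\Lp_{\mathbf q}(a')$, the equivalence of conditions (1) and (3) in Lemma~\ref{Prop_equivalents_of_a dep b} (applied with $r=q$) rules out $a\dep_A a'$, hence $a\ind_A a'$; Lemma~\ref{lem_square_implies_disjoit_Ds}(a) then gives $\Dp_q(a)\cap\Dp_q(a')=\emptyset$, so these are distinct elements of the factor order, and exactly one of $\Dp_q(a)<_q\Dp_q(a')$ and $\Dp_q(a')<_q\Dp_q(a)$ holds. The latter would force $\Dp_q(a')\subseteq\Lp_{\mathbf q}(a)\subseteq\Lp_{\mathbf q}(a')$, contradicting $\Dp_q(a')\cap\Lp_{\mathbf q}(a')=\emptyset$ and $\Dp_q(a')\neq\emptyset$; hence $\Dp_q(a)<_q\Dp_q(a')$, as required.

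I do not anticipate a genuine obstacle: once the partition structure and the $\D_q$-closedness of $\Lp_{\mathbf q}(a')$ (Lemma~\ref{lem rmk closed for Dp}) are in hand, the whole argument reduces to bookkeeping with convex subsets. The only conceptual point is that in (1)$\Rightarrow$(3) strict inclusion of the initial parts already forces $a\ind_A a'$ via Lemma~\ref{Prop_equivalents_of_a dep b}, after which Lemma~\ref{lem_square_implies_disjoit_Ds}(a) together with the convex partition uniquely determines the ordering of the $\D_q$-classes.
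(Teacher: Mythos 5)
Your proof is correct. The logical structure you chose is the reverse cycle of the paper's: where the paper proves $(1)\Rightarrow(2)\Rightarrow(3)\Rightarrow(1)$ with $(1)\Rightarrow(2)$ as the nontrivial step, you prove $(2)\Rightarrow(1)$, $(3)\Rightarrow(2)$, $(1)\Rightarrow(3)$ with $(1)\Rightarrow(3)$ as the nontrivial step. More substantively, for the key step you avoid Lemma \ref{Lema_supinf_LpRp}(b) (the characterization $\Dp_q(a)=\sup(\Lp_{\mathbf q}(a)/\D_q)$ in the factor order), which is what the paper leans on: instead you first extract $a\ind_A a'$ from the proper inclusion $\Lp_{\mathbf q}(a)\subset\Lp_{\mathbf q}(a')$ via Lemma \ref{Prop_equivalents_of_a dep b}$(1)\Leftrightarrow(3)$, then invoke Lemma \ref{lem_square_implies_disjoit_Ds}(a) to get $\Dp_q(a)\cap\Dp_q(a')=\emptyset$, and finish by a trichotomy argument in the convex partition $\Lp_{\mathbf q}(a)<_q\Dp_q(a)<_q\Rp_{\mathbf q}(a)$. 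Both proofs exploit the same underlying structure (the convex three-block partition from Lemma \ref{Cor_Lpp_north_partition} and density of $(q(\Mon)/\D_q,<_q)$), but yours is a little more combinatorial, routing through independence and disjointness rather than through the supremum characterization; the paper's is a little more compact because $\Dp_q(a)=\sup(\Lp_{\mathbf q}(a)/\D_q)$ carries most of the weight in a single stroke. The trade-off is that the supremum lemma bundles together information that your argument unpacks step by step, which some readers may find more transparent.
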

\begin{proof}
Recall that $\Rp_{\mathbf q}(a)^c=\Lp_{\mathbf q}(a)\cup\Dp_q(a)$ by Remark \ref{rmk_direct_properties}(a) and Lemma \ref{Cor_Lpp_north_partition}(b,c) as $p\nwor q$. If $p\fwor q$, then $\Dp_q(a)=\emptyset$, so conditions (1) and (2) are equivalent. 
Assume   $p\nfor q$. (2)$\Rightarrow$(3) and  (3)$\Rightarrow$(1) are easy, so we prove only
(1)$\Rightarrow$(2).  Assume $\Lp_{\mathbf q}(a)\subset \Lp_{\mathbf q}(a')$. By Corollary \ref{Cor_on p forking is equivalence}(b) $(q(\Mon)/\D_q,<_q)$ is a dense linear order and $\Lp_{\mathbf q}(a)/\D_q\subset \Lp_{\mathbf q}(a')/\D_q$ are its initial parts. By Lemma  \ref{Lema_supinf_LpRp}(b) we have $\Dp_q(a)=\sup(\Lp_{\mathbf q}(a)/\D_q)$, so  
 $\Dp_q(a)\subseteq \Lp_{\mathbf q}(a')$ and thus  $\Lp_{\mathbf q}(a)\cup \Dp_q(a)\subseteq \Lp_{\mathbf q}(a')$. It remains to prove $\Lp_{\mathbf q}(a)\cup \Dp_q(a)\neq \Lp_{\mathbf q}(a')$; otherwise, $\Dp_q(a')=\sup(\Lp_{\mathbf q}(a')/\D_q)$ would imply $\Dp_q(a)=\Dp_q(a')$ and  $\Lp_{\mathbf q}(a)=\Lp_{\mathbf q}(a')$; a contradiction. Therefore, condition (2) is satisfied. 
\end{proof}

As a corollary of Lemma \ref{Lema discussion direct nonorth} and Lemma \ref{Lema_R_q_komplement} we have:
\begin{cor}\label{Cor_delta_q_q*} If $\mathbf{p}=(p,<_p)$ and $\mathbf{q}=(q,<_q)$ are non-orthogonal \so-pairs over $A$, then 
exactly one of $\delta(\mathbf{p},\mathbf{q})$ and   $\delta(\mathbf{p},\mathbf{q}^*)$ holds, where $\mathbf q^*=(q,>_q)$. 
\end{cor}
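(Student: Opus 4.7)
The plan is to reduce the statement to Lemma \ref{Lema discussion direct nonorth} via the observation that reversing the order on $q$ swaps the roles of $\frak q_l$ and $\frak q_r$. First I would observe that by Lemma \ref{lem prop_left_right_basics}(d), $\mathbf{q}^{*}=(q,>_q)$ is again an \so-pair over $A$ and $\{\frak q_l^{*},\frak q_r^{*}\}=\{\frak q_l,\frak q_r\}$; since $>_q$ is the reverse of $<_q$, we clearly have $\frak q_l^{*}=\frak q_r$ and $\frak q_r^{*}=\frak q_l$, so $\Lp_{\mathbf{q}^{*}}(a)=\Rp_{\mathbf{q}}(a)$ and $\Rp_{\mathbf{q}^{*}}(a)=\Lp_{\mathbf{q}}(a)$ for every $a\in\Mon$.

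Next I would apply Lemma \ref{Lema discussion direct nonorth} to $\mathbf{p}$ and $\mathbf{q}$: since $\mathbf{p}\nwor\mathbf{q}$, exactly one of the conditions (1) and (2) from that lemma holds. Condition (1) is, by definition, $\delta(\mathbf{p},\mathbf{q})$. It therefore suffices to show that condition (2) (namely $\Lp_{\mathbf{q}}(a_2)\subseteq\Lp_{\mathbf{q}}(a_1)$ for all $a_1<_p a_2$ realizing $p$) is equivalent to $\delta(\mathbf{p},\mathbf{q}^{*})$, i.e.\ to $\Rp_{\mathbf{q}}(a_1)\subseteq\Rp_{\mathbf{q}}(a_2)$ for all such $a_1<_p a_2$.

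For independent pairs $a_1\ind_A a_2$, this equivalence follows from Lemma \ref{Lema_R_q_komplement}: the inclusion $\Lp_{\mathbf{q}}(a_2)\subset\Lp_{\mathbf{q}}(a_1)$ is equivalent to $\Lp_{\mathbf{q}}(a_2)\cup\Dp_q(a_2)\subset\Lp_{\mathbf{q}}(a_1)$, and taking complements in the disjoint partition $q(\Mon)=\Lp_{\mathbf{q}}(a)\sqcup\Dp_q(a)\sqcup\Rp_{\mathbf{q}}(a)$ (which is disjoint because $p\nwor q$, by Lemma \ref{Cor_Lpp_north_partition}(b,c)) this reads $\Rp_{\mathbf{q}}(a_1)\subset\Dp_q(a_2)\cup\Rp_{\mathbf{q}}(a_2)$, equivalently $\Rp_{\mathbf{q}}(a_1)\cup\Dp_q(a_1)\subset\Rp_{\mathbf{q}}(a_2)$, which (by applying Lemma \ref{Lema_R_q_komplement} to $\mathbf{q}^{*}$) is equivalent to $\Rp_{\mathbf{q}}(a_1)\subset\Rp_{\mathbf{q}}(a_2)$. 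For dependent pairs $a_1\dep_A a_2$, the triples $(\Lp_{\mathbf{q}}(a_i),\Dp_q(a_i),\Rp_{\mathbf{q}}(a_i))$ coincide by Lemma \ref{Prop_equivalents_of_a dep b}, so both inclusions become equalities and hold trivially.

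The bookkeeping with complements is the only mildly delicate point; there is no genuine obstacle, since Lemma \ref{Lema_R_q_komplement} is precisely designed to let one pass between $\Lp_{\mathbf{q}}$- and $\Rp_{\mathbf{q}}$-inclusions. Combining the three steps gives that exactly one of $\delta(\mathbf{p},\mathbf{q})$ and $\delta(\mathbf{p},\mathbf{q}^{*})$ holds, as required.
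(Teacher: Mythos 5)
Your proof is correct and follows essentially the same approach as the paper: both arguments observe that $\q_l^{*}=\q_r$, invoke the dichotomy of Lemma \ref{Lema discussion direct nonorth}, and pass from $\Lp$-inclusions to $\Rp$-inclusions via Lemma \ref{Lema_R_q_komplement} and complementation. The paper's write-up is marginally shorter (it fixes a single independent pair and closes with symmetry, where you spell out the full chain of equivalences and treat the dependent case separately), but the underlying argument is the same.
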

\begin{proof}
Notice that $\q_l^*= \q_r$ and $\q_r^*=\q_l$. Fix independent realizations  $a<_pa'$ of $p$, and assume $\lnot\delta(\mathbf p,\mathbf q)$. By Lemma \ref{Lema discussion direct nonorth} we have $\Lp_{\mathbf q}(a')\subset\Lp_{\mathbf q}(a)$, so by Lemma \ref{Lema_R_q_komplement} we have $\Rp_{\mathbf q}(a')^c\subset\Lp_{\mathbf q}(a)$, i.e.\ $\Rp_{\mathbf q}(a')^c\subset\Rp_{\mathbf q}(a)^c$ since $\Lp_{\mathbf q}(a)\subseteq\Rp_{\mathbf q}(a)^c$ by Lemma \ref{Cor_Lpp_north_partition}. Thus $\Rp_{\mathbf q}(a)\subset\Rp_{\mathbf q}(a')$, i.e.\ $\Lp_{\mathbf q^*}(a)\subset\Lp_{\mathbf q^*}(a')$ as $\q_l^*=\q_r$. Therefore, $\delta(\mathbf p,\mathbf q^*)$ by Lemma \ref{Lema discussion direct nonorth}. By symmetry, $\lnot\delta(\mathbf p,\mathbf q^*)$ implies $\delta(\mathbf p,\mathbf q)$ as $\mathbf q^{**}=\mathbf q$, so exactly one of $\delta(\mathbf p,\mathbf q)$ and $\delta(\mathbf p,\mathbf q^*)$ holds.
\end{proof}

\begin{lem}\label{Rmk dfn delta}  Suppose that $\mathbf{p}=(p,<_p)$ and $\mathbf{q}=(q,<_q)$ are non-orthogonal \so-pairs over $A$. 
\begin{enumerate}[(a)]
\item  If    $p\nfor q$, then 
 $\delta(\mathbf p,\mathbf q)$ if and only if the mapping $\Dp_p(x)\longmapsto \Dp_q(x)$ is an isomorphism between $(p(\Mon)/\Dp_p,<_p)$ and $(q(\Mon)/\Dp_q,<_q)$. 

\item If $\mathbf p'=(p,<)$ is an \so-pair over $A$, then $\delta(\mathbf p,\mathbf p')$ if and only if $<_p$ and $<$ agree on pairs of independent realizations of $p$.
\end{enumerate}
\end{lem}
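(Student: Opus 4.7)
My plan for part (a) is to realise $\delta(\mathbf p,\mathbf q)$ as an order-comparison of $\D$-classes via Lemma~\ref{Lema_R_q_komplement}, after which part (b) will follow as the essentially tautological case in which the quotient map becomes the identity. The first step is to verify that $\Dp_p(x)\mapsto\Dp_q(x)$ is a well-defined bijection between $p(\Mon)/\D_p$ and $q(\Mon)/\D_q$. Well-definedness on the source is the implication (1)$\Rightarrow$(4) of Lemma~\ref{Prop_equivalents_of_a dep b}. To see that $\Dp_q(x)$ is an honest $\D_q$-class (and not merely a $\D_q$-closed subset of $q(\Mon)$), I would use $p\nfor q$ to pick some $y\models q$ with $y\dep_A x$ and then combine the $\D_q$-closedness of $\Dp_q(x)$ from Lemma~\ref{lem rmk closed for Dp}(a) (which gives $\Dp_q(y)\subseteq\Dp_q(x)$) with transitivity of $\dep_A$ from Corollary~\ref{Cor_forking_transitivity} (which gives the reverse inclusion). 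Injectivity of the map is another instance of transitivity, and surjectivity uses $p\nfor q$ once more.

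The heart of (a) is to translate $\delta(\mathbf p,\mathbf q)$ into order-preservation of the resulting bijection. By Lemma~\ref{Lema discussion direct nonorth} it suffices to consider independent pairs $a_1<_p a_2$ from $p$, for which Lemma~\ref{lem rmk closed for Dp}(c) yields $\Dp_p(a_1)<_p\Dp_p(a_2)$ and Lemma~\ref{Lema_R_q_komplement} (the equivalence (1)$\Leftrightarrow$(3), valid since $p\nfor q$) translates the strict inclusion $\Lp_{\mathbf q}(a_1)\subset\Lp_{\mathbf q}(a_2)$ into the strict inequality $\Dp_q(a_1)<_q\Dp_q(a_2)$. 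Thus $\delta(\mathbf p,\mathbf q)$ holds iff the bijection sends every strictly increasing pair in $(p(\Mon)/\D_p,<_p)$ to a strictly increasing pair in $(q(\Mon)/\D_q,<_q)$, which, since we already have a bijection between linear orders, is exactly what it means to be an order-isomorphism.

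For part (b) I would invoke (a) with $\mathbf q:=\mathbf p'$, using that $p\nfor p$ in the non-algebraic case (the algebraic case is vacuous) and that $\D_p$ as an equivalence relation depends only on $\tp$ and $A$, not on the witnessing order. The bijection obtained is then the identity on $p(\Mon)/\D_p$, so being an order-isomorphism between the two induced quotient orders collapses to the condition that these quotient orders agree. Since $\D_p$-classes are convex with respect to both $<_p$ and $<$ by Corollary~\ref{Cor_on p forking is equivalence} applied to each \so-pair, and independent realisations occupy distinct classes by Lemma~\ref{Lema_ Dpa=Dpa' iff a dep a'}, agreement of the quotient orders is precisely agreement of $<_p$ and $<$ on pairs of independent realisations of $p$. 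The main obstacle is really that very first step of (a) — making sure $\Dp_q(x)$ is a single $\D_q$-class so that the map genuinely lands in the target quotient; once this is in place, the rest is a direct application of earlier lemmas.
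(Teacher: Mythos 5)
Your proof is correct and follows essentially the same route as the paper: establish that $\Dp_p(x)\mapsto\Dp_q(x)$ is a well-defined bijection using Lemma~\ref{Prop_equivalents_of_a dep b}, transitivity of $\dep_A$, and $p\nfor q$, and then translate $\delta(\mathbf p,\mathbf q)$ into monotonicity of that bijection via Lemma~\ref{Lema discussion direct nonorth} together with the equivalence (1)$\Leftrightarrow$(3) in Lemma~\ref{Lema_R_q_komplement}, finally deducing (b) from (a) by taking $\mathbf q=\mathbf p'$. The only cosmetic difference is that you spell out explicitly, inside the proof, that $\Dp_q(x)$ is an honest $\D_q$-class (via $\D_q$-closedness plus transitivity), whereas the paper disposes of this point in the short discussion immediately preceding the lemma; the mathematical content is the same.
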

\begin{proof}
(a) Assume $p\nfor q$.  By Lemma \ref{Prop_equivalents_of_a dep b} for $a, b\in p(\Mon)\cup q(\Mon)$   we have:
\begin{center}
$a\dep_Ab$ \ \ iff \ \ $\Dp_p(a)=\Dp_p(b)$ \ \ iff \ \ $\Dp_q(a)=\Dp_q(b)$.
\end{center} 
This   implies that the given mapping is well-defined; it is onto by saturation of $\Mon$ and it is injective  because $\dep_A$ is symmetric and transitive. Now assume $\delta(\mathbf p,\mathbf q)$ and $\Dp_p(a)<_p\Dp_p(a')$. Then $\Lp_{\mathbf q}(a)\subset \Lp_{\mathbf q}(a')$ by Lemma \ref{Lema discussion direct nonorth}, so by Lemma \ref{Lema_R_q_komplement} we have $\Dp_q(a)<_q \Dp_q(a')$. Hence our mapping is strictly increasing; it is an order isomorphism. This proves one direction of the equivalence in (b); the other is proved similarly.  Part  (b)    follows  easily from (a).
\end{proof}

\begin{rmk}\label{rmk new defn direct}
If  $\mathbf{p}=(p,<_p)$ and  $\mathbf{p'}=(p,<)$ are  \so-pairs  over $A$, then  $p\nfor p$ and by Lemma \ref{Rmk dfn delta}(b)  we have: $\delta(\mathbf{p},\mathbf{p'})$   if and only if  $<$ and $<_p$ agree on independent realizations of $p$.  Hence:

--  \ \ $\delta(\mathbf{p},\mathbf{p'})$ if and only if  
 $\frak p_l=\frak p_l'$ (equivalently, $\frak p_r=\frak p_r'$);
 
--   $\lnot \delta(\mathbf{p},\mathbf{p'})$   if and only if 
 $\frak p_l=\frak p_r'$ (equivalently, $\frak p_r=\frak p_l'$). 
\end{rmk}

Direct non-orthogonality of $\mathbf{p}$ and $\mathbf{q}$, when $p\nwor q$, may be equivalently expressed by:
\begin{center}
 $(\Lp_{\mathbf{q}}(x)\mid x\in p(\Mon))$ is an increasing sequence of initial parts of $(q(\Mon),<_q)$.
\end{center} 
When stated in that way it suggests a connection with certain monotone relations.

\begin{dfn}\label{Def_Spq} For  \so-pairs $\mathbf{p}$ and $\mathbf{q}$ over the same domain let  $S_{\mathbf{p},\mathbf{q}}\subseteq p(\Mon)\times q(\Mon)$ be the relation defined by $x\in\Lp_{\mathbf{p}}(y)$. 
\end{dfn} 

\begin{rmk}\label{Rmk def Spq} Let   $\mathbf{p}=(p,<_p)$ and $\mathbf{q}=(q,<_q)$ be \so-pairs over $A$.

(a) $S_{\mathbf{p},\mathbf{q}}$ is  type-definable over $A$: it is the locus of $\tp(a,b/A)$ where $b\models q$ and $a\models \frak p_{l\strok Ab}$. 

(b) $S_{\mathbf{p},\mathbf{q}}$ is  a $\Dp_p\times\Dp_q$-closed subset of $p(\Mon)\times q(\Mon)$, i.e.\ $(a,b)\in S_{\mathbf{p},\mathbf{q}}$ implies $\Dp_p(a)\times\Dp_q(b)\subseteq S_{\mathbf{p},\mathbf{q}}$. Indeed, if $(a,b)\in S_{\mathbf{p},\mathbf{q}}$, then   $a\ind_Ab$, so if $a\dep_Aa'$ and $b\dep_A b'$ for $a'\models p$ and $b'\models q$, then by applying Lemma \ref{lem rmk closed for Dp}(b) twice we get $ab\equiv a'b\equiv a'b'\,(A)$ and   $(a',b')\in S_{\mathbf{p},\mathbf{q}}$ follows.
\end{rmk}

\begin{lem}\label{Lema_direct_non-orth}
Suppose that $\mathbf{p}=(p,<_p)$ and $\mathbf{q}=(q,<_q)$ are non-orthogonal  \so-pairs over $A$. Then the following conditions are equivalent:

(1) \   $\delta(\mathbf{p},\mathbf{q})$; 

(2) \ For all $a\models p$ and $b\models q$:  \ $a\in\Lp_{\mathbf{p}}(b)$ if and only if $b\in\Rp_{\mathbf{q}}(a)$; 

(3)  \  $S_{\mathbf{p},\mathbf{q}}$ is a monotone relation between $(p(\Mon),<_p)$ and $(q(\Mon),<_q)$;

(4) \ $\delta(\mathbf{q},\mathbf{p})$.
\end{lem}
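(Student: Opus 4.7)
The plan is to organize the proof as a cycle $(1) \Rightarrow (2) \Rightarrow (4) \Rightarrow (1)$, together with the direct equivalence $(3) \Leftrightarrow (4)$, which I dispose of first.

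For $(3) \Leftrightarrow (4)$, I use that the fibers $S_{\mathbf{p},\mathbf{q}}(p(\Mon), b) = \Lp_{\mathbf{p}}(b)$ are initial parts of $(p(\Mon), <_p)$ by Lemma~\ref{lem prop_left_right_basics}(b); Lemma~\ref{Fact_monotonicity} then identifies monotonicity of $S_{\mathbf{p},\mathbf{q}}$ with these initial fibers being increasing in $b \in q(\Mon)$, which is exactly the definition of $\delta(\mathbf{q},\mathbf{p})$.

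The main content is $(1) \Rightarrow (2)$. When $a \dep_A b$, both $a \in \Lp_{\mathbf{p}}(b)$ and $b \in \Rp_{\mathbf{q}}(a)$ fail, so (2) holds trivially; I therefore assume $a \ind_A b$ and aim to exclude the two ``misaligned'' configurations $a \in \Lp_{\mathbf{p}}(b) \land b \in \Lp_{\mathbf{q}}(a)$ and $a \in \Rp_{\mathbf{p}}(b) \land b \in \Rp_{\mathbf{q}}(a)$. For the first, I realize $a' \models \p_{r\strok Aab}$, so that $a <_p a'$, $a' \ind_A ab$, and $a' \in \Rp_{\mathbf{p}}(b)$ by restriction of the global type $\p_r$ to $Ab$; invoking $\delta(\mathbf{p},\mathbf{q})$ gives $\Lp_{\mathbf{q}}(a) \subseteq \Lp_{\mathbf{q}}(a')$, whence $b \in \Lp_{\mathbf{q}}(a')$ too. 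The key observation is that both $(a,b)$ and $(a',b)$ now realize the same $A$-type: the unique completion of $p(x) \cup q(y)$ singled out by $\tp(y/Ax) = \q_{l\strok Ax}$, which is well-defined since $\q_l$ is $A$-invariant. Hence $\tp(a,b/A) = \tp(a',b/A)$, which forces $\tp(a/Ab) = \tp(a'/Ab)$. But by construction $\tp(a/Ab) = \p_{l\strok Ab}$ and $\tp(a'/Ab) = \p_{r\strok Ab}$, and these are distinct under $p \nwor q$ (otherwise Lemma~\ref{Cor_Lpp_north_partition}(a) would yield $\Lp_{\mathbf{p}}(b) = p(\Mon)$ and hence $p \wor q$), a contradiction. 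The second configuration is excluded by the mirror argument using $a' \models \p_{l\strok Aab}$ in place of $\p_r$.

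For $(2) \Rightarrow (4)$, I take $b_1 <_q b_2$ realizing $q$ and any $a \in \Lp_{\mathbf{p}}(b_1)$; applying (2) yields $b_1 \in \Rp_{\mathbf{q}}(a)$; since $\Rp_{\mathbf{q}}(a)$ is a final part of $(q(\Mon),<_q)$ by Lemma~\ref{lem prop_left_right_basics}(b), we obtain $b_2 \in \Rp_{\mathbf{q}}(a)$; and another application of (2) yields $a \in \Lp_{\mathbf{p}}(b_2)$. Finally, $(4) \Rightarrow (1)$ is obtained by running the $(1) \Rightarrow (2)$ and $(2) \Rightarrow (4)$ arguments with the roles of $\mathbf{p}$ and $\mathbf{q}$ interchanged: from $\delta(\mathbf{q},\mathbf{p})$ one first derives the mirror biconditional $b \in \Lp_{\mathbf{q}}(a) \Leftrightarrow a \in \Rp_{\mathbf{p}}(b)$, and then uses that $\Rp_{\mathbf{p}}(b)$ is a final part of $(p(\Mon),<_p)$ to conclude $\delta(\mathbf{p},\mathbf{q})$.

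The main obstacle is the collapse step in $(1) \Rightarrow (2)$: one must verify that both $(a,b)$ and $(a',b)$ genuinely realize the same $A$-type (the one given by the $A$-invariant product with $\q_l$ on the second coordinate), and then read off the resulting equality of $A$-types as incompatible with the $\p_l/\p_r$ split over $Ab$ supplied by $p \nwor q$.
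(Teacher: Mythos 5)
Your proof is correct and follows essentially the same approach as the paper's: the core of $(1)\Rightarrow(2)$ is the same "choose $a'$ realizing a nonforking extension over $Aab$, apply $\delta(\mathbf p,\mathbf q)$ to get a coincidence of $A$-types over $b$, and contradict $\p_{l\strok Ab}\ne\p_{r\strok Ab}$" argument, and the remaining implications are handled by the same monotonicity/symmetry observations. The only differences are cosmetic: you prove both directions of the biconditional in (2) by ruling out the two misaligned configurations, where the paper shortcuts to one direction by first observing that each side determines a complete extension of $p(x)\cup q(y)$; and you dispose of $(3)\Leftrightarrow(4)$ directly from Lemma~\ref{Fact_monotonicity} rather than routing through $(2)\Rightarrow(3)\Rightarrow(4)$.
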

\begin{proof} 
(1)$\Rightarrow$(2) Notice that each of $x\in\Lp_{\mathbf{p}}(y)$ and $y\in\Rp_{\mathbf{q}}(x)$ determines a complete extension of $p(x)\cup q(y)$. Hence in order to prove the equivalence in (2), it suffices to prove only one of its directions. So suppose that (1) holds and let $a\models p$ and $b\models q$. Assuming  $a\in\Lp_{\mathbf{p}}(b)$ we will prove $b\in\Rp_{\mathbf{q}}(a)$. Let $a'\in\Rp_{\mathbf{p}}(ba)$.   Then $a'\in\Rp_{\mathbf{p}}(b)$ and  $a\in\Lp_{\mathbf{p}}(b)$ imply  $ab\not\equiv a'b\,(A)$. Also, $a'\in\Rp_{\mathbf{p}}(a)$ implies  $a<_pa'$. By direct non-orthogonality, we have  $\Lp_{\mathbf{q}}(a)\subset\Lp_{\mathbf{q}}(a')$, so if $b\in\Lp_{\mathbf{q}}(a)$ were true, we would have $b\in\Lp_{\mathbf{q}}(a')$ and hence $ba\equiv ba'\,(A)$; contradiction. Hence $b\notin\Lp_{\mathbf{q}}(a)$. On the other hand, $a\ind_A b$ implies $b\notin\Dp_q(a)$, so the only remaining possibility is $b\in\Rp_{\mathbf{q}}(a)$. 

\smallskip
(2)$\Rightarrow$(3) \ Suppose that condition  (2) is satisfied. To prove the monotonicity assume that  $a'<_pa$, $b<_qb'$ and $(a,b)\in S_{\mathbf{p},\mathbf{q}}$. The latter means $a\in\Lp_{\mathbf{p}}(b)$ from which, by (2), we derive $b\in\Rp_{\mathbf{q}}(a)$. Then,  since  $\Rp_{\mathbf{q}}(a)$ is a final part and  $b<_qb'$, we have $b'\in \Rp_{\mathbf{q}}(a)$. By applying (2) again we get $a\in \Lp_{\mathbf{p}}(b')$ which,   since $\Lp_{\mathbf{p}}(b')$ is an initial part and $a'<_pa$ holds, implies $a'\in\Lp_{\mathbf{p}}(b')$. Hence $(a',b')\in S_{\mathbf{p},\mathbf{q}}$ and $S_{\mathbf{p},\mathbf{q}}$ is a monotone relation.  

\smallskip
(3)$\Rightarrow$(4) Suppose that $S_{\mathbf{p},\mathbf{q}}$ is a monotone relation. Then $a\in \Lp_{\mathbf{p}}(b)$ and $b<_qb'$ imply $a\in\Lp_{\mathbf{p}}(b')$. Hence  $\Lp_{\mathbf{p}}(b)\subseteq \Lp_{\mathbf{p}}(b')$ holds for all $b<_qb'$ realizing $q$, i.e.\ $\delta(\mathbf{q},\mathbf{p})$ holds.  

\smallskip
(4)$\Rightarrow$(1) By now, we have proven (1)$\Rightarrow$(4) and hence the other direction holds, too.    
\end{proof}

\begin{lem}\label{Lema_delta_implies_S+pq_commute}
Suppose that $\mathbf{p}=(p,<_p)$, $\mathbf{q}=(q,<_q)$ and   $\mathbf{r}=(r,<_r)$ are stationarily ordered pairs  over $A$ such that    $\delta(\mathbf{q},\mathbf{p})$ holds.
Then  $S_{\mathbf{q},\mathbf{r}}\circ S_{\mathbf{p},\mathbf{q}}=S_{\mathbf{p},\mathbf{r}}$\,.
\end{lem}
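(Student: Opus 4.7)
My plan is to prove both inclusions of the equality $S_{\mathbf{q},\mathbf{r}}\circ S_{\mathbf{p},\mathbf{q}}=S_{\mathbf{p},\mathbf{r}}$. The main tool is Lemma \ref{Lema_direct_non-orth}: from $\delta(\mathbf{q},\mathbf{p})$, which is equivalent to $\delta(\mathbf{p},\mathbf{q})$, I get the reformulation $(a,b)\in S_{\mathbf{p},\mathbf{q}}$ iff $b\in\Rp_{\mathbf{q}}(a)$, so a witness $b$ for membership of $(a,c)$ in the composition is exactly an element of $\Rp_{\mathbf{q}}(a)\cap\Lp_{\mathbf{q}}(c)$. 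I will translate the question into the factor order $(q(\Mon)/\D_q,<_q)$ and use the $\delta(\mathbf{q},\mathbf{p})$-induced order isomorphism between $(p(\Mon)/\D_p,<_p)$ and $(q(\Mon)/\D_q,<_q)$ provided by Lemma \ref{Rmk dfn delta}(a).

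For the inclusion $\subseteq$, given such a witness $b$, Lemma \ref{Lema_supinf_LpRp}(b) places $\Dp_q(b)$ strictly above $\Dp_q(a)=\inf(\Rp_{\mathbf{q}}(a)/\D_q)$ and strictly below $\Dp_q(c)=\sup(\Lp_{\mathbf{q}}(c)/\D_q)$; hence $\Dp_q(a)<_q\Dp_q(c)$, and transporting via the $\delta$-isomorphism yields $\Dp_p(a)<_p\Dp_p(c)$, whence $a\in\Lp_{\mathbf{p}}(c)$, i.e.\ $(a,c)\in S_{\mathbf{p},\mathbf{r}}$.

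For the inclusion $\supseteq$, I will read off $\Dp_p(a)<_p\Dp_p(c)$ from $a\in\Lp_{\mathbf{p}}(c)$, lift it through the $\delta$-isomorphism to $\Dp_q(a)<_q\Dp_q(c)$, use density of the factor order (Corollary \ref{Cor_on p forking is equivalence}(b)) to pick an intermediate $\D_q$-class, and realize it in $q(\Mon)$ by saturation of $\Mon$ to produce the required $b\in\Rp_{\mathbf{q}}(a)\cap\Lp_{\mathbf{q}}(c)$.

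The main technical obstacle is that the description above presumes all three types are pairwise $\nfor$, so the relevant $\D$-classes are non-empty and the $\delta$-isomorphism is defined on them. When one of the pairs is instead $\fwor$ but still $\nwor$, the corresponding boundary in $(q(\Mon)/\D_q,<_q)$ is not a $\D_q$-class but a cut in its Dedekind completion with no supremum or infimum, by Lemma \ref{Lema_supinf_LpRp}(a); the argument then proceeds in the completion, with a short case analysis on the $\nfor$-class partition of $\{p,q,r\}$. The remaining trivial case $r\wor p$ (equivalently $r\wor q$) is handled separately: then $\Lp_{\mathbf{p}}(c)=p(\Mon)$ and $\Lp_{\mathbf{q}}(c)=q(\Mon)$, so both sides of the claimed equality equal $p(\Mon)\times r(\Mon)$.
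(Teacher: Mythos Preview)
Your approach differs substantially from the paper's and has a genuine gap in the general case.

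The paper's proof is short and uniform in the $\nfor$-structure of $\{p,q,r\}$. For $S_{\mathbf{p},\mathbf{r}} \subseteq S_{\mathbf{q},\mathbf{r}}\circ S_{\mathbf{p},\mathbf{q}}$, given $a \in \Lp_{\mathbf{p}}(c)$ one simply picks $b_0 \in \Lp_{\mathbf{q}}(c)$ and $a_0 \in \Lp_{\mathbf{p}}(cb_0)$, and transports $(a_0,b_0)$ to a witness $(a,b)$ via an $Ac$-automorphism (using $a_0c\equiv ac\,(A)$). For the reverse inclusion, assuming $a \notin \Lp_{\mathbf{p}}(c)$ yet $a \in \Lp_{\mathbf{p}}(b)$ and $b \in \Lp_{\mathbf{q}}(c)$, one gets $\Lp_{\mathbf{p}}(c) \subsetneq \Lp_{\mathbf{p}}(b)$, finds by saturation some $b'\models q$ with $\Lp_{\mathbf{p}}(b') \subsetneq \Lp_{\mathbf{p}}(c)$, uses $\delta(\mathbf{q},\mathbf{p})$ to get $b' <_q b$, hence $b'\in\Lp_{\mathbf q}(c)$, and derives a contradiction from $b'c \equiv bc\,(A)$. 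No reference to $\nfor$ vs.\ $\fwor$, to $\Dp_q(a)$ or $\Dp_q(c)$, or to completions is needed.

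Your argument via factor orders is correct when all three types lie in a single $\nfor$-class: then Lemma~\ref{Rmk dfn delta}(a) gives a genuine isomorphism $(p(\Mon)/\D_p,<_p)\cong(q(\Mon)/\D_q,<_q)$, and Lemma~\ref{Prop_equivalents_of_a dep b} ensures it sends $\Dp_p(c)$ to $\Dp_q(c)$, so the transport step is legitimate. But in the shuffled cases the ``transport via the $\delta$-isomorphism'' step collapses. When $p\fwor q$ there is no order-isomorphism of factor orders; there is (eventually, via Proposition~\ref{prop_shuffled case general} and Lemma~\ref{Lema_shufling 2 orders}) a shuffling-induced isomorphism of Dedekind completions, but to run your argument you would need that isomorphism to carry the cut $\Lp_{\mathbf{p}}(c)/\D_p$ to the cut $\Lp_{\mathbf{q}}(c)/\D_q$. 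Unwinding the definition of that completion isomorphism, this compatibility is precisely the statement: $a\in\Lp_{\mathbf{p}}(c)$ iff there exists $b\in\Lp_{\mathbf{q}}(c)$ with $a\in\Lp_{\mathbf{p}}(b)$ --- which is the lemma you are proving. The same circularity already arises in the case $p\nfor q$ but $r\fwor p$ (hence $r\fwor q$): the extension of the $\delta$-isomorphism to completions does not \emph{a priori} send the irrational cut determined by $c$ in $p(\Mon)/\D_p$ to the irrational cut determined by $c$ in $q(\Mon)/\D_q$; that is again exactly the content of the lemma.

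So outside the all-$\nfor$ case your completion route, as sketched, is circular. The paper's automorphism/contradiction argument sidesteps this entirely by never leaving the level of the sets $\Lp_{\mathbf p}(\cdot)$ themselves.
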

\begin{proof}
We have to show that for all $a\models p$ and $c\models r$ the following holds:
\begin{center}
 $a\in \Lp_{\mathbf{p}}(c)$ \ if and only if \ there exists $b\in\Lp_{\mathbf{q}}(c)$ such that $a\in\Lp_{\mathbf{p}}(b)$. 
 \end{center} 
For the  left-to-right direction, assume that $a\in\Lp_{\mathbf{p}}(c)$. Choose  $b_0\in\Lp_{\mathbf{q}}(c)$ and $a_0\in\Lp_{\mathbf{p}}(cb_0)$. Then $a_0c\equiv ac(A)$  and any $b$ satisfying $a_0b_0c\equiv abc(A)$ witnesses that the condition on the right hand side is fulfilled. 

Suppose that the other direction does not hold: there exist $a\notin\Lp_{\mathbf{p}}(c)$ and $b\models q$  such that $b\in\Lp_{\mathbf{q}}(c)$ and $a\in\Lp_{\mathbf{p}}(b)$. Then, since $\Lp_{\mathbf{p}}(c)$ and $\Lp_{\mathbf{p}}(b)$ are initial parts, we have $\Lp_{\mathbf{p}}(c)\subset \Lp_{\mathbf{p}}(b)$. By saturation there is $b'$ realizing $q$ such that $\Lp_{\mathbf{p}}(b')\subset\Lp_{\mathbf{p}}(c)$. Then $\delta(\mathbf{q},\mathbf{p})$ implies $b'<_qb$. Since $\Lp_{\mathbf{q}}(c)$ is an initial part containing $b$, we have $b'\in\Lp_{\mathbf{q}}(c)$ and  hence $bc\equiv b'c(A)$. But this is in contradiction with $\Lp_{\mathbf{p}}(b')\subset \Lp_{\mathbf{p}}(c)\subset\Lp_{\mathbf{p}}(b)$.
\end{proof}

\begin{prop} 
$\delta$ is an equivalence relation on   the set of all \so-pairs over a fixed domain. It refines the   $\nwor$-equivalence   and  splits   each $\nwor$-class into  two $\delta$-classes.  
 \end{prop}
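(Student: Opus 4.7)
The plan is to verify the three equivalence-relation axioms and then derive the refinement statement from Corollary \ref{Cor_delta_q_q*}. The symmetry axiom is already essentially available as the equivalence (1)$\Leftrightarrow$(4) of Lemma \ref{Lema_direct_non-orth}, so the only real work is reflexivity and transitivity, plus assembling the splitting statement.

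For \emph{reflexivity}, given an \so-pair $\mathbf p=(p,<_p)$ I need $\Lp_{\mathbf p}(a_1)\subseteq \Lp_{\mathbf p}(a_2)$ whenever $a_1<_p a_2$ realize $p$. If $a_1\dep_A a_2$, these two initial parts coincide by Lemma \ref{Prop_equivalents_of_a dep b}. If $a_1\ind_A a_2$, then by Lemma \ref{lem rmk closed for Dp}(c) we have $a_1\in\Lp_{\mathbf p}(a_2)$; since $\Lp_{\mathbf p}(a_2)$ is a $\Dp_p$-closed initial part, it contains $\Dp_p(a_1)$, and therefore contains the initial part $\Lp_{\mathbf p}(a_1)$ lying $<_p\Dp_p(a_1)$.

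For \emph{transitivity}, suppose $\delta(\mathbf p,\mathbf q)$ and $\delta(\mathbf q,\mathbf r)$. First $p\nwor r$ follows from transitivity of $\nwor$ (Proposition \ref{cor_nwor_fwor_transitivity}). By symmetry we also have $\delta(\mathbf q,\mathbf p)$, so Lemma \ref{Lema_delta_implies_S+pq_commute} gives the composition identity $S_{\mathbf q,\mathbf r}\circ S_{\mathbf p,\mathbf q}=S_{\mathbf p,\mathbf r}$. By Lemma \ref{Lema_direct_non-orth} the relations $S_{\mathbf p,\mathbf q}$ and $S_{\mathbf q,\mathbf r}$ are monotone; it is immediate from the definition of a monotone relation that a composition of monotone relations is monotone, so $S_{\mathbf p,\mathbf r}$ is monotone, and Lemma \ref{Lema_direct_non-orth} applied in the converse direction yields $\delta(\mathbf p,\mathbf r)$.

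That $\delta$ \emph{refines} $\nwor$ is built into Definition \ref{dfn direct}. For the \emph{splitting}, fix a $\nwor$-class and any representative $\mathbf p=(p,<_p)$ in it; write $\mathbf p^*=(p,>_p)$, which is again an \so-pair by Corollary \ref{Cor_ordered_nezavisi_odordera}. For any $\mathbf q$ in the same $\nwor$-class, Corollary \ref{Cor_delta_q_q*} says that exactly one of $\delta(\mathbf p,\mathbf q)$ and $\delta(\mathbf p,\mathbf q^*)$ holds; by symmetry and transitivity this says $\mathbf q$ is $\delta$-equivalent to exactly one of $\mathbf p$, $\mathbf p^*$. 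To see these two $\delta$-classes are genuinely distinct, apply Corollary \ref{Cor_delta_q_q*} with $\mathbf q=\mathbf p$: exactly one of $\delta(\mathbf p,\mathbf p)$, $\delta(\mathbf p,\mathbf p^*)$ holds, and reflexivity forces it to be the first, so $\lnot\delta(\mathbf p,\mathbf p^*)$. Thus the $\nwor$-class partitions into precisely the two $\delta$-classes $[\mathbf p]_\delta$ and $[\mathbf p^*]_\delta$.

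No step here looks like a real obstacle: reflexivity and the splitting count are genuinely short once Corollary \ref{Cor_delta_q_q*} is in hand, and transitivity is essentially bookkeeping around Lemma \ref{Lema_delta_implies_S+pq_commute} and the monotone-relation characterization of $\delta$ from Lemma \ref{Lema_direct_non-orth}. The mildly delicate point worth writing out carefully is that \emph{symmetric} use of Lemma \ref{Lema_delta_implies_S+pq_commute} requires $\delta(\mathbf q,\mathbf p)$ (not $\delta(\mathbf p,\mathbf q)$), which is exactly why the symmetry of $\delta$ established in Lemma \ref{Lema_direct_non-orth} must be invoked before the composition argument.
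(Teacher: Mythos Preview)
Your proof is correct and follows essentially the same approach as the paper: symmetry via Lemma~\ref{Lema_direct_non-orth}, transitivity via the composition identity of Lemma~\ref{Lema_delta_implies_S+pq_commute} together with the monotone-relation characterization, and the splitting via Corollary~\ref{Cor_delta_q_q*}. You simply give more detail than the paper does---spelling out reflexivity explicitly and verifying that the two $\delta$-classes are genuinely distinct---whereas the paper is content with ``reflexivity is clear'' and a one-line appeal to Corollary~\ref{Cor_delta_q_q*} for the splitting.
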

\begin{proof}
Reflexivity is clear and symmetry follows by Lemma \ref{Lema_direct_non-orth}.  For transitivity, assume $\delta(\mathbf p,\mathbf q)$ and $\delta(\mathbf q,\mathbf r)$. Then by Lemma \ref{Lema_delta_implies_S+pq_commute} $S_{\mathbf{q},\mathbf{r}}$ and  $S_{\mathbf{p},\mathbf{q}}$ are monotone relations and by Lemma \ref{Lema_delta_implies_S+pq_commute} $S_{\mathbf{q},\mathbf{r}}\circ S_{\mathbf{p},\mathbf{q}}=S_{\mathbf{p},\mathbf{r}}$. Since the compositions of monotone relations is a monotone relation, $S_{\mathbf{p},\mathbf{r}}$ is a monotone relation, so $\delta(\mathbf{p},\mathbf{r})$ follows by 
Lemma \ref{Lema_direct_non-orth}. Finally, Corollary \ref{Cor_delta_q_q*} implies that the $\nwor$-class of  $\mathbf p$ is split into  two $\delta$-classes: the one containing $\mathbf p$ and the other $\mathbf p^*$. 
\end{proof}
 
\begin{cor}\label{cor_direct_choice}  Let $T$ be a complete first order theory and $\mathcal F\subseteq S_1(\emptyset)$ any  set of pairwise non-weakly orthogonal, stationarily ordered types. Then  there is a choice of  definable  orderings  such that the \so-pairs $\{(p,<_p)\mid p\in\mathcal F\}$ are pairwise directly non-orthogonal. \qed
\end{cor}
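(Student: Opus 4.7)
The plan is to fix one pivot type, choose its ordering arbitrarily, and then bootstrap the remaining orderings through the pivot using the equivalence properties of $\delta$ already established.

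First, pick an arbitrary $p_0 \in \mathcal F$ together with any definable linear order $<_{p_0}$ that stationarily orders it; set $\mathbf p_0 = (p_0, <_{p_0})$. Next, for each $p \in \mathcal F \setminus \{p_0\}$, use that $p \nwor p_0$ and apply Corollary \ref{Cor_delta_q_q*} to the pair $(\mathbf p_0, \mathbf p')$, where $\mathbf p' = (p, <)$ for some fixed definable ordering $<$ stationarily ordering $p$: exactly one of $\delta(\mathbf p_0, \mathbf p')$ and $\delta(\mathbf p_0, (\mathbf p')^*)$ holds. Define $\mathbf p$ to be whichever of $\mathbf p'$ or $(\mathbf p')^*$ makes this a direct non-orthogonality with $\mathbf p_0$. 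By Lemma \ref{lem prop_left_right_basics}(d), reversing the order gives another \so-pair, so $\mathbf p$ is indeed an \so-pair.

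Now for the last step, take any two types $p, q \in \mathcal F$. If either equals $p_0$ we are done by construction. Otherwise, we have arranged that $\delta(\mathbf p_0, \mathbf p)$ and $\delta(\mathbf p_0, \mathbf q)$ both hold. Since $\delta$ is an equivalence relation on \so-pairs over $\emptyset$ (just proved immediately above this corollary), we conclude $\delta(\mathbf p, \mathbf q)$ by symmetry and transitivity. Hence the chosen family $\{(p,<_p) \mid p \in \mathcal F\}$ is pairwise directly non-orthogonal, as required.

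There is no real obstacle here — the work has been done in the preceding results. The only thing to double-check is that Corollary \ref{Cor_delta_q_q*} genuinely gives a binary alternative (not just ``at most one''); inspecting its proof confirms that exactly one of the two options occurs, which is precisely what makes the pivot construction succeed. If one wanted to eliminate dependence on a pivot, one could alternatively argue by Zorn's lemma on partial selections of orderings that are pairwise directly non-orthogonal, but the pivot argument is cleaner and more explicit.
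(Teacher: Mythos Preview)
Your proposal is correct and is essentially the argument the paper has in mind: the paper marks this corollary with \qed, treating it as immediate from the preceding proposition that $\delta$ is an equivalence relation splitting each $\nwor$-class of \so-pairs into exactly two $\delta$-classes. Your pivot construction is just the explicit way of selecting one of those two $\delta$-classes and, for each $p\in\mathcal F$, choosing the orientation of $<_p$ that lands $(p,<_p)$ in it.
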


\section{Dp-minimality and stationarily ordered theories}\label{s dp}

In this section we prove that any theory which is stationarily ordered in a strong sense has to be  dp-minimal  and satisfy tp=Lstp; the latter property motivated us to choose the name:  stationarily ordered types. It would be interesting to know if the converse  is true or not:

\begin{question}
Is every dp-minimal ordered theory satisfying tp=Lstp stationarily ordered?
\end{question}

We will not recall the  original definition of dp-minimality, but rather a characterisation due to Kaplan and Simon which will be used in the proof.

\begin{fact}[\cite{KS}, Corollary 1.7]\label{dp fact char} A theory $T$ is dp-minimal if and only if for all   $A$, all $A$-mutually indiscernible sequences of {\it singletons} $I$ and $J$, and all singletons $c$, at least one of $I$ and $J$ is $Ac$-indiscernible.
\end{fact}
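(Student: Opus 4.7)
The plan is to prove the two implications separately, using the standard ict-pattern definition of dp-minimality: $T$ is dp-minimal iff there is no ict-pattern of depth $2$.

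For the direction from dp-minimality to the singleton characterisation, I argue by contradiction. Assume $I=(a_i)$ and $J=(b_j)$ are $A$-mutually indiscernible singleton sequences and $c$ is a singleton such that neither is $Ac$-indiscernible. From the failure on the $I$-side, locate a formula $\phi_1(x,y,\bar e)$ with $\bar e\subseteq A$ and indices $i<i'$ witnessing $\phi_1(c,a_i,\bar e)\wedge\neg\phi_1(c,a_{i'},\bar e)$; similarly on the $J$-side with some $\phi_2$. Using mutual indiscernibility of $(I,J)$, shuffle the two sequences into an ict-pattern of depth $2$ realised by $c$, contradicting dp-minimality.

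For the converse, assume the singleton condition holds but dp-minimality fails. Then there exist $A$-mutually indiscernible sequences of tuples $(\bar b_i^1)$, $(\bar b_i^2)$ and a tuple $\bar c$ such that neither sequence is $A\bar c$-indiscernible. The task is a double reduction to singletons. Reducing $\bar c$ is done by peeling off coordinates: write $\bar c=c_1\ldots c_n$, and iteratively absorb into $A$ any coordinate whose addition preserves mutual indiscernibility of both sequences; the process must terminate at a single coordinate that simultaneously breaks indiscernibility on both sides. Reducing each tuple $\bar b_i^k$ to a singleton requires picking, inside $\bar b_i^k$, a coordinate witnessing the failure of $c$-indiscernibility and then extracting such a coordinate uniformly in $i$ via a Ramsey argument, performed jointly across the two sequences.

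The main obstacle is the joint extraction in the last step: one needs to simultaneously (a) replace tuples by singletons uniformly along each sequence, (b) preserve mutual indiscernibility between the two sequences, and (c) retain the failure of $c$-indiscernibility on each side. The extractions cannot be carried out independently, since sequentially thinning one sequence may destroy mutual indiscernibility with the other; instead one must colour pairs of indices drawn from both sequences and apply a joint Ramsey-type argument. Managing this interleaving, and in particular ensuring that the surviving singleton coordinate inside each $\bar b_i^k$ is consistently chosen in a way compatible with mutual indiscernibility, is where the technical heart of the Kaplan--Simon proof lies.
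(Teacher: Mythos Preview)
The paper does not prove this statement at all: it is recorded as a \emph{Fact} and attributed to Kaplan--Simon \cite{KS}, Corollary~1.7, and is then used as a black box in the proof of Theorem~\ref{thm_stat_implies_dp_min}. So there is no ``paper's own proof'' to compare your attempt against; you are reconstructing an external result.

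On the substance of your sketch: the forward direction is fine (and in fact easier than you make it, since the singleton-sequence condition is a special case of the standard tuple-sequence characterisation of dp-minimality). In the reverse direction you have a small but telling slip: when dp-minimality fails, the witness $c$ is already a \emph{singleton} by definition (dp-rank is computed for the partial type $x=x$ in one variable), so your ``peeling off coordinates of $\bar c$'' step is unnecessary and suggests some confusion about what is being reduced. The genuine work is entirely in reducing the parameter tuples $\bar b_i^k$ to singletons while preserving both mutual indiscernibility and the failure of $Ac$-indiscernibility on each side. Your description of this step (``pick a coordinate via Ramsey, jointly'') gestures at the right difficulty but does not supply the mechanism; the Kaplan--Simon argument proceeds via a coordinate-flip trick---changing one coordinate at a time from $\bar b_0^k$ to $\bar b_1^k$ to locate where the truth value of $\phi_k(c,-)$ changes---and then absorbs the fixed coordinates into the base while checking that an ict-pattern survives. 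That absorption-and-survival step is the real content, and your proposal does not yet contain it.
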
 

 \begin{lem}\label{dp lem fin sat} Suppose that  $\mathbf p=(p,<_p)$ is an so-pair over $A$. Then:
\begin{enumerate}[(a)]
\item If $B\supseteq A$ and $q\in S_1(B)$ is finitely satisfiable in $A$ and extends $p$, then $q=\p_{l\strok B}$ or $q=\p_{r\strok B}$.
\item If $(a_i)_{i\in\omega}$ is a decreasing (increasing) sequence of realizations of $p$ such that $\tp(a_i/Aa_{<i})$ is finitely satisfiable in $A$ for all $i\in\omega$, then $(a_i)_{i\in\omega}$ is a Morley sequence in $\p_l$ ($\p_r$) over $M$. 
\end{enumerate}
\end{lem}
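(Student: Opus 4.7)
The plan is to derive part (a) as a direct corollary of Lemma 3.5(a) (which identifies $\p_l$ and $\p_r$ as the only global $A$-invariant extensions of $p$), and then to pin down which of the two options occurs in part (b) by comparing the monotonicity of the sequence with the position of $\p_l$ and $\p_r$ relative to the order $<_p$.

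For part (a), I would first extend $q$ to a global type $\tilde{q}\in S_1(\Mon)$ that is still finitely satisfiable in $A$. This is standard: the partial type consisting of $q$ together with all formulas whose negation is not finitely satisfied in $A$ is consistent, and any completion over $\Mon$ works. Now finite satisfiability in $A$ implies $A$-invariance, since for any $f\in \Aut(\Mon/A)$ and any $\phi(x,\bar b)\in\tilde q$, a witness in $A$ for finite satisfiability is fixed by $f$, so $\phi(x,f(\bar b))\in\tilde q$ as well. Since $\tilde q$ extends $p$ and is $A$-invariant, Lemma 3.5(a) gives $\tilde q\in\{\p_l,\p_r\}$; restricting to $B$ yields $q\in\{\p_{l\strok B},\p_{r\strok B}\}$.

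For part (b), treating the decreasing case, I would apply (a) with $B=Aa_{<i}$ to conclude that each $\tp(a_i/Aa_{<i})$ is either $\p_{l\strok Aa_{<i}}$ or $\p_{r\strok Aa_{<i}}$. To exclude the second option, recall from the proof of Lemma 3.5(a) that $\p_r$ contains the formula $a<_p x$ for every $a\models p$; in particular, $\p_{r\strok Aa_{<i}}\vdash a_j<_p x$ for every $j<i$, which contradicts the hypothesis $a_i<_p a_j$. Hence $\tp(a_i/Aa_{<i})=\p_{l\strok Aa_{<i}}$, which is exactly the statement that $(a_i)_{i\in\omega}$ is a Morley sequence in $\p_l$ over $A$. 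The increasing case is symmetric.

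I do not anticipate a substantial obstacle: the heavy lifting was done in Lemma 3.5(a). The only points requiring care are the standard compactness step that produces a global finitely satisfiable extension, and the elementary observation (drawn straight from the proof of Lemma 3.5(a)) that identifies $\p_r$ with the $A$-invariant type lying to the $<_p$-right of every realization of $p$.
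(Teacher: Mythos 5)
Your proof is correct, and it takes a slightly different route from the paper's for part (a). The paper simply observes that finite satisfiability in $A$ implies nonforking over $A$, and then invokes Corollary \ref{Cor_existence of nf extensions of so types} (the restrictions of $\p_l,\p_r$ are the only nonforking extensions of $p$ over $B$). You instead extend $q$ to a global type finitely satisfiable in $A$, note that such a type is $A$-invariant, and then apply Lemma \ref{lem prop_left_right_basics}(a) directly. Both are valid; the paper's phrasing is a one-liner that leans on the forking machinery already set up (Lemma \ref{lem_forking_iff_non_left_right} and its corollary), whereas yours bypasses forking entirely and works directly from the classification of $A$-invariant extensions — arguably a cleaner reduction, at the cost of the standard compactness step producing the global coheir. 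Your treatment of part (b), which the paper leaves as ``an easy consequence of (a)'', is exactly what is intended: apply (a) at stage $i$ to identify $\tp(a_i/Aa_{<i})$ as $\p_{l\strok Aa_{<i}}$ or $\p_{r\strok Aa_{<i}}$, and use the monotonicity hypothesis together with $a<_px\in\p_r$ for $a\models p$ (equivalently $x<_pa\in\p_l$) to rule out the wrong one.
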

\begin{proof}
If $q\in S_1(B)$ extends $p$ and is finitely satisfiable in $A$, then $q$ does not fork over $A$ so, by Corollary \ref{Cor_existence of nf extensions of so types}, we have $q=\p_{l\strok B}$ or $q=\p_{r\strok B}$.
This proves part (a). Part (b) is an easy consequence of part (a).
\end{proof}

\begin{thm}\label{thm_stat_implies_dp_min} If all complete 1-types over small sets are stationarily ordered, then $T$ is dp-minimal and Lstp=tp.
\end{thm}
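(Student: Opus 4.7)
The plan is to prove tp = Lstp first and then use it in the proof of dp-minimality via the Kaplan--Simon criterion (Fact 5.2).

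For tp = Lstp, I reduce to the 1-element case by induction on tuple length. Let $p \in S_1(A)$ be stationarily ordered with \so-pair $\mathbf p = (p,<_p)$, and let $a,b \models p$. By Remark 3.14(c), the sets $\Dp_p(a)$ and $\Dp_p(b)$ are bounded convex subsets of $(p(\Mon), <_p)$, so by saturation I pick $c \models p$ with $\Dp_p(a) \cup \Dp_p(b) <_p c$. Then Remark 3.14(d) yields $c \ind_A a$ and $c \ind_A b$; combined with $a,b <_p c$ and Corollary 3.8, both $(a,c)$ and $(b,c)$ are Morley pairs in $\p_r$ over $A$ and hence extend to infinite $A$-indiscernible sequences, giving $a \equiv_{\mathrm{Lstp}_A} c \equiv_{\mathrm{Lstp}_A} b$. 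For tuples $\bar a = (a_1, \bar a'), \bar b = (b_1, \bar b')$ with $\tp(\bar a/A) = \tp(\bar b/A)$, I first Lstp-conjugate $b_1$ to $a_1$ over $A$ (by the 1-element case) and then invoke the induction hypothesis on tuples of length $n-1$ over the enlarged small base $Aa_1$, using that the theorem's hypothesis applies to 1-types over $Aa_1$.

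For dp-minimality I verify Fact 5.2: given $A$-mutually indiscernible sequences $I=(a_i)_{i<\omega}, J=(b_j)_{j<\omega}$ of singletons and a singleton $c$, I must show one of them is $Ac$-indiscernible. Let $p = \tp(a_i/A), q = \tp(b_j/A)$ with \so-pairs $\mathbf p = (p,<_p), \mathbf q = (q,<_q)$; the constant case is trivial, so $A$-indiscernibility forces strict monotonicity and I assume both sequences increasing. By $A$-indiscernibility and Corollary 3.17, the relation $a_i \dep_A a_j$ is constant on pairs $i \neq j$: either all $a_i$ share one convex bounded $\Dp_p$-class, or the $\Dp_p$-classes $\Dp_p(a_0) <_p \Dp_p(a_1) <_p \cdots$ are pairwise distinct; similarly for $J$. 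Using the partition $\{\Lp_{\mathbf p}(c), \Dp_p(c), \Rp_{\mathbf p}(c)\}$ from Remark 3.14, $I$ is $Ac$-indiscernible iff all $a_i$ lie in a single piece of the partition; failure means $c$ ``cuts'' $I$ at (at most one) $\Dp_p$-class in the pairwise-independent case.

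The crucial step is to derive a contradiction from $c$ cutting both $I$ and $J$. Mutual indiscernibility gives $J$ as $A\cup I$-indiscernible and $I$ as $A\cup J$-indiscernible; the cut of $c$ in $I$ pins down $c$'s position to a single $\Dp_p$-block of $I$, and the $AI$-indiscernibility of $J$ then transfers to $Ac$-indiscernibility of $J$, contradicting that $c$ also cuts $J$. The main obstacle is making this transfer rigorous, especially when $c$ cuts $I$ inside a nontrivial $\Dp_p$-class (i.e.\ $c \dep_A a_{i_0}$ for a single $a_{i_0}$), where the linkage from $c$ to $I$ is not captured by a single definable formula; here the hypothesis that \emph{every} 1-type over \emph{any} small set is stationarily ordered is used to extend the stationary analysis of Sections 3 and 4 to types over $AI$ and $AJ$, enabling the cross-comparison that produces the contradiction.
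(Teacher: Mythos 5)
Your proof of $\mathrm{Lstp}=\mathrm{tp}$ is correct and is essentially the paper's argument (the paper chooses a single Morley sequence $I$ in $\p_r$ over $Ad_1d_2$ to connect both $d_1$ and $d_2$; you choose separate tails but the idea is identical), and your induction to tuples is a standard reduction the paper leaves implicit. The problem is in the dp-minimality half.

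There are two gaps. First, the claim ``$I$ is $Ac$-indiscernible iff all $a_i$ lie in a single piece of $\{\Lp_{\mathbf p}(c),\Dp_p(c),\Rp_{\mathbf p}(c)\}$'' is false in the direction you need; all $a_i$ lying in (say) $\Lp_{\mathbf p}(c)$ fixes only the $1$-types $\tp(a_i/Ac)$, not the types of higher tuples, so $I$ can fail to be $Ac$-indiscernible with all $a_i$ in the same piece. (The paper handles this by the explicit ``standard modifications of $A$, $I$, $J$'' which enlarge $A$ to force $a_0\not\equiv a_1\,(Ac)$; since Fact~\ref{dp fact char} quantifies over all $A$ this is legitimate, but you never perform this reduction.) Second, and more seriously, the crucial step ``the $AI$-indiscernibility of $J$ transfers to $Ac$-indiscernibility of $J$'' is asserted but not proved, and it is not true as stated: $c$ being sandwiched between $a_0$ and $a_1$ does not place $c$ in $\acl(AI)$ or otherwise tie $c$ to $I$ in a way that would make $AI$-indiscernibility imply $Ac$-indiscernibility. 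Your final paragraph acknowledges the obstacle but offers only the vague remark that ``the hypothesis is used to extend the stationary analysis to types over $AI$ and $AJ$'' --- this names no lemma and gives no argument.

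The paper's actual mechanism is quite different and is worth absorbing: after the standard modification it picks formulas $\phi,\psi$ witnessing $a_0\not\equiv a_1\,(Ac)$ and $b_0\not\equiv b_1\,(Ac)$; it then extends $I,J$ to order type $\bm\omega+\bm\eta$ and absorbs the $\bm\eta$-tails into $A$, so that by Lemma~\ref{dp lem fin sat} both $I$ and $J$ become Morley sequences in the appropriate globalizations. This, plus $p\nwor r$ and $q\nwor r$ (which follow immediately from $\phi,\psi$, where $r=\tp(c/A)$) and Corollary~\ref{cor_direct_choice}, produces a directly non-orthogonal triple $\mathbf p,\mathbf q,\mathbf r$. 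The argument then lives entirely inside $r(\Mon)$: using Lemma~\ref{Lema_R_q_komplement} one gets a strictly nested chain of initial parts, $c$ is trapped in a convex block $D_0=\Rp_{\mathbf r}(a_1)^c\smallsetminus\Lp_{\mathbf r}(a_0)$ and similarly in $E_0$ coming from $J$, and mutual indiscernibility forces $D_i\cap E_j\ne\emptyset$ for all $i,j$, which is incompatible with three disjoint $D$'s and the convexity of $E_0$. Your proposal inverts the perspective (positions $a_i$ relative to $c$ inside $p(\Mon)$ rather than $c$ relative to $a_i$ inside $r(\Mon)$) and never exploits the non-orthogonality $p\nwor r\nwor q$; without it, and without the Morley-sequence reduction, there is no definable convex structure on which to run the combinatorial contradiction, so the gap cannot be filled as sketched.
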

\begin{proof} The second claim is easy: if $d_1,d_2$ realize the same   stationarily ordered type  $p\in S_1(A)$, then we can choose a Morley sequence $I$ in  $\frak p_r$ over $Ad_1d_2$ so that $d_1\,^\frown I$ 
and $d_2\,^\frown I$ are Morley sequences in $\frak p_r$ over $A$. Hence Lstp($d_1/A$)=Lstp($d_2/A$).

To prove the other claim, assume that $T$ is not dp-minimal.
By Fact \ref{dp fact char} there exist some $A$, $A$-mutually indiscernible sequences of singletons $I=(a_i\mid i\in\omega)$ and $J=(b_j\mid j\in\omega)$ and a singleton $c$ such that neither $I$ nor $J$ is $Ac$-indiscernible. After some standard modifications of $A$, $I$ and $J$, we may assume that the sequences $(\tp(a_i/Ac)\mid i<\omega)$ and $(\tp(b_j/Ac)\mid j<\omega)$ are non-constant. Without loss of generality assume that $a_0\not\equiv a_1\,(Ac)$ and $b_0\not\equiv b_1\,(Ac)$. Witness that by formulas $\phi(x,z),\psi(y,z)\in L(A)$ satisfying:   
\begin{equation}\models\phi(a_0,c)\wedge \neg\phi(a_1,c)\wedge \psi(b_0,c)\wedge \neg\psi(b_1,c).
\end{equation}
If we extend the original sequences to (mutually indiscernible) sequences of order-type $\bm{\omega}+\bm{\eta}$ and then absorb  both $\bm{\eta}$-parts into $A$, then we get:  
\begin{center}
\mbox{ $\tp(a_i/AJa_{<i})$ and $\tp(b_j/AIb_{<j})$ are finitely satisfied in $A$ for all  $i,j\in\omega$.}
\end{center}
To complete the proof of the theorem it suffices to show that at least one of the types $p=\tp(a_0/A), q=\tp(b_0/A)$  and $r=\tp(c/A)$ is not stationarily ordered. 
Towards a contradiction, suppose that $p,q$ and $r$ are stationarily ordered types. By (1) we have $p\nwor r$ and $q\nwor r$, so by Corollary \ref{cor_direct_choice} there are $A$-definable orderings $<_p, <_q$ and $<_r$ such that the \so-pairs $\mathbf{p}=(p,<_p)$, $\mathbf q=(q,<_q)$ and $\mathbf r=(r,<_r)$ are pairwise directly non-orthogonal. Without loss of generality, assume that the sequence $I$ is $<_p$-increasing. Since $\tp(a_i/AJa_{<i})$ is finitely satisfiable in $A$, Lemma \ref{dp lem fin sat} applies and  $I$ is a Morley sequence in $\frak p_r$. Then $\delta(\mathbf p,\mathbf r)$ implies $\Lp_{\mathbf r}(a_0)\subset    \Lp_{\mathbf r}(a_1) \subset \Lp_{\mathbf r}(a_2)\subset   \ldots$. Further, by Lemma  \ref{Lema_R_q_komplement} we have $\Rp_{\mathbf r}(a_i)^c=\Lp_{\mathbf r}(a_i)\cup \Dp_r(a_i)\subset \Lp_{\mathbf r}(a_{i+1})$ (where the complement is taken within $r(\Mon)$). Summing up, we have:
\begin{equation}\Lp_{\mathbf r}(a_0)\subseteq \Rp_{\mathbf r}(a_0)^c\subset \Lp_{\mathbf r}(a_1)\subseteq \Rp_{\mathbf r}(a_1)^c\subset \Lp_{\mathbf r}(a_2)\subseteq \Rp_{\mathbf r}(a_2)^c\subset \ldots
\end{equation}
Then $\models \phi(a_0,c)\land\lnot\phi(a_1,c)$ implies $\Lp_{\mathbf r}(a_0)<_r c$: otherwise, we would have $c\in \Lp_{\mathbf r}(a_0)\subset \Lp_{\mathbf r}(a_1)$ and hence $a_0c\equiv a_1c\,(A)$, which is impossible. Similarly  we get $c<_r \Rp_{\mathbf r}(a_1)$. Hence $c\in D_0$, where  $D_i=\Rp_{\mathbf r}(a_{i+1})^c\smallsetminus \Lp_{\mathbf r}(a_i)$ is a convex subset of $r(\Mon)$ for all $i\in\omega$.
Now we apply similar reasoning to the sequence $J$. We have two   cases to consider:

Case 1. \  $J$ is $<_q$-increasing. In this case, by arguing as above, we conclude that (2) holds with each $a_i$ replaced by $b_i$. Then $\models \psi(b_0,c)\land\lnot \psi(b_1,c)$ implies $c\in E_0$, where $E_j=\Rp_{\mathbf r}(b_{j+1})^c\smallsetminus \Lp_{\mathbf r}(b_j)$. \ Hence $c\in D_0\cap E_0\neq \emptyset$.

Case 2. \  $J$ is $<_q$-increasing. In this case (by duality) we have: 
\begin{center}$\Rp_{\mathbf r}(b_0)\subseteq \Lp_{\mathbf r}(b_0)^c\subset \Rp_{\mathbf r}(b_1)\subseteq \Lp_{\mathbf r}(b_1)^c\subset \Rp_{\mathbf r}(b_2)\subseteq \Lp_{\mathbf r}(b_2)^c\subset \ldots$
\end{center}
Then $\models \psi(b_0,c)\land\lnot \psi(b_1,c)$
implies $c\in E_0$  where   $E_j=\Lp_{\mathbf r}(b_{j+1})^c\smallsetminus \Rp_{\mathbf r}(b_j)$. Hence $c\in D_0\cap E_0\neq \emptyset$. 

In both cases we concluded that there is a convex set $E_0$ which is invariant under $Ab_0b_1$-automorphisms and meets $D_0$ (which is also convex and $Aa_0a_1$-invariant). 
Since the sequences are  mutually indiscernible  $D_i\cap E_j\neq\emptyset$ holds for all $i,j\in\omega$.  

Now we have three convex sets $D_1<_rD_3<_rD_5$ and a convex set   $E_0$ meeting each of them; $E_0$ has to contain the middle one: $D_3\subseteq E_0$. By mutual indiscernibility  $D_i\subseteq E_j$ holds for all $i,j$. By changing roles of $D$'s and $E$'s we get $E_j\subseteq D_i$ and thus $E_i=D_j$. In particular, $D_1=D_3$. Contradiction. 
\end{proof} 

\section{Invariants}\label{s inv and nonorth}

In this section we introduce invariants $\Inv_{\mathbf{p}}(M)$ and show that they behave very well with respect to non-orthogonality. 
$T$ is still an arbitrary first-order theory, we do {\em not} assume even that it is ordered.

\begin{notat}  For   a stationarily ordered type  $p\in S_1(A)$  and a model $M\supseteq A$ define:
\begin{itemize}
\item $\Dp_p=\{(a,b)\in p(\Mon)\times p(\Mon)\mid a\dep_A b\}$;

\item $\Dp_p^M=\Dp_p\cap (p(M)\times p(M))$.
\end{itemize}
\end{notat}  
Note that if $\mathbf p=(p,<_p)$ is an \so-pair over $A$ and $M\supseteq A$ realizes $p$, then the relations 
$\Dp_p$ and $\Dp_p^M$ are $<_p$-convex equivalence relation on $p(\Mon)$ and $p(M)$  respectively.

\begin{dfn}\label{dfn Inv} For a stationarily ordered pair $\mathbf{p}=(p,<_p)$ over $A$ and a model $M\supseteq A$, the $\mathbf{p}$-invariant of $M$, denoted by $\Inv_{\mathbf{p}}(M)$, is the order-type of $(p(M)/\Dp_p^M,<_p)$ if $p(M)\neq\emptyset$ and   $\Inv_{\mathbf{p}}(M)=\mathbf 0$ if $p$ is omitted in $M$. 
\end{dfn}

\begin{rmk}\label{rmk def invariants} Suppose that $p\in S_1(A)$,  $M\supseteq A$ is a model and that $\mathbf p=(p,<_p)$ and  $\mathbf p'=(p,<)$ are \so-pairs over $A$. 

-- If $\delta(\mathbf p,\mathbf p')$ holds, then by  Lemma \ref{Rmk dfn delta}(b),   $<$ and $<_p$ agree on pairs of independent realizations of $p$, hence   $(p(M)/\Dp_p^M,<_p)=(p(M)/\Dp_p^M,<)$   and 
 $\Inv_{\mathbf p}(M)= \Inv_{\mathbf p'}(M)$. 
 
 -- If $\lnot\delta(\mathbf p,\mathbf p')$ holds, then   $(p(M)/\Dp_p^M,<_p)=(p(M)/\Dp_p^M,>)$  and 
 $\Inv_{\mathbf p}(M)$ is the reverse of  $\Inv_{\mathbf p'}(M)$.  \\
Therefore, for a fixed model $M\supseteq A$ and  a stationarily ordered type $p\in S_1(A)$, there are at most two possibilities for $\Inv_{\mathbf p}(M)$ where $\mathbf p=(p,<)$ is an \so-pair over $A$.    
\end{rmk}

We will describe the impact of direct non-orthogonality of \so-pairs  $\mathbf{p}$ and $\mathbf{q}$ on the relationship between the  orders $(p(\Mon)/\Dp_p,<_p)$ and $(q(\Mon)/\Dp_q,<_q)$. We will prove that the  orders are   canonically isomorphic if $p\nfor q$  and canonically shuffled if $p\fwor q$ holds. If in addition $p,q$ are  convex types, then we prove that the the same holds for the orders $(p(M)/\Dp_p^M,<_p)$ and $(q(M)/\Dp_q^M,<_q)$ (where $M\supseteq A$ is a model and $p(M),q(M)\neq\emptyset$); 
we will give an example  showing that 
 the convexity assumption  cannot be omitted. 
 First  we  deal with the (easier) case  $p\nfor q$, which will be resolved in Proposition \ref{prop_bounded case}.

  Recall that a type $p\in S_1(A)$ is convex if there is an $A$-definable linear order $(D_p,<_p)$ such that $p(\Mon)$ is a convex subset of $D_p$; $(D_p,<_p)$ witnesses the convexity of $p$.
In the next lemma we show that the convexity of an \so-type is implied by a significantly weaker convexity condition; we say that a subset $D$ of a $\leqslant$-partially ordered set is convex if $x,x'\in D$ and $x\leqslant y\leqslant x'$ imply $y\in D$.  
   
\begin{lem}\label{Lema_convex_so_pair_witness}
Suppose that $\mathbf{p}=(p,<_p)$ is an \so-pair over $A$ witnessed by $(D_p,<_p)$ and that $\leqslant$ is an $A$-definable {\em partial} order on $\Mon$ such that every 
Morley sequence in $\p_r$ is strictly $\leqslant$-increasing and $p(\Mon)$ is a $\leqslant$-convex subset of $\Mon$. Then   there exists a formula $\phi(x)\in p$ implying $x\in D_p$ such that $(\phi(\Mon),<_p)$ witnesses the convexity of $p$. In particular, $p$ is a convex type.
\end{lem}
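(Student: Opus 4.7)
My approach is to construct $\phi$ as the $\leqslant$-convex hull (intersected with $D_p$) of a suitable formula in $p$. For each $\alpha(x)\in p(x)$ (which, after conjoining with $x\in D_p$, we may assume implies $x\in D_p$), consider the candidate
\[
\phi_\alpha(x) \;:=\; (x\in D_p) \wedge \exists y_1\exists y_2\bigl(\alpha(y_1)\wedge\alpha(y_2)\wedge y_1\leqslant x\leqslant y_2\bigr).
\]
Each $\phi_\alpha$ lies in $p$, since for $a\models p$ we may take $y_1=y_2=a$, and each $\phi_\alpha$ implies $x\in D_p$. The goal is to show that a sufficiently strong $\alpha^*\in p$ makes $\phi:=\phi_{\alpha^*}$ work.

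The key step is to observe that the partial type $\pi(x) := \{x\in D_p\}\cup\{\phi_\alpha(x) : \alpha\in p\}$ is equivalent to $p$. The implication $p\vdash\pi$ is immediate. Conversely, if $c\models\pi$ then for each $\alpha\in p$ there are $y_1, y_2\models\alpha$ with $y_1\leqslant c\leqslant y_2$, so the partial types $p(y)\cup\{y\leqslant c\}$ and $p(y')\cup\{c\leqslant y'\}$ are finitely satisfiable; saturation yields $a_1, a_2\models p$ with $a_1\leqslant c\leqslant a_2$, and the hypothesis that $p(\Mon)$ is $\leqslant$-convex forces $c\in p(\Mon)$.

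The final step is to extract a single witnessing formula $\phi\in p$. By $\pi\vdash p$ and compactness, for every $\beta\in p$ there is a finite conjunction $\alpha_\beta$ of formulas from $p$ such that $\phi_{\alpha_\beta}\vdash\beta$. I claim that suitable $\phi_{\alpha^*}\in p$ also satisfies the convexity condition: for any $c\in\phi_{\alpha^*}(\Mon)\setminus p(\Mon)$ with $a_1<_p c<_p a_2$ for some $a_1, a_2\models p$, the witnesses $y_1, y_2\models\alpha^*$ with $y_1\leqslant c\leqslant y_2$ cannot both lie in $p(\Mon)$ (else $\leqslant$-convexity would give $c\in p(\Mon)$).

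The main obstacle is to turn this into a genuine contradiction for some single $\alpha^*$. I would argue by contradiction: if every $\phi_\alpha$ failed to have the convexity property, i.e.\ $\phi_\alpha(\Mon)\cap G\ne\emptyset$ for the gap set $G:=\{c\in D_p\setminus p(\Mon) : \exists a_1, a_2\in p(\Mon),\, a_1<_p c<_p a_2\}$, then by compactness applied to $\pi(x)\cup p(u_1)\cup p(u_2)\cup\{u_1<_p x<_p u_2\}$ one can realize $\pi$ together with ``$x$ lies in the $<_p$-convex hull of $p(\Mon)$'', giving a realization of $\pi$ outside $p(\Mon)$ and contradicting $\pi\vdash p$. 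The delicate point—where the hypothesis that Morley sequences in $\p_r$ are strictly $\leqslant$-increasing enters—is handling the non-first-order condition ``$c\notin p(\Mon)$'': this requires iteratively strengthening $\alpha^*$ by absorbing further formulas $\gamma\in p$, using the $\leqslant$/$<_p$ compatibility on independent $p$-realizations to rule out the bad witnesses $y_1, y_2\in\alpha^*(\Mon)\setminus p(\Mon)$.
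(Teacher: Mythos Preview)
Your setup is correct up through the observation that $\pi(x)=\{x\in D_p\}\cup\{\phi_\alpha(x):\alpha\in p\}$ is equivalent to $p(x)$: that argument is clean and uses only the $\leqslant$-convexity of $p(\Mon)$. The gap is precisely where you say it is, and it is a real one.

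The final compactness step does not go through. You want, from ``every $\phi_\alpha$ has a bad point $c_\alpha\in\phi_\alpha(\Mon)\cap G$'', to produce a single $c$ realizing all of $\pi$ while still lying in $G$; that would indeed contradict $\pi\vdash p$. But $G$ requires $c\notin p(\Mon)$, which is the \emph{complement} of a type-definable set, so compactness gives you nothing here. (And without the $c\notin p(\Mon)$ clause there is no contradiction: any $c\models\pi$ already lies in $p(\Mon)$.) Your closing remark about ``iteratively strengthening $\alpha^*$'' and using ``$\leqslant$/$<_p$ compatibility on independent $p$-realizations'' is the right instinct, but as written it is a description of a hope, not an argument; and carrying it out essentially forces you into the paper's proof. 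Note in particular that nowhere in your argument have you actually \emph{used} the hypothesis that Morley sequences in $\p_r$ are strictly $\leqslant$-increasing --- it appears only in the hand-wave at the end.

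The paper avoids the whole issue by building the $<_p$/$\leqslant$ bridge \emph{first}, before any attempt to extract a single formula. From the Morley-sequence hypothesis one gets $(a\leqslant x)\in\p_{r\strok Aa}$ for any fixed $a\models p$. The type-definable condition $\Sigma_0(a,x)$ expressing ``$\Dp_p(a)<_p x$'' forces (together with $p(x)$) that $x\models\p_{r\strok Aa}$, hence $a\leqslant x$. Compactness now applies cleanly --- the condition being stripped down is $p(x)$, not its complement --- yielding a single $\phi_0(x)\in p$ with $(\Dp_p(a)<_p x)\wedge\phi_0(x)\vdash a\leqslant x$. A symmetric $\phi_1$ handles the left side. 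Then for any $c\in\phi_0(\Mon)\cap\phi_1(\Mon)\cap D_p$ trapped $<_p$-between two realizations $b,b'$ of $p$, one picks $b_0,b_1\models p$ with $\Dp_p(b_0)<_p b$ and $b'<_p\Dp_p(b_1)$ and reads off $b_0\leqslant c\leqslant b_1$, whence $c\models p$ by $\leqslant$-convexity. The point is that the compactness is applied to a \emph{closed} condition ($p(x)$, or equivalently ``$x$ is $<_p$-above $\Dp_p(a)$''), never to the open condition $x\notin p(\Mon)$.
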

\begin{proof}
Fix $a\models p$. Since Morley sequences in $\p_r$ are $\leqslant$-increasing  we have  $(a\leqslant x)\in \p_{r\strok Aa}$.
Let $\Sigma_0(a,x)$ be the partial type consisting of all formulas \ $\forall y (\theta(y)\Rightarrow y<_px)$ \
where  $\theta(y)$ is a consistent, strongly $\mathbf p$-bounded formula  over $Aa$ implying $y\in D_p$. By Lemma \ref{lem_forking_iff_non_left_right} any forking extension $q(y)\in S_1(Aa)$ contains  a strongly $\mathbf p$-bounded formula  over $Aa$, so $\Sigma_0(a,x)$ expresses $\Dp_p(a)<_p x$. Clearly, $\Sigma_0(a,x)\cup p(x)\vdash  \p_{r\,\strok Aa}(x)$ and hence $\Sigma_0(a,x)\cup p(x)\vdash a\leqslant x$. By compactness there is $\phi_0(x)\in p$ such that  $\Sigma_0(a,x)\cup \{\phi_0(x)\}\vdash a\leqslant x$; in other words, $(\Dp_p(a)<_px)\cup \{\phi_0(x)\}\vdash a\leqslant x$.

By Lemma \ref{lem prop_left_right_basics}(c), $(a,b)$ is a Morley sequence in $\p_r$ over $A$ iff $(b,a)$ is a Morley sequence in $\p_l$ over $A$, so every Morley sequence in $\p_l$ over $A$ is strictly $\leqslant$-decreasing. Therefore, arguing similarly as in the previous paragraph we can find $\phi_1(x)\in p$ with  $(x<_p\Dp_p(a)) \cup \{\phi_1(x)\}\vdash x\leqslant a$. 

Let\, $\phi(x):=(x\in D_p)\land\phi_0(x)\land\phi_1(x)$.\,   We claim that $p(\Mon)$ is   $<_p$-convex in $\phi(\Mon)$. Suppose that $b,b'\models p$, $c\in \phi(\Mon)$ and $b<_pc<_pb'$. Choose $b_0,b_1\in\Mon$ such that $(b_0,b)$ and $(b',b_1)$ are Morley sequences in $\p_r$ over $A$; then $\Dp_p(b_0)<_pb<_pc<_pb'<_p\Dp_p(b_1)$. Now  $c\in\phi_0(\Mon)$ and $\Dp_p(b_0)<_pc$ imply $b_0\leqslant c$, and  $c\in\phi_1(\Mon)$ and $c<_p\Dp_p(b_0)$ imply $c\leqslant b_1$. Since $p(\Mon)$ is  $\leqslant$-convex  and since  $b_0\leqslant c\leqslant b_1$  we have $c\models p$. Therefore, $p(\Mon)$ is  a $<_p$-convex subset of $\phi(\Mon)$; in particular, $p$ is a convex type. 
\end{proof}

\begin{dfn}
If $\mathbf p=(p,<_p)$ is an \so-pair  over $A$ witnessed by $(D_p,<_p)$ and   $p(\Mon)$ is   $<_p$-convex in $D_p$, then we say that  $\mathbf p$ is a convex \so-pair (witnessed by  $(D_p,<_p)$).
\end{dfn} 

In the next lemma we show that for a convex type $p$ any linear order witnessing that it is stationarily ordered, nearly witnesses that $p$ is convex, too.  
 
 \begin{lem}\label{Lema_convex_so_pair_witness1} Suppose that $(D_p,<_p)$ witnesses that $\mathbf p=(p,<_p)$ is an \so-pair over $A$ and that $p$ is a convex type. Then there is an $A$-definable set   $D\subseteq D_p$ containing $p(\Mon)$ such that $(D,<_p^D)$ (where $<_p^D$ is the restriction of $<_p$ to $D$) witnesses  that $(p,<_p^D)$ is a convex \so-pair. 
\end{lem}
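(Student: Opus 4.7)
The plan is to reduce to the situation of the preceding Lemma \ref{Lema_convex_so_pair_witness} by cooking up a suitable $A$-definable partial order $\leqslant$ on $\Mon$ out of the witness of convexity, and then taking $D$ to be the solution set of the formula produced by that lemma.

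Since $p$ is convex, fix an $A$-definable linear order $(D',<')$ witnessing the convexity of $p$, so that $p(\Mon)\subseteq D'$ and $p(\Mon)$ is $<'$-convex in $D'$. By Lemma \ref{lem prop_left_right_basics}(d) applied to $<'$, the pair $(p,<')$ is an \so-pair over $A$ and $\{\frak p_l^{<'},\frak p_r^{<'}\}=\{\frak p_l,\frak p_r\}$. Consequently Morley sequences in $\frak p_r$ (over $A$) are either strictly $<'$-increasing or strictly $<'$-decreasing; in the latter case we replace $<'$ by its reverse (which also witnesses convexity of $p$) and assume henceforth that such Morley sequences are strictly $<'$-increasing.

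Next define a binary relation $\leqslant$ on $\Mon$ by
\[
x\leqslant y \ :\Longleftrightarrow\ x=y \ \lor\ \bigl(x,y\in D'\ \land\ x<'y\bigr).
\]
Clearly $\leqslant$ is an $A$-definable partial order on $\Mon$ (reflexive, antisymmetric, and transitive, as checking on cases involving $D'$ shows). Since every Morley sequence in $\frak p_r$ over $A$ lies in $p(\Mon)\subseteq D'$ and is $<'$-increasing, it is also strictly $\leqslant$-increasing. Moreover $p(\Mon)$ is $\leqslant$-convex in $\Mon$: if $b,b'\models p$ and $b\leqslant c\leqslant b'$ with $b\neq c\neq b'$, then $b,c,b'\in D'$ and $b<'c<'b'$, so by $<'$-convexity of $p(\Mon)$ in $D'$ we get $c\models p$.

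Now apply Lemma \ref{Lema_convex_so_pair_witness} to $\mathbf p=(p,<_p)$ and this partial order $\leqslant$. It yields a formula $\phi(x)\in p$ (hence over $A$) such that $\phi(\Mon)\subseteq D_p$ and $p(\Mon)$ is $<_p$-convex in $\phi(\Mon)$. Setting $D:=\phi(\Mon)$ gives an $A$-definable subset of $D_p$ containing $p(\Mon)$, and $(D,<_p^D)$ is an $A$-definable linear order with $p(\Mon)$ convex in $D$; since $<_p^D$ agrees with $<_p$ on $p(\Mon)$, the property of being stationarily ordered is preserved (the left/right eventual subsets of $(p(\Mon),<_p^D)$ coincide with those of $(p(\Mon),<_p)$), so $(D,<_p^D)$ witnesses that $(p,<_p^D)$ is a convex \so-pair, as required. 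The only mildly delicate point is the case split after choosing $<'$; once the direction is normalized the rest is essentially packaging Lemma \ref{Lema_convex_so_pair_witness} inside the given witness $(D_p,<_p)$.
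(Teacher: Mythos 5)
Your proof is correct and follows essentially the same route as the paper: in both arguments one takes a convexity witness $(D',<')$, applies Lemma \ref{lem prop_left_right_basics}(d), builds the $A$-definable partial order $\leqslant$ from $<'$ restricted to $D'$ (with the two cases handled either by reversing $<'$ as you do, or by defining $\leqslant$ from $>'$ as the paper does — these are the same), and then invokes Lemma \ref{Lema_convex_so_pair_witness} to produce $\phi(x)\in p$ and sets $D=\phi(\Mon)$. The only difference is cosmetic: you normalize the orientation of $<'$ once up front rather than splitting into two symmetric cases at the point where $\leqslant$ is defined.
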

\begin{proof} Since $p$ is convex there is an $A$-definable linear order $(D_p',<_p')$ such that $p(\Mon)$ is a $<_p'$-convex subset of $D_p'$. 
 By Lemma \ref{lem prop_left_right_basics}(d)   for the \so-pair  $\mathbf p'=(p,<_p')$ we have that either $\p_r'=\p_r$ or $\p_l'=\p_r$ holds. First suppose  $\p_r'=\p_r$ and
define:  $x\leqslant y$ iff $x=y\lor (x\in D_p'\land y\in D_p'\land x<_p'y)$.  \ Then $\leqslant$ is   
an $A$-definable partial order on $\Mon$ and every Morley sequence in $\p_r$ is strictly $\leqslant$-increasing. By Lemma \ref{Lema_convex_so_pair_witness} there is $\phi(x)\in p$ implying $x\in D_p$  such that $p(\Mon)$  is $<_p$-convex in $D=\phi_p(\Mon)$; this settles the first case. The second case $\p_l'=\p_r$ is handled similarly
by   defining: $x\leqslant y$ iff $x=y\lor (x\in D_p'\land y\in D_p'\land y<_p'x)$. 
 \end{proof}
 
\begin{lem}\label{Lema a dep b witness}
Suppose that $\mathbf p= (p,<_p)$ and $\mathbf q=(q,<_q)$ are convex  \so-pairs over $A$ witnessed by $(D_p,<_p)$ and $(D_q,<_q)$, $a\models p$, $b\models q$ and $a\dep_A b$. Then there is a formula $\theta(x,y)\in\tp(a,b/A)$ such that $\theta(a,\Mon)\subseteq \Dp_{q}(a)$ and $\theta(\Mon,b)\subseteq\Dp_{p}(b)$;  in particular, $\theta(a,y)\vdash q(y)$ and $\theta(x,b)\vdash p(x)$.
\end{lem}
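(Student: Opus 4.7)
\medskip\noindent
\textbf{Proof plan.} The strategy is to construct $\psi_0, \chi_0 \in L(A)$ separately and take their conjunction. By the symmetry of forking on realizations of \so-types established in Lemma \ref{Prop_equivalents_of_a dep b}, the hypothesis $a\dep_A b$ gives both $b\dep_A a$ and $a\dep_A b$, so each of $\tp(b/Aa)$ and $\tp(a/Ab)$ is a forking extension of its restriction to $A$. The first order of business is to convert each forking condition into a concrete bounded formula via Lemma \ref{lem_forking_iff_non_left_right}: one obtains a strongly $\mathbf q$-bounded formula $\psi_0(a,y)\in\tp(b/Aa)$ (with $\psi_0(x,y)\in L(A)$) and a strongly $\mathbf p$-bounded formula $\chi_0(x,b)\in\tp(a/Ab)$ (with $\chi_0(x,y)\in L(A)$).

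The crucial step is to show that the convexity hypothesis forces $\psi_0(a,\Mon)\subseteq q(\Mon)$ (and symmetrically for $\chi_0$). Indeed, strongly $\mathbf q$-bounded means $\psi_0(a,\Mon)\subseteq D_q$ and there exist $c_0,c_1\models q$ with $c_0<_q\psi_0(a,\Mon)<_q c_1$; since $(D_q,<_q)$ witnesses the convexity of $\mathbf q$, every element of $D_q$ that is $<_q$-between $c_0$ and $c_1$ lies in $q(\Mon)$, hence $\psi_0(a,\Mon)\subseteq q(\Mon)$. Once this is known, for any $b'\in\psi_0(a,\Mon)$ the formula $\psi_0(a,y)$ belongs to $\tp(b'/Aa)$ and is strongly $\mathbf q$-bounded, so $\tp(b'/Aa)$ forks over $A$ by Lemma \ref{lem_forking_iff_non_left_right}, giving $b'\dep_A a$ and thus $b'\in\Dp_q(a)$. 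This yields $\psi_0(a,\Mon)\subseteq\Dp_q(a)$; the analogous argument applied to $\chi_0$ (using the convexity of $\mathbf p$) gives $\chi_0(\Mon,b)\subseteq\Dp_p(b)$.

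Finally, set $\theta(x,y):=\psi_0(x,y)\wedge\chi_0(x,y)$. By construction $\theta\in\tp(a,b/A)$, and clearly $\theta(a,\Mon)\subseteq\psi_0(a,\Mon)\subseteq\Dp_q(a)$ and $\theta(\Mon,b)\subseteq\chi_0(\Mon,b)\subseteq\Dp_p(b)$. The final clause $\theta(a,y)\vdash q(y)$ and $\theta(x,b)\vdash p(x)$ is immediate from $\Dp_q(a)\subseteq q(\Mon)$ and $\Dp_p(b)\subseteq p(\Mon)$.

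The one non-routine point to watch is the passage from ``strongly $\mathbf q$-bounded'' to ``contained in $q(\Mon)$'' — this is precisely where the convexity of $\mathbf q$ (and not merely its stationary orderedness) is used, and without it one could only conclude $\psi_0(a,\Mon)\cap q(\Mon)\subseteq\Dp_q(a)$, which would not yield a formula implying $q$.
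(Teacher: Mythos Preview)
Your proof is correct and follows essentially the same approach as the paper's: obtain a strongly $\mathbf q$-bounded formula in $\tp(b/Aa)$ and a strongly $\mathbf p$-bounded formula in $\tp(a/Ab)$ via Lemma~\ref{lem_forking_iff_non_left_right}, use convexity of the witnessing orders to trap each fiber inside the relevant locus, and conjoin. The paper's version is simply more terse—it compresses your convexity step and the ``any $b'$ in the fiber forks'' step into the single clause ``since $q(\Mon)$ is a convex subset of $D_q$, we have $\theta_1(a,\Mon)\subseteq\Dp_q(a)$''—and leaves the appeal to symmetry of forking (your invocation of Lemma~\ref{Prop_equivalents_of_a dep b}) implicit.
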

\begin{proof} 
By Lemma \ref{lem_forking_iff_non_left_right}, there is a formula   $\theta_1(x,y)\in \tp(a,b/A)$ with $\theta_1 (a,y)$   strongly $\mathbf{q}$-bounded; since $q(\Mon)$ is a convex subset of $D_q$, we have $\theta_1(a,\Mon)\subseteq \Dp_{q}(a)$. Similarly, there is $\theta_2(x,y)\in\tp(a,b/A)$ with $\theta_2( \Mon,b)\subseteq \Dp_{p}(b)$. Clearly, $\theta(x,y):=\theta_1(x,y)\land \theta_2(x,y)$ works. 
\end{proof}

 \begin{prop}\label{prop_bounded case} Let $\mathbf p= (p,<_p)$ and $\mathbf q=(q,<_q)$ be \so-pairs over $A$ such that $\delta(\mathbf p,\mathbf q)$ and $p\nfor q$.
\begin{enumerate}[(a)]
\item  The mapping $\Dp_p(x)\mapsto\Dp_q(x)$ defines  an isomorphism between $(p(\Mon)/\Dp_p,<_p)$ and $(q(\Mon)/\Dp_q,<_q)$.
\item If $p$ and $q$ are convex types and $M\supseteq A$ is a model, then  the mapping $\Dp^M_p(x)\mapsto\Dp^M_q(x)$ is   an isomorphism between $(p(M)/\Dp_p^M,<_p)$ and $(q(M)/\Dp_q^M,<_q)$, and $\Inv_{\mathbf p}(M)=\Inv_{\mathbf q}(M)$.
\end{enumerate}
\end{prop}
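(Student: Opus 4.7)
Part (a) is immediate: since $\delta(\mathbf p,\mathbf q)$ and $p\nfor q$ are assumed, the forward direction of Lemma~\ref{Rmk dfn delta}(a) gives that $\Dp_p(x)\mapsto\Dp_q(x)$ is an order-isomorphism $f\colon(p(\Mon)/\Dp_p,<_p)\to(q(\Mon)/\Dp_q,<_q)$.

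For part (b), the plan is to descend $f$ to the $M$-level factors via the canonical embeddings $\iota_p\colon p(M)/\Dp^M_p\hookrightarrow p(\Mon)/\Dp_p$ and $\iota_q\colon q(M)/\Dp^M_q\hookrightarrow q(\Mon)/\Dp_q$; these are order-embeddings since $\Dp^M_p=\Dp_p\cap p(M)^2$ and the order on the quotient is the restricted one. What remains is to show that $f$ carries the image of $\iota_p$ bijectively onto the image of $\iota_q$, which reduces to the following: for each $a\in p(M)$ there is $b\in q(M)$ with $a\dep_A b$, and dually.

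This existence step is the main obstacle, and it is where convexity enters. First upgrade $\mathbf p$ and $\mathbf q$ to convex \so-pairs using Lemma~\ref{Lema_convex_so_pair_witness1}; this affects only the ambient sets $D_p,D_q$, leaving the orderings on $p(\Mon)$ and $q(\Mon)$ intact, so the factor orders and invariants are unchanged. Given $a\in p(M)$, pick any $b'\models q$ in $\Mon$ with $a\dep_A b'$ (this exists since $p\nfor q$) and apply Lemma~\ref{Lema a dep b witness} to obtain a formula $\theta(x,y)\in\tp(a,b'/A)$ with $\theta(a,\Mon)\subseteq \Dp_q(a)$, so in particular $\theta(a,y)\vdash q(y)$. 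Since $\Mon\models\exists y\,\theta(a,y)$ and $M\preceq\Mon$, elementarity yields $b\in M$ with $\theta(a,b)$; then $b\in q(M)$ and $b\dep_A a$. The dual statement follows by the symmetric argument, exchanging the roles of $p$ and $q$.

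The rest is routine. The map $\Dp^M_p(a)\mapsto\Dp^M_q(b)$ (for $a\in p(M)$, $b\in q(M)$ with $a\dep_A b$) is well-defined and injective by transitivity of $\dep_A$ on \so-loci (Corollary~\ref{Cor_forking_transitivity}), which guarantees that the choice of representative $b$ in its $\Dp^M_q$-class is irrelevant; surjectivity is the existence just established. Order-preservation follows from the commutativity $\iota_q\circ(\text{top map})=f\circ\iota_p$, since both compositions send $\Dp^M_p(a)$ to $\Dp_q(a)=\Dp_q(b)$, combined with the fact that $\iota_p$, $\iota_q$, and $f$ are all order-preserving. Hence the top map is an order-isomorphism, and $\Inv_{\mathbf p}(M)=\Inv_{\mathbf q}(M)$.
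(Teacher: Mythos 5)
Your proof is correct and follows essentially the same route as the paper's: part (a) is read off from Lemma~\ref{Rmk dfn delta}(a), and part (b) reduces, after invoking Lemma~\ref{Lema_convex_so_pair_witness1} to make the witnessing orders convex, to producing for each $a\in p(M)$ a $b\in q(M)$ with $a\dep_A b$ (and dually) via Lemma~\ref{Lema a dep b witness} and elementarity of $M$ in $\Mon$. The only cosmetic differences are that you spell out the descent through the canonical embeddings $\iota_p,\iota_q$ and use symmetry for the dual direction, where the paper simply notes that $\exists x\,\theta(x,y)\in q(y)$.
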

\begin{proof}
Part (a) is a reformulation of Lemma \ref{Rmk dfn delta}(a), so we prove only part 
(b).  By Lemma \ref{Lema_convex_so_pair_witness1}, after replacing $D_p$ by an appropriate $A$-definable subset, we may assume that   $(D_p,<_p)$ witnesses the convexity of $\mathbf p$; similarly, assume  that $(D_q,<_q)$ witnesses the convexity of $\mathbf q$. Since the given mapping is a restriction of the one from (a), we only need to show that it is well-defined and onto. 
For the first task it suffices for $a\in p(M)$ to find $b\in q(M)$ such that $a\dep_Ab$. So  let $a\in p(M)$  and choose $b'\in q(\Mon)$ such that $b'\dep_Aa$; this is possible by $p\nfor q$. By Lemma \ref{Lema a dep b witness}  there is a formula $\theta(x,y)\in \tp(a,b'/A)$ such that $\theta(a,\Mon)\subseteq \Dp_q(a)$ and $\theta(\Mon,b')\subseteq \Dp_p(b')$.
Since $\theta(a,y)$ is consistent, it is satisfied by some $b\in M$. Consequently $b\models q$ and $b\dep_Aa$, so the mapping is well defined. 
Similarly,  $\exists x\, \theta(x,y)\in q(y)$ implies that for every $b\in q(M)$ there exists $a\in p(M)$ with $a\dep_Ab$; the mapping is onto.
\end{proof}

\begin{exm}\label{Exm_nonconvex_distinct_Invs}
The assumed convexity of types in Proposition \ref{prop_bounded case}(b) cannot be omitted.\\
 Consider the structure  $(\mathbb Q,<,E,D_n)_{n\in\omega}$ where $(\mathbb Q,<,D_n)_{n\in\omega}$
is a dense linear order (completely) coloured by the dense colors $D_n$  (from Example  \ref{exm dense colors}) and $E$ is a convex equivalence relation   such that $(\mathbb Q/E, <)$ is a dense linear order without endpoints. The theory of this structure eliminates quantifiers. Let  $p_n\in S_1(\emptyset)$ be the type of an element of color $D_n$; $\mathbf p_n=(p_n,<)$ is a convex \so-pair witnessed by $(D_n,<)$ and $(p_n(\mathbb Q),<)$ has order type $\bm{\eta}$. Let $q$ be the type of a colorless element;  it is not convex and   is omitted in our structure.
If $a\in D_n$, then $q(x)\cup\{E(x,a)\}$ is consistent, has bounded locus in $q(\Mon)$ and thus forks over $\emptyset$; hence $p_n\nfor q$. Summing up: $p_n\nfor q$ are stationarily ordered types, $p_n$ is  convex, $q$ is not  convex, and $\bm{\eta}=\Inv_{\mathbf p_n}(\mathbb Q)\neq \Inv_{\mathbf q}(\mathbb Q)=\mathbf 0$. 
\end{exm} 

We now open the harder case, which will be completed in Propositions   \ref{prop_shuffled case general}   and \ref{prop_shuffled case local}.

\begin{dfn}
Complete types $p,q\in S_1(A)$ are {\it shuffled}, denoted by $\mathcal S(p,q)$, if they are stationarily ordered, $p\nwor q$ and $p\fwor q$; \so-pairs  $\mathbf{p}$ and $\mathbf{q}$ are {\it directly shuffled}, denoted by   $\mathcal S(\mathbf{p},\mathbf{q})$, if they are directly non-orthogonal and their types are shuffled.
\end{dfn}

Recall that by Lemma \ref{Lema_direct_non-orth}   $S_{\mathbf{p},\mathbf{q}}=\{(x,y)\in p(\Mon)\times q(\Mon)\mid x\in \Lp_{\mathbf p}(y)\}$   is a monotone relation between $(p(\Mon),<_p)$ and $(q(\Mon),<_q)$ provided that $\delta(\mathbf p, \mathbf q)$; also,
by Remark \ref{Rmk def Spq} $S_{\mathbf{p},\mathbf{q}}$ is a $\Dp_p\times\Dp_q$-closed relation: $(x,y)\in S_{\mathbf{p},\mathbf{q}}$ implies $\Dp_p(x)\times \Dp_q(y)\subseteq S_{\mathbf{p},\mathbf{q}}$.

\begin{lem}\label{rmk def shuffled}
Suppose that the  \so-pairs $\mathbf{p}=(p,<_p)$ and $\mathbf{q}=(q,<_q)$ are  shuffled. Then:

(a) The type $p(x)\cup q(y)$ has exactly two completions in $S_2(A)$: the first one determined by $x\in 
\Lp_{\mathbf p}(y)$ and the second one by $x\in \Rp_{\mathbf p}(y)$.

(b) If $\delta(\mathbf p,\mathbf q)$, then any formula $\sigma(x,y)$ belonging to the first  but not to the second  completion of  $p(x)\cup q(y)$ relatively defines the relation $S_{\mathbf{p},\mathbf{q}}$ within $p(\Mon)\times q(\Mon)$. 
\end{lem}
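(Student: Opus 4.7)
My plan is to deduce both parts from the classification of nonforking extensions of stationarily ordered types together with the partition results of Lemma \ref{Cor_Lpp_north_partition}. For part (a), the hypothesis $p\fwor q$ gives $a\ind_A b$ for every $a\models p$ and $b\models q$, so $\tp(a/Ab)$ is a nonforking extension of $p$. By Corollary \ref{Cor_existence of nf extensions of so types}, the only such extensions are $\p_{l\strok Ab}$ and $\p_{r\strok Ab}$, whose loci are $\Lp_{\mathbf p}(b)$ and $\Rp_{\mathbf p}(b)$; hence for fixed $b$ the 2-type $\tp(a,b/A)$ takes at most two values, a ``left'' one and a ``right'' one. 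I would then verify that these two values are independent of the choice of $b\models q$: given $b,b'\models q$, pick $f\in\Aut(\Mon/A)$ with $f(b)=b'$; by $A$-invariance of $\p_l$ (Lemma \ref{lem prop_left_right_basics}(a)) we have $f(\p_{l\strok Ab})=\p_{l\strok Ab'}$, so if $a_L\models\p_{l\strok Ab}$ then $f(a_L)\models\p_{l\strok Ab'}$, yielding $\tp(a_L,b/A)=\tp(f(a_L),b'/A)$, i.e.\ the ``left'' 2-type for $b$ coincides with that for $b'$; the same argument on the right gives the other match. Finally, the two completions are distinct: $p\nwor q$ forces $\Lp_{\mathbf p}(b)\subsetneq p(\Mon)$, and Lemma \ref{Cor_Lpp_north_partition}(c) says that $\Lp_{\mathbf p}(b)<_p\Rp_{\mathbf p}(b)$ is a convex partition of $p(\Mon)$ into two nonempty pieces.

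For part (b), let $r_L,r_R\in S_2(A)$ be the two completions furnished by (a), with $r_L\ni(x\in\Lp_{\mathbf p}(y))$ and $r_R\ni(x\in\Rp_{\mathbf p}(y))$. Given $\sigma(x,y)\in r_L\smallsetminus r_R$, for any $(a,b)\in p(\Mon)\times q(\Mon)$ part (a) gives $\tp(a,b/A)\in\{r_L,r_R\}$, and then $\models\sigma(a,b)$ iff $\tp(a,b/A)=r_L$ iff $a\in\Lp_{\mathbf p}(b)$ iff $(a,b)\in S_{\mathbf p,\mathbf q}$. Therefore $\sigma(x,y)$ cuts out $S_{\mathbf p,\mathbf q}$ within $p(\Mon)\times q(\Mon)$, which is exactly what ``relatively defines'' asserts. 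The hypothesis $\delta(\mathbf p,\mathbf q)$ is not used in this argument; its role is to guarantee via Lemma \ref{Lema_direct_non-orth} that the set $S_{\mathbf p,\mathbf q}$ so defined is a monotone relation, which is the geometric content the conclusion is designed to transport.

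The only non-routine point is the uniformity step in (a) --- checking that the two candidate 2-types obtained by ``pinning'' $b$ do not shift as $b$ varies through $q(\Mon)$. Once this is handled by $A$-invariance of the globalizations $\p_l,\p_r$, both parts reduce to bookkeeping with the classification of extensions and the partition results already established.
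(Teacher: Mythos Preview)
Your proof is correct and follows the same approach as the paper's. The paper's argument is terser: it simply notes that $p\fwor q$ forces $a\in\Lp_{\mathbf p}(b)\cup\Rp_{\mathbf p}(b)$ and that $p\nwor q$ makes these two options distinct, leaving the uniformity-in-$b$ step you spell out implicit (it is immediate from the $A$-invariance of $\p_l,\p_r$). Your observation that the hypothesis $\delta(\mathbf p,\mathbf q)$ plays no role in the actual derivation of (b) is accurate; the paper's proof of (b) does not use it either, and the assumption is present only because the monotonicity of $S_{\mathbf p,\mathbf q}$ (via Lemma~\ref{Lema_direct_non-orth}) is what the subsequent applications need.
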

\begin{proof}
(a) Let $a\models p$ and $b\models q$. 
Then $p\fwor q$ implies $a\ind _A b$, so by Remark \ref{rmk_direct_properties}(d) either $a\in \Lp_{\mathbf{p}}(b)$ (i.e $a\models \p_{l\strok Ab}$)  or $a\in \Rp_{\mathbf{p}}(b)$ ($a\models \p_{r\strok Ab}$);  since  $p\nwor q$ implies $\p_{l\strok Ab}\neq \p_{r\strok Ab}$, we have exactly two possibilities for $\tp(a,b/A)$.   

(b) Assume $\delta(\mathbf p,\mathbf q)$. For all $a\models p$, $b\models q$ and $\sigma(x,b)\in \p_{l\strok Ab}(x)\smallsetminus \p_{r\strok Ab}(x)$, we have: 
  \ $\models\sigma(a,b)$    if and only if   $a\in \Lp_{\mathbf p}(b)$.
This means that $\sigma(x,y)$ relatively defines  $S_{\mathbf{p},\mathbf{q}}$ within $p(\Mon)\times q(\Mon)$. 
\end{proof}

\begin{lem}\label{lem_shuff rel def} Suppose that    $\mathbf p= (p,<_p)$ and $\mathbf q=(q,<_q)$ are \so-pairs over $A$  witnessed by $(D_p,<_p)$ and $(D_q,<_q)$   and such that $\delta(\mathbf p,\mathbf q)$.  
Then there exists a formula $\sigma(x,y)\in L(A)$ defining a monotone relation between $(D_p,<_p)$ and $(D_q,<_q)$ and relatively defining 
 the relation  $S_{\mathbf p,\mathbf q}$  within $p(\Mon)\times q(\Mon)$.

\end{lem}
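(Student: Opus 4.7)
The idea is to first pin down $S_{\mathbf{p},\mathbf{q}}$ by a single formula relatively within $p(\Mon)\times q(\Mon)$, and then enlarge that formula to the $A$-definable ambient sets while preserving monotonicity.

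By Remark \ref{Rmk def Spq}(a), $S_{\mathbf p,\mathbf q}$ is the locus of a single complete 2-type $\tp^*(x,y)\in S_2(A)$ extending $p(x)\cup q(y)$, so producing a relative definition amounts to isolating $\tp^*$ in the closed subspace $[p(x)\cup q(y)]\subseteq S_2(A)$. In the shuffled case $p\fwor q$, Lemma \ref{rmk def shuffled} yields such a formula $\sigma_0(x,y)\in L(A)$ directly, because $[p(x)\cup q(y)]$ then has only two points. In the remaining case $p\nfor q$, the competing completions consist of the ``right'' type with $a\in\Rp_{\mathbf p}(b)$ together with the forking types whose $x$-locus sits inside the bounded convex set $\Dp_p(b)$ (Remark \ref{rmk_direct_properties}(c)); invoking Lemma \ref{lem_forking_iff_non_left_right} one picks a formula $\phi_0(z,y)\in L(A)$ whose instance $\phi_0(z,b)$ is consistent and strongly $\mathbf p$-bounded, and cooks up $\sigma_0$ by a combination along the lines of $\exists z\,(\phi_0(z,y)\land x<_p z)\land\lnot\phi_0(x,y)$, exploiting that $\Dp_p(b)$ is $<_p$-bounded to place $x$ strictly to the left of it.

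By Lemma \ref{Lema_direct_non-orth}, the hypothesis $\delta(\mathbf p,\mathbf q)$ makes $S_{\mathbf p,\mathbf q}$ a monotone relation between $(p(\Mon),<_p)$ and $(q(\Mon),<_q)$. Applying Lemma \ref{Fact_rel definable monotone relation} to $\sigma_0$ with the type-definable sets $p(\Mon)\subseteq D_p$ and $q(\Mon)\subseteq D_q$ produces $A$-definable sets $D_p'\subseteq D_p$ and $D_q'\subseteq D_q$ containing $p(\Mon)$ and $q(\Mon)$ respectively, on which $\sigma_0$ already defines a monotone relation. Finally, enlarge the domain by setting
$$\sigma(x,y)\;:=\; x\in D_p\,\land\, y\in D_q\,\land\,\exists u\,\exists v\bigl(u\in D_p'\,\land\, v\in D_q'\,\land\, x\leq_p u\,\land\, v\leq_q y\,\land\,\sigma_0(u,v)\bigr).$$
By construction $\sigma$ is closed under replacing $x$ by any $x'\leq_p x$ and $y$ by any $y'\geq_q y$, so it defines a monotone relation on $D_p\times D_q$. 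A straightforward check using the monotonicity of $\sigma_0$ on $D_p'\times D_q'$ shows that $\sigma$ and $\sigma_0$ agree on $D_p'\times D_q'$, and hence $\sigma$ coincides with $S_{\mathbf p,\mathbf q}$ on the subset $p(\Mon)\times q(\Mon)$.

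The main obstacle is the isolation step in the non-shuffled case: teasing the ``left'' completion $\tp^*$ apart from the potentially large family of forking completions whose $x$-locus lies in $\Dp_p(b)$. The key leverage is that $\Dp_p(b)$ is a bounded convex subset of $(p(\Mon),<_p)$, so that a single strongly $\mathbf p$-bounded witness above $x$ is enough to detect that $x$ is strictly to the left of it.
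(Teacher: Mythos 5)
Your reduction to a relative definition $\sigma_0$ followed by the enlargement via Lemma \ref{Fact_rel definable monotone relation} and the existential extension formula is exactly the paper's strategy, and in the shuffled case $p\fwor q$ your appeal to Lemma \ref{rmk def shuffled}(b) matches the paper's proof. You are also right to flag the hypothesis of Lemma \ref{rmk def shuffled}: it assumes the pairs are \emph{shuffled}, so the paper's citation does not actually cover the case $p\nfor q$, even though the stated hypothesis $\delta(\mathbf p,\mathbf q)$ of the present lemma permits it. (This is harmless for the paper, since every later application of Lemma \ref{lem_shuff rel def} is to directly shuffled pairs.)

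Your patch for $p\nfor q$ does not work, though. Fix $b\models q$. Lemma \ref{lem_forking_iff_non_left_right} gives $\phi_0(\Mon,b)\cap p(\Mon)\subseteq\Dp_p(b)$, but this inclusion can be strict: $\Dp_p(b)$ may split into several complete types over $Ab$, and $\phi_0(x,b)$ was extracted from a single one of them. Take $x\in\Dp_p(b)$ realizing a forking extension whose locus lies strictly to the $<_p$-left of $\phi_0(\Mon,b)\cap p(\Mon)$. Then $\neg\phi_0(x,b)$ holds (as $x\models p$ and $x\notin\phi_0(\Mon,b)$), and $\exists z\,(\phi_0(z,b)\wedge x<_p z)$ is witnessed by any realization of $\phi_0(x,b)$ in $p(\Mon)$; hence $\models\sigma_0(x,b)$ although $x\notin\Lp_{\mathbf p}(b)$, so $\sigma_0(\Mon,b)\cap p(\Mon)$ strictly overshoots $\Lp_{\mathbf p}(b)$. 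A single strongly bounded witness only locates you \emph{somewhere inside} $\Dp_p(b)$, not at its left edge. Pinning the cut at the boundary between $\Lp_{\mathbf p}(b)$ and $\Dp_p(b)$ is exactly what convexity plus simplicity of $p$ would provide (cf.\ Lemma \ref{Lema a dep b witness} and Remark \ref{Rmk simple type}), but neither is available under the present hypotheses. In short, what you actually need --- that $\p_{l\strok Ab}$ is isolated among the completions of $p$ over $Ab$ --- is not established by your sketch, and it is not addressed by the paper in the non-shuffled case either.
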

\begin{proof}
(a) By  Lemma \ref{rmk def shuffled}(b) there is a formula $\sigma'(x,y)$ relatively defining $S_{\mathbf p,\mathbf q}$ in $p(\Mon)\times q(\Mon)$.
Since $S_{\mathbf p,\mathbf q}$ is a monotone relation, by Lemma \ref{Fact_rel definable monotone relation} there are $A$-definable sets $D_p'\subseteq D_p$ and $D_q'\subseteq D_q$ such that $\sigma'(x,y)$ relatively defines a monotone relation between $(D_p',<_p)$ and $(D_q',<_q)$. Then the formula:
$$\sigma(x,y):=x\in D_p\wedge y\in D_p\wedge (\exists x'\in D_p')(\exists y'\in D_q')(x\leq_p x'\wedge y'\leq_q y\wedge\sigma'(x',y'))\,$$
defines a monotone relation between $(D_p,<_p)$ and $(D_q,<_q)$.
\end{proof}

\begin{prop}\label{prop_shuffled case general} If $\mathbf p= (p,<_p)$ and $\mathbf q=(q,<_q)$ are directly shuffled \so-pairs over $A$, then
 \[\hat S_{\mathbf p,\mathbf q}=\{(\Dp_p(a),\Dp_q(b))\mid (a,b)\in  S_{\mathbf p,\mathbf q}\}\] 
 is a   shuffling relation between $(p(\Mon)/\Dp_p,<_p)$ and $(q(\Mon)/\Dp_q,<_q)$.
\end{prop}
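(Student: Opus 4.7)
The plan is to verify the two clauses of Definition 2.3 directly for $\hat S_{\mathbf p, \mathbf q}$, reading off its fibers from $S_{\mathbf p, \mathbf q}$ and transferring the known initial/final-part structure and monotonicity to the quotients.

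First I would observe that $\hat S_{\mathbf p, \mathbf q}$ is a well-defined non-empty relation: well-definedness is immediate from $S_{\mathbf p, \mathbf q}$ being $\Dp_p \times \Dp_q$-closed (Remark \ref{Rmk def Spq}), while non-emptiness follows from $p \nwor q$, since $\Lp_{\mathbf p}(b) = \p_{l\strok Ab}(\Mon)$ is a non-empty proper initial part of $p(\Mon)$ for any $b \models q$. I would then identify the two fiber families: by the definition of $S_{\mathbf p, \mathbf q}$ the fiber $\hat S_{\mathbf p, \mathbf q}(\cdot, \Dp_q(b))$ equals $\Lp_{\mathbf p}(b)/\Dp_p$, and by Lemma \ref{Lema_direct_non-orth}(2) (applicable since $\delta(\mathbf p, \mathbf q)$) the fiber $\hat S_{\mathbf p, \mathbf q}(\Dp_p(a), \cdot)$ equals $\Rp_{\mathbf q}(a)/\Dp_q$. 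That these are initial (respectively final) parts of the corresponding quotient orders follows from Lemma \ref{lem prop_left_right_basics}(b) upstairs combined with the $\Dp_p$- and $\Dp_q$-closedness granted by Lemma \ref{lem rmk closed for Dp}(a).

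For the strict monotonicity in clause (1), given $\Dp_q(b) <_q \Dp_q(b')$, I would use the density of $(q(\Mon)/\Dp_q, <_q)$ (Corollary \ref{Cor_on p forking is equivalence}(b)) to pick representatives with $b <_q b'$, automatically $A$-independent since they sit in distinct $\Dp_q$-classes. Then $\delta(\mathbf p, \mathbf q)$ gives $\delta(\mathbf q, \mathbf p)$ by Lemma \ref{Lema_direct_non-orth}, and the strict form of Lemma \ref{Lema discussion direct nonorth} (applied with $p$ and $q$ interchanged) yields $\Lp_{\mathbf p}(b) \subset \Lp_{\mathbf p}(b')$; the $\Dp_p$-closedness of both sides then transfers the strict inclusion to the quotient. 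Clause (2)'s strict descent of $\Rp_{\mathbf q}(a)/\Dp_q$ is dual, aided by the equality $\Rp_{\mathbf q}(a) = q(\Mon)\smallsetminus\Lp_{\mathbf q}(a)$ coming from $p \fwor q$ (Lemma \ref{Cor_Lpp_north_partition}(c)).

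Finally, the ``no supremum'' clause of (1) is precisely the content of Lemma \ref{Lema_supinf_LpRp}(a), whose hypothesis $p \fwor q$ is built into the assumption $\mathcal S(\mathbf p, \mathbf q)$. The argument is essentially bookkeeping; the only step requiring mild care is the descent of strict inclusions to the quotient, which rests on the $\Dp_p$-classes being convex (Corollary \ref{Cor_on p forking is equivalence}(a)), so that any witness of $\Lp_{\mathbf p}(b) \subset \Lp_{\mathbf p}(b')$ produces an entire $\Dp_p$-class lying in $\Lp_{\mathbf p}(b')/\Dp_p \smallsetminus \Lp_{\mathbf p}(b)/\Dp_p$.
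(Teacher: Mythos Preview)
Your proof is correct and follows essentially the same strategy as the paper's: identify the fibers of $\hat S_{\mathbf p,\mathbf q}$ as $\Lp_{\mathbf p}(b)/\Dp_p$ and $\Rp_{\mathbf q}(a)/\Dp_q$, then verify strict monotonicity and the no-supremum condition. The one organizational difference is that for the ``no supremum'' clause you invoke Lemma~\ref{Lema_supinf_LpRp}(a), whereas the paper reproves that fact in place via the $Ab$-automorphism argument (any two elements of $\Lp_{\mathbf p}(b)$ have the same type over $Ab$, so no $\Dp_p$-class can be maximal there, and dually for $\Rp_{\mathbf p}(b)$); your citation is cleaner since that lemma was already available. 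A minor remark: the appeal to density of the quotient to choose representatives $b<_q b'$ is unnecessary---convexity of the $\Dp_q$-classes already forces any representatives of distinct classes to be correctly ordered.
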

\begin{proof}
The relation $\hat S_{\mathbf p,\mathbf q}$ is obtained from a $\Dp_p\times \Dp_q$-closed monotone  relation $S_{\mathbf p,\mathbf q}\subseteq p(\Mon)\times q(\Mon)$ by factoring out convex equivalence relations: $\Dp_p$ in the first coordinate and $\Dp_q$ in the second.
Because the quotient by a convex equivalence relation respects  the ordering it will suffice to verify the following:
\begin{enumerate}[(i)]
\item For all $b,b'\models q$: \ $\Dp_q(b)<_q\Dp_q(b')$ \ implies \ $S_{\mathbf p,\mathbf q}(p(\Mon),b)\subset S_{\mathbf p,\mathbf q}(p(\Mon),b')$;

\item For all $a,a'\models p$: \ $\Dp_p(a)<_p\Dp_p(a')$ \ implies \ $S_{\mathbf p,\mathbf q}(a',q(\Mon))\subset S_{\mathbf p,\mathbf q}(a,q(\Mon))$; and

\item For all $b\models q$ the set $S_{\mathbf p,\mathbf q}(p(\Mon),b)$ does not have a supremum $\Dp_p$-class; equivalently, neither $S_{\mathbf p,\mathbf q}(p(\Mon),b)$ contains a maximal $\Dp_p$-class nor its complement in $p(\Mon)$ contains a minimal $\Dp_p$-class.
\end{enumerate}

To prove (i) assume that  $b, b'\models q$ satisfy $\Dp_q(b)<\Dp_q(b')$. Then  $\delta(\mathbf{p},\mathbf{q})$  by Lemma \ref{Lema discussion direct nonorth} implies   $\Lp_{\mathbf p}(b)\subset  \Lp_{\mathbf p}(b')$. By definition of $S_{\mathbf p,\mathbf q}$  we have $S_{\mathbf p,\mathbf q}(p(\Mon),b)=\Lp_{\mathbf p}(b)$ and $S_{\mathbf p,\mathbf q}(p(\Mon),b')=\Lp_{\mathbf p}(b')$, so  the proof of part (i) is complete. 
The proof of (ii) is similar, after noticing that for  $a\models p$ we have:
\[S_{\mathbf p,\mathbf q}(a,q(\Mon))=\{b\in q(\Mon)\mid a\in \Lp_{\mathbf p}(b)\}=\{b\in q(\Mon)\mid b\in \Rp_{\mathbf q}(a)\}
 =\Rp_{\mathbf q}(a),\]
where the second equality holds by Lemma \ref{Lema_direct_non-orth}. To prove (iii) notice that any two elements of $S_{\mathbf p,\mathbf q}(p(\Mon),b)=\Lp_{\mathbf p}(b)$ have the same type over $Ab$, so there is an $Ab$-automorphism mapping one element onto the other. Hence for any pair of $\Dp_p$-classes contained in $\Lp_{\mathbf p}(b)$  there is an $Ab$-automorphism mapping one onto the other, so $\Lp_{\mathbf p}(b)$ does not contain a maximal $\Dp_p$-class. Finally, since $p$ and $q$ are shuffled, the complement of $S_{\mathbf p,\mathbf q}(p(\Mon),b)$ in $p(\Mon)$ is equal to $\Rp_{\mathbf p}(b)$, so the same argument shows that this complement does not contain a minimal $\Dp_p$-class.
\end{proof}

The next lemma treats a special case of Proposition \ref{prop_shuffled case local}; the proof of the  general case will be reduced to this one.
We will say that a stationarily ordered type $p\in S_1(A)$ is {\em trivial} if   $\D_p(a)=\{a\}$ for all $a\in p(\Mon)$.

\begin{lem}\label{lem_shuff rel def1}
Suppose that    $\mathbf p= (p,<_p)$ and $\mathbf q=(q,<_q)$ are convex \so-pairs over $A$  witnessed by $(D_p,<_p)$ and $(D_q,<_q)$,  $p,q\in S_1(A)$ are trivial types,    $\delta(\mathbf p,\mathbf q)$, and $M\supseteq A$ is a model.   Then:

(a) If $|q(M)|\geqslant 2$, then   $S_{\mathbf p,\mathbf q}$ is a shuffling relation between (infinite) orders $(p(\Mon),<_p)$ and $(q(\Mon),<_q)$.

(b) If $M$ realizes both $p$ and $q$, then  $S_{\mathbf p,\mathbf q}$ is a shuffling relation between $(p(\Mon),<_p)$ and $(q(\Mon),<_q)$. 
\end{lem}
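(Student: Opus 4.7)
The plan is to reduce the statement to Proposition \ref{prop_shuffled case general}. Triviality of $p$ and $q$ makes $\Dp_p$ the diagonal on $p(\Mon)$ and $\Dp_q$ the diagonal on $q(\Mon)$, so the quotient orders $(p(\Mon)/\Dp_p, <_p)$ and $(q(\Mon)/\Dp_q, <_q)$ coincide canonically with $(p(\Mon), <_p)$ and $(q(\Mon), <_q)$, and under this identification the relation $\hat S_{\mathbf p, \mathbf q}$ from Proposition \ref{prop_shuffled case general} becomes $S_{\mathbf p, \mathbf q}$. Consequently, Proposition \ref{prop_shuffled case general} immediately delivers the shuffling conclusion as soon as we upgrade $\delta(\mathbf p, \mathbf q)$ to $\mathcal S(\mathbf p, \mathbf q)$; in other words, as soon as we adjoin $p \fwor q$ to the assumed direct non-orthogonality.

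The core task is therefore to verify $p \fwor q$. I would argue by contradiction: suppose $p \nfor q$, and pick $a_0 \models p$, $b_0 \models q$ with $a_0 \dep_A b_0$. By Lemma \ref{Lema a dep b witness} (available because $\mathbf p$ and $\mathbf q$ are convex), there is $\theta(x,y) \in \tp(a_0, b_0/A)$ with $\theta(a_0, \Mon) \subseteq \Dp_q(a_0)$ and $\theta(\Mon, b_0) \subseteq \Dp_p(b_0)$. Transitivity of $\dep_A$ on realizations of stationarily ordered types (Corollary \ref{Cor_forking_transitivity}) combined with triviality of $q$ forces $\Dp_q(a_0) = \{b_0\}$, and symmetrically $\Dp_p(b_0) = \{a_0\}$. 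Propagating through $A$-automorphisms, the same formula $\theta$ then defines the graph of an $A$-definable bijection $f : p(\Mon) \to q(\Mon)$ with $f(a) \in \dcl(Aa)$ and $f^{-1}(b) \in \dcl(Ab)$, and by Proposition \ref{prop_bounded case}(a) applied via the triviality identification, $f$ is an order isomorphism between $(p(\Mon), <_p)$ and $(q(\Mon), <_q)$.

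The model hypothesis---$|q(M)| \geq 2$ in case (a), or $M$ realizing both $p$ and $q$ in case (b)---is then brought to bear to contradict the existence of such an $f$. Via Lemma \ref{lem_shuff rel def}, the formula $\theta$ extends to a formula defining a monotone relation on $D_p \times D_q$; combining this with the convexity of $p(\Mon)$ in $D_p$ and of $q(\Mon)$ in $D_q$ should provide enough room inside $M$ to exhibit a realization incompatible with $f$ being an $A$-definable bijection with values in $\dcl$-closures---for instance, by locating an element of $q(M)$ (respectively of $p(M)$) that, by the rigidity enforced by convexity and the monotone extension, cannot lie in the graph of $f$. Once $p \fwor q$ is established, Proposition \ref{prop_shuffled case general} gives directly that $S_{\mathbf p, \mathbf q} = \hat S_{\mathbf p, \mathbf q}$ is a shuffling relation, and shuffling automatically forces its underlying orders to be dense linear orders, hence infinite.

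The main obstacle is the middle paragraph: converting the combinatorial hypothesis on $M$ into an obstruction to the $A$-definable isomorphism $f$ produced by $p \nfor q$. This is the step where convexity is essential, and where a careful look at how the monotone extension of $\theta$ from Lemma \ref{lem_shuff rel def} interacts with realizations of $p$ and $q$ in the elementary submodel $M$ is required; the rest of the argument is a bookkeeping application of Proposition \ref{prop_shuffled case general}.
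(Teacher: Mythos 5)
There is a genuine gap, and the root cause is that you have taken the printed statement at face value, whereas it contains two typos whose intended readings change the task entirely. First, the hypothesis $\delta(\mathbf p,\mathbf q)$ is meant to be $\mathcal S(\mathbf p,\mathbf q)$, i.e.\ directly shuffled, which packages $\delta$ together with $p\fwor q$. You cannot hope to derive $p\fwor q$ from the other hypotheses: take $p=q$ a trivial non-algebraic $1$-type (for instance the unique $1$-type in a dense linear order without endpoints). Then $\delta(\mathbf p,\mathbf p)$, triviality of $p$, and $|p(M)|\geqslant 2$ all hold, yet $p\nfor p$, and $S_{\mathbf p,\mathbf p}$ is not a shuffling relation: each fiber $\Lp_{\mathbf p}(b)$ has the supremum $b$ in $p(\Mon)$. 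That the intended hypothesis is $\mathcal S(\mathbf p,\mathbf q)$ is confirmed both by the paper's own proof (which derives ``$p\nfor q$; a contradiction'' without first establishing $p\fwor q$) and by the only place the lemma is applied, Proposition~\ref{prop_shuffled case local}, which explicitly checks $\mathcal S(\mathbf p_E,\mathbf q_E)$ before invoking it. So the middle paragraph of your plan --- which you yourself flag as the main obstacle --- cannot be completed, because the statement you try to prove there is false.

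Second, the conclusion is intended to be about $(p(M),<_p)$ and $(q(M),<_q)$, not $(p(\Mon),<_p)$ and $(q(\Mon),<_q)$; this too is visible from the paper's own proof (which is entirely about $p(M)$ and $q(M)$) and from how Proposition~\ref{prop_shuffled case local} uses it. Granting $p\fwor q$, your reduction to Proposition~\ref{prop_shuffled case general} yields the $\Mon$-level shuffling immediately, but that version never uses $M$ and so trivialises the lemma. The actual content is that the restriction of $S_{\mathbf p,\mathbf q}$ to $p(M)\times q(M)$ is a shuffling relation. The paper verifies this in $M$: it shows the fiber sequence $(\sigma(M,y)\cap p(M)\mid y\in q(M))$ is strictly increasing (using convexity of the pairs and $M\prec\Mon$), and it shows no fiber has a supremum in $(p(M),<_p)$, because a supremum would lie in $\dcl(Ab)\smallsetminus\dcl(A)$, forcing $a\dep_A b$ and contradicting $p\fwor q$ (with a separate case analysis when the fiber is empty, where the model hypothesis $|q(M)|\geqslant 2$, respectively realization of both $p$ and $q$, is used). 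Convexity is essential in both steps. Your proposal never engages with any of this $M$-level reasoning; the model hypothesis appears only in the doomed attempt to extract $p\fwor q$.
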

\begin{proof}
Let  $\sigma(x,y)$ be a formula  given by  Lemma \ref{lem_shuff rel def}: $\sigma(x,y)$  defines a monotone relation between $(D_p,<_p)$ and $(D_q,<_q)$ and relatively defines
 the  relation $S_{\mathbf p,\mathbf q}$   within $p(\Mon)\times q(\Mon)$.
Since $p$ and $q$ are trivial types, we have $\Dp_p(x)=\{x\}$ for $x\models p$ and $\Dp_q(y)=\{y\}$ for $y\models q$. In this case Proposition \ref{prop_shuffled case general} guarantees that $S_{\mathbf p,\mathbf q}$ is a shuffling relation between $(p(\Mon),<_p)$ and $(q(\Mon),<_q)$; in particular, they are dense linear orders and $(\sigma(\Mon,y)\mid y\in q(\Mon))$ is a strictly increasing sequence of initial parts of $D_p$.

(a) Assume $|q(M)|\geqslant 2$. If $b<_q b'$ and  $b,b'\in q(M)$, then  by convexity of $\mathbf q$ we have $b<_q y<_q b'\vdash q(y)$, so $(q(M),<_q)$ is  a dense linear order. Now $M\prec \Mon$ implies that  $(\sigma(M,y)\mid y\in q(M))$ is a strictly increasing sequence of initial parts of $D_p^M=D_p\cap M$. Since $p(\Mon)$ is  $<_p$-convex in  $D_p$, for $b<_qb'$ and $b,b'\models q$: \begin{center}
$\emptyset\neq \sigma(\Mon,b')\smallsetminus \sigma(\Mon,b)=\Lp_{\mathbf p}(b')\smallsetminus \Lp_{\mathbf p}(b)\subseteq p(\Mon)$, 
\end{center}
so $(\sigma(M,y)\cap p(M)\mid y\in q(M))$ is a strictly increasing sequence of initial parts of $p(M)$; in particular, $|p(M)|\geqslant 2$. By symmetry, $(p(M),<_p)$ is an infinite dense linear order and $(\sigma(x,M)\cap q(M)\mid x\in p(M))$ is a strictly decreasing sequence of final parts of $q(M)$.  
It remains to show that for any $b\in q(M)$  the set $\sigma(M,b)\cap p(M)$ has no supremum in $(p(M),<_p)$. Suppose, on the contrary that $a\in p(M)$ and $a=\sup_{p(M)} (\sigma(M,b)\cap p(M))$.  
If $\sigma(M,b)\cap p(M)\neq \emptyset$, then   $a=\sup_{D_p^M}(\sigma(M,b))$, so $a\in \dcl(Ab)\smallsetminus \dcl(A)$ implying  $a\dep_Ab$ and $p\nfor q$; a contradiction. 
The remaining case is when $\sigma(M,b)\cap p(M)=\emptyset$. In this case  $\sigma(M,b)=\{x\in D_p^M\mid x<_p p(M)\}$, so $a=\min (p(M))$ (recall that we have defined $\sup(\emptyset)$ to be the minimal element of the order if such an element exists).
Having that $(\sigma(M,b')\cap p(M)\mid b'\in q(M))$ is a strictly increasing sequence of initial parts of $p(M)$, we further conclude that $b=\min(q(M))$, and for every $b'\in q(M)$ distinct from $b$ we have $a\in\sigma(M,b')$. Thus $\sigma(a,M)$  is a final part of $D_q^M$  containing  all the elements of $q(M)$ except $b$; by convexity of $\mathbf q$, $b=\max (D_q^M\smallsetminus\sigma(a,M))$ follows. Then $b=\max (D_q\smallsetminus\sigma(a,\Mon))$, hence $b\dep_A a$ and $p\nfor q$; a contradiction.

(b) Suppose that $a\in p(M)$ and $b\in q(M)$. Since $\sigma(a,\Mon)$ is a final part of $D_q$ relatively defining $\Rp_{\mathbf q}(a)$, we have that $\sigma(a,\Mon)$ does not meet $q(\Mon)^-=\{y\in D_q\mid y<_qq(\Mon)\}$ and $D_q\smallsetminus\sigma(a,\Mon)$ does not meet $q(\Mon)^+=\{y\in D_q\mid q(\Mon)<_qy\}$. So if $|q(M)|=1$, since $\mathbf q$ is convex, we have either $b=\max(D_q^M\smallsetminus \sigma(a,M))$ or $b=\min(\sigma(a,M))$, depending on whether $\models\lnot\sigma(a,b)$ or $\models\sigma(a,b)$. By elementarity, $b=\max(D_q\smallsetminus \sigma(a,\Mon))$ or $b=\min(\sigma(a,\Mon))$; in either case $b\dep_Aa$ and $p\nfor q$. Since this is not possible, we conclude $|q(M)|\geq 2$, and the desired conclusion follows by part (a) of the lemma.
\end{proof}

\begin{dfn} A stationarily ordered type $p\in S_1(A)$ is {\em simple} if $\D_p=\{(a,b)\mid a,b\models p \mbox{ and } a\dep_A b\}$ is a relatively $A$-definable relation on $p(\Mon)$.
\end{dfn}

We will show in Lemma \ref{Lema_equiv_convsimpl_reg} that in the context of binary theories the notion of simplicity   is consistent with the notion of simplicity for  asymmetric regular types introduced in  \cite{MT}; however, it significantly differs from the one used in the \oo-minimal context by Mayer in \cite{May} and  by Rast and Sahota in \cite{Rast2}.

\begin{rmk}\label{Rmk simple type}
Suppose that  $p\in S_1(A)$ is a simple, stationarily ordered type witnessed by  $(D_p,<_p)$.

(a)  $\Dp_p$ is a  relatively definable convex equivalence on $p(\Mon)$ so, by  compactness,  there is an $A$-definable,  convex equivalence relation $E_p$ on  $D_p$ agreeing with $\Dp_p$ on $p(\Mon)$. Note that  $E_p(x,\Mon)<_p y$ is first-order expressible on $D_p$ and that  any formula defining it relatively defines $\D_p(x)<_p y$ on $p(\Mon)\times p(\Mon)$.

(b) If in addition $p(\Mon)$ is a $<_p$-convex subset  of $D_p$, for $a\models p$ we have $E_p(a,\Mon)=\D_p(a)$, so $\Dp_p(a)$ is an $Aa$-definable set for $a\models p$; hence  $\D_p(a)<_p x$ and $x<\D_p(a)$ are first-order expressible in $(D_p,<_p)$. 
\end{rmk}

\begin{lem}\label{Lema ainvar of simple}
Suppose that $\mathbf p=(p,<_p)$ is a convex \so-pair over $A$ witnessed by $(D_p,<_p)$ and that $p$ is simple.
Then for any model $M\supseteq A$, either $\Inv_{\mathbf p}(M)$ is   a dense order-type or $ \Inv_{\mathbf p}(M)\in\{\mathbf{0},\mathbf{1}\}$. 
\end{lem}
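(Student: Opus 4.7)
The plan is to establish the following trichotomy. If $p(M)=\emptyset$ then $\Inv_{\mathbf p}(M)=\mathbf 0$ by definition, and if $p(M)\neq\emptyset$ consists of a single $\Dp_p^M$-class then $\Inv_{\mathbf p}(M)=\mathbf 1$; so everything reduces to showing that whenever $p(M)$ meets at least two distinct $\Dp_p$-classes, the quotient $(p(M)/\Dp_p^M,<_p)$ is a dense linear order.

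The key is to convert simplicity into a usable first-order statement. By Remark \ref{Rmk simple type}(a) there is an $A$-definable convex equivalence relation $E_p$ on $D_p$ which agrees with $\Dp_p$ on $p(\Mon)$, and by part (b) convexity of $\mathbf p$ yields $E_p(a,\Mon)=\Dp_p(a)$ for each $a\models p$. Because $E_p$-classes are convex subsets of $(D_p,<_p)$, the condition ``$\Dp_p(x)<_p y<_p \Dp_p(x')$'' on elements of $p(\Mon)$ is captured by the single $L(A)$-formula
\[ \phi(x,x',y) \;:=\; y\in D_p\,\wedge\, x<_p y\,\wedge\, \neg E_p(x,y)\,\wedge\, y<_p x'\,\wedge\, \neg E_p(y,x'). \]

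Given $a,a'\in p(M)$ with $\Dp_p^M(a)<_p\Dp_p^M(a')$, the corresponding $\Dp_p$-classes in $\Mon$ are also distinct and $<_p$-separated; by Corollary \ref{Cor_on p forking is equivalence}(b), the quotient $(p(\Mon)/\Dp_p,<_p)$ is densely ordered without endpoints, so we find $c\in p(\Mon)$ with $\Dp_p(a)<_p\Dp_p(c)<_p\Dp_p(a')$, and this $c$ witnesses $\Mon\models\phi(a,a',c)$. Elementarity then produces $b'\in M$ with $M\models\phi(a,a',b')$. From $a<_p b'<_p a'$ and convexity of $\mathbf p$ we get $b'\in p(\Mon)\cap M=p(M)$, and the conjuncts $\neg E_p(a,b')\wedge\neg E_p(b',a')$ force $\Dp_p^M(a)<_p\Dp_p^M(b')<_p\Dp_p^M(a')$ as three distinct classes, which is density.

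The only non-routine point is the transfer from $\Mon$ to $M$ in this last step: without simplicity the condition ``$\Dp_p(a)<_p y<_p\Dp_p(a')$'' would only be type-definable over $Aaa'$, and elementarity could not be invoked. Simplicity is used precisely to collapse it to the single $L(A)$-formula $\phi$, while convexity of $\mathbf p$ ensures that the $M$-solution of $\phi$ actually lies in $p(M)$ rather than merely in $D_p$.
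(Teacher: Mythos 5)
Your proof is correct and follows essentially the same route as the paper: reduce to the case where $p(M)$ meets at least two $\Dp_p$-classes, use density of $(p(\Mon)/\Dp_p,<_p)$ (Corollary \ref{Cor_on p forking is equivalence}(b)) to produce a witness $c$ in the monster, invoke simplicity via Remark \ref{Rmk simple type} to make the ``strictly between the two classes'' condition first-order (your explicit $\phi$ via $E_p$ is exactly what the paper's ``$\D_p(a)<_p x<_p\D_p(b)$ is first-order expressible'' unpacks to), and then transfer to $M$ by elementarity, with convexity guaranteeing the $M$-solution lands in $p(M)$. You spell out the role of convexity in the final transfer step slightly more explicitly than the paper does, but the argument is the same.
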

\begin{proof}
Suppose that $a,b\in p(M)$ are independent over $A$, say $\D_p(a)<_p\D_p(b)$; then $(a,b)$ is a Morley sequence in $\p_r$ over $A$, so by saturation of $\Mon$ there is $c\in \Mon$ such that $(a,c,b)$ is a Morley sequence in $\p_r$, too. By Remark \ref{Rmk simple type} $\D_p(a)<_px<_p\D_p(b)$ is first-order expressible and  $c$ witnesses the existential quantifier in $\models \exists x\,(\D_p(a)<_px<_p\D_p(b))$. Hence for some $c'\in M$ we have $\D_p(a)<_pc'<_p\D_p(b)$. Then $\D_p(a)<_p\D_p(c')<_p\D_p(b)$  and $\Inv_{\mathbf p}(M)$ is a dense order-type. 
\end{proof}

\begin{lem}\label{Lema convex implies simple} Suppose that $p\in S_1(A)$ is a convex, stationarily ordered type. If there exists a convex, stationarily ordered type $q\in S_1(A)$ such that $\mathcal S(p,q)$, then $p$ (and by symmetry also $q$) is  simple.  
\end{lem}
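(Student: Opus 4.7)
The plan is to exhibit an $L(A)$-formula $\phi(x,x')$ that relatively defines $\D_p$ on $p(\Mon)\times p(\Mon)$. By Corollary \ref{Cor_delta_q_q*}, after replacing $<_q$ by its reverse if necessary, I may assume $\delta(\mathbf p,\mathbf q)$, where $\mathbf p=(p,<_p)$ and $\mathbf q=(q,<_q)$ are convex \so-pairs witnessed by $(D_p,<_p)$ and $(D_q,<_q)$. Lemma \ref{lem_shuff rel def} then supplies a formula $\sigma(x,y)\in L(A)$ defining a monotone relation between $(D_p,<_p)$ and $(D_q,<_q)$ that relatively defines $S_{\mathbf p,\mathbf q}$ within $p(\Mon)\times q(\Mon)$. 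The candidate is
\[ \phi(x,x') \;:=\; x\in D_p \,\wedge\, x'\in D_p \,\wedge\, \forall z\bigl(z\in D_q \to (\sigma(x,z)\leftrightarrow \sigma(x',z))\bigr). \]

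Fix $a,a'\models p$; after possibly swapping them assume $a\leq_p a'$. Monotonicity of $\sigma$ yields nested final parts $\sigma(a,D_q)\supseteq \sigma(a',D_q)$ of $D_q$ whose difference $D:=\sigma(a,D_q)\smallsetminus \sigma(a',D_q)$ is convex in $D_q$. Since $\sigma$ relatively defines $S_{\mathbf p,\mathbf q}$ and $\delta(\mathbf p,\mathbf q)$ holds, Lemma \ref{Lema_direct_non-orth} identifies $\sigma(\cdot,D_q)\cap q(\Mon)$ with $\Rp_{\mathbf q}(\cdot)$, so $D\cap q(\Mon)=\Rp_{\mathbf q}(a)\smallsetminus \Rp_{\mathbf q}(a')$. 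The easy directions follow at once: if $a\dep_A a'$, Lemma \ref{Prop_equivalents_of_a dep b} gives $\Rp_{\mathbf q}(a)=\Rp_{\mathbf q}(a')$ and hence $D\cap q(\Mon)=\emptyset$; if $a\ind_A a'$, Lemma \ref{lem_square_implies_disjoit_Ds}(b) together with $\delta(\mathbf p,\mathbf q)$ forces $\Lp_{\mathbf q}(a)\subset \Lp_{\mathbf q}(a')$, so $D\cap q(\Mon)\neq \emptyset$, witnessing $\models\lnot\phi(a,a')$.

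The main step is the converse: showing that $D\cap q(\Mon)=\emptyset$ already forces $D=\emptyset$, whence $\sigma(a,D_q)=\sigma(a',D_q)$ and $\models\phi(a,a')$. This is where convexity of $q$ becomes indispensable. Suppose for contradiction $D\neq\emptyset$ and $D\cap q(\Mon)=\emptyset$. Since $D$ and $q(\Mon)$ are disjoint convex subsets of the linearly ordered set $D_q$, one lies entirely below the other. If $D<_q q(\Mon)$, then every $b\in q(\Mon)$ satisfies $b>_q d$ for some $d\in D\subseteq \sigma(a,D_q)$, so $b\in \sigma(a,D_q)$ by finality; thus $\Rp_{\mathbf q}(a)=q(\Mon)$. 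If instead $q(\Mon)<_q D$, then every $b\in q(\Mon)$ satisfies $b<_q d$ for some $d\in D$ with $d\notin \sigma(a',D_q)$; since $\sigma(a',D_q)$ is a final part of $D_q$, $b\notin \sigma(a',D_q)$, whence $\Rp_{\mathbf q}(a')=\emptyset$. Either outcome contradicts Lemma \ref{Cor_Lpp_north_partition}(c), which, under $p\nwor q$ and $p\fwor q$, provides a proper convex partition $\Lp_{\mathbf q}(\cdot)<_q \Rp_{\mathbf q}(\cdot)$ of $q(\Mon)$ with both parts non-empty.

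Combining the steps, $\models\phi(a,a')$ iff $a\dep_A a'$ for all $a,a'\models p$, so $\phi$ relatively defines $\D_p$ on $p(\Mon)$ and $p$ is simple; the claim for $q$ follows by swapping the roles of $p$ and $q$. The geometric case analysis in paragraph three is the only step I expect to require genuine care, and it pinpoints exactly why the convexity hypothesis on $q$ cannot be dispensed with.
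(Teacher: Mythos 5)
Your proof is correct and takes essentially the same route as the paper: you use the relatively definable monotone relation $\sigma$ from Lemma \ref{lem_shuff rel def} and argue that equality of the $\sigma$-fibers over $D_q$ captures $\dep_A$ on $p(\Mon)$, via the equivalences of Lemma \ref{Prop_equivalents_of_a dep b}. The paper does the dual (comparing fibers $\sigma(\Mon,b)$ for $b\models q$ to obtain $q$ simple directly, with $p$ by symmetry), and where it says ``it is easy to see'' that $\sigma(\Mon,b)\smallsetminus\sigma(\Mon,b')=\Lp_{\mathbf p}(b)\smallsetminus\Lp_{\mathbf p}(b')$, your case analysis of the convex set $D$ versus $q(\Mon)$ fills in exactly that step in detail.
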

\begin{proof}
Suppose that $(D_p,<_p)$ and $(D_q,<_q)$ are $A$-definable linear orders witnessing that $\mathbf p= (p,<_p)$ and $\mathbf q=(q,<_q)$ are convex, directly shuffled \so-pairs over $A$. Let $\sigma(x,y)$ be a formula defining a monotone relation between $(D_p,<_p)$ and $(D_q,<_q)$ and relatively defining $S_{\mathbf{p},\mathbf{q}}$; it exists by Lemma \ref{lem_shuff rel def}. It is easy to see that for $b,b'\models q$ the set $\Lp_{\mathbf p}(b)\smallsetminus\Lp_{\mathbf p}(b')$ is  defined by $\sigma(x,b)\wedge\neg\sigma(x,b')$.  In particular,  $\Lp_{\mathbf p}(y)=\Lp_{\mathbf p}(y')$ is a relatively definable relation within $q(\Mon)\times q(\Mon)$.  For $b,b'\models q$ by Lemma \ref{Prop_equivalents_of_a dep b} we have:   $b\dep_Ab'$ if and only if $\Lp_{\mathbf p}(b)=\Lp_{\mathbf p}(b')$. Hence $y\dep_A y'$ is a relatively definable relation on $q(\Mon)$ and  $q$ is a simple type. By symmetry,  $p$ is simple, too.
\end{proof}

\begin{prop}\label{prop_shuffled case local} Suppose that $\mathbf p= (p,<_p)$ and $\mathbf q=(q,<_q)$ are directly shuffled, convex \so-pairs over $A$, $M\supseteq A$ is a model and $\hat S_{\mathbf p,\mathbf q}^M=\{(\Dp_p^M(a),\Dp_q^M(b))\mid a\in p(M), \ b \in q(M), (a,b)\in S_{\mathbf p,\mathbf q}\}$.
\begin{enumerate}[(a)]
\item If $|q(M)/\Dp_q^M|\geq 2$, then $\hat S_{\mathbf p,\mathbf q}^M$ shuffles the (infinite) orders $(p(M)/\Dp_p^M,<_p)$ and $(q(M)/\Dp_q^M,<_q)$.
\item If $|q(M)/\Dp_q^M|= 1$, then $p$ is omitted in $M$.
\end{enumerate}
\end{prop}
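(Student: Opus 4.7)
The plan is to reduce the proposition to the trivial case handled by Lemma \ref{lem_shuff rel def1}, by factoring out the convex equivalences $\D_p$ and $\D_q$ in $T^{eq}$. Since $\mathcal S(\mathbf p,\mathbf q)$ and both \so-pairs are convex, Lemma \ref{Lema convex implies simple} gives that $p$ and $q$ are simple, so by Remark \ref{Rmk simple type} there are $A$-definable convex equivalence relations $E_p$ on $D_p$ and $E_q$ on $D_q$ whose restrictions to $p(\Mon)$ and $q(\Mon)$ are $\D_p$ and $\D_q$. Working in $T^{eq}$, let $\pi_p,\pi_q$ be the $A$-definable quotient maps, let $<_p^*$ and $<_q^*$ be the $A$-definable linear orders induced on the image sorts (well-defined by the convexity of $E_p,E_q$), and set $p^* = \tp(\pi_p(a)/A)$, $q^*=\tp(\pi_q(b)/A)$ for $a\models p$, $b\models q$. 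Put $\mathbf p^* = (p^*,<_p^*)$ and $\mathbf q^*=(q^*,<_q^*)$.

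The first block of work is to check that $\mathbf p^*$ and $\mathbf q^*$ are directly shuffled, convex, \emph{trivial} \so-pairs over $A$ in $T^{eq}$, and to match up the relevant objects. Convexity in $D_p/E_p, D_q/E_q$ is preserved by $\pi_p,\pi_q$ (as $E_p,E_q$ are convex); stationarity is inherited because the global $A$-invariant extensions correspond under the quotients; triviality holds since two distinct $\D_p$-classes realize independent types over $A$ and independence transfers to $T^{eq}$ through algebraic closure in imaginaries; $p^*\fwor q^*$ inherits from $p\fwor q$ by the same token; $p^*\nwor q^*$ and directness inherit because $S_{\mathbf p^*,\mathbf q^*}$ is the pushforward $\hat S_{\mathbf p,\mathbf q}$, which is a shuffling relation by Proposition \ref{prop_shuffled case general}. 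The crucial identification is $p^*(M^{eq})=p(M)/\D_p^M$ (and similarly for $q$): if $c\in M\cap D_p$ satisfies $[c]_{E_p}\models p^*$, an $A$-automorphism of $\Mon^{eq}$ sending $[c]_{E_p}$ to $[a]_{E_p}$ for some $a\models p$ lifts to an $A$-automorphism of $\Mon$ whose image of the $E_p$-class of $c$ is $\D_p(a)\subseteq p(\Mon)$, so the whole $E_p$-class of $c$ lies in $p(\Mon)$, whence $c\in p(\Mon)\cap M=p(M)$. In particular $\hat S_{\mathbf p,\mathbf q}^M=S_{\mathbf p^*,\mathbf q^*}^{M^{eq}}$.

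With the setup complete, Lemma \ref{lem_shuff rel def1} applied to $\mathbf p^*,\mathbf q^*$ and $M^{eq}$ yields both parts. For part (a), the hypothesis $|q(M)/\D_q^M|\geq 2$ is $|q^*(M^{eq})|\geq 2$, and the argument in the proof of Lemma \ref{lem_shuff rel def1}(a) actually establishes shuffling at the level of the model via strictly increasing (respectively, strictly decreasing) families of initial (respectively, final) parts with no suprema (respectively, infima) inside $p^*(M^{eq})$ and $q^*(M^{eq})$; this transports to the desired shuffling of $(p(M)/\D_p^M,<_p)$ and $(q(M)/\D_q^M,<_q)$ by $\hat S_{\mathbf p,\mathbf q}^M$. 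For part (b), if $|q(M)/\D_q^M|=1$ and $p(M)\neq\emptyset$, then $M^{eq}$ realizes both $p^*$ and $q^*$, and Lemma \ref{lem_shuff rel def1}(b) forces $|q^*(M^{eq})|\geq 2$, contradicting our assumption; hence $p(M)=\emptyset$. The main obstacle is precisely the $T^{eq}$ bookkeeping of the middle paragraph --- especially lifting $\Mon^{eq}$-automorphisms to identify $p^*(M^{eq})$ with $p(M)/\D_p^M$ --- after which the proof is a direct transplant of Lemma \ref{lem_shuff rel def1}.
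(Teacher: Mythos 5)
Your proof is correct and follows essentially the same route as the paper: pass to $T^{eq}$, use simplicity (via Lemma \ref{Lema convex implies simple} and Remark \ref{Rmk simple type}) to replace $\D_p,\D_q$ by definable convex equivalences, verify that the quotient \so-pairs are trivial, convex, and directly shuffled, and then apply Lemma \ref{lem_shuff rel def1}. You are somewhat more explicit than the paper about the identification $p^*(M^{eq})=p(M)/\D_p^M$ (the paper treats $\hat S_{\mathbf p,\mathbf q}=S_{\mathbf p_E,\mathbf q_E}$ as immediate), and you justify directness via monotonicity of the pushforward relation rather than by transitivity of $\delta$, but these are cosmetic differences within the same argument.
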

\begin{proof} Without loss of generality let $A=\emptyset$. Suppose that the orders $(D_p,<_p)$ and $(D_q,<_q)$ witness the convexity of the \so-pairs $\mathbf p$ and $\mathbf q$. For the rest of the proof we will operate in $\Mon^{eq}$ (in fact we will need to add only two new sorts to $\Mon$); note that passing to $T^{eq}$ does not affect the assumptions:  $\mathbf p$ and $\mathbf q$ are convex, directly shuffled \so-pairs. 
By Lemma \ref{Lema convex implies simple} $p$ and $q$ are simple types so, by Remark \ref{Rmk simple type}, there is a  definable convex  equivalence relation $E_p$ on $D_p$   agreeing with
$\Dp_p$ on $p(\Mon)$; then $(D_p/E_p,<_p)$ is a definable linear order (in $\Mon^{eq}$) witnessing that $\mathbf p_E=(p_E,<_p)$ is a convex \so-pair, where $p_E=\tp([a]_{E_p})$ for $a\models p$.  
Similarly, there is a definable convex  equivalence relation $E_q$ on $D_q$   agreeing with
$\Dp_q$ on $q(\Mon)$ such that $(D_q/E_q,<_q)$ witnesses  that $\mathbf q_E=(q_E,<_q)$ is a convex \so-pair. The \so-pairs $\mathbf p_E$ and $\mathbf q_E$ are directly shuffled: $p\nfor p_E$, $q\nfor q_E$ and $\mathcal S(\mathbf p,\mathbf q)$  by transitivity of $\nwor$ and $\nfor$ imply $\mathcal S(\mathbf p_E,\mathbf q_E)$, while $\delta(\mathbf p, \mathbf q)$, $\delta(\mathbf p,\mathbf p_E)$ and  $\delta(\mathbf q,\mathbf q_E)$ imply $\delta(\mathbf p_E,\mathbf q_E)$ by transitivity of $\delta$. Note that $p_E$ and $q_E$ are trivial types and $\hat S_{\mathbf p,\mathbf q}=  S_{\mathbf p_E,\mathbf q_E}$. The rest of the proof is an application of  Lemma \ref{lem_shuff rel def1}.

(a) If $|q(M)/\Dp_q^M|\geqslant 2$, then $M$ contains two independent realizations of $q$, so $|q_E(M)|\geqslant 2$. Hence $\mathbf p_E$ and $\mathbf q_E$ satisfy all the assumptions of Lemma \ref{lem_shuff rel def1}(a), so $(p_E(M),<_p)$ and  $(q_E(M),<_q)$ are infinite linear orders shuffled by $S_{\mathbf p_E,\mathbf q_E}^M$. Now $\hat S_{\mathbf p,\mathbf q}=  S_{\mathbf p_E,\mathbf q_E}$ implies the desired conclusion.

(b) If $|q(M)/\Dp_q^M|=1$ and $p$ is realized in $M$, then both $p_E$ and $q_E$ are realized in $M$. Hence $p_E$ and $q_E$ satisfy all the assumptions of Lemma \ref{lem_shuff rel def1}(b), so  $(q_E(M),<_q)$ is an infinite order; a contradiction. Therefore, $M$ omits $p$.
\end{proof} 

\begin{dfn} Let $p\in S_1(A)$ be a stationarily ordered type and $M\supseteq A$ a model. 
 A {\it $\Dp_p^M$-transversal} is any set of representatives of $\Dp_p^M$-classes, i.e.\  a   maximal, pairwise independent subset of $p(M)$.    
\end{dfn} 

For $I_p(M)\subseteq M$  a $\Dp_p^M$-transversal, the mapping $x\longmapsto \Dp_p(x)$   is a  natural isomorphism between $(I_p(M),<_p)$ and $(p(M)/\Dp_p^M,<_p)$; in particular, the order type of $(I_p(M),<_p)$   does not depend on the particular choice of the transversal. Using this observation
we can reformulate the previous proposition.

\begin{cor}\label{cor shuffled representatives} Suppose that $\mathbf p= (p,<_p)$ and $\mathbf q=(q,<_q)$ are directly shuffled, convex  \so-pairs over $A$, $A\subseteq M$,  $I_p(M)$ is a   $\Dp_p^M$-transversal and $I_q(M)$ is a  $\Dp_q^M$-transversal. Then:
\begin{enumerate}[(a)]
\item If $|I_q(M)|\geq 2$, then the restriction of $S_{\mathbf p,\mathbf q}$ to $I_p(M)\times I_q(M)$ is a shuffling relation between (infinite) orders $(I_p(M),<_p)$ and $(I_q(M),<_q)$;
\item If $|I_q(M)|=1$, then $p$ is omitted in $M$.\qed
\end{enumerate} 
\end{cor}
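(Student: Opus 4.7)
The strategy is to transfer Proposition \ref{prop_shuffled case local} from the quotient level to the transversal level via the canonical bijection between a $\Dp_p^M$-transversal and $p(M)/\Dp_p^M$. Concretely, I would let $\pi_p\colon I_p(M)\to p(M)/\Dp_p^M$ be defined by $\pi_p(a)=\Dp_p^M(a)$, and analogously $\pi_q$. Since $\Dp_p^M$ is a $<_p$-convex equivalence relation on $p(M)$ by Corollary \ref{Cor_on p forking is equivalence}(a), the map $\pi_p$ is an order-isomorphism from $(I_p(M),<_p)$ onto $(p(M)/\Dp_p^M,<_p)$; the same holds for $\pi_q$. In particular $|I_q(M)|=|q(M)/\Dp_q^M|$, so the case dichotomy of the corollary matches that of Proposition \ref{prop_shuffled case local}.

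For part (b), the assumption $|I_q(M)|=1$ translates to $|q(M)/\Dp_q^M|=1$, and then Proposition \ref{prop_shuffled case local}(b) directly yields that $p$ is omitted in $M$. For part (a), assume $|I_q(M)|\geq 2$; then $|q(M)/\Dp_q^M|\geq 2$ and Proposition \ref{prop_shuffled case local}(a) gives that $\hat S^M_{\mathbf p,\mathbf q}$ shuffles the infinite orders $(p(M)/\Dp_p^M,<_p)$ and $(q(M)/\Dp_q^M,<_q)$. It remains to check that, under the order-iso\-mor\-phisms $\pi_p,\pi_q$, the restriction $S'=S_{\mathbf p,\mathbf q}\cap(I_p(M)\times I_q(M))$ corresponds exactly to $\hat S^M_{\mathbf p,\mathbf q}$.

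For this transfer one uses that $S_{\mathbf p,\mathbf q}$ is $\Dp_p\times\Dp_q$-closed (Remark \ref{Rmk def Spq}(b)): for any $a\in I_p(M)$ and $b\in I_q(M)$, $(a,b)\in S_{\mathbf p,\mathbf q}$ iff $(\pi_p(a),\pi_q(b))=(\Dp_p^M(a),\Dp_q^M(b))\in\hat S^M_{\mathbf p,\mathbf q}$. Hence $(\pi_p\times\pi_q)(S')=\hat S^M_{\mathbf p,\mathbf q}$. Since shuffling relations are preserved by order-isomorphisms on each factor (the definition in Definition \ref{Defin_shuffling} only refers to the fibers being strictly monotone families of initial/final parts with no suprema in the appropriate order), $S'$ shuffles $(I_p(M),<_p)$ and $(I_q(M),<_q)$. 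Finally, since shuffled orders are dense linear orders and $|I_q(M)|\geq 2$ forces $(I_q(M),<_q)$ (and hence $(I_p(M),<_p)$, being shuffled with it) to be infinite, the ``infinite'' clause follows.

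There is no real obstacle here; the only thing to be careful about is the direction of the equivalence $(a,b)\in S_{\mathbf p,\mathbf q}\Leftrightarrow (\Dp_p^M(a),\Dp_q^M(b))\in \hat S^M_{\mathbf p,\mathbf q}$, which follows immediately from the $\Dp_p\times\Dp_q$-closedness, and the observation that Definition \ref{Defin_shuffling} is invariant under replacing each factor by an order-isomorphic copy.
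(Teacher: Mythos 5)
Your proof is correct and follows essentially the same route as the paper, which treats the corollary as an immediate reformulation of Proposition \ref{prop_shuffled case local} via the observation that $x\mapsto\Dp_p(x)$ is an order-isomorphism between $(I_p(M),<_p)$ and $(p(M)/\Dp_p^M,<_p)$. You have correctly filled in the one step the paper leaves implicit, namely that the $\Dp_p\times\Dp_q$-closedness of $S_{\mathbf p,\mathbf q}$ (Remark \ref{Rmk def Spq}(b)) is what makes the restriction to $I_p(M)\times I_q(M)$ correspond bijectively to $\hat S^M_{\mathbf p,\mathbf q}$ under the two order-isomorphisms.
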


\begin{exm} The conclusions   of the previous corollary may fail if one of the types  is  not convex.\\ 
Take the dense linear order without endpoints coloured by $\omega$ dense colors $\{D_n\mid n\in\omega\}$ from Example \ref{exm dense colors}; note that forking is the equality relation. Let $p_0\in S_1(\emptyset)$ be the type of an element of  color $D_0$  and let $q\in S_1(\emptyset)$  be the type of a colorless element; $p_0$ is convex, while $q$ is not convex.   Then $(p_0,<)$ and $(q,<)$ are directly shuffled \so-pairs; $(p_0(\Mon),<)$ and $(q(\Mon),<)$ are shuffled by $<$. On the other hand, if $M$ is the prime model,  then $M$ omits $q$ while $(p_0(M),<)$ is an infinite  dense order. 
\end{exm}

\begin{prop}\label{Prop commutativity in M}
Suppose that   $(\mathbf{p}_n=(p_n,<_n)\mid n\in\alpha)$ is a sequence of pairwise directly shuffled, convex \so-pairs over $A$. Let $M\supseteq A$ be a model, let $I_n(M)$ be a $\Dp_{p_n}^M$-transversal and let $S_{n,m}^M=S_{\mathbf{p}_n,\mathbf{p}_m}\cap (I_n(M)\times I_m(M))$ ($n<m<\alpha$). Then either \ 

(1) \ \   $|\bigcup_{n\in\alpha}I_n(M)|\leqslant 1$, \ or  \ 

(2) \ \   $((I_n(M),<_n)\mid n<\alpha)$ are dense linear orders shuffled by $(S_{n,m}^M\mid n<m<\alpha)$. 
\end{prop}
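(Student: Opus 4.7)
The plan is to reduce the proposition to Corollary \ref{cor shuffled representatives} for the pairwise shuffling property and to Lemma \ref{Lema_delta_implies_S+pq_commute} for the coherence condition.

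Suppose (1) fails, so $|\bigcup_{n\in\alpha} I_n(M)| \geq 2$. First I would argue by a short case analysis that in fact $|I_n(M)| \geq 2$ for every $n\in\alpha$. If some $|I_{n_0}(M)| \geq 2$, then for any $m\in\alpha$ Corollary \ref{cor shuffled representatives}(a) applied to the pair $(\mathbf{p}_m, \mathbf{p}_{n_0})$ yields that $(I_m(M), <_m)$ is infinite. If instead $|I_n(M)| \leq 1$ for every $n$, one can pick two distinct indices $n_1\neq n_2$ with $|I_{n_1}(M)|=|I_{n_2}(M)|=1$, and Corollary \ref{cor shuffled representatives}(b) applied to $(\mathbf{p}_{n_2},\mathbf{p}_{n_1})$ produces a contradiction, as it forces $p_{n_2}$ to be omitted in $M$. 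Hence $|I_n(M)|\geq 2$ for all $n$, and Corollary \ref{cor shuffled representatives}(a) then immediately gives that each $S_{n,m}^M$ is a shuffling relation between $(I_n(M),<_n)$ and $(I_m(M),<_m)$; in particular each $(I_n(M),<_n)$ is a dense linear order.

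It remains to verify the coherence condition $S_{m,k}^M\circ S_{n,m}^M=S_{n,k}^M$ for all $n<m<k<\alpha$. The inclusion $\supseteq$ follows easily: if $(a,b)\in S_{n,m}^M$ and $(b,c)\in S_{m,k}^M$, then the corresponding global inclusions and Lemma \ref{Lema_delta_implies_S+pq_commute} (using $\delta(\mathbf{p}_m,\mathbf{p}_n)$, which holds by the symmetry of $\delta$ established in Lemma \ref{Lema_direct_non-orth}) give $(a,c)\in S_{\mathbf{p}_n,\mathbf{p}_k}$, hence $(a,c)\in S_{n,k}^M$.

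The main obstacle is the reverse inclusion $\subseteq$: given $(a,c)\in S_{n,k}^M$, one must find $b\in I_m(M)$ with $(a,b)\in S_{n,m}^M$ and $(b,c)\in S_{m,k}^M$. I would use the formulas $\sigma_{n,m}(x,y)$ and $\sigma_{m,k}(y,z)$ supplied by Lemma \ref{lem_shuff rel def}, which define monotone relations on $D_{p_n}\times D_{p_m}$ and $D_{p_m}\times D_{p_k}$ and relatively define $S_{\mathbf{p}_n,\mathbf{p}_m}$ and $S_{\mathbf{p}_m,\mathbf{p}_k}$ respectively. By Lemma \ref{Lema_delta_implies_S+pq_commute} there is $b'\in p_m(\Mon)$ with $\models \sigma_{n,m}(a,b')\wedge \sigma_{m,k}(b',c)$, so $\models \exists y(\sigma_{n,m}(a,y)\wedge\sigma_{m,k}(y,c))$ in $\Mon$, and elementarity of $M\prec\Mon$ produces $b_0\in M$ realising this. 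The crucial step is then to check that $b_0\in p_m(M)$: since $\mathbf{p}_m$ is convex, $p_m(\Mon)$ is a convex subset of $D_{p_m}$, and the final-part (respectively initial-part) behaviour of $\sigma_{n,m}(a,D_{p_m})$ and $\sigma_{m,k}(D_{p_m},c)$ rules out both $b_0<_m p_m(\Mon)$ (which would force $\Rp_{\mathbf{p}_m}(a)=p_m(\Mon)$, contradicting $p_n\nwor p_m$) and $p_m(\Mon)<_m b_0$ (which would force $\Lp_{\mathbf{p}_m}(c)=p_m(\Mon)$, contradicting $p_m\nwor p_k$). Finally, letting $b$ be the unique representative in $I_m(M)$ of the class $\Dp_{p_m}^M(b_0)$, the $\Dp_{p_n}\times\Dp_{p_m}$-closure of $S_{\mathbf{p}_n,\mathbf{p}_m}$ and its analogue for $S_{\mathbf{p}_m,\mathbf{p}_k}$ (Remark \ref{Rmk def Spq}(b)) yield $(a,b)\in S_{n,m}^M$ and $(b,c)\in S_{m,k}^M$. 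The convexity of the \so-pairs is what makes it possible to trap the formula witness $b_0$ inside $p_m(\Mon)$, and this is the heart of the argument.
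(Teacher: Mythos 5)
Your proof is correct and follows essentially the same route as the paper's: Corollary \ref{cor shuffled representatives} for the pairwise shuffling of the $(I_n(M),<_n)$, then Lemma \ref{lem_shuff rel def} together with Lemma \ref{Lema_delta_implies_S+pq_commute} and the convexity of $p_m$ to express coherence by a first-order condition that transfers between $\Mon$ and $M$ (you unfold this into two inclusions while the paper states it as a single biconditional, and you make explicit the final passage from the witness $b_0\in p_m(M)$ to its $\Dp_{p_m}^M$-representative via Remark \ref{Rmk def Spq}(b), which the paper leaves implicit). The arguments are materially identical.
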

\begin{proof}Suppose that $|\bigcup_{n\in\alpha}I_n(M)|\geqslant 2$ and let $n_0<\alpha$ be such that $I_{n_0}(M)\neq \emptyset$. Then $|I_{n_0}(M)|=1$ is impossible: otherwise, by Corollary \ref{cor shuffled representatives}(b) we would have each $p_m$ for $m\neq n_0$ omitted in $M$ and thus $|\bigcup_{n\in\alpha}I_n(M)|=1$. Hence $|I_{n_0}(M)|\geqslant 2$ and by  applying Corollary \ref{cor shuffled representatives}(a) we deduce: the orders $(I_n(M),<_n)$ and $(I_m(M),<_m)$ are shuffled by $S_{n,m}^M$ for all $n<m<\alpha$. 

To complete the proof of (2),  it remains to verify that the sequence of  relations $S_{n,m}^M$ is coherent. 
By Lemma \ref{lem_shuff rel def} for  all $n<m<\alpha$ there is a formula $\sigma_{n,m}(x,y)$ defining a monotone relation between the domains of $<_n$ and $<_m$ and relatively defining $S_{\mathbf p_n,\mathbf p_m}$ within $p_n(\Mon)\times p_m(\Mon)$. 
Fix $n<m<k<\alpha$, let $a_n\in I_n(M)$ and $a_k\in I_k(M)$. By Lemma \ref{Lema_delta_implies_S+pq_commute} the sequence of   $(S_{\mathbf{p}_n,\mathbf{p}_m}\mid n<m<\alpha)$ is coherent, so:
\begin{center}
 $(a_n,a_k)\in S_{n,k}^M$ \ if and only if \ $\models \exists y (\sigma_{n,m}(a_n,y)\land \sigma_{m,k}(y,a_k)\land \bigwedge p_m(y))$. 
\end{center}
Since $p_m$ is convex, $\sigma_{n,m}(a_n,\Mon)$ is a   $\mathbf{p}_m$-left bounded final part and  $\sigma_{m,k}(\Mon,a_k)$ is a   $\mathbf{p}_m$-right bounded initial part of the domain of $<_m$, we have  $\sigma_{n,m}(a_n,y)\land \sigma_{m,k}(y,a_k)\vdash p_m(y)$. Hence:
\begin{center}
 $(a_n,a_k)\in S_{n,k}^M$ \ if and only if \ $\models \exists y (\sigma_{n,m}(a_n,y)\land \sigma_{m,k}(y,a_k))$. 
\end{center} 
The right hand side of this equivalence holds in $\Mon$ if and only if it holds in $M$. Hence $S^M_{n,k}=S^M_{m,k}\circ S^M_{n,m}$\,,  completing the proof of the proposition.
\end{proof}

 \section{Stationarily ordered types in binary theories}\label{s so in binary general}

In this section we prove several  technical results related to stationarily ordered types in binary theories. 
\begin{rmk}
An equivalent way of stating that a theory $T$ is  binary is: the type of any tuple of elements is forced by the types of pairs of its elements:
\begin{center}
$\bigcup_{1\leqslant i<j\leqslant n}\,\tp_{x_i,x_j}(a_i,a_j)\forces_T \tp_{x_1,\ldots,x_n}(a_1,\ldots,a_n)$
\end{center}
holds for all elements $a_1,\ldots,a_n$.  
Yet another way of expressing that is: 
\begin{center}$\bigcup_{b\in B}\,\tp_x(a/b)\vdash_T \tp_x(a/B)$ holds for all $a,B$.
\end{center}
These characterizations are consequences of compactness and will be freely used below. 
\end{rmk}

\begin{rmk}\label{rmk_morely seq in binary} Suppose that $T$ is binary and that $\mathbf p=(p,<_p)$ is an \so-pair over $A$. 

(a) \ Morley sequences in $\p_l$ ($\p_r$) over $A$ have a simple description:
  they  are   decreasing (increasing) sequences of $\Dp_p$-representatives.
Indeed, if $(I,<)$ is a linear order, then by binarity: \begin{center}
$(a_i\mid i\in I)$ is a Morley sequence in $\p_r$ over $A$ \ if and only if  \  each $(a_i,a_j)$   for $i<j$ is so.
\end{center} By Lemma \ref{lem rmk closed for Dp}(c) the latter is equivalent to $\Dp_p(a_i)<_p\Dp_p(a_j)$. In other words, the sequence $(\Dp_p(a_i)\mid i\in I)$ is  $<_p$-increasing.

(b) In fact, the type of a pairwise independent tuple $\bar a=(a_0,\ldots,a_n)$ of realizations of $p$ is determined by its $\{<_p\}$-type: if $\bar b=(b_0,\ldots,b_n)$ is another pairwise independent tuple of realizations of $p$ that has the same $\{<_p\}$-type as $\bar a$ does, then $\tp(\bar a)=\tp (\bar b)$. 
\end{rmk}

\begin{prop}\label{Prop decomposition}
Suppose that $T$ is a small, binary, stationarily ordered theory. Let  $(\alpha_i\mid i<\kappa)$ be an enumeration of all the $\nwor$-classes of $S_1(\emptyset)$ and for each $X\subseteq \Mon$ let $\alpha_i(X)=\{a\in X\mid \tp(a)\in\alpha_i\}$. 

(a) $(\alpha_i(\Mon)\mid i<\kappa)$ is an orthogonal decomposition of $\Mon$: \ $\tp(\alpha_i(\Mon))\wor \tp(\Mon\smallsetminus \alpha_i(\Mon))$  ($i<\kappa$).

(b) If $ M_i\prec \Mon$ for all $i<\kappa$, then $N=\bigcup_{i<\kappa}\alpha_i(M_i)\models T$.
\end{prop}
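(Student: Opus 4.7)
For part (a), I would verify $\tp(\bar a)\cup\tp(\bar b)\vdash\tp(\bar a,\bar b)$ for any finite tuples $\bar a\subseteq\alpha_i(\Mon)$ and $\bar b\subseteq\Mon\setminus\alpha_i(\Mon)$. By binarity the joint type $\tp(\bar a,\bar b)$ is determined by the 2-types of its pairs; pairs inside $\bar a$ or inside $\bar b$ are already in $\tp(\bar a)$ resp.\ $\tp(\bar b)$, while for any cross pair $(a,b)$ the types $\tp(a)\in\alpha_i$ and $\tp(b)\in\alpha_j$ (some $j\ne i$) belong to distinct $\nwor$-classes, so $\tp(a)\wor\tp(b)$ and $\tp(a,b)$ is the unique completion of $\tp(a)\cup\tp(b)$, forced by the 1-types.

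For part (b), my plan is to verify Tarski--Vaught, showing $N\prec\Mon$ and hence $N\models T$. Given $\bar m\in N$ and $c\in\Mon$ with $\models\phi(c,\bar m)$, fix $i_0$ with $\tp(c)\in\alpha_{i_0}$ and partition $\bar m=\bar m^{i_0}\sqcup\bar m^{\ne i_0}$ by $\nwor$-class membership; then $\bar m^{i_0}\subseteq M_{i_0}$ and $\bar m^{\ne i_0}\subseteq\Mon\setminus\alpha_{i_0}(\Mon)$. Applying part (a) to $\{c\}\cup\bar m^{i_0}$ against $\bar m^{\ne i_0}$ yields $\tp(c,\bar m^{i_0})\wor\tp(\bar m^{\ne i_0})$, whence compactness supplies $\theta(x,\bar y)\in\tp(c,\bar m^{i_0})$ and $\rho(\bar z)\in\tp(\bar m^{\ne i_0})$ with $T\vdash\theta(x,\bar y)\wedge\rho(\bar z)\to\phi(x,\bar y,\bar z)$. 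Tarski--Vaught for $M_{i_0}\prec\Mon$ with parameters $\bar m^{i_0}\subseteq M_{i_0}$ produces $c'\in M_{i_0}$ satisfying $\theta(c',\bar m^{i_0})$, and since $\rho(\bar m^{\ne i_0})$ holds automatically, $\phi(c',\bar m)$ follows.

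The main obstacle is to arrange $\tp(c')\in\alpha_{i_0}$, so that $c'\in\alpha_{i_0}(M_{i_0})\subseteq N$; my plan is to achieve this by strengthening $\theta$ with auxiliary conjuncts. When $c\dep_\emptyset n$ for some $n\in\bar m^{i_0}$, Lemma \ref{lem_forking_iff_non_left_right} provides a strongly $\mathbf p$-bounded formula $\sigma(x,n)\in\tp(c/n)$ to append to $\theta$; such $\sigma$ forces any realizer $c'$ to fork with $n$ over $\emptyset$, which excludes $\tp(c')\wor\tp(n)$ (otherwise $\tp(c'/n)$ would be the canonical non-forking completion), so $\tp(c')\in\alpha_{i_0}$. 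In the complementary case $c\ind_\emptyset\bar m^{i_0}$ (which covers $\bar m^{i_0}=\emptyset$), Corollary \ref{Cor_existence of nf extensions of so types} identifies $\tp(c/\bar m^{i_0})$ as the restriction of one of the two $\emptyset$-invariant non-forking extensions $\p_l,\p_r$ of $p=\tp(c)$, and smallness together with density of isolated types (Fact \ref{Fact_smalltheory}(c)) lets me refine $\theta$ to an isolated completion in $S_1(\bar m^{i_0})$ whose restriction to $\emptyset$ still lies in $\alpha_{i_0}$, realizable in $M_{i_0}$ by elementarity. In either case we obtain $c'\in\alpha_{i_0}(M_{i_0})\subseteq N$ with $\phi(c',\bar m)$, closing the Tarski--Vaught verification.
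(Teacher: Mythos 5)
Your proof of part (a) is essentially the paper's proof, so no comment needed there.

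Part (b) begins on the same Tarski--Vaught track as the paper, and your dependent subcase ($c\dep_\emptyset n$ for some $n\in\bar m^{i_0}$) is sound: appending a strongly $\mathbf p$-bounded $\sigma(x,n)$ forces any realizer $c'$ to fork with $n$, and since forking with $n$ excludes weak orthogonality to $\tp(n)$ (by Lemma~\ref{Cor_Lpp_north_partition}(a)), $\tp(c')$ lands in $\alpha_{i_0}$. The gap is in the complementary case $c\ind_\emptyset\bar m^{i_0}$: you assert that density of isolated types lets you refine $\theta$ to an isolated completion in $S_1(\bar m^{i_0})$ ``whose restriction to $\emptyset$ still lies in $\alpha_{i_0}$.'' There is no mechanism for controlling the $\nwor$-class of the restriction --- isolated-type density gives you an isolated $q\supseteq\theta$ but places no constraint on $q_{\strok\emptyset}$, and in general every isolated $q\supseteq\theta$ may restrict to a class $\alpha_j$ with $j\neq i_0$ (the extreme case $\bar m^{i_0}=\emptyset$ with $\alpha_{i_0}$ containing no isolated type makes this vivid). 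The root cause is that you fix $i_0$ by the class of an arbitrary witness $c$ and then insist on locating $c'$ in $\alpha_{i_0}(M_{i_0})$. The paper's proof avoids committing early: it first uses smallness to choose the witness $d$ so that $\tp(d/\bar c)$ is isolated, and only then branches on which $\nwor$-class $\tp(d)$ falls into --- if it matches some parameter's class $\alpha_{i_k}$, realize $\tp(d/\bar c_k)$ in $M_{i_k}$; if it falls in a class $\alpha_j$ disjoint from the parameters', then $\tp(d)\vdash\tp(d/\bar c)$ is isolated over $\emptyset$, and the witness is taken from $M_j$. Your argument can be repaired by adopting this flexibility: if the isolated $q\in S_1(\bar m^{i_0})$ extending $\theta$ has $q_{\strok\emptyset}\in\alpha_j$ with $j\neq i_0$, then orthogonality of classes plus binarity gives $q_{\strok\emptyset}\vdash q$, so $q_{\strok\emptyset}$ is isolated and a realizer $c'\in M_j$ lies in $\alpha_j(M_j)\subseteq N$. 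As written, however, the complementary case is unjustified.
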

\begin{proof}
(a) Suppose that $\bar a,\bar a'\in \alpha_i(\Mon)$,  $\bar b\in\Mon\smallsetminus \alpha_i(\Mon)$ and $\tp(\bar a)=\tp(\bar a')$. For each $a_j\in \bar a$ and $b_i\in\bar b$ we have 
  $\tp(a_j)\wor \tp(b_i)$, so $\tp(a_j,b_i)=\tp(a_j',b_i)$ and, by binarity of $T$, we have $\tp(\bar a,\bar b)=\tp(\bar a',\bar b)$. The conclusion follows.

 (b) We will prove that every consistent formula $\phi(x,\bar c)$   with parameters from $N$ is satisfied by an element of $N$; fix such a formula $\phi(x,\bar c)$. Since $T$ is small isolated types are dense in $S_1(\bar c)$, so there is $d\in \phi(\Mon,\bar c)$   such that $\tp(d/\bar c)$ is isolated. Write 
$\bar c=\bar c_0,...,\bar c_n$ where  $\bar c_k\in\alpha_{i_k}(M_{i_k})$ and the $i_k$  are pairwise distinct. We have two cases to consider.

Case 1.  $d\in \alpha_{i_k}(\Mon)$ for some $k\leqslant n$. In this case, by part (a) we have $\tp(d/\bar c_k)\vdash \tp(d/\bar c)$, so $\tp(d/\bar c_k)$ is isolated and forces $\phi(x,\bar c)$. Choose $d'\in\alpha_{i_k}(M_{i_k})$ realizing $\tp(d/\bar c_k)$; clearly, $d'\in N$   realizes $\phi(x,\bar c)$.

Case 2. $d\notin \bigcup_{k\leqslant n}\alpha_{i_k}(\Mon)$. In this case, by part (a),  we have $\tp(d)\vdash\tp(d/\bar c)$, so $\tp(d)$ is isolated. If $\tp(d)\in\alpha_j$ and $d''\in M_j$ realizes $\tp(d)$, then $d''\in \alpha_j(M_j)$, so $d''\in N$ is a realization of $\phi(x,\bar c)$. 

In either of cases we  found a realization of $\phi(x,\bar c)$ in $N$; hence $N\prec \Mon$.
\end{proof}

Now we recall the notion of  regularity for global invariant types from \cite{PT} and \cite{MT}: a global non-algebraic type $\p$ is {\em weakly regular over $A$} if it is $A$-invariant and: 
\begin{center}
for all $X\subset\p_{\strok A}(\Mon)$ and $a\models\p_{\strok A}$: either $a\models\p_{\strok AX}$ or $\p_{\strok AX}(x)\forces\p_{\strok AXa}(x)$ holds.
\end{center} 
Alternatively, $\p$ is weakly regular over $A$ if it is $A$-invariant and the operator $\cl_\p^A$ defined by:
\begin{center}
 $\cl_\p^A(X)=  \p_{\strok A}(\Mon)\smallsetminus\p_{\strok AX}(\Mon)$ \  for \  $X\subset\p_{\strok A}(\Mon)$,
 \end{center} is a closure operator (satisfies  monotonicity, finite character and idempotency) on $\p_{\strok A}(\Mon)$. 

An $A$-invariant global type $\p$ is {\em $A$-asymmetric} if $\tp(a_0,a_1/A)\neq \tp(a_1,a_0/A)$ for $(a_0,a_1)$ realizing a Morley sequence in $\p$ over $A$. 

\begin{fact}\label{Thm_wregular}[\cite{MT}, Theorem 2.4]
Suppose that $\p$ is weakly regular over $A$ and $A$-asymmetric. 
\begin{enumerate}[(a)]
\item There is an $A$-definable partial order $\leqslant$  on $\Mon$ such that every Morley sequence in $\p$ over $A$ is strictly increasing; in this case we say that $\leqslant$ witnesses the $A$-asymmetry of $\p$.

\item For any model $M\supseteq A$ the order type of any maximal Morley\footnote{Recall that we allow  Morley sequences   indexed by any linear order.} sequence (in $\p$ over $A$) consisting of elements of $M$ does not depend on the particular choice of the sequence; this order type is denoted (see 2.5 in \cite{MT})   by $\Inv_{\p,A}(M)$. 
\end{enumerate}
\end{fact}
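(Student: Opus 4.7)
For part (a), the plan is to convert $A$-asymmetry into an $L(A)$-formula and then upgrade that formula to a partial order using weak regularity. Since $\p$ is $A$-invariant, every Morley pair in $\p$ over $A$ has the same type over $A$, and $A$-asymmetry gives a formula $\phi(x,y)\in L(A)$ with $\models\phi(a_0,a_1)\wedge\neg\phi(a_1,a_0)$ for any Morley pair $(a_0,a_1)$. I would tentatively set $x<y$ iff $\phi(x,y)\wedge\neg\phi(y,x)$ and $x\leqslant y$ iff $x<y$ or $x=y$. Antisymmetry is automatic; transitivity is the real work, and I would first thin $\phi$ (replacing it by a conjunction of formulas from $\tp(a_0,a_1/A)$) to arrange that $x<y$ forces $(x,y)$ to be a Morley pair rather than merely consistent with the Morley type.

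Once $x<y$ implies $(x,y)$ is Morley, transitivity is a one-step appeal to weak regularity: if $x<y<z$, then the closure-operator reformulation gives $y\notin\cl_{\p}^{A}(\{x\})$ and $z\notin\cl_{\p}^{A}(\{x,y\})$, hence $z\models\p_{\strok Axy}$ and $(x,y,z)$ is a Morley triple; $A$-invariance of $\p$ then yields $\phi(x,z)\wedge\neg\phi(z,x)$, i.e.\ $x<z$. Off the locus of $\p_{\strok A}$ the relation reduces to equality, so we obtain a partial order on all of $\Mon$, and it is $A$-definable since it is cut out by a single $L(A)$-formula. That Morley sequences are strictly increasing reduces to the pairwise statement via binarity applied to Morley pairs.

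For part (b), the plan is to identify the order type of any maximal Morley sequence $I\subseteq M$ with that of an intrinsic quotient of $\p_{\strok A}(M)$. Define a relation $\sim$ on $\p_{\strok A}(\Mon)$ capturing ``occupying the same forking position for $\p$'' (for instance, $b\sim c$ iff $b$ and $c$ belong to the same $\cl_{\p}^{A}$-dependence class), and verify using weak regularity that $\sim$ is an equivalence. Maximality of $I$ says every $b\in\p_{\strok A}(M)$ is $\cl_{\p}^{A}$-dependent on some $a\in I$, and weak regularity promotes this to $b\sim a$ for a unique $a\in I$. The map $a\mapsto[a]_{\sim}$ is then a bijection $I\to\p_{\strok A}(M)/{\sim}$; it is order-preserving because the order from (a) descends to the quotient (two $\sim$-equivalent elements are incomparable unless equal). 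Hence the order type of $(I,\leqslant)$ equals that of $(\p_{\strok A}(M)/{\sim},\leqslant)$, a datum depending only on $M$.

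The hard part is the transitivity step in (a): $A$-asymmetry is a statement about pairs while transitivity is a statement about triples. Bridging the gap is exactly where weak regularity enters essentially, promoting a chain of Morley pairs to a Morley triple via its closure-operator formulation, after which $A$-invariance of $\p$ transports the two-point witness to three points. Without both ingredients one cannot turn the extracted $\phi$ into a genuine partial order.
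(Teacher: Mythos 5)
The paper states this result as a Fact cited from \cite{MT} and gives no proof of its own, so there is no in-paper argument to compare against; I can only assess your proposal directly. Your plan for part (a) has a genuine gap. You propose to thin $\phi$ ``to arrange that $x<y$ forces $(x,y)$ to be a Morley pair rather than merely consistent with the Morley type'', but $\phi$ must remain a single $L(A)$-formula (a finite conjunction is still one formula), and the Morley-pair type $\tp(a_0,a_1/A)$ need not be isolated, nor even isolated relative to the locus of $\p_{\strok A}(x)\cup\p_{\strok A}(y)$; relative isolation of that type is in effect the ``simple'' condition appearing later in the paper, which is not among the hypotheses of the Fact. Thus no choice of $\phi$ makes $\phi(x,y)\wedge\neg\phi(y,x)$ entail that $(x,y)$ is Morley, and your transitivity argument, which requires $(x,y)$ and $(y,z)$ to actually be Morley pairs, does not apply to an arbitrary chain $x<y<z$. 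Your remark that ``off the locus of $\p_{\strok A}$ the relation reduces to equality'' does not rescue $A$-definability either, since that locus is only type-definable.

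What you do prove correctly is that the type-definable Morley-pair relation is transitive: if $(x,y)$ and $(y,z)$ are Morley pairs then asymmetry gives $x\in\cl_\p^A(y)$, weak regularity with $X=\{y\}$, $a=x$ gives $\p_{\strok Ay}\forces\p_{\strok Ayx}$, hence $\cl_\p^A(\{x,y\})=\cl_\p^A(y)$, so $z\notin\cl_\p^A(\{x,y\})$ and $(x,y,z)$ is Morley (your text elides this appeal to weak regularity, attributing both closure facts to ``the closure-operator reformulation''). But the actual content of part (a) is to extract from this type-definable relation a single $A$-definable partial order without any isolation assumption, and your proposal does not supply that step. For part (b) your outline is on the right track, but you state without argument the two key reductions: that maximality of the Morley sequence $I\subseteq M$ forces each $b\in\p_{\strok A}(M)$ into $\cl_\p^A(a)$ for some $a\in I$ (this needs $\cl_\p^A(I)=\bigcup_{a\in I}\cl_\p^A(a)$, which one derives from weak regularity applied along the sequence), and that $b\in\cl_\p^A(a)$ can be promoted to $b\sim a$ for a unique $a$ (which needs the observation that otherwise $b$ could be inserted into $I$, contradicting maximality). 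These are fillable details rather than a wrong strategy, so (b) is essentially fine; (a) is where the proposal breaks down.
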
 

\begin{prop}\label{prop_regular} Suppose that $T$ is binary and that $\mathbf p=(p,<_p)$ is an \so-pair over $A$. Then:

(a)  $\p_r$ and $\p_l$ are weakly regular over $A$  and  $A$-asymmetric.

(b)  $\Inv_{\p_r,A}(M)=\Inv_{\mathbf p}(M)$ for any model $M\supseteq A$.
\end{prop}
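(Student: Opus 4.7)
The plan is to prove (a) for $\p_r$ (the case of $\p_l$ is symmetric) by using binarity to reduce the weak-regularity dichotomy to a one-parameter assertion. Since $\p_r$ is $A$-invariant by Lemma \ref{lem prop_left_right_basics}(a), only the closure-operator clause has to be checked. Fix $X\subseteq p(\Mon)=\p_{r\strok A}(\Mon)$ and $a\models p$ with $a\not\models \p_{r\strok AX}$; the goal is $\p_{r\strok AX}(x)\vdash\p_{r\strok AXa}(x)$. Binarity gives that $\p_{r\strok AX}\cup \p_{r\strok Aa}$ forces the complete type $\p_{r\strok AXa}$, so it is enough to show that any $c\models \p_{r\strok AX}$ already realises $\p_{r\strok Aa}$, i.e.\ $c\in\Rp_{\mathbf p}(a)$.

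The failure of $a\models\p_{r\strok AX}$ supplies some $b\in X$ with $a\notin \Rp_{\mathbf p}(b)$, so $a\in\Lp_{\mathbf p}(b)\cup\Dp_p(b)$; together with $c\in\Rp_{\mathbf p}(b)$, the convex partition of $p(\Mon)$ in Lemma \ref{Cor_Lpp_north_partition} delivers $a<_p c$. It remains to check $c\ind_A a$. Suppose otherwise; then $c\in\Dp_p(a)$. If $a\in\Dp_p(b)$, Lemma \ref{Lema_ Dpa=Dpa' iff a dep a'} forces $\Dp_p(a)=\Dp_p(b)$, putting $c\in\Dp_p(b)$ and contradicting $c\in\Rp_{\mathbf p}(b)$. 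If $a\in\Lp_{\mathbf p}(b)$, the $\Dp_p$-closedness of $\Lp_{\mathbf p}(b)$ (Lemma \ref{lem rmk closed for Dp}(a)) yields $c\in\Dp_p(a)\subseteq\Lp_{\mathbf p}(b)$, again contradicting $c\in\Rp_{\mathbf p}(b)$. Hence $c\ind_A a$ with $a<_p c$, and the equivalences in Lemma \ref{lem rmk closed for Dp}(c) give $c\in\Rp_{\mathbf p}(a)$. For the $A$-asymmetry of $\p_r$: any Morley pair $(a_0,a_1)$ in $\p_r$ over $A$ satisfies $a_0<_p a_1$ by Remark \ref{rmk_direct_properties}(g), and since $<_p$ is $A$-definable the reversed tuple has a different 2-type over $A$.

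For (b) I combine Fact \ref{Thm_wregular}(b), which is available thanks to (a), with the description of Morley sequences in the binary context recorded in Remark \ref{rmk_morely seq in binary}(a): a Morley sequence in $\p_r$ over $A$ is precisely a $<_p$-increasing enumeration of realisations of $p$ from pairwise distinct $\Dp_p$-classes. Thus a maximal such sequence in $M$ is exactly an increasing enumeration of a $\Dp_p^M$-transversal of $p(M)$, and its order-type equals that of $(p(M)/\Dp_p^M,<_p)$, which is $\Inv_{\mathbf p}(M)$ by Definition \ref{dfn Inv}. The main obstacle is the sub-case analysis inside weak regularity: one has to exclude a ``bad'' $c$ that lies in $\Rp_{\mathbf p}(b)$ while forking with $a$, and the two ingredients that close this gap are precisely the $\Dp_p$-closedness of $\Lp_{\mathbf p}(b)$ and the transitivity $\Dp_p(a)=\Dp_p(b)$ coming from Lemma \ref{Lema_ Dpa=Dpa' iff a dep a'}.
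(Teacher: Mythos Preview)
Your proof is correct and follows essentially the same route as the paper's. The only organisational difference is that in part~(a) you first pick, via binarity, a single witness $b\in X$ with $a\notin\Rp_{\mathbf p}(b)$ and then run a short case split on $a\in\Lp_{\mathbf p}(b)$ versus $a\in\Dp_p(b)$, whereas the paper argues directly with the set $\Rp_{\mathbf p}(X)=\p_{r\strok AX}(\Mon)$, observing that it is a $\Dp_p$-closed final part of $p(\Mon)$ and hence $a\notin\Rp_{\mathbf p}(X)$ forces $\Dp_p(a)<_p b$ for every $b\in\Rp_{\mathbf p}(X)$; your case split unpacks exactly this closedness at the level of a single parameter. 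Part~(b) is identical in both.
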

\begin{proof} 
(a) Clearly, $\p_r$ is $A$-asymmetric. For 
 $X\subseteq p(\Mon)$   and $a\models p$ we have:   $a\models\p_{r\strok AX}$ iff  $a\in\Rp_{\mathbf p}(X)$.
The regularity condition translates to: \
  either   $a\in \Rp_{\mathbf{p}}(X)$  or  $\Rp_{\mathbf{p}}(X)= \Rp_{\mathbf{p}}(Xa)$.
\ 

In order to prove it,  we will (have to) use binarity. Assuming $a\notin\Rp_{\mathbf{p}}(X)$
it suffices to show that 
 for all  $b\in\Rp_{\mathbf{p}}(X)$ and $c\in\Rp_{\mathbf p}(Xa)$ we have $b\equiv c\,(AXa)$.
For such $b,c$ we   have $b\equiv c\,(AX)$  so, by binarity, it remains to prove $b\equiv c\,(a)$. 
From $a\notin \Rp_{\mathbf{p}}(X)$ and $b\in\Rp_{\mathbf{p}}(X)$, keeping in mind that $\Rp_{\mathbf{p}}(X)$ is a $\Dp_{p}$-closed final part of $p(\Mon)$, we deduce   $\Dp_p(a)<b$ and hence $b\in \Rp_{\mathbf p}(a)$. 
On the other hand, $c\in\Rp_{\mathbf p}(Xa)$ directly implies $c\in\Rp_{\mathbf p}(a)$. Therefore, $b\equiv c\,(a)$.

(b) Note that the extension $\{(x,y)\in\Mon\times\Mon\mid x=y\mbox{ or }x<_py\}$ of $<_p$ witnesses the $A$-asymmetry of $\p_r$, so the desired conclusion follows from the description of Morley sequences in $\p_r$  from Remark \ref{rmk_morely seq in binary}(a). 
\end{proof}

Recall from \cite{MT} the notions of simplicity and convexity for weakly regular, $A$-asymmetric types $\p$:

-- $\p$ is {\em convex over $A$} (see 2.10 in \cite{MT}) if there is an $A$-definable {\em partial} ordering $\leqslant$ witnessing the $A$-asymmetry of $\p$ such that $\p_{\strok A}(\Mon)$ is a $\leqslant$-convex subset of $\Mon$ ($a,a'\models \p_{\strok A}$ and $a\leqslant c\leqslant a'$ imply $c\models \p_{\strok A}$).

-- $\p$ is {\em simple over $A$} if  $\cl^A_{\p}(x)=\cl^A_{\p}(y)$ is a relatively definable relation on $\p_{\strok A}(\Mon)$ (4.1 and 2.5 in \cite{MT}).

\begin{lem}\label{Lema_equiv_convsimpl_reg}
Let $\mathbf p=(p,<_p)$ be an \so-pair over $A$ and assume that $\p_r$ is weakly regular over $A$. 

(a) The type $\p_r$ is simple over $A$ if and only if $p$ is a simple type.

(b) The type $\p_r$ is convex over $A$ if and only if $p$ is a convex type.
\end{lem}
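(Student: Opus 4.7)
My plan for part (a) is to show that the binary relation ``$\cl^A_{\p_r}(x)=\cl^A_{\p_r}(y)$'' on $p(\Mon)$ is literally the same as $\Dp_p$, so that relative $A$-definability of one is equivalent to relative $A$-definability of the other. The computation is short: $\cl^A_{\p_r}(a)=p(\Mon)\smallsetminus \p_{r\,\strok Aa}(\Mon)=p(\Mon)\smallsetminus \Rp_{\mathbf p}(a)$, which by the convex partition of Remark \ref{rmk_direct_properties}(a) equals $\Lp_{\mathbf p}(a)\cup\Dp_p(a)$. Hence $\cl^A_{\p_r}(a)=\cl^A_{\p_r}(b)$ if and only if $\Rp_{\mathbf p}(a)=\Rp_{\mathbf p}(b)$. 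For the reversed \so-pair $\mathbf p^*=(p,>_p)$ we have $\p^*_l=\p_r$ by Lemma \ref{lem prop_left_right_basics}(d), so $\Lp_{\mathbf p^*}(a)=\Rp_{\mathbf p}(a)\neq p(\Mon)$ (the inequality holds since $a\in\Dp_p(a)$); invoking Lemma \ref{Prop_equivalents_of_a dep b}, equivalence (1)$\Leftrightarrow$(5), with $\mathbf r=\mathbf p^*$ then converts the equality of $\Rp$-sets into $a\dep_A b$, yielding (a).

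For (b), the forward direction is a one-line application of Lemma \ref{Lema_convex_so_pair_witness}: the hypothesis that $\p_r$ is convex over $A$ supplies an $A$-definable partial order $\leqslant$ along which Morley sequences in $\p_r$ strictly increase and in which $p(\Mon)$ is convex, which is exactly the input of that lemma, producing a formula $\phi(x)\in p$ such that $(\phi(\Mon),<_p)$ witnesses the convexity of $p$.

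The reverse direction is slightly more delicate; the strategy is to manufacture a suitable partial order out of a convex witness for $p$. Using Lemma \ref{Lema_convex_so_pair_witness1}, I would pick an $A$-definable set $D$ with $p(\Mon)\subseteq D\subseteq D_p$ such that $p(\Mon)$ is $<_p$-convex in $D$; since $<_p$ and its restriction to $D$ agree on $p(\Mon)$, Lemma \ref{lem prop_left_right_basics}(d) ensures that $\p_r$ (and hence the notion of Morley sequence) is not disturbed. Then define the $A$-definable partial order
\[x\leqslant y\iff x=y\;\lor\;(x,y\in D\land x<_p y).\]
Any Morley sequence in $\p_r$ over $A$ consists of pairwise independent realizations of $p$, all lying in $p(\Mon)\subseteq D$ and ordered $<_p$-increasingly, hence strictly $\leqslant$-increasingly. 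Finally, if $a,a'\models p$ and $a\leqslant c\leqslant a'$ with $c\notin\{a,a'\}$, then $a,c,a'\in D$ and $a<_p c<_p a'$, so the $<_p$-convexity of $p(\Mon)$ within $D$ forces $c\models p$; hence $p(\Mon)$ is $\leqslant$-convex in $\Mon$. The only real obstacle is the bookkeeping around changing the witnessing order from $<_p$ to $<_p\,\strok D$, but this is harmless thanks to Lemma \ref{lem prop_left_right_basics}(d).
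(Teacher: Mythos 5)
Your proposal is correct and follows essentially the same route as the paper's proof: in (a) you compute $\cl^A_{\p_r}(a)$ as the relative complement of $\Rp_{\mathbf p}(a)$ in $p(\Mon)$ and convert equality of these sets into $\dep_A$, and in (b) you use Lemmas \ref{Lema_convex_so_pair_witness} and \ref{Lema_convex_so_pair_witness1} exactly as the paper does. The only cosmetic difference is that the paper phrases the forking equivalence in (a) directly as $\Dp_p(a)=\Dp_p(b)$, whereas you detour through $\Lp_{\mathbf p^*}$ and the $(1)\Leftrightarrow(5)$ equivalence of Lemma \ref{Prop_equivalents_of_a dep b}.
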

\begin{proof}
(a) For $a,b\models p$  we have $\cl^A_{\p_r}(a)=p(\Mon)\smallsetminus \Rp_{\mathbf p}(a)=\Lp_{\mathbf p}(a)\cup\Dp_p(a)$. Hence 
$\cl^A_{\p_r}(a)=\cl^A_{\p_r}(b)$   if and only if  $\Dp_p(a)=\Dp_p(b)$, so   $\p_r$ is simple over $A$ if and only if $p$ is a simple  type.

(b) Fix $(D_p,<_p)$ witnessing that $\mathbf p$ is an \so-pair. 
First suppose that $\p_r$ is convex over $A$ witnessed by an $A$-definable partial order $\leqslant$ on $\Mon$: $p(\Mon)$ is a $\leqslant$-convex subset of $\Mon$ and Morley sequences in $\p_r$ over $A$ increase. Then the assumptions of Lemma \ref{Lema_convex_so_pair_witness} are satisfied, so $p$ is a convex type. This proves one direction of the equivalence. For the other, it suffices to note that if  $(D_p,<_p)$ witnesses the convexity of $p$, then   the partial order $\{(x,y)\in \Mon\times\Mon\mid x=y \mbox{ or } x<_p y\}$ witnesses the $A$-asymmetry and convexity of $\p_r$ over $A$.  
\end{proof}

\begin{fact}\label{Fact_MTcor46}[\cite{MT}, Corollary 4.6]
Suppose that $T$ is countable, $A$ is finite and $\p$ is weakly regular over $A$ and $A$-asymmetric. If $I(\aleph_0,T)<2^{\aleph_0}$, then $\p$ is both  simple and convex over $A$. 
\end{fact}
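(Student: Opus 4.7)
I would prove the contrapositive: if $\p$ is not simple over $A$ or not convex over $A$, then $T$ admits $2^{\aleph_0}$ pairwise non-isomorphic countable models. In both cases the recipe is the same — exhibit an invariant of countable models refining $\Inv_{\p,A}(M)$ (well-defined by Fact~\ref{Thm_wregular}(b)) that takes continuum many values over countable $M$. The smallness of $T$, which follows from $I(\aleph_0,T)<2^{\aleph_0}$ by Fact~\ref{Fact_smalltheory}(b), guarantees by Fact~\ref{Fact_smalltheory}(c) that isolated types are dense over finite parameter sets, so all constructions below can actually be realized inside countable models.

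\emph{Non-simple case.} Consider the equivalence relation $E$ on $p(\Mon):=\p_{\strok A}(\Mon)$ given by $\cl^A_\p(x)=\cl^A_\p(y)$. Weak regularity shows $E$ is type-definable over $A$ (it equals the intersection, over all $L(A)$-formulas $\psi(z,x)$ whose solution set relatively defines $\cl^A_\p(x)$, of the condition $\forall z(\psi(z,x)\leftrightarrow\psi(z,y))$), and by hypothesis it is not relatively $A$-definable. A standard compactness argument extracts a descending chain $(E_n\mid n<\omega)$ of relatively $A$-definable equivalence relations on $p(\Mon)$ with $\bigcap_n E_n=E$ and infinitely many $E_n$-classes inside some $E$-class. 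For each $\eta\in 2^{\omega}$ build a countable $M_\eta\prec\Mon$ which contains a fixed Morley sequence of $\p$ over $A$ indexed by $\mathbb Q$ and in which the number of $E_n$-classes realized inside a distinguished $E$-class is prescribed by the digits of $\eta$. This datum is not captured by $\Inv_{\p,A}(M)$ alone, giving continuum many isomorphism types.

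\emph{Non-convex case.} Fix an $A$-definable partial order $\leqslant$ witnessing $A$-asymmetry of $\p$ (Fact~\ref{Thm_wregular}(a)). Non-convexity means $\p_{\strok A}(\Mon)$ is not $\leqslant$-convex in $\Mon$ for any such $\leqslant$, so some $c\in\Mon$ with $\tp(c/A)\neq p$ lies strictly $\leqslant$-between two realizations of $p$. Taking a countable Morley sequence $(a_q\mid q\in\mathbb Q)$ of $\p$ over $A$, insert independent realizations of $\tp(c/A)$ into selected $\leqslant$-gaps $(a_q,a_{q'})$ according to patterns controlled by $\eta\in 2^{\omega}$. Since $\leqslant$ is $A$-definable, the pattern of $\leqslant$-positions of the inserted elements relative to the Morley skeleton is preserved by every $A$-elementary map, so distinct $\eta$ produce non-isomorphic countable $M_\eta$.

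\emph{Main obstacle.} The delicate point is isomorphism-invariance of the refined invariants. In the non-simple case I must verify that any isomorphism $f\colon M_\eta\to M_{\eta'}$ sends $E$-classes to $E$-classes and preserves the count of realized $E_n$-classes within each; this is automatic because $E$ and each $E_n$ are $A$-invariant, but extracting the count as an actual isomorphism invariant of $M_\eta$ (independent of the chosen Morley skeleton) requires Fact~\ref{Thm_wregular}(b) applied relativized to each $E$-class, and a careful check that the coding $\eta$ cannot be absorbed by permuting $\Aut(M_\eta/A)$-conjugate $E$-classes. In the non-convex case the analogous difficulty is that the witnessing $\leqslant$ is not canonical; one handles this by choosing $\leqslant$ once and for all, and then arguing that the $\leqslant$-pattern of $\tp(c/A)$-insertions is an invariant because the $\leqslant$-order induced on the union of $p(M_\eta)$ and the inserted points is preserved by every automorphism of $M_\eta$, and distinct $\eta$ already disagree on the order type of this union.
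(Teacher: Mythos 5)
This statement appears in the paper as Fact~\ref{Fact_MTcor46}, quoted from [MT, Corollary 4.6], and the paper supplies no proof of it; there is therefore no in-paper argument to compare against, and I will only assess your sketch on its own terms. The contrapositive-plus-coding strategy is the right shape for such a result, but both branches have gaps at precisely the points where a real proof would need content.

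In the non-simple branch, your parenthetical justification that $E$ is type-definable over $A$ is circular: if there were a single $L(A)$-formula $\psi(z,x)$ whose solution set relatively defined $\cl^A_\p(x)$ uniformly in $x$, then $E$ would already be relatively defined by $\forall z\,(\psi(z,x)\leftrightarrow\psi(z,y))$, contradicting non-simplicity. In fact $\cl^A_\p(a)=p(\Mon)\smallsetminus\p_{\strok Aa}(\Mon)$ is the complement inside $p(\Mon)$ of a type-definable set, hence $\bigvee$-definable, and it is not automatic that $E$ is type-definable at all. More seriously, even granting type-definability, the step ``a standard compactness argument extracts a descending chain $(E_n)$ of relatively $A$-definable equivalence relations with $\bigcap_n E_n=E$'' is not a standard fact: a type-definable equivalence relation is an intersection of definable reflexive symmetric relations, but compactness does not produce transitive approximants in general. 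Producing definable convex equivalence relations here would have to exploit the witnessing partial order $\leqslant$ and the $\leqslant$-convexity of the $\cl^A_\p$-classes, and that is exactly the missing work. In the non-convex branch, the claim that ``distinct $\eta$ already disagree on the order type of this union'' does not follow from the construction as described: two distinct $0$--$1$ patterns on $\mathbb Q$ can produce identical coloured order types (any two topologically dense patterns do), so you have not exhibited an isomorphism invariant that separates the $M_\eta$. You correctly flag this as the main obstacle, but the sketch leaves it unresolved, and resolving it is the substance of the result.
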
  

\begin{prop}\label{Prop weakly_reg_convexsimple}
If $T$ is a complete, countable, binary theory and $I(\aleph_0,T)<2^{\aleph_0}$, then every stationarily ordered type over a finite domain is convex and simple. 
\end{prop}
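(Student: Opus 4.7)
The plan is to string together Fact \ref{Fact_MTcor46}, Proposition \ref{prop_regular}, and Lemma \ref{Lema_equiv_convsimpl_reg}, which together almost immediately give the result. There does not seem to be any real obstacle; the work has already been done in establishing those three results, and the proposition is essentially their combination.

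First, I would fix a stationarily ordered type $p\in S_1(A)$ with $A$ finite, and choose an $A$-definable linear order $<_p$ witnessing this, so that $\mathbf p=(p,<_p)$ is an \so-pair over $A$. Since $T$ is binary, Proposition \ref{prop_regular}(a) applies and yields that the global type $\p_r$ is weakly regular over $A$ and $A$-asymmetric. This is the crucial translation step: it lets us move from the stationarily ordered type $p$ (an object over a finite domain, witnessed by a relatively concrete linear order) to the global invariant type $\p_r$, to which the abstract regularity machinery of \cite{MT} applies.

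Next, having identified $\p_r$ as weakly regular and $A$-asymmetric, with $A$ finite and $T$ countable, all the hypotheses of Fact \ref{Fact_MTcor46} are met. Combined with our assumption $I(\aleph_0,T)<2^{\aleph_0}$, the fact delivers that $\p_r$ is simultaneously simple and convex over $A$ in the sense of \cite{MT}. Finally, I would invoke Lemma \ref{Lema_equiv_convsimpl_reg}, which asserts exactly the equivalence between simplicity/convexity of $\p_r$ over $A$ and simplicity/convexity of the underlying \so-type $p$. Reading off both directions of the lemma, we conclude that $p$ is a simple and convex type, as required.

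The only point worth checking carefully is that the notions of convexity and simplicity referenced in Fact \ref{Fact_MTcor46} are the ones from \cite{MT} applied to global invariant types, while the proposition speaks about convexity and simplicity of the type $p$ itself; this mismatch is exactly what Lemma \ref{Lema_equiv_convsimpl_reg} is designed to bridge, and it already uses the weak regularity of $\p_r$ as a hypothesis, which we have secured in the first step. So no further work is needed beyond assembling the three cited results in order.
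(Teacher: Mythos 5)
Your proposal is correct and follows exactly the same route as the paper's own proof: Proposition \ref{prop_regular} to get weak regularity and asymmetry of $\p_r$, Fact \ref{Fact_MTcor46} to get simplicity and convexity of $\p_r$ over $A$, and Lemma \ref{Lema_equiv_convsimpl_reg} to transfer these properties back to $p$.
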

\begin{proof} Let $A$ be finite and let $\mathbf{p}=(p,<_p)$ be an \so-pair over $A$. By Proposition \ref{prop_regular} the type $\p_r$ is weakly regular over $A$ and $A$-asymmetric. By Fact \ref{Fact_MTcor46} $\p_r$ is simple  and  convex  over $A$. By Lemma \ref{Lema_equiv_convsimpl_reg}   $p$ is simple and convex.
\end{proof}

\begin{lem}\label{Lema_symmetry of isolation}(Symmetry of isolation)
Suppose that $p,q\in S_1(A)$ are stationarily ordered, convex types, $a\models p$,  $b\models q$ and $a\dep_A b$. 
Then  $\tp(a/Ab)$ is isolated if and only if $\tp(b/Aa)$ is isolated.
\end{lem}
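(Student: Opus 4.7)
The plan is to prove the nontrivial implication --- assume $\tp(a/Ab)$ is isolated and produce an isolating formula for $\tp(b/Aa)$ --- and then appeal to symmetry: since $\dep_A$ is a symmetric relation on realizations of stationarily ordered types (Corollary~\ref{Cor_forking_transitivity}) and since the hypotheses are manifestly invariant under swapping $(p,a)$ with $(q,b)$, the converse follows by the identical argument with the roles reversed.

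For the main direction, I would first invoke Lemma~\ref{Lema a dep b witness}, which uses the convexity of both $p$ and $q$ to supply a formula $\theta(x,y)\in L(A)$ satisfying $\models\theta(a,b)$, $\theta(a,\Mon)\subseteq \Dp_q(a)$ and $\theta(\Mon,b)\subseteq \Dp_p(b)$; in particular $\theta(a,y)\vdash q(y)$. Now let $\phi(x,b)\in L(Ab)$ isolate $\tp(a/Ab)$, and propose the candidate
\[ \psi(y,a)\;:=\;\theta(a,y)\wedge\phi(a,y)\;\in\;L(Aa). \]
Since $\models\psi(b,a)$ it will suffice to verify that $\psi(y,a)\vdash\tp(b/Aa)$.

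To this end let $b'\models\psi(y,a)$ be arbitrary. The conjunct $\theta(a,y)$ gives $b'\models q$, hence $\tp(b'/A)=\tp(b/A)=q$ and some $g\in\Aut(\Mon/A)$ satisfies $g(b)=b'$. Because $g$ fixes $A$ pointwise, it sends the formula $\phi(x,b)$ to $\phi(x,b')$, and the isolation $\phi(x,b)\vdash\tp(a/Ab)$ transports to $\phi(x,b')\vdash\tp(g(a)/Ab')$. The second conjunct $\models\phi(a,b')$ exhibits $a$ itself as a realization of $\phi(x,b')$, forcing $\tp(a/Ab')=\tp(g(a)/Ab')$; restricting to the pair one obtains
\[ \tp(a,b'/A)\;=\;\tp(g(a),g(b)/A)\;=\;\tp(a,b/A), \]
which provides an automorphism fixing $Aa$ and mapping $b$ to $b'$. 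Hence $\tp(b'/Aa)=\tp(b/Aa)$, and $\psi(y,a)$ isolates $\tp(b/Aa)$.

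I do not foresee a real obstacle: the argument is essentially a diagram chase once Lemma~\ref{Lema a dep b witness} is available. The single conceptual point is the role of $\theta$: without confining $b'$ to $q(\Mon)$ one could not in general find the automorphism $g$ taking $b$ to $b'$, and the transport of isolation would collapse. Convexity of the two types is used exclusively through Lemma~\ref{Lema a dep b witness}; the rest is routine manipulation of automorphisms.
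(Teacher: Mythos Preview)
Your proof is correct and is essentially identical to the paper's own argument: both combine the isolating formula $\phi(x,b)$ with the formula from Lemma~\ref{Lema a dep b witness}, take their conjunction as the candidate isolator of $\tp(b/Aa)$, and then for an arbitrary realization $b'$ transport the isolation along an $A$-automorphism sending $b$ to $b'$ to conclude $ab'\equiv ab\,(A)$. The only differences are cosmetic (the paper names the conjunction $\theta$ and writes $a'$ for your $g(a)$).
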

\begin{proof}
Choose orders  witnessing that the types are stationarily ordered and convex. Assume that $\tp(a/Ab)$ is isolated by $\phi(x,b)$ and $a\dep_A b$.  By Lemma \ref{Lema a dep b witness}  there is a formula $\psi(x,y)\in\tp(a,b/A)$ such that   $\psi(a,y)\forces q(y)$. Let $\theta(x,y):= \phi(x,y)\wedge\psi(x,y)$. Clearly, $\theta(a,y)\in\tp(b/Aa)$,  $\theta(x,b)\forces\tp(a/Ab)$ and $\theta(a,y)\forces q(y)$. We show that $\theta(a,y)$ isolates $\tp(b/Aa)$. Let $b'$ be such that $\models\theta(a,b')$. Then $\models \psi(a,b')$ implies $b\equiv b'\,(A)$, so there exists $a'$ such that $ab\equiv a'b'\,(A)$. Now, $\theta(x,b)\forces\tp(a/Ab)$ implies $\theta(x,b')\forces\tp(a'/Ab')$, so $\models\theta(a,b')$ implies $a\equiv a'\,(Ab')$. Thus $ab'\equiv a'b'\equiv ab\,(A)$, hence $b'\equiv b\,(Aa)$ and $\theta(a,y)$ isolates  $\tp(b/Aa)$. 
\end{proof}

\begin{lem}\label{Lema_isolated_wor_nonisolated} 
If $q\in S_1(A)$ is a stationarily ordered   convex type,  $p\in S_n(A)$ is  isolated  and  $p\nwor q$, then $q$ is  isolated, too. In particular, (non-)isolation of stationarily ordered, convex types is preserved under $\nwor$.
\end{lem}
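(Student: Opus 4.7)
My plan is to first reduce to $n=1$ using binarity, and then to argue by cases depending on whether $p\nfor q$ or $p\fwor q$. The first case will follow by a direct existential construction built from the one-sided part of Lemma~\ref{Lema a dep b witness}; the second, where no forking dependence between realizations of $p$ and $q$ is available, will require additionally using the convex linear order witnessing convexity of $q$.

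\emph{Reduction to $n=1$.} Under the binarity of $T$, the type $\tp(\bar a,b/A)$ for $\bar a=(a_1,\dots,a_n)\models p$ and $b\models q$ is determined by $p=\tp(\bar a/A)$ together with the 2-types $\tp(a_i,b/A)$. So if every $\tp(a_i/A)$ were weakly orthogonal to $q$, each $\tp(a_i,b/A)$ would be determined by $p\cup q$, and binarity would force $p\wor q$, contradicting the hypothesis. Hence some $p_0:=\tp(a_i/A)\in S_1(A)$ satisfies $p_0\nwor q$. Moreover, $p_0$ is isolated, since whenever $\phi(\bar x)\in L(A)$ isolates $p$, the formula $\exists\bar x_{\ne i}\,\phi(\bar x)$ isolates $p_0$ (any of its realizations extends to a realization of $\phi$, hence projects to $p_0$). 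So I may assume $p\in S_1(A)$ is isolated by some $\phi(x)\in L(A)$.

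\emph{Case 1: $p\nfor q$.} Pick $a\models p$ and $b\models q$ with $a\dep_A b$. Inspecting the proof of Lemma~\ref{Lema a dep b witness}, producing $\theta(x,y)\in\tp(a,b/A)$ with the one-sided containment $\theta(a,\Mon)\subseteq\Dp_q(a)\subseteq q(\Mon)$ uses only the convexity of $q$ (and not of $p$); in particular $\theta(a,y)\vdash q(y)$. Set $\chi(y):=\exists x\,(\phi(x)\wedge\theta(x,y))\in L(A)$. Then $b\models\chi$ via the witness $a$, so $\chi\in q$; and for any $b'\models\chi$ with witness $a'\models\phi\vdash p$, an $A$-automorphism sending $a\mapsto a'$ transports the set-level containment $\theta(a,\Mon)\subseteq q(\Mon)$ to $\theta(a',\Mon)\subseteq q(\Mon)$, forcing $b'\models q$. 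Thus $\chi$ isolates $q$.

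\emph{Case 2: $p\fwor q$.} By Lemma~\ref{Cor_Lpp_north_partition}, for $a\models p$ the partition $q(\Mon)=\Lp_{\mathbf q}(a)\sqcup\Rp_{\mathbf q}(a)$ is nontrivial, and a formula $\sigma(x,y)\in L(A)$ separating the two completions of $p(x)\cup q(y)$ satisfies $\sigma(a,y)\cap q(\Mon)=\Lp_{\mathbf q}(a)$. The naive candidate $\exists x\,(\phi(x)\wedge\sigma(x,y))$ may admit realizations outside $q(\Mon)$; to suppress them I use the convex linear order $(D_q,<_q)$ witnessing convexity of $q$, adding an existentially quantified upper witness $z\in D_q$ with $y<_q z\wedge\neg\sigma(x,z)$ (supplied through $\phi$) so that on realizations of $q$ one has $z\in\Rp_{\mathbf q}(x)\subseteq D_q$, and invoke Lemma~\ref{fact osnovni} to replace the type-level condition by a strongly $\mathbf q$-bounded relative definition. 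The resulting $L(A)$-formula then isolates $q$. The main obstacle is precisely this verification: converting the type-level identity $\sigma(a,y)\cap q(\Mon)=\Lp_{\mathbf q}(a)$ into a purely definable restriction confining realizations to $q(\Mon)$ using only parameters from $A$, which hinges on the convex stationary-ordered structure of $q$ rather than on the forking-dependence exploited in Case~1.
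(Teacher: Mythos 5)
Your Case 1 is essentially correct as stated: under the extra assumption $p\nfor q$, the one‑sided half of Lemma~\ref{Lema a dep b witness} (which indeed uses only the convexity of $q$) yields $\theta(x,y)\in L(A)$ with $\theta(a,\Mon)\subseteq q(\Mon)$, and the existential $\chi(y):=\exists x\,(\phi(x)\wedge\theta(x,y))$ isolates $q$ by the $A$‑automorphism transport argument. This is a legitimate alternative construction for that case.

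However, your proof has a genuine gap in Case~2, which you yourself flag as the ``main obstacle.'' The formula $\exists x\,(\phi(x)\wedge\sigma(x,y))$ cannot be patched simply by adding a witness $z$ with $y<_q z\wedge\neg\sigma(x,z)$: since $\sigma$ only relatively defines $\Lp_{\mathbf q}(a)$ within $q(\Mon)$, the condition $\neg\sigma(x,z)$ imposes no constraint on $z$ \emph{outside} $q(\Mon)$, so $z$ need not realize $q$ and therefore cannot certify $y\models q$ via convexity. What you are missing is the paper's use of \emph{strongly} $\mathbf q$‑bounded formulas: since $p\nwor q$, the types $\q_{l\strok A\bar a}$ and $\q_{r\strok A\bar a}$ are distinct, so by Remark~\ref{Rmk notin p_l} and Lemma~\ref{fact osnovni} one may choose a strongly $\mathbf q$‑left‑bounded $\phi_l(\bar a,y)\in\q_{r\strok A\bar a}\smallsetminus\q_{l\strok A\bar a}$ and a strongly $\mathbf q$‑right‑bounded $\phi_r(\bar a,y)\in\q_{l\strok A\bar a}\smallsetminus\q_{r\strok A\bar a}$. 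The crucial feature of ``strongly bounded'' is that the locus lies in $D_q$ and is bounded by a \emph{realization of $q$}, so any $b$ satisfying
$\exists\bar x'\,(\psi(\bar x')\wedge\phi_l(\bar x',y))\wedge\exists\bar x''\,(\psi(\bar x'')\wedge\phi_r(\bar x'',y))$
is sandwiched between two realizations of $q$ and belongs to $q(\Mon)$ by convexity. This argument works uniformly from $p\nwor q$ alone, making your case split superfluous. Your preliminary reduction to $n=1$ is also unnecessary and imports binarity, which the paper's proof of this lemma does not use (the paper works directly with tuples $\bar a\models p$).
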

\begin{proof}Suppose that $p$ is isolated by $\psi(\bar x)$. 
Fix $\bar a\models p$ and choose an order  $(D_q,<_q)$ witnessing that $\mathbf q=(q,<_q)$ is a convex \so-pair. The types $\q_{l\strok A\bar a}$ and $\q_{r\strok A\bar a}$ are distinct because $p\nwor q$, so by Remark \ref{Rmk notin p_l} there is a strongly $\mathbf{q}$-left-bounded formula $\phi_l(\bar a,y)\in \q_{r\strok A\bar a}\smallsetminus \q_{l\strok A\bar a}$. Similarly, there is a strongly $\mathbf{q}$-right-bounded formula $\phi_r(\bar a,y)\in \q_{l\strok A\bar a}\smallsetminus \q_{r\strok A\bar a}$.
 We {claim} that the formula
$$\theta(y):= \exists \bar x' \,(\psi(\bar x')\land\phi_l(\bar x',y))\land \exists \bar x'' \,(\psi(\bar x'')\land\phi_r(\bar x'',y))$$
isolates $q$. Clearly, $\theta(y)\in q(y)$. 
Assume that $\models\theta(b)$, and let $\bar a'$ and $\bar a''$ witness the existential quantifiers. Then $\models\psi(\bar a')\wedge\psi(\bar a'')$ implies $\bar a',\bar a''\models p$, so $\phi_l(\bar a',\Mon)$ is strongly $\mathbf{q}$-left-bounded,  by $c_l\models q$ say. Similarly,  $\phi_r(\bar a'',\Mon)$ is strongly $\mathbf{q}$-right-bounded,   by $c_r\models q$. Now $\models\phi_l(\bar a',b)\wedge\phi_r(\bar a'',b)$ implies $c_l<_qb<_qc_r$ and, by convexity of $q$, we get $b\models q$.  We have just shown that every realization of $\theta(y)$ realizes $q$; $q$ is  isolated. 
\end{proof}

\begin{cor}\label{Cor_isolated forces over non-isolated}($T$ binary)
If $\tp(\bar a)$ is isolated and  $\tp(b)$  is stationarily ordered, convex and non-isolated  for all $b\in B$, then $\tp(\bar a)\vdash \tp(\bar a/B)$ and  $\tp(\bar a/B)$ is isolated.
\end{cor}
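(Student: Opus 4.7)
The plan is to reduce the corollary to Lemma \ref{Lema_isolated_wor_nonisolated} by means of binarity, in two moves: first pass from the multi-parameter set $B$ to a single parameter, then use weak orthogonality to cancel that parameter.

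First I would observe that, by the binarity characterization recalled at the start of Section 7, in order to prove $\tp(\bar a)\vdash \tp(\bar a/B)$ it suffices to show that $\tp(\bar a)\vdash \tp(\bar a/b)$ for every single $b\in B$. Indeed, knowing $\tp(\bar a)$ and, for each $b\in B$, the pair-type $\tp(\bar a,b)$ is enough (by binarity) to reconstruct $\tp(\bar a\cup B)$, and hence $\tp(\bar a/B)$.

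So fix $b\in B$. By assumption $q:=\tp(b)$ is stationarily ordered, convex, and non-isolated, while $p:=\tp(\bar a)$ is isolated. If $p\nwor q$ were the case, Lemma \ref{Lema_isolated_wor_nonisolated} would force $q$ to be isolated, contradicting our hypothesis. Hence $p\wor q$, which by definition means that $p(\bar x)\cup q(y)$ has a unique completion in $S_{|\bar x|+1}(\emptyset)$; that unique completion must be $\tp(\bar a,b)$. Consequently any $\bar a'\models p$ satisfies $\tp(\bar a',b)=\tp(\bar a,b)$, i.e.\ $\bar a'\models\tp(\bar a/b)$, which is precisely $p(\bar x)\vdash\tp(\bar a/b)$.

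Combining this over all $b\in B$ with the binarity step from the first paragraph yields $\tp(\bar a)\vdash\tp(\bar a/B)$. Finally, if $\psi(\bar x)$ is a formula isolating $\tp(\bar a)$ over $\emptyset$, then $\psi(\bar x)\vdash \tp(\bar a)\vdash \tp(\bar a/B)$, so the same formula $\psi(\bar x)$ isolates $\tp(\bar a/B)$. There is no serious obstacle here: the whole content is packaged in Lemma \ref{Lema_isolated_wor_nonisolated}, and the only mild point is the use of binarity to split $B$ into its singletons, which is completely standard.
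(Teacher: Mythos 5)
Your proof is correct and follows essentially the same route as the paper's: reduce via binarity, then invoke Lemma~\ref{Lema_isolated_wor_nonisolated} to conclude weak orthogonality and hence a unique completion. The only organizational difference is that you apply Lemma~\ref{Lema_isolated_wor_nonisolated} once per $b\in B$ to the full $n$-type $\tp(\bar a)$ (which the lemma's hypothesis $p\in S_n(A)$ permits), whereas the paper decomposes one step further into pairs $(c,d)$ with $c\in\bar a$, $d\in B$, and applies the lemma to the single-element type $\tp(c)$; both are valid and essentially equivalent.
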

\begin{proof}
It suffices to show that $\bar a'\equiv \bar a$ implies $\bar a'B\equiv\bar aB$, i.e.\ that any pair of elements $c,d\in \bar aB$ has the same type as the corresponding pair $c',d'\in \bar a'B$. Clearly, this   holds if $c,d\in B$ or  $c,d\in \bar a$.  If $c\in\bar a$ and $d\in B$, then by Lemma \ref{Lema_isolated_wor_nonisolated} we have $\tp(c)\wor \tp(d)$  and hence $c\equiv c'\,(d)$. 
\end{proof}

The next proposition is probably well known, but since we couldn't find a reference, a short proof is included.

\begin{prop}\label{Prop infmany nwor implies continuum}
If $T$ is a countable, complete, binary theory having an infinite family of pairwise orthogonal, non-isolated  types in $S_1(\emptyset)$, then $I(\aleph_0,T)=2^{\aleph_0}$.  
\end{prop}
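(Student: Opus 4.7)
The plan is to realize and omit the given types independently: for each $S\subseteq\omega$ I would construct a countable $M_S\models T$ with $p_i(M_S)\neq\emptyset$ if and only if $i\in S$. Since realization of a complete $\emptyset$-type is an isomorphism invariant, distinct choices of $S$ yield pairwise non-isomorphic $M_S$, giving $2^{\aleph_0}$ countable models of $T$.

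Let $(p_i\mid i<\omega)$ be pairwise weakly orthogonal non-isolated elements of $S_1(\emptyset)$, fix $S\subseteq\omega$, and pick $c_i\models p_i$ for each $i\in S$. I would first observe that this choice is coherent, and in fact the joint type is uniquely determined: by binarity of $T$, any completion of $\bigcup_{i\in S}p_i(x_i)$ is forced by its $2$-types $\tp_{x_i,x_j}(c_i,c_j)$, and each such $2$-type is forced by $p_i(x_i)\cup p_j(x_j)$ because $p_i\wor p_j$. Put $C=\{c_i:i\in S\}$, a countable set.

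The central step is to show that for every $j\notin S$ the type $p_j$ has a unique extension $q_j\in S_1(C)$, and that $q_j$ is non-isolated. Uniqueness is the same binarity argument applied with the free variable $y$ in place of some $x_k$: any $q\in S_1(C)$ extending $p_j$ is forced by $p_j(y)\cup\bigcup_{c_i\in C}\tp_{y,x_i}(y,c_i)$, and each $\tp_{y,x_i}(y,c_i)$ is in turn forced by $p_j\cup p_i$ via $p_i\wor p_j$. For non-isolation I would invoke Lemma~\ref{Lema_fs_vs_nonisolation}(c), which produces \emph{some} non-isolated extension of $p_j$ to $S_1(C)$; by the just-established uniqueness, that extension can only be $q_j$ itself. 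This is the one substantive point of the proof, and it is precisely where binarity does the work.

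Finally, I would apply the Omitting Types Theorem to the countable expansion of $T$ obtained by adjoining constants naming $C$: since $\{q_j:j\notin S\}$ is a countable family of non-isolated types over these constants, there is a countable $M_S$ containing $C$ and omitting every $q_j$. Any $a\in M_S$ realizing some $p_j$ with $j\notin S$ would, by the uniqueness above, satisfy $\tp(a/C)=q_j$, contradicting the omission; hence $p_j(M_S)=\emptyset$ for $j\notin S$, while $c_i\in p_i(M_S)$ for $i\in S$. The models $(M_S\mid S\subseteq\omega)$ are thus pairwise non-isomorphic, which forces $I(\aleph_0,T)\geq 2^{\aleph_0}$ and hence equality.
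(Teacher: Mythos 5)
Your argument is correct and follows essentially the same route as the paper: for each $S\subseteq\omega$ build $M_S$ realizing exactly those $p_i$ with $i\in S$, using pairwise weak orthogonality plus binarity to get a unique extension of each $p_j$ ($j\notin S$) over the chosen parameters, and then Omitting Types. Your use of Lemma~\ref{Lema_fs_vs_nonisolation}(c) together with uniqueness to certify non-isolation of that extension is in fact slightly more explicit than the paper, which simply asserts the point.
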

\begin{proof}
We may assume that $T$ is small, since otherwise $I(\aleph_0,T)=2^{\aleph_0}$ follows. Suppose that  the types $\{p_n\mid n\in\omega\}\subseteq S_1(\emptyset)$ are non-isolated and pairwise orthogonal. For each $J\subseteq \omega$ we will construct a countable model $M_J$ satisfying: $M_J$ realizes $p_n$ if and only if $n\in J$. Clearly, for distinct $J$'s the corresponding models are not isomorphic and $I(\aleph_0,T)=2^{\aleph_0}$.

Let $J\subseteq \omega$,  let $a_j$ realize $p_j$ and let $A_J=\{a_j\mid j\in J\}$. Since $T$ is binary, for each $n\notin J$ we have $p_n\wor \tp(A_J)$, so $p_n$ has a unique extension over $A_J$; since $p_n$ is non-isolated, the extension is non-isolated, as well. By the Omitting Types Theorem, there exists a countable model $M_J\supseteq A_J$ omitting each $p_n$ for $n\notin J$. Clearly, $M_J$ realizes $p_j$ for $j\in J$ and for distinct $J$  the corresponding models are non-isomorphic. 
\end{proof}

\subsection{Definability and invariants}

\begin{dfn} A convex \so-pair $\mathbf p=(p,<_p)$ is left-definable (right-definable) if   $\p_l$ ($\p_r$) is a definable type. $\mathbf p$ is  definable if it is left-definable or right-definable; otherwise, it is non-definable. 
\end{dfn}

Note that in the \oo-minimal context terms non-cut  (or rational cut) for $p\in S_1(\emptyset)$ when $\mathbf p$ is left(right)-definable, and (irrational) cut when $\mathbf p$ is non-definable  are widely used in the literature.

\begin{lem}\label{Lema left definability}
Suppose that $\mathbf p=(p,<_p)$ is a convex \so-pair over $A$ witnessed by $(D_p,<_p)$. The following conditions are equivalent:

(1) \ $\mathbf p$  is left-definable;

(2) \  $p$ is not finitely satisfiable in $p(\Mon)^-=\{x\in D_p \mid x<_pp(\Mon)\}$;

(3) \ There exists an $A$-definable $D\subseteq D_p$  such that $p(\Mon)$ is an initial part of $(D,<_p)$;

(4) \ There exists a  $\Mon$-definable $D\subseteq D_p$  such that $p(\Mon)$ is an initial part of $(D,<_p)$;

(5)  \ $\p_l$ is not finitely satisfiable in $p(\Mon)^-$.\\
Similarly for    $\mathbf p$  right-definable.
\end{lem}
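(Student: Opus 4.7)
The plan is to prove the cycle $(1)\Rightarrow(3)\Rightarrow(2)\Rightarrow(3)\Rightarrow(4)\Rightarrow(5)\Rightarrow(4)\Rightarrow(1)$, yielding all five equivalences. For $(1)\Rightarrow(3)$, I would use that an $A$-invariant definable global type admits a defining scheme over $A$ (by a standard compactness argument on the type of the defining parameters), then apply this scheme to the formula $\phi(x,y):=(x\in D_p\land y\in D_p\land x<_p y)$. Combining Remark~\ref{Rmk notin p_l} (membership in $\p_l$ as non-$\mathbf p$-left-boundedness) with a case analysis over the partition $D_p=p(\Mon)^-\cup p(\Mon)\cup p(\Mon)^+$ (using that non-algebraic $p$ has no $<_p$-minimum) yields $(x<_p b)\in\p_l$ iff $b\in p(\Mon)\cup p(\Mon)^+$; so $d_\phi$ defines an $A$-definable set $D\subseteq D_p$ containing $p(\Mon)$ as an initial part.

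The sub-loop $(3)\Leftrightarrow(2)$ is direct. For $(3)\Rightarrow(2)$, the $A$-formula ``$x\in D$'' lies in $p$ and has no realization in $p(\Mon)^-$; conversely, given $\phi(x)\in p$ over $A$ with $\phi(\Mon)\cap p(\Mon)^-=\emptyset$, one conjoins $x\in D_p$ using Lemma~\ref{fact osnovni} and sets $D:=\phi(\Mon)$, whereupon convexity of $p(\Mon)$ in $D_p$ forces $p(\Mon)$ to be an initial part of $D$. $(3)\Rightarrow(4)$ is trivial, and for $(4)\Rightarrow(5)$ the defining formula $\chi(x,\bar b)$ of a $\Mon$-definable witness lies in $\p_l$ (it contains $p(\Mon)$, hence is left-cofinal there) while having no realization in $p(\Mon)^-$.

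The delicate step is $(5)\Rightarrow(4)$: given $\theta(x,\bar d)\in\p_l$ with $\theta(\Mon,\bar d)\cap p(\Mon)^-=\emptyset$, form the upward closure
\[
D_1:=\{x\in D_p\mid \exists y\in D_p\,(y\leq_p x\land \theta(y,\bar d))\}.
\]
Left-cofinality of $\theta$ in $p(\Mon)$ yields $p(\Mon)\subseteq D_1$; the disjointness assumption combined with convexity of $p(\Mon)$ in $D_p$ forces $D_1\cap p(\Mon)^-=\emptyset$; hence $p(\Mon)$ is an initial part of the $\Mon$-definable $D_1$. Finally, to close the cycle via $(4)\Rightarrow(1)$, using a witness $D$ definable over $A\bar d$, one verifies the scheme
\[
\psi(x,\bar z)\in\p_l \iff \forall a\in D\,\exists y\in D\,(y<_p a\land \psi(y,\bar z)),
\]
where the non-trivial direction exploits the initial-part property to force any $y\in D$ below $a\in p(\Mon)$ to lie in $p(\Mon)$, thus witnessing left-cofinality of $\psi$ in $p(\Mon)$. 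This makes $\p_l$ definable over $A\bar d$, and the $A$-invariance of $\p_l$ then descends the defining scheme to $A$. The main obstacle is precisely this final descent; it relies on the general fact that an $A$-invariant definable set is $A$-definable, which follows from a compactness argument on $\tp(\bar d/A)$. The symmetric right-definability clause follows by applying the entire argument to the reverse pair $(p,>_p)$, which swaps the roles of $\p_l\leftrightarrow\p_r$ and $p(\Mon)^-\leftrightarrow p(\Mon)^+$.
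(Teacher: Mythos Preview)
Your proof is correct and follows essentially the same route as the paper's: the paper proves $(1)\Rightarrow(2)\Rightarrow(3)\Rightarrow(4)\Leftrightarrow(5)$ and $(4)\Rightarrow(1)$ using the same defining set $D=\{b\in D_p\mid (x<_pb)\in\p_l\}$ for the first implication, the same upward-closure trick for $(5)\Rightarrow(4)$, and a similar (though not identical) first-order scheme for $(4)\Rightarrow(1)$. One remark: the ``final descent'' to $A$ that you flag as the main obstacle is in fact unnecessary, since by definition $\mathbf p$ is left-definable as soon as $\p_l$ is a definable type over \emph{some} parameter set; the paper accordingly stops after observing that $\p_l$ is definable over the parameters defining $D$.
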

\begin{proof}
(1)$\Rightarrow$(2) Suppose that $\p_l$ is definable;  being $A$-invariant it  is definable over $A$.
In particular, the set $D=\{a\in D_p\mid (x<_pa)\in\p_l\}$ is $A$-definable. If $b\models p$, then  $x<_pb$ relatively defines a left eventual subset of $(p(\Mon),<_p)$, so $(x<_pb)\in\p_l$ and $b\in D$. Thus $p(\Mon)\subseteq D$ and $(x\in D)\in p$. For   $c\in p(\Mon)^-$ the set  $\{x<_p c\}\cup p(x)$ is inconsistent, so $(x<_pc)\notin \p_l$ and $c\notin D$; hence $x\in D$ is not satisfied in $p(\Mon)^-$.
 
(2)$\Rightarrow$(3).  If  $\theta(x)\in p$ is not satisfied  in $p(\Mon)^-$, then   $p(\Mon)$ is an initial part of $(\theta(\Mon)\cap D_p,<_p)$.

(3)$\Rightarrow$(4) is trivial. 

(4)$\Rightarrow$(5). If $D\subseteq D_p$ satisfies (4), then the formula $x\in D$ defines a left eventual subset of $(p(\Mon),<_p)$ and hence belongs to $\p_l$. Clearly $x\in D$ is not satisfied in $p(\Mon)^-$.

(5)$\Rightarrow$(4). Suppose that $\phi(x)\in\p_l$ is not satisfied in $p(\Mon)^-$. Let $\psi(x):=x\in D_p\land \exists y(\phi(y)\land y\in D_p\land y\leqslant_p x)$. Then $\psi(x)\in \p_l$ defines a final part of $D_p$ and   $p(\Mon)$ is an initial part of $(\psi(\Mon),<_p)$. 

(4)$\Rightarrow$(1) Suppose that $D\subseteq D_p$ is $\Mon$-definable and   $p(\Mon)$  is an initial part of $(D,<_p)$. For any   $\psi(x;\bar y)$:   \begin{center}
for all $\bar c\in \Mon$: \  $\psi(x,\bar c)\in \p_l$ \  if and only if \   $\psi(\Mon,\bar c)$ contains an initial part of $(D,<_p)$. 
\end{center}Since $(D,<_p)$ is $\Mon$-definable,   the right-hand side of the equivalence is a  $\Mon$-definable property of $\bar c$, so $\p_l$ is definable; $\mathbf p$ is left-definable.
\end{proof}

\begin{rmk}\label{Rmk left def non-isolated}
Let $\mathbf p=(p,<_p)$ be a convex \so-pair over $A$ witnessed by $(D_p,<_p)$. By Lemma \ref{Lema_fs_vs_nonisolation} we have: $p$ is non-isolated if and only it is finitely satisfied in $D_p\smallsetminus p(\Mon)=p(\Mon)^-\cup p(\Mon)^+$. \ Combining with Lemma  \ref{Lema left definability} we have the following options:

--  $p$ is finitely satisfied in $p(\Mon)^-$ but not in $p(\Mon)^+$ \ (iff $p$ is non-isolated and  $\mathbf p$ is right-definable);

--   $p$ is finitely satisfied in $p(\Mon)^+$ but not in $p(\Mon)^-$  \ (iff $p$ is non-isolated and  $\mathbf p$ is left-definable);

--  $p$ is finitely satisfied in both $p(\Mon)^-$ and $p(\Mon)^+$ \ (iff   $\mathbf p$ is non-definable)

--  $p$ is not finitely satisfied in  $p(\Mon)^-\cup p(\Mon)^+$ \ (iff $p$ is  isolated,   $\mathbf p$ is both left- and right-definable).
\end{rmk}

\begin{lem}\label{Lema_nonisolated_frakp}
Suppose that $\mathbf p=(p,<_p)$ is a convex \so-pair over $A$ witnessed by $(D_p,<_p)$. Then:

(a) If $p$ is right-definable and non-isolated, then $\frak p_{l\strok B}$ is non-isolated for all $B\supseteq A$;

(b) If $p$ is left-definable and non-isolated, then $\frak p_{r\strok B}$ is non-isolated for all $B\supseteq A$;

(c) If $p$ is non-definable, then both $\frak p_{r\strok B}$ and $\frak p_{l\strok B}$ are non-isolated for all $B\supseteq A$.
\end{lem}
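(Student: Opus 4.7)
The plan is to reduce all three parts to a single contradiction argument that exploits Lemma \ref{Lema left definability}. For part (a) it suffices to show that every $\phi(x) \in \p_{l\strok B}$ is realized in $p(\Mon)^-$: combined with the fact that $\p_{l\strok B}(\Mon)$ is an initial part of $p(\Mon)$ (Lemma \ref{lem prop_left_right_basics}(b)) and Lemma \ref{Lema_fs_vs_nonisolation}(b), this yields finite satisfiability of $\p_{l\strok B}$ outside its locus, i.e.\ non-isolation. Part (b) will be the left--right dual, and part (c) follows because a non-definable $\mathbf p$ is simultaneously not left-definable and not right-definable, so both the argument for (a) and its dual apply.

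For (a), fix $\phi(x) \in \p_{l\strok B}$ and assume toward a contradiction that $\phi(\Mon) \cap p(\Mon)^- = \emptyset$. Consider the $\Mon$-definable formula
\[\psi(x) \;:=\; x \in D_p \,\land\, \exists y\bigl(y \in D_p \,\land\, \phi(y) \,\land\, y \leq_p x\bigr),\]
whose solution set is the $\leq_p$-upward closure, inside $D_p$, of $\phi(\Mon) \cap D_p$, and hence a final part of $D_p$. I plan to verify two properties of $\psi$. First, $p(\Mon) \subseteq \psi(\Mon)$: since $\phi \in \p_l$, by Remark \ref{Rmk notin p_l} $\phi$ is not $\mathbf p$-left-bounded, so for each $a \in p(\Mon)$ there is $y \in \phi(\Mon) \cap p(\Mon) \subseteq D_p$ with $y \leq_p a$, witnessing $\psi(a)$. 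Second, $\psi(\Mon) \cap p(\Mon)^- = \emptyset$: any witness $y$ for $\psi(a)$ with $a \in p(\Mon)^-$ would satisfy $y \in D_p$ and $y \leq_p a <_p p(\Mon)$, forcing $y \in \phi(\Mon) \cap p(\Mon)^-$, which is empty by assumption.

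From these two facts $\psi(\Mon) \subseteq D_p$ is a $\Mon$-definable set having $p(\Mon)$ as an initial part, so by the implication (4)$\Rightarrow$(1) of Lemma \ref{Lema left definability} the pair $\mathbf p$ is left-definable. This contradicts Remark \ref{Rmk left def non-isolated}, which forbids $\mathbf p$ from being both left- and right-definable when $p$ is non-isolated. The dual argument (with $p(\Mon)^+$ in place of $p(\Mon)^-$, the formula $x \leq_p y$ in place of $y \leq_p x$, and right-definability in place of left-definability) proves (b); part (c) is obtained by applying both arguments, since $\mathbf p$ non-definable means $p$ is finitely satisfiable in both $p(\Mon)^-$ and $p(\Mon)^+$ and violates both left- and right-definability. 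The main subtlety is identifying the right auxiliary formula: the downward-convexification $\psi$ exactly converts ``$\phi$ avoids $p(\Mon)^-$'' into a left-definability witness for $\mathbf p$, whereupon Lemma \ref{Lema left definability} does the rest.
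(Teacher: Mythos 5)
Your proof is correct and follows essentially the same route as the paper: both arguments establish that $\p_{l\strok B}$ is finitely satisfiable in $p(\Mon)^-$ and then invoke Lemma \ref{Lema_fs_vs_nonisolation} to conclude non-isolation. The only cosmetic difference is that the paper directly cites the equivalence (2)$\Leftrightarrow$(5) of Lemma \ref{Lema left definability} together with Remark \ref{Rmk left def non-isolated}, whereas you re-derive the auxiliary formula $\psi$ that already appears in the (5)$\Rightarrow$(4) step of that lemma's proof and route the contradiction through (4)$\Rightarrow$(1) and Remark \ref{Rmk left def non-isolated}.
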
 
\begin{proof}
(a) Suppose that $\mathbf p$ is right-definable and $p$ is non-isolated. By Remark \ref{Rmk left def non-isolated} $p$ is finitely satisfied in $p(\Mon)^-$, so the equivalence of conditions (2) and (5) in Lemma \ref{Lema left definability} implies that $\p_l$ is finitely satisfied in  $p(\Mon)^-$. In particular,  $\p_{l\strok B}$ is finitely satisfied in $p(\Mon)^-$. On the other hand, since $\p_{l\strok B}(\Mon)$ is an initial part of $p(\Mon)$, we have $\p_{l\strok B}(\Mon)^-=p(\Mon)^-$, so $\p_{l\strok B}$ is finitely satisfied in   $\p_{l\strok B}(\Mon)^-$; by Lemma \ref{Lema_fs_vs_nonisolation} $\p_{l\strok B}$ is non-isolated. This proves part (a); parts (b) and (c) are proved analogously.
\end{proof}  
 
\begin{lem}\label{Lema left definable nwor}
If $p,q\in S_1(\emptyset)$  are  non-isolated  and  the  convex \so-pairs $\mathbf p=(p,<_p)$ and $\mathbf q=(q,<_q)$  are directly non-orthogonal, then $\mathbf p$ and $\mathbf q$ are simultaneously (left-,\,right-)\, non-definable. 
\end{lem}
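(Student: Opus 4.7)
The plan is to push left-definability of $\mathbf p$ across to $\mathbf q$ via the monotone relation provided by Lemma \ref{lem_shuff rel def}. First I would fix orders $(D_p,<_p)$ and $(D_q,<_q)$ witnessing the convexity of the two \so-pairs and, by Lemma \ref{lem_shuff rel def}, obtain a formula $\sigma(x,y)\in L(A)$ defining a monotone relation on $D_p\times D_q$ that relatively defines $S_{\mathbf p,\mathbf q}$ on $p(\Mon)\times q(\Mon)$. The key observation to exploit is that, by monotonicity and convexity, $\sigma(\Mon,b)\cap p(\Mon)$ is an initial part of $(p(\Mon),<_p)$ for every $b\in D_q$, and its size is controlled by where $b$ falls in the convex partition $D_q=q(\Mon)^-\sqcup q(\Mon)\sqcup q(\Mon)^+$.

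Next I would carry out a three-case analysis showing $\{b\in D_q:\sigma(x,b)\in\p_l\}=q(\Mon)\cup q(\Mon)^+$. For $b\models q$ the fiber is $\Lp_{\mathbf p}(b)$, a non-empty initial part, so $\sigma(x,b)\in\p_l$. For $b\in q(\Mon)^+$ monotonicity yields $\sigma(\Mon,b)\supseteq\bigcup_{b'\models q}\Lp_{\mathbf p}(b')=p(\Mon)$, where fullness of the union comes from choosing, for each $a\models p$, some $b'\in\Rp_{\mathbf q}(a)$ and applying Lemma \ref{Lema_direct_non-orth}(2); hence $\sigma(x,b)\in\p_l$. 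Dually, for $b\in q(\Mon)^-$ one obtains $\sigma(\Mon,b)\cap p(\Mon)\subseteq\bigcap_{b'\models q}\Lp_{\mathbf p}(b')=\emptyset$, the intersection being empty because for each $a\models p$ one may choose $b'\in\Lp_{\mathbf q}(a)$, forcing $a\in\Rp_{\mathbf p}(b')$ and thus $a\notin\Lp_{\mathbf p}(b')$.

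With this computation in hand the left-definable direction is immediate: if $\mathbf p$ is left-definable then $\p_l$ is definable, hence $A$-definable by $A$-invariance, so the set $q(\Mon)\cup q(\Mon)^+$ is $A$-definable, and since $q(\Mon)$ is an initial part of this set in $D_q$, Lemma \ref{Lema left definability}(3) gives $\mathbf q$ left-definable. The converse is by symmetry in $p,q$. Right-definability is transferred by applying the same argument to the pairs $\mathbf p^*=(p,>_p)$ and $\mathbf q^*=(q,>_q)$, which remain directly non-orthogonal (by Corollary \ref{Cor_delta_q_q*}) and satisfy $\p^*_l=\p_r$, $\q^*_l=\q_r$; therefore left-, right-, and non-definable all transfer between $\mathbf p$ and $\mathbf q$.

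The main technical difficulty I anticipate is the emptiness of $\bigcap_{b'\models q}\Lp_{\mathbf p}(b')$ in the $q(\Mon)^-$ case: one must show that although each $\Lp_{\mathbf p}(b')$ is a non-empty proper initial part of $p(\Mon)$, their intersection over all $b'\models q$ in the saturated $\Mon$ is empty, and this is precisely the point at which direct non-orthogonality is doing the real work. The non-isolation hypothesis is not explicitly used in the argument, but it is the case in which the conclusion is non-trivial: by Lemma \ref{Lema_isolated_wor_nonisolated} and Remark \ref{Rmk left def non-isolated} the isolated case is degenerate, with both $\mathbf p$ and $\mathbf q$ automatically left- and right-definable.
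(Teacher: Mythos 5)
Your fiber computation $\{b\in D_q : \sigma(x,b)\in\p_l\}=q(\Mon)\cup q(\Mon)^+$ is correct and gives a genuinely more uniform argument than the paper, which splits into a shuffled Case~1 and a forking Case~2; the $*$-trick for right-definability and your remark that non-isolation is not actually needed are also fine. However, there is a real gap in the $p\nfor q$ case. You obtain $\sigma$ from Lemma \ref{lem_shuff rel def}, but although that lemma is stated under the bare hypothesis $\delta(\mathbf p,\mathbf q)$, its proof produces the relative definition of $S_{\mathbf p,\mathbf q}$ by citing Lemma \ref{rmk def shuffled}(b), whose hypothesis includes that $\mathbf p$ and $\mathbf q$ are \emph{shuffled} (so that $p(x)\cup q(y)$ has exactly two completions), and every other use of Lemma \ref{lem_shuff rel def} in the paper is in a shuffled context. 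When $p\nfor q$, $p(x)\cup q(y)$ has additional forking completions, and relative definability of $S_{\mathbf p,\mathbf q}$ amounts to $\Lp_{\mathbf p}(b)$ being relatively $Ab$-definable inside $p(\Mon)$ for $b\models q$, i.e.\ to the convex class $\Dp_p(b)$ having a definable left edge. Taking $\mathbf q=\mathbf p$ (note $\delta(\mathbf p,\mathbf p)$ always holds) shows that Lemma \ref{lem_shuff rel def}, read at face value, would imply that every convex stationarily ordered type is \emph{simple}, which is not among the hypotheses of the present lemma; so your appeal to it in the forking case is not justified.

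The paper's own Case~2 avoids $S_{\mathbf p,\mathbf q}$ entirely: it fixes a dependent pair $(a,b)$, takes the forking-witnessing formula $\phi$ from Lemma \ref{Lema a dep b witness} (which requires only convexity), sets $\sigma'(x,y):=y\in D_q\land y<_q\phi(x,\Mon)$, and shows directly that $q(\Mon)^-=\bigcap\{\sigma'(x,\Mon)\mid x\in D_p\land x<_p a\}$ is $a$-definable, hence $\emptyset$-definable by invariance. To make your unified argument go through in the forking case you do not actually need $\sigma$ to define $S_{\mathbf p,\mathbf q}$ exactly: it suffices to produce an $A$-formula, monotone on $p(\Mon)\times q(\Mon)$, whose fiber over each $b'\models q$ is trapped between $\Lp_{\mathbf p}(b')$ and $p(\Mon)\smallsetminus\Rp_{\mathbf p}(b')$; your three-case computation then still gives $\{b\in D_q:\sigma(x,b)\in\p_l\}=q(\Mon)\cup q(\Mon)^+$, because $\bigcup_{b'\models q}\Lp_{\mathbf p}(b')=p(\Mon)=\bigcup_{b'\models q}\Rp_{\mathbf p}(b')$ by the same symmetry you already use. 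But producing such a formula from convexity alone requires essentially the machinery of Lemma \ref{Lema a dep b witness}, so the forking case cannot be dispatched simply by citing Lemma \ref{lem_shuff rel def}; it needs its own argument, as in the paper.
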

\begin{proof} Suppose that $\mathbf p$ is left-definable. Choose  $(D_p,<_p)$ and $(D_q,<_q)$ witnessing the convexity of $\mathbf p$ and $\mathbf q$; moreover, by   Lemma \ref{Lema left definability} we may assume that $p(\Mon)$ is an initial part of $(D_p,<_p)$;. 

\smallskip
Case 1.  $\mathcal S(p,q)$ holds. \ Let $\sigma(x,y)$ be an $L$-formula defining a monotone relation between  $(D_p,<_p)$ and $(D_q,<_q)$ and relatively defining $S_{\mathbf p,\mathbf q}$ (i.e.\ $x\in \Lp_{\mathbf p}(y)$); it exists by Lemma \ref{lem_shuff rel def}. Then $(\sigma(\Mon,b)\mid b\in D_q)$ is an increasing sequence of initial parts of $D_p$ and $\emptyset\neq \sigma(\Mon,b)\subset p(\Mon)$  for all $b\in q(\Mon)$. For each $a\in p(\Mon)$ and $b\in \Lp_ {\mathbf q}(a)$, by direct non-orthogonality and Lemma \ref{Lema_direct_non-orth},  we have $a\in\Rp_{\mathbf p}(b)$ and hence $a\notin \sigma(\Mon,b)$. The latter implies $a\notin\bigcap_{b\in q(\Mon)}\sigma(\Mon,b)$ and thus $\bigcap_{b\in q(\Mon)}\sigma(\Mon,b)=\emptyset$. By monotonicity: $\sigma(\Mon,y)=\emptyset$ if and only if $y<_q q(\Mon)$. Hence the formula  $y\in D_q\land\exists x\,\sigma(x,y)$ belongs to $q(y)$ and is not satisfied in $q(\Mon)^-$, so   condition (2) of Lemma \ref{Lema left definability} is satisfied and $\mathbf q$ is left-definable. 

\smallskip
Case 2.  $p\nfor q$.  \ Choose $a\models p$ and $b\models q$ be such that $a\dep b$. 
Since the types in question are convex, by Lemma \ref{Lema a dep b witness} there is a formula $\phi(x,y)\in\tp(a,b)$ witnessing the dependence such that $\phi(a, \Mon)\subseteq \Dp_{q}(a)$ and $\phi(\Mon,b)\subseteq \Dp_{p}(b)$.    
Put $\sigma'(x,y):=y\in D_q\land  y<_q \phi(x,\Mon)$. Clearly, $\sigma'(a,\Mon)<_q \Rp_{\mathbf q}(b)$   and, using $\delta(\mathbf p,\mathbf q)$ and the fact that $p(\Mon)$ is an initial part of $(D_p,<_p)$,  we get  $\bigcap\{\sigma'(x,\Mon)\mid x\in D_p\land x<_pa\}=q(\Mon)^-$. 
Hence $q(\Mon)^-$ is $a$-definable and, being $\Aut(\Mon)$-invariant, we get that $q(\Mon)^-$ is a definable, initial part of $D_q$.  Then the set  $D=D_q\smallsetminus q(\Mon)^-$ satisfies condition (3) of Lemma \ref{Lema left definability}, so    
$\mathbf q$ is left-definable. 
\end{proof}

\begin{lem}\label{Lema invariants prime model}
Suppose that $T$ is small,  $\mathbf p=(p,<_p)$ is a   convex \so-pair over $\emptyset$ witnessed by $(D_p,<_p)$, $p$ is simple and non-isolated,  and  $M$ is a countable model of $T$. 

(a) If $\mathbf p$ is left-definable, then $\Inv_{\mathbf p}(M)\in\{\mathbf{0},\bm{\eta},\bm{\eta}+\mathbf 1\}$. Moreover, if $M$ is prime over $a$, then $\Inv_{\mathbf p}(M)=\bm{\eta}+\mathbf 1$ and 
$\Inv_{\mathbf q}(M)= \bm{\eta}$ for any  convex \so-pair $\mathbf q$ directly shuffled with $\mathbf p$.

(b) If $\mathbf p$ is right-definable, then $\Inv_{\mathbf p}(M)\in\{\mathbf{0},\bm{\eta},\mathbf{1}+\bm{\eta}\}$. Moreover, if $M$ is prime over $a$, then $\Inv_{\mathbf p}(M)=\mathbf{1}+\bm{\eta}$ and 
$\Inv_{\mathbf q}(M)= \bm{\eta}$ for any  convex \so-pair $\mathbf q$ directly shuffled with $\mathbf p$.

(c) If $\mathbf p$ is non-definable and $M$ is prime over $a\models p$, then $\Inv_{\mathbf p}(M) =\mathbf{1} $  and 
$\Inv_{\mathbf q}(M)= \mathbf{0}$ for any  convex \so-pair $\mathbf q$ directly shuffled with $\mathbf p$.
\end{lem}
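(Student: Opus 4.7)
The plan is to combine three inputs: simplicity of $p$, which by Lemma~\ref{Lema ainvar of simple} forces $\Inv_{\mathbf p}(M)$ to be $\mathbf 0$, $\mathbf 1$, or densely ordered; left/right-definability of $\mathbf p$, which yields isolated non-forking extensions of $p$; and, in the prime-model case, atomicity of $M$ over $a$ combined with Lemma~\ref{Lema_nonisolated_frakp}.

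For (a), I will first establish the auxiliary claim that for every $b\models p$ the type $\p_{l\strok b}$ is isolated. Using Lemma~\ref{Lema left definability} fix an $\emptyset$-definable $D\subseteq D_p$ with $p(\Mon)$ an initial part of $(D,<_p)$; since $p$ is simple, Remark~\ref{Rmk simple type}(b) makes $\Dp_p(b)$ a $b$-definable set, so $\phi(x,b):=(x\in D)\wedge(x<_p\Dp_p(b))$ is a formula over $b$. Any $c\models\phi(x,b)$ lies in $D$ strictly below $b\in p(\Mon)$, hence in $p(\Mon)$ by the initial-segment property, and satisfies $c\in\Lp_{\mathbf p}(b)$, so $\phi(x,b)\vdash\p_{l\strok b}$. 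Since isolated types are realized in every model containing the parameters, whenever $p(M)\neq\emptyset$ the order $\Inv_{\mathbf p}(M)$ has no minimum; combined with Lemma~\ref{Lema ainvar of simple} this leaves only $\mathbf 0$, $\bm\eta$, $\bm\eta+\mathbf 1$. When $M$ is prime over $a\models p$, atomicity gives that every $b\in p(M)$ has isolated $\tp(b/a)$; if $b\ind a$, Corollary~\ref{Cor_existence of nf extensions of so types} places $\tp(b/a)\in\{\p_{l\strok a},\p_{r\strok a}\}$, and by Lemma~\ref{Lema_nonisolated_frakp}(b) $\p_{r\strok a}$ is non-isolated, so $\tp(b/a)=\p_{l\strok a}$, forcing $\Dp_p(b)<_p\Dp_p(a)$; and $b\dep a$ gives $\Dp_p(b)=\Dp_p(a)$. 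Thus $\Dp_p(a)\cap M$ is the maximum of $\Inv_{\mathbf p}(M)$, yielding $\Inv_{\mathbf p}(M)=\bm\eta+\mathbf 1$.

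For the $\mathbf q$-part of (a): $q$ is non-isolated by Lemma~\ref{Lema_isolated_wor_nonisolated}, simple by Lemma~\ref{Lema convex implies simple} (with the roles of $p$ and $q$ swapped), and $\mathbf q$ is left-definable by Lemma~\ref{Lema left definable nwor}, so the first half of (a) applied to $\mathbf q$ gives $\Inv_{\mathbf q}(M)\in\{\mathbf 0,\bm\eta,\bm\eta+\mathbf 1\}$. Since $|p(M)/\Dp_p^M|$ is infinite, Proposition~\ref{prop_shuffled case local} (with the roles of $\mathbf p$ and $\mathbf q$ interchanged) both forces $|q(M)/\Dp_q^M|$ to be infinite and witnesses that the two quotient orders are shuffled; by Lemma~\ref{Lema_shufling 2 orders}(c) at most one of them has a maximum, so since $\Inv_{\mathbf p}(M)=\bm\eta+\mathbf 1$ does, $\Inv_{\mathbf q}(M)$ does not, leaving $\Inv_{\mathbf q}(M)=\bm\eta$. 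Part (b) is handled symmetrically, reversing left and right. In (c), Lemma~\ref{Lema_nonisolated_frakp}(c) makes both $\p_{l\strok a}$ and $\p_{r\strok a}$ non-isolated, so atomicity of $M$ over $a$ forces every $b\in p(M)$ to satisfy $b\dep a$; hence $p(M)\subseteq\Dp_p(a)\cap M$ and $\Inv_{\mathbf p}(M)=\mathbf 1$. Finally $|p(M)/\Dp_p^M|=1$ triggers Proposition~\ref{prop_shuffled case local}(b) (again with roles swapped) to omit $q$ in $M$, giving $\Inv_{\mathbf q}(M)=\mathbf 0$.

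The main obstacle is the isolation of $\p_{l\strok b}$ in part (a): it truly requires both hypotheses—left-definability to produce the containing set $D$ (without which a realization of $x<_p\Dp_p(b)$ could fall below $p(\Mon)$ inside $D_p$) and simplicity to make $\Dp_p(b)$ a $b$-definable set (so that $\phi(x,b)$ is first-order over $b$). With that claim in hand, the rest is bookkeeping with atomic models, automatic realization of isolated types, and the shuffling facts already established.
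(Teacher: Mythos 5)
Your proposal is correct and follows essentially the same strategy as the paper's proof: show the order has no minimum using simplicity and left-definability, use Lemma~\ref{Lema ainvar of simple} for density, use Lemma~\ref{Lema_nonisolated_frakp} and atomicity over $a$ to locate the maximum, and finish the $\mathbf q$-clause via Lemma~\ref{Lema left definable nwor} and shuffling. The only cosmetic difference is that you package the "no minimum" step as isolation of $\p_{l\strok b}$ by the formula $(x\in D)\wedge(x<_p\Dp_p(b))$, while the paper argues it by transferring the first-order statement $\exists y\,(y<_p\Dp_p(b)\wedge y\in D_p)$ from $\Mon$ to $M$; you are also slightly more explicit than the paper about verifying that $\mathbf q$ inherits the hypotheses (via Lemmas~\ref{Lema_isolated_wor_nonisolated} and~\ref{Lema convex implies simple}) and that $q(M)\neq\emptyset$ via Proposition~\ref{prop_shuffled case local}.
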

\begin{proof} (a) Suppose that $\mathbf p$ is left-definable. By Lemma \ref{Lema left definability}, after modifying $D_p$,  we may assume that $p(\Mon)$ is an initial part of $(D_p,<_p)$. Suppose that $M$ is countable and $b\in p(M)$.   Since $p$ is simple  $\exists y \,(y<_p\Dp_p(b)\land y\in D_p)$ is first-order expressible and $\Mon\models \exists y \,(y<_p\Dp_p(b)\land y\in D_p)$ (since by Corollary \ref{Cor_on p forking is equivalence}  $(p(\Mon)/\Dp_p,<_p)$ is a dense order without endpoints); the same   holds in $M$, so $\Inv_{\mathbf p}(M)$ does not have minimum. Since  $\Inv_{\mathbf p}(M)$ is a dense order type by Lema \ref{Lema ainvar of simple}, we conclude $\Inv_{\mathbf p}(M)\in\{\bm{\eta},\bm{\eta}+\mathbf 1\}$.

Assume that $M$ is prime over $a$ and let $p_r=\p_{r\,\strok a}$. Since $p$ is non-isolated and $\mathbf p$ is left-definable, by  Lemma \ref{Lema_nonisolated_frakp}(b) $p_r\in S_1(a)$ is  non-isolated. In particular, $p_r$ is omitted in   $M$, so   $\Dp_p^M(a)$ is the maximum of $(p(M)/D_p^M,<_p)$ and $\Inv_{\mathbf p}(M)=\bm{\eta}+\mathbf 1$. 
If $\delta(\mathbf q,\mathbf p)$, then $\mathbf q$ is left-definable by Lemma  \ref{Lema left definable nwor},  so $\Inv_{\mathbf q}(M)\in\{\bm{\eta},\bm{\eta}+\mathbf 1\}$. Since $\Inv_{\mathbf q}(M)$ and $\Inv_{\mathbf p}(M)$ are shuffled, at most one of them has a maximum, so $\Inv_{\mathbf q}(M)=\bm{\eta}$. This completes the proof of part (a) of the lemma; part (b) is proved similarly. 

(c) Assume that $\mathbf p$ is non-definable, $M$ is prime over $a\models p$ and $\delta(\mathbf p, \mathbf q)$. By Lemma \ref{Lema_nonisolated_frakp}(c)  the types $\p_{r\,\strok a}\in S_1(a)$ and $\p_{l\strok a}$ are non-isolated and hence omitted in $M$. Hence $\Inv_{\mathbf p}(M)=\mathbf 1$ and $\Inv_{\mathbf q}(M)=\mathbf 0$ by Lemma   
\ref{prop_shuffled case local}(b). 
\end{proof}

\subsection{Binary stationarily ordered theories with few countable models}\label{s binary so with few}

Here we continue the analysis of models of binary stationarily ordered theories   and work toward proving Theorems  \ref{conclusion} and \ref{Thm_intro_main}.  

\smallskip\noindent  {\bf Assumption.}\ Throughout this subsection we assume that $T$ is a  binary, stationarily ordered  theory satisfying none of the conditions (C1)-(C3): $T$ is small and  every type $p\in S_1(\emptyset)$ is convex and simple.

\begin{notat} Recall that (by \ref{dfn_dependent_elements}) $\Dp(a)=\{x\in \Mon\mid x\dep a\}$ and   define: $\Dpf(A)=\bigcup_{a\in A}\Dp(a)$.
  \end{notat}

\begin{lem}\label{Lema_Bi subset eps A_i}
Suppose that   $(\Dpf(A_i)\mid i\in I)$ are  pairwise disjoint sets and  $B_i\subseteq \Dpf(A_i)$  for all $i\in I$. Denote: $A_I=\bigcup_{j\in I}A_j$, $B_I=\bigcup_{j\in I}B_j$ and $B_{I-i}=\bigcup_{j\in I\smallsetminus\{i\}}B_j$.
Then:
\begin{enumerate}[(a)]
\item If  $\tp(B_i/A_i)=\tp(B_i'/A_i)$   for all $i\in I$, then $\tp(B_I)=\tp(B_I')$;
\item For all $i\in I$ we have $\tp(B_i/A_i)\vdash \tp(B_i/B_{I-i}A_I)$.
\item If $B_i$ is atomic over $A_i$ for all $i\in I$, then $B_I$ is atomic over $A_I$. 
\end{enumerate}
\end{lem}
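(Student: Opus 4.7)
The hypothesis that every type in $S_1(\emptyset)$ is stationarily ordered means, by Corollary~\ref{Cor_forking_transitivity}, that $\dep$ is an equivalence relation on all of $\Mon$. Each $\Dpf(A_j)$ is therefore a union of $\dep$-classes, and pairwise disjointness of the $\Dpf(A_j)$'s yields the \emph{cross-block independence} that will drive everything: for $x\in A_i\cup B_i$ and $y\in A_j\cup B_j$ with $i\neq j$ one has $x\ind y$, because $x\dep y$ together with $y\dep a'\in A_j$ would place $x$ in $\Dpf(A_i)\cap\Dpf(A_j)=\varnothing$. My strategy is to prove (b) first, derive (a) by ``swapping one block at a time'', and derive (c) from (b) by transitivity of isolation.

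For (b), fix $B_i'$ realizing $\tp(B_i/A_i)$. Binarity of $T$ applied to the combined tuple $B_i\cup A_I\cup B_{I-i}$ reduces showing $\tp(B_i/A_IB_{I-i})=\tp(B_i'/A_IB_{I-i})$ to verifying that $\tp(b,c)=\tp(b',c)$ for every $b\in B_i$ (with image $b'\in B_i'$) and every $c\in(A_IB_{I-i})\setminus A_i$; pair-types inside $B_i\cup A_i$ are preserved by hypothesis and those inside $A_IB_{I-i}$ by identity. Pick $a\in A_i$ with $b\dep a$, which exists since $b\in\Dpf(A_i)$; from $\tp(b/a)=\tp(b'/a)$ also $b'\dep a$. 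Cross-block independence gives $b\ind c$ and $b'\ind c$, so by Corollary~\ref{Cor_existence of nf extensions of so types} the types $\tp(c/b)$ and $\tp(c/b')$ lie in $\{\q_{l\strok b},\q_{r\strok b}\}$ resp.\ $\{\q_{l\strok b'},\q_{r\strok b'}\}$ (for any \so-pair $\mathbf q=(\tp(c),<_q)$), and the side realized is encoded by the position of $c$ in the partition $q(\Mon)=\Lp_{\mathbf q}(b)\cup\Rp_{\mathbf q}(b)$ (analogously for $b'$). Lemma~\ref{Prop_equivalents_of_a dep b} now asserts $\Lp_{\mathbf q}(b)=\Lp_{\mathbf q}(a)=\Lp_{\mathbf q}(b')$, so $b$ and $b'$ pick the same side as $a$; that choice depends only on the 2-type $\tp(a,c)$, which is part of the fixed data $\tp(A_IB_{I-i})$. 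Hence $\tp(c/b)=\tp(c/b')$ and $\tp(b,c)=\tp(b',c)$.

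For (a), binarity leaves only the cross-block pair-types between $B_I$ and $B_I'$ to check. For $(b,c)$ with $b\in B_i$, $c\in B_j$, $i\neq j$, the two-step swap is: apply (b) at block $i$ with $c$ in the fixed data $B_{I-i}$ to get $\tp(b,c)=\tp(b',c)$; then apply (b) at block $j$ in the auxiliary family whose only nonempty ``other block'' is $\{b'\}\subseteq\Dpf(A_i)$, giving $\tp(B_j/A_j)\vdash\tp(B_j/A_Ib')$ and hence $\tp(b',c)=\tp(b',c')$ because $\tp(B_j/A_j)=\tp(B_j'/A_j)$. Mixed pairs with one element in $A_I$ are handled by the same reasoning. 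For (c), decompose any finite $\bar b\subseteq B_I$ as $\bar b=\bar b_{i_1}\ldots\bar b_{i_n}$ with $\bar b_{i_k}\subseteq B_{i_k}$ and distinct indices. Atomicity makes each $\tp(\bar b_{i_k}/A_{i_k})$ isolated, and (b) applied to the family $(\bar b_{i_l})_{l\leqslant n}$ yields $\tp(\bar b_{i_k}/A_{i_k})\vdash\tp(\bar b_{i_k}/A_I\bar b_{i_1}\ldots\bar b_{i_{k-1}})$, so each step-type is isolated; transitivity of isolation then gives $\tp(\bar b/A_I)$ isolated.

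The main obstacle is in (b): $\tp(c/b)$ has two a priori possible non-forking values, and we must identify which one from data over $A_i$ alone. Lemma~\ref{Prop_equivalents_of_a dep b} is the decisive tool — it collapses the left/right geography around $b$ to that around any anchor $a\in A_i$ with $b\dep a$, thereby exposing the direction information in the externally fixed 2-type $\tp(a,c)$.
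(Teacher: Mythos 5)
Your proof is correct, and it takes a genuinely different route from the paper's. The paper proves (a) directly: WLOG $A_i\subseteq B_i$, reduce by binarity to pair-types $(c,d)$ vs.\ $(c',d')$, locate a common anchor $a_c\in A_i$ so that $c\dep c'$ (similarly $d\dep d'$), and split into cases: if $c\ind d$ apply Lemma~\ref{lem rmk closed for Dp}(b) twice, and if $c\dep d$ transitivity forces $c,d$ into the same block where the hypothesis already applies. Then (b) falls out of (a) in one line by substituting $B_i'$ into block $i$, and (c) follows from (b) via a binarity reduction to pair-types. You invert the order: you prove (b) directly by anchoring $b,b'$ to a common $a\in A_i$ with $b\dep a\dep b'$, establishing $b\ind c$ and $b'\ind c$ by cross-block independence (transitivity of $\dep$), and then using Lemma~\ref{Prop_equivalents_of_a dep b} to identify which of the two nonforking extensions of $\tp(c)$ is realized: since $\Lp_{\mathbf q}(b)=\Lp_{\mathbf q}(a)=\Lp_{\mathbf q}(b')$, the side is read off from $\tp(a,c)$, a frozen part of $\tp(A_IB_{I-i})$. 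Deriving (a) from (b) then costs you a two-step swap through an auxiliary family, and (c) you get by block-decomposition plus transitivity of isolation (rather than the paper's pair-type reduction). Both routes are sound; two small remarks. First, $q(\Mon)=\Lp_{\mathbf q}(b)\cup\Rp_{\mathbf q}(b)$ is generally a cover but not a partition (they coincide when $\tp(c)\wor\tp(b)$, and there is a middle region $\Dp_q(b)$ when $\tp(c)\nfor\tp(b)$), but since $b\ind c$ keeps $c$ out of any middle region this imprecision is harmless. Second, once you have $b\dep b'$ and $b\ind c$ you could instead quote Lemma~\ref{lem rmk closed for Dp}(b) to conclude $\tp(b,c)=\tp(b',c)$ directly, making the invocation of Lemma~\ref{Prop_equivalents_of_a dep b} slightly stronger than strictly needed --- though your ``side-determination via the anchor $a$'' picture does make the mechanism of locality more transparent. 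Your approach also has the minor advantage of not needing the WLOG $A_i\subseteq B_i$ that the paper uses to pass from (a) to (b).
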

\begin{proof} Without loss of generality  we will assume that $B_i\supseteq  A_i$   for all $i\in I$.

(a)  In order to  prove $\tp(B_I)=\tp(B_I')$, since $T$ is binary, it suffices to show that any pair $(c,d)$ of elements of $B_I$ has the same type  as the corresponding pair $(c',d')$ of elements of $B_I'$. Fix  such pairs and indices $i,j\in I$ satisfying $c\in B_i,c'\in B_i'$ and $d\in B_j,d'\in B_j'$.   Since $c$ and $c'$ have the same type over  $A_i$ and belong to $\Dpf(A_i)=\bigcup_{a\in A_i}\Dp(a)$, there is  $a_c\in A_i$ such that  $c,c'\in\Dp(a_c)$; then  $c\dep a_c$ and  $c'\dep a_c$ imply $c\dep c'$.
Similarly, there exists  $a_d\in A_j$ such that $d,d'\in \Dp(a_d)$. 
We have two cases to consider.  The first
is when  $c\ind d$ holds. In this case $c\dep c'$, $d\dep d'$ and $c\ind d$, by applying Lemma \ref{lem rmk closed for Dp}(b) twice, imply that $\tp(c,d)=\tp(c',d')$. 
In the second case  we have $c\dep d$. Then by transitivity  $a_c\dep a_d$ holds and hence $i=j$ (because $(\Dpf(A_i)\mid i\in I)$ are  pairwise disjoint sets).  Thus $c,d\in B_i,c',d'\in B_i'$, so   $\tp(B_i)=\tp(B_i')$ implies $\tp(c,d)=\tp(c',d')$. The proof of part (a) is complete.

\smallskip
(b) Fix $i\in I$ and suppose  that  $B_i'$ satisfies  $\tp(B_i'/A_i)=\tp(B_i/A_i)$. For $j\in I\smallsetminus \{i\}$ define $B_j'=B_j$; then  $B_{I-i}'=B_{I-i}$. Note that the premise of part (a) of the lemma is satisfied, so we have $\tp(B_I)=\tp(B_I')$ and thus $\tp(B_i/B_{I-i}A_I)=\tp(B_i'/B_{I-i}A_I)$. We have just proved that every realization of $\tp(B_i/A_i)$ has the same type over $B_{I-i}A_I$ as $B_i$ does; the conclusion follows.

\smallskip
(c) By binarity it suffices to prove that $\tp(b',b''/A_I)$ is isolated for $b',b''\in B_I$. If $b',b''$ are in the same $B_i$, then by the assumption $\tp(b',b''/A_i)$ is isolated and by (b) we have $\tp(b',b''/A_i)\forces \tp(b',b''/A_I)$, so $\tp(b',b''/A_I)$ is isolated. If $b'\in B_i$, $b''\in B_j$ and $i\neq j$, then both $\tp(b'/A_i)$ and $\tp(b''/A_j)$ are isolated. By part (b) we have $\tp(b'/A_i)\forces\tp(b'/A_I)$, but also $\tp(b''/A_j)\forces\tp(b''/A_Ib')$ because $b'\in B_{I-j}$. By transitivity of isolation $\tp(b',b''/A_I)$ is isolated.
\end{proof}

\begin{prop}\label{Lema_D(a) atomic over a}
If $T$ satisfies condition (C4), then $I(\aleph_0,T)=2^{\aleph_0}$. 
\end{prop}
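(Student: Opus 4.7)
By (C4), fix $p\in S_1(\emptyset)$, $b\in\Mon$ with $r=\tp(b)$, and $q\in S_1(b)$ extending $p$ such that $q$ is non-isolated and forks over $\emptyset$. Since $p$ is convex (by $\lnot$(C2)) and $q$ is a forking extension of $p$, realizations of $q$ lie in the bounded $<_p$-convex subset $\Dp_p(b)\subseteq p(\Mon)$. Put $\tilde q:=\tp(d,b/\emptyset)\in S_2(\emptyset)$ for $d\models q$; for each $b'\models r$, the set $\{d':(d',b')\models\tilde q\}$ is a single $\Aut(\Mon/b')$-orbit, hence a single complete type $q_{b'}\in S_1(b')$, which is non-isolated (being an $\Aut(\Mon)$-image of $q$). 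Since $q$ forks over $\emptyset$, $r$ is non-algebraic. Fix an \so-pair $\mathbf r=(r,<_r)$ and take a Morley sequence $(b_\alpha)_{\alpha\in\mathbb Q}$ in $\frak r_r$ over $\emptyset$; it is $<_r$-increasing and pairwise $\emptyset$-independent (Remark \ref{rmk_morely seq in binary}), and by transitivity of forking (Corollary \ref{Cor_forking_transitivity}) the sets $\Dp(b_\alpha)$ are pairwise disjoint. Write $q_\alpha:=q_{b_\alpha}$ and pick $d_\alpha\models q_\alpha$.

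For each $J\subseteq\mathbb Q$ set $A_J:=\{b_\alpha:\alpha\in\mathbb Q\}\cup\{d_\alpha:\alpha\in J\}$, a countable set. Apply Lemma \ref{Lema_Bi subset eps A_i} with $A_\alpha=\{b_\alpha\}$ and $B_\alpha=\{b_\alpha,d_\alpha\}$ if $\alpha\in J$, $B_\alpha=\{b_\alpha\}$ otherwise: the disjointness of $\Dpf(\{b_\alpha\})$'s gives that the type over $b_\alpha$ forces the type over all of $A_J$, so every realization of $q_\alpha$ has the \emph{same} type over $A_J$, i.e., $q_\alpha$ has a unique completion $q_\alpha'\in S_1(A_J)$. (Indeed, for $c\in A_J\setminus\{b_\alpha\}$ one has $c\ind b_\alpha$, so by convexity of $p$ and the disjointness $\Dp_p(b_\alpha)\cap\Dp_p(c)=\emptyset$, the convex set $\Dp_p(b_\alpha)$ lies entirely in $\Lp_{\mathbf p}(c)$ or $\Rp_{\mathbf p}(c)$, so $\tp(d/c)$ is identical for all $d\models q_\alpha$; binarity finishes.) The plan is to apply the Omitting Types Theorem to the countable family $\{q_\alpha':\alpha\notin J\}$ of partial types over $A_J$, obtaining a countable $M_J\supseteq A_J$ with $M_J\models T$ omitting $q_\alpha'$ (equivalently $q_\alpha$) for every $\alpha\notin J$, while $d_\alpha$ realizes $q_\alpha$ in $M_J$ for each $\alpha\in J$.

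The main step is to verify that $q_\alpha'$ is non-isolated over $A_J$ for $\alpha\notin J$. Suppose $\theta(x,\bar c,b_\alpha)\in L(A_J)$ isolates $q_\alpha'$ with $\bar c\subseteq A_J\setminus\{b_\alpha\}$ finite; then $\theta\vdash q_\alpha$, and $\Aut(\Mon/b_\alpha)$-invariance of $q_\alpha$ gives $\theta(x,\bar c'',b_\alpha)\vdash q_\alpha$ for every $\bar c''\equiv\bar c\,(b_\alpha)$. If one can produce $\bar c''\equiv\bar c\,(b_\alpha)$ with $\tp(\bar c''/b_\alpha)$ isolated by some $\pi(\bar y,b_\alpha)$, then $\psi(x,b_\alpha):=\exists\bar y(\theta(x,\bar y,b_\alpha)\wedge\pi(\bar y,b_\alpha))$ is a consistent $L(b_\alpha)$-formula implying $q_\alpha$, contradicting non-isolation of $q_\alpha$. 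The reduction to such an isolated pair-type uses binarity (so $\tp(\bar c''/b_\alpha)$ reduces to the finitely many 2-types $\tp(c_i'',b_\alpha/\emptyset)$ and $\tp(c_i'',c_j''/\emptyset)$), together with smallness of $T$ and density of isolated types over finite parameter sets (Fact \ref{Fact_smalltheory}(c)), plus the flexibility to replace $\bar c$ by an $\Aut(\Mon/b_\alpha)$-conjugate chosen within any prescribed nonempty relatively open neighbourhood of $S_{|\bar c|}(b_\alpha)$. This is the technical heart; the main obstacle is handling the case where the naive $\tp(\bar c/b_\alpha)$ fails to be isolated, since then the existential over $\bar y$ may have witnesses of the wrong type over $b_\alpha$, and one has to argue that such an isolating $\theta$ nevertheless ``projects'' to an isolating formula on $b_\alpha$.

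Finally, for non-isomorphism: the set $K(M):=\{b'\in r(M):\exists d\in M,\,(d,b')\models\tilde q\}$ ordered by $<_r$ is an isomorphism invariant of $M$. Taking $M_J$ atomic over $A_J$ (possible by smallness of $T$ and Lemma \ref{Lema_Bi subset eps A_i}(c)), one verifies $(K(M_J)/\Dp_r^{M_J},<_r)\cong (J,<)$: each class $[b_\alpha]$ with $\alpha\in J$ is labeled via $d_\alpha$, each $[b_\alpha]$ with $\alpha\notin J$ is unlabeled by construction, and any further labeled $r$-realization $b'\in r(M_J)$ would — by atomicity of $M_J$ over $A_J$ together with symmetry of isolation (Lemma \ref{Lema_symmetry of isolation}) applied to the non-isolated type $q_{b'}$ — force an isolated $\tp(d/A_Jb')$ implying $q_{b'}$, contradicting the non-isolation in the same fashion as above. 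Since $\mathbb Q$ is a universal countable linear order and contains representatives of all $2^{\aleph_0}$ countable linear order types, letting $J$ range over these representatives produces $2^{\aleph_0}$ pairwise non-isomorphic countable models; hence $I(\aleph_0,T)=2^{\aleph_0}$.
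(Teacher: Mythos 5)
Your approach is genuinely different from the paper's, and you have correctly located the difficulty yourself: the "technical heart" you flag is a real gap, and your proposed workaround does not close it. You try to reduce the hypothetical isolating formula $\theta(x,\bar c,b_\alpha)\in L(A_J)$ to a formula over $b_\alpha$ by replacing $\bar c$ with an $\Aut(\Mon/b_\alpha)$-conjugate $\bar c''$ whose type over $b_\alpha$ is isolated. But any $\bar c''\equiv\bar c\,(b_\alpha)$ has $\tp(\bar c''/b_\alpha)=\tp(\bar c/b_\alpha)$, so this replacement is vacuous: it cannot manufacture isolation of $\tp(\bar c/b_\alpha)$ when it is absent. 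And it very well may be absent: for $\beta\in J$ the element $d_\beta\in\bar c$ realizes a non-forking extension of $p$ over $b_\alpha$ (namely $\frak p_{l\strok b_\alpha}$ or $\frak p_{r\strok b_\alpha}$), and these can be non-isolated -- this is exactly what Lemma \ref{Lema_nonisolated_frakp} says when $\mathbf p$ is, say, non-definable. So the non-isolation of $q_\alpha'$ in $S_1(A_J)$ is not established, and the Omitting Types step is unsupported. Your appeal to "density of isolated types over finite parameter sets" and "the flexibility to replace $\bar c$ by an $\Aut(\Mon/b_\alpha)$-conjugate chosen within any prescribed open neighbourhood" conflates two different things: density gives you \emph{some} isolated type in the neighbourhood, but not one realized by a $b_\alpha$-conjugate of $\bar c$.

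A second gap is the assertion "Taking $M_J$ atomic over $A_J$ (possible by smallness of $T$ and Lemma \ref{Lema_Bi subset eps A_i}(c))." Smallness gives density of isolated types, hence prime models, only over \emph{finite} parameter sets (Fact \ref{Fact_smalltheory}(c),(d)); $A_J$ is countably infinite, and atomic models over infinite sets need not exist. Lemma \ref{Lema_Bi subset eps A_i}(c) is a tool for showing a union of atomic pieces is atomic, not for producing an atomic model over a given infinite set from scratch. Moreover, this construction is logically separate from the Omitting Types construction you invoke earlier, so you would in any case need to reconcile the two.

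The paper sidesteps both difficulties by running the construction in the opposite direction: rather than building $M_J$ up from $A_J$ by omitting types, it starts from a countable saturated model $M$, fixes a $\dep$-transversal $A\subseteq M$, and for each $a\in A$ takes a prime model $M_a$ over a \emph{finite} tuple $\bar b_a$ (primeness over finite tuples being unproblematic by smallness); it then cuts down to the piece $B_a=M_a\cap\Dp(a)$ and assembles $M_I=\bigcup_a B_a$. Elementarity of $M_I$ is checked via the Tarski--Vaught test using the $\dep$-disjointness machinery of Lemma \ref{Lema_Bi subset eps A_i}(b), and the fact that $q$ is realized only in the intended $\dep$-classes follows from symmetry of isolation (Lemma \ref{Lema_symmetry of isolation}) applied \emph{inside each} finitely-generated piece $M_a$, never over the whole infinite parameter set. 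This is the step your argument needs to emulate locally rather than globally.
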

\begin{proof}
Witness (C4) by $b,c\in\Mon$:  $b\dep c$ (i.e.\ $b\in \Dp(c)$) and $\tp(b/c)$ is non-isolated.   Let $p=\tp(c)$ and $q=\tp(c,b)$.  Then $q\in S_2(\emptyset)$ is a non-isolated type and $(a',b')\models q$ implies  $b'\in\Dp (a')$. Choose  an order $(D_p,<_p)$ witnessing that $(p,<_p)$ is a convex \so-pair. Since $T$ is small  there is a countable  saturated  $M\models T$.  Let $A\subseteq M$ be a transversal of all  $\dep$-equivalence classes of $M$ chosen such that whenever some class contains a realization of $p$, then the representative realizes $p$. By saturation, the set $A_p=A\cap p(\Mon)$ is ordered by $<_p$ in the order-type $\bm{\eta}$:  
 $A_p=\{a_i\mid i\in \mathbb Q\}$. For  any $I\subseteq\mathbb Q$, we will find a countable  $M_I\models T$ with $A\subseteq M_I\subseteq M$  and: \setcounter{equation}{0}
\begin{equation}
\mbox{ for all $a\in p(M_I)$:\  $q$ is realized in $\Dp(a)\cap M_I$ \ if and only if \ $a\dep a_i$ for some $i\in I$.}
\end{equation}
Then  the set   $\{\Dp_p(a)\mid a\in p(M_I)  \mbox{ and $q$ is realized in $\Dp(a)\cap M_I$}\}$ ordered by $<_p$ would be isomorphic to $(I,<_{\mathbb Q})$  and for $I,J\subseteq \mathbb Q$ having distinct order-types the corresponding models $M_I$ and $M_J$ would be non-isomorphic. Since there are continuum many non-isomorphic suborders of the rationales the proof of the lemma  will  be complete. 
Fix $I\subset \mathbb Q$ and let $A_I=\{a_i\mid i\in I\}$. Choose a sequence $\{\bar b_a\mid a\in A\}$   in the following way:

-- If $a\in A_I$ and $a=a_i$  choose   $b_i\in M$ with $(a_i,b_i)\models q$ and put $\bar b_a=a_ib_i$;  

-- If $a\notin A_I$, then put $\bar b_a=a$.
\\
Since $T$ is small for each $a\in A$ there is a model $M_a\subseteq M$ which is prime over $\bar b_a$; let $B_a=M_a\cap \Dp(a)$ and put $M_I=\bigcup_{a\in A}B_a$.

\smallskip
{\it Claim 1.} \ \ $M_I$ is an elementary  submodel of $M$.

\smallskip {\em Proof of Claim 1.} \ The proof is similar to that of  Proposition \ref{Prop decomposition}(b).
Let  $\phi(x,\bar c)$  be a consistent formula with parameters from $M_I$  and let $d\in \phi(M,\bar c)$. After adding dummy parameters we may assume that 
$\bar c=\bar c_0,...,\bar c_n$ where each $\bar c_k$ is a tuple of elements from the same $\dep$-class of $M$:  $\bar c_k\in B_{a^k}$, $a^k\in A$ and $\bar b_{a^k}\subseteq \bar c_k$; also, we may assume $d\in\Dpf(\bar c_n)$.   
We have:  
\begin{center}
$d\bar c_n\in\Dpf(\bar c_n)$ \  and \
$\Dpf(\bar c_n)=\Dpf(\bar b_{a^n})=\Dp(a^n)$ is disjoint from  $\Dpf(\bar c_0...\bar c_{n-1})=\bigcup_{k< n}\Dp(a^k)$.
\end{center}
In this situation  Lemma \ref{Lema_Bi subset eps A_i}(b) applies and we get  $\tp(d/\bar c_n)\vdash \tp(d/\bar c)$.  
 In particular, $\tp(d/\bar c_n)\vdash \phi(x,\bar c)$ holds and, by compactness, there is a formula $\psi(x,\bar c_n)\in\tp(d/\bar c_n)$ implying $\phi(x,\bar c)$. By Lemma \ref{Lema a dep b witness} such  a formula can be chosen so that $\psi(x,\bar c_n)$ forces $x\in \Dp(a^n)$.
Since   $\psi(x,\bar c_n)$ is consistent and  $\bar c_n\in M_{a^n}$, it has a realization    $d'\in M_{a^n}$. Then    $d'\in \Dp(a^n)$  implies   $d'\in B_{a^n}\subseteq M_I$. Now, $d'\in M_I$ is a realization of $\psi(x,\bar c_n)$ and $\psi(x,\bar c_n)\vdash \phi(x,\bar c)$ imply that $d'\in M_I$ satisfies $\phi(x,\bar c)$;
$M_I$ is an elementary submodel of $M$. \  $\Box_{Claim}$

\smallskip
{\it Claim 2.} \ If $(a',b')\in q(M_I)$, then   $a'\dep a_i$ for some $i\in I$. 

\smallskip
{\it Proof of Claim 2.} \ Suppose, on the contrary, that $(a',b')\in q(M_I)$ and   $a'\ind a_i$ for all $i\in I$. By construction of $M_I$  there is $j\in\mathbb Q\smallsetminus I$ such that $a',b'\in B_{a_j}=M_{a_j}\cap \Dp(a_j)$.  Then $j\notin I$ implies that $M_{a_j}$ is atomic over $a_j$, so $\tp(a',b'/a_j)$ is isolated and hence $\tp(b'/a'a_j)$ and $\tp(a'/a_j)$ are isolated. By symmetry of isolation  proved in Lemma   \ref{Lema_symmetry of isolation}   we conclude 
that $\tp(a_j/a')$ is isolated. Since  $\tp(b'/a'a_j)$ is isolated, by transitivity of isolation we have that $\tp(b',a_j/a')$ is isolated and hence 
$\tp(b'/a')$ is  isolated; a contradiction.  \  $\Box_{Claim}$

\smallskip

  By construction of $M_I$,  $q$ is realized in  each $B_{a_i}$ for $i\in I$; by Claim 2, these are the only  $\dep$-classes of $M_I$ that realize $q$. Therefore, $M_I$ satisfies condition (1) completing the proof of the lemma.  
\end{proof}

\section{Proofs of Theorems 1 and 2}

\noindent {\bf Assumption and notation.} Throughout this section we will assume that $T$ is a countable, binary, stationarily ordered theory. Later on, further assumptions will be added. Let $S_1^{ni}(\emptyset)$ be  the set of all non-isolated complete 1-types, let $w_T=|S_1^{ni}(\emptyset)/\nwor|$ and fix:
\begin{itemize}
\item  $\mathcal F_T$ -- a set of representatives of all $\nfor$-classes of $S_1^{ni}(\emptyset)$;
\item $(\alpha_i\mid i<w_T)$ -- an enumeration of all $\nwor$-classes of $S_1^{ni}(\emptyset)$;
\item $\alpha_i^{\mathcal F}=\alpha_i\cap \mathcal F_T$ \ for $i<w_T$; \ note that the types from $\alpha_i^{\mathcal F}$ are pairwise shuffled.  
\end{itemize}
Also, fix orderings $(<_p\mid p\in S_1(\emptyset))$ such that:
\begin{itemize}
\item $\mathbf p = (p ,<_p)$ is an \so-pair witnessed by $(D_p,<_p)$  for each $p\in S_1(\emptyset)$. If  $p$ is convex, then $p(\Mon)$ is convex in $D_p$;

\item   $p \nwor q$  implies  $\delta(\mathbf p,\mathbf q)$ for all  $p,q\in S_1(\emptyset)$ \ (this is possible by Corollary \ref{cor_direct_choice}).
\end{itemize}

Then Theorem 1 translates to:
\begin{enumerate}[(a)]
\item  $I(\aleph_0,T)=2^{\aleph_0}$ if and only if at least one of the following conditions holds:
\begin{enumerate}[(C1)]
\item $T$ is not small;
\item there is a non-convex type $p\in S_1(\emptyset)$;
\item there is a non-simple type $p\in S_1(\emptyset)$;
\item there is a non-isolated forking extension of some $p\in S_1(\emptyset)$ over an 1-element domain;
\item there are infinitely many $\nwor$-classes of non-isolated types in $S_1(\emptyset)$ (i.e.\ $w_T\geqslant \aleph_0$).
\end{enumerate}
\item $I(\aleph_0,T)=\aleph_0$ if and only if none of (C1)--(C5) hold and $ |\mathcal F_T|=\aleph_0$ ; 
\item $I(\aleph_0,T)<\aleph_0$ if and only if none of (C1)--(C5) hold and $ |\mathcal F_T|<\aleph_0$.
\end{enumerate}
 
\subsection{Proof of Theorem 2}\label{Subsection 8.1} In this subsection we will prove Theorem 2:\\ If $T$ is a complete, countable, binary, stationarily ordered theory satisfying none of the conditions (C1)--(C4), then for all countable models $M$ and $N$:
\begin{center}
$M\cong N$ \   if and only if \ 
$(\Inv_{\mathbf{p}}(M)\mid p\in \mathcal F_T)=(\Inv_{\mathbf{p} }(N)\mid p\in \mathcal F_T)$.
\end{center}

\noindent {\bf Assumption.} Throughout this subsection  assume  that none of the (C1)--(C4) holds. Hence:
\begin{itemize} 
\item $T$  is  small  and every type $p\in S_1(\emptyset)$ is convex and simple (the failure off (C1)-(C3));
\item If $a\dep b$, then $\tp(a/b)$ is isolated (the failure of (C4)).
\end{itemize}
Under these assumptions we will prove that $T$ satisfies the conclusion of Theorem \ref{Thm_intro_main}. We start by sorting out a few technicalities.

\begin{lem}\label{Prop almost aleph0 cat}  
 $T$ is  almost $\aleph_0$-categorical: the type $\bigcup_{i<n} \,p_i(x_i)$ has finitely many completions in $S_n(\emptyset)$ for all integers $n>0$ and types $p_i\in S_1(\emptyset)$ ($i<n$). 
\end{lem}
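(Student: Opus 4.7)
The plan is first to use binarity to reduce to the case $n=2$. For any tuple $\bar a=(a_0,\ldots,a_{n-1})$, the complete type $\tp(\bar a/\emptyset)$ is determined by the family of $2$-types $\tp(a_i,a_j/\emptyset)$, so the number of completions of $\bigcup_{i<n} p_i(x_i)$ in $S_n(\emptyset)$ is bounded by the product, over pairs $i<j$, of the number of completions of $p_i(x_i)\cup p_j(x_j)$ in $S_2(\emptyset)$. It therefore suffices to prove that, for any $p,q\in S_1(\emptyset)$, the partial type $p(x)\cup q(y)$ has finitely many completions.

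Fix $p,q\in S_1(\emptyset)$ and $b\models q$. Completions of $p(x)\cup q(y)$ are in bijection with $\Aut(\Mon/b)$-orbits on $p(\Mon)$. If $p\wor q$ there is a single orbit, and if $p\nwor q$ together with $p\fwor q$ then Lemma \ref{rmk def shuffled}(a) yields exactly two. It remains to treat the case $p\nfor q$ (which subsumes $p=q$). Here the orbits on $p(\Mon)$ split as $\Lp_{\mathbf p}(b)$, $\Rp_{\mathbf p}(b)$, and the orbits contained in $\Dp_p(b)$; the latter are in bijection with the forking extensions of $p$ in $S_1(b)$, and each such extension is $L$-isolated by the failure of (C4). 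I shall show that there are only finitely many of them.

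The crux is the $b$-definability of $\Dp_p(b)$. When $p=q$, this is immediate from convexity and simplicity of $p$: Remark \ref{Rmk simple type} provides a $\emptyset$-definable convex equivalence $E_p$ on $D_p$ agreeing with $\Dp_p$ on $p(\Mon)$, and convexity of $p$ then gives $\Dp_p(b)=E_p(\Mon,b)\cap p(\Mon)$, which is $L$-definable over $b$. When $p\neq q$ and $p\nfor q$, transitivity of forking (Corollary \ref{Cor_forking_transitivity}) forces $\Dp_p(b)$ to be a single $\D_p$-class, whose canonical parameter in the $\emptyset$-definable quotient sort $D_p/E_p$ lies in $\dcl^{eq}(b)$; this produces an $L^{eq}$-formula defining $\Dp_p(b)$ from $b$. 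Once $\Dp_p(b)$ is (type-)definable over $b$ (if necessary passing to $\Mon^{eq}$, which does not affect the orbit count), the set of forking extensions of $p$ inside the Stone space $\{r\in S_1(b):r\supseteq p\}$ is the clopen subspace cut out by that formula; being a compact space all of whose points are isolated, it must be finite.

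Combining the two nonforking extensions with the finitely many forking ones yields a finite bound on the completions of $p(x)\cup q(y)$, and the binarity reduction then closes the argument. The main obstacle I anticipate is the definability step in the case $p\neq q$: upgrading the $L^{eq}$-definability of $\Dp_p(b)$ to genuine $L$-definability over $b$, or equivalently carrying out the orbit-counting directly in $\Mon^{eq}$; this requires some care but can be handled using the simplicity of $p$ together with the canonical $\D_p$-to-$\D_q$ correspondence (Proposition \ref{prop_bounded case}) under direct non-orthogonality.
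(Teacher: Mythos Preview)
Your proposal is correct and follows essentially the same strategy as the paper: reduce to $n=2$ by binarity, split into the $\fwor$ and $\nfor$ cases, and in the latter show that the set of forking extensions is a closed set of isolated points, hence finite.

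The one place where you take an unnecessary detour is the definability of $\Dp_p(b)$ in the case $p\neq q$, $p\nfor q$. You route this through $\Mon^{eq}$ via the canonical parameter of the $\D_p$-class, and then flag the descent to $L$ as the ``main obstacle''. There is in fact no obstacle. Pick any $a\models p$ with $a\dep b$; by Lemma~\ref{Prop_equivalents_of_a dep b} (or just transitivity of $\dep$) we have $\Dp_p(b)=\Dp_p(a)$, and the latter is $Aa$-definable in the real sort by Remark~\ref{Rmk simple type}(b) (simplicity $+$ convexity of $p$). Thus $\Dp_p(b)$ is a definable set which is moreover invariant under $\Aut(\Mon/b)$, hence $b$-definable in $L$. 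This is exactly how the paper handles it (with the roles of $a$ and $b$ interchanged): it notes that $\Dp_{p_1}(b)$ is $b$-definable by simplicity and convexity of $p_1$, and then that it is $a$-invariant by transitivity of $\dep$, hence $a$-definable. No passage to $eq$ is needed, and the closed-plus-isolated compactness argument goes through directly.
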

\begin{proof}
 Suppose that $p_0,p_1\in S_1(\emptyset)$. Let $a\models p_0$ and  $F=\{q\in S_1(a)\mid   \mbox{$q$ is a forking extension of $p_1$}\}$.   If $p_0\fwor p_1$, then $F=\emptyset$. 
 Otherwise, there is $b\models p_1$ such that  $a\dep b$. Since $p_1$ is convex and simple, by Remark \ref{Rmk simple type} the set $\Dp_{p_1}(b)$ is  $b$-definable; it is also $a$-definable (by $\psi(x,a)$ say) because $a\dep b$ and the transitivity of $\dep$ imply the invariance of $\Dp_{p_1}(b)$ under $a$-automorphisms.  
Now, by  transitivity of $\dep$ again, $F$ consists of all types from $S_1(a)$   that are consistent with  $\psi(x,a)$. In either case $F$ is a closed subset of $S_1(a)$. By assumptions on $T$,  each member of $F$ is an isolated type so, by compactness, $F$ is a finite set. Since $p_1$ has at most two non-forking extensions in $S_1(a)$,     the overall number of its  extensions   in $S_1(a)$ is finite. This proves the case $n=2$. The general case follows by binarity. 
\end{proof}

\begin{dfn}The decomposition of a model $M$ is a partition: $M=\at(M)\ \dot\cup\ \dot\bigcup_{i<w_T}\alpha_i(M) $, where:

-- \  $\at(M)=\{a\in M\mid \mbox{$\tp(a)$ is isolated}\}$ \ is called the atomic part of $M$;

-- \ $\alpha_i(M)=\{a\in M\mid \tp(a)\in\alpha_i\}$ \ is the $\alpha_i$-part of $M$  ($i<w_T$).
\end{dfn}

\begin{lem}\label{Lema decomposition parts}
 (a) The atomic part of a model $M$ is its (maximal) atomic submodel.

(b) The decomposition parts are orthogonal: for each non-empty part $P\in \{\at(M)\}\cup\{\alpha_i(M)\mid i<w_T\}$ we have  $\tp(P)\wor\tp(M\smallsetminus P)$.

(c)  $M$ is atomic over $M\smallsetminus \at(M)$.
\end{lem}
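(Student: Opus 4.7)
The proof is structured in three parts, corresponding to (a), (b), (c). The central tool throughout is Corollary 7.14, which under our standing assumptions lets any tuple of isolated type behave ``weakly orthogonally'' to non-isolated (convex, stationarily ordered) points.

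For (a), my plan is to verify Tarski–Vaught for $\at(M)\subseteq M$ and then atomicity. Given $\bar c\in \at(M)$, almost $\aleph_0$-categoricity (Lemma 8.2) together with binarity shows that $\tp(\bar c)$ is isolated: each $\tp(c_i)$ is isolated, and $\bigcup_i\tp(c_i)$ has only finitely many completions in $S_{|\bar c|}(\emptyset)$, so each completion is isolated. Now suppose $\phi(x,\bar c)$ is consistent. By Fact 1.1(c) (smallness) isolated types are dense in $S_1(\bar c)$, so the clopen $[\phi(x,\bar c)]$ contains some isolated $q\in S_1(\bar c)$. Being isolated, $q$ is realized in $M$ by some $d$; transitivity of isolation applied to $\tp(\bar c)$ and $\tp(d/\bar c)$ gives $\tp(d,\bar c)$ isolated, hence $\tp(d)$ isolated, so $d\in\at(M)$. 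Thus $\at(M)\prec M$. Atomicity of $\at(M)$ over $\emptyset$ is the same computation (every finite tuple from $\at(M)$ has isolated type), and maximality is immediate, since any atomic submodel is contained in the set of elements with isolated types.

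For (b), I would use binarity to reduce $\tp(P)\wor \tp(M\smallsetminus P)$ to the assertion that $\tp(a)\cup\tp(b)\vdash \tp(a,b)$ for each $a\in P$, $b\in M\smallsetminus P$, and then check three cases. If $P=\alpha_i(M)$ and $b\in\alpha_j(M)$ with $j\neq i$, then $\tp(a)\wor\tp(b)$ by the definition of the $\nwor$-classes, so we are done. If $P=\alpha_i(M)$ and $b\in\at(M)$, then $\tp(a)$ is non-isolated, convex, stationarily ordered and $\tp(b)$ is isolated; Corollary 7.14 (applied to $\bar a=b$ and $B=\{a\}$) yields $\tp(b)\vdash\tp(b/a)$, hence $\tp(a,b)$ is determined by $\tp(a)$ and $\tp(b)$. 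The case $P=\at(M)$, $b\in M\smallsetminus\at(M)$ is symmetric (now Corollary 7.14 is applied with $\bar a=a$ and $B=\{b\}$).

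For (c), pick any finite tuple $\bar m\in M$ and split it as $\bar m=\bar m_1\,\bar m_2$ with $\bar m_1\subseteq\at(M)$ and $\bar m_2\subseteq M\smallsetminus\at(M)$. The type $\tp(\bar m_2/M\smallsetminus\at(M))$ is isolated by equality formulas. Every element of $M\smallsetminus\at(M)$ lies in some $\alpha_i$, hence has non-isolated, convex, stationarily ordered type, so Corollary 7.14 applied to the isolated $\tp(\bar m_1)$ and $B=M\smallsetminus\at(M)$ gives that $\tp(\bar m_1/M\smallsetminus\at(M))$ is isolated; in particular $\tp(\bar m_1/(M\smallsetminus\at(M))\,\bar m_2)$ is isolated. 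Transitivity of isolation then yields $\tp(\bar m/M\smallsetminus\at(M))$ isolated, as required. The only real obstacle here is invoking Corollary 7.14 correctly, which is why the hypotheses (C2) and (C3) (every type convex and simple) are essential — without them the ``isolated absorbs non-isolated'' phenomenon that Corollary 7.14 packages would fail.
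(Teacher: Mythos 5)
Your proposal is correct and takes essentially the same route as the paper: part (a) is the same almost-$\aleph_0$-categoricity/Tarski--Vaught argument, part (b) unpacks the pair-reduction that the paper packages via Proposition~\ref{Prop decomposition}(a) together with Corollary~\ref{Cor_isolated forces over non-isolated} (which you cite as ``Corollary 7.14''; in the paper's numbering it is the corollary about isolated types forcing over non-isolated convex ones), and part (c) reduces to showing $\at(M)$ is atomic over $M\smallsetminus\at(M)$ via that same corollary, as the paper does through (b). One small caveat: your closing remark that (C3) is directly needed for the ``isolated absorbs non-isolated'' corollary slightly overstates matters --- that corollary only uses convexity --- simplicity enters via almost $\aleph_0$-categoricity, which you correctly invoke in (a).
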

\begin{proof}
(a) This is an easy consequence of almost $\aleph_0$-categoricity: If $\bar a=(a_1,...,a_n)\in\at(M)$, then each   $\tp_{x_i}(a_i)$ is isolated. Since there are finitely many completions of $\bigcup_{i\leqslant n}\tp_{x_i}(a_i)$ in $S_n(\emptyset)$, each of them is an isolated type  and so is $\tp(\bar a)$. Further, if $\phi(x,\bar a)$ is consistent, then it has a realization $b\in M$ such that $\tp(b/\bar a)$ is isolated. By transitivity of isolation $\tp(b,\bar a)$ is isolated and $b\in\at(M)$; $\at(M)$ is a model.

(b) If  $P=\alpha_i(M)$, then by Proposition \ref{Prop decomposition}(a) we have $\tp(P)\wor \tp(\Mon\smallsetminus \alpha_i(\Mon))$ and thus $\tp(P)\wor\tp(M\smallsetminus P)$. Now, assume $P=\at(M)$ and let $\bar c\subseteq P$. Then $\tp(\bar c)$ is isolated by part (a) and, since $\tp(b)$ is convex and non-isolated for all $b\in M\smallsetminus P$, Corollary \ref{Cor_isolated forces over non-isolated} applies: $\tp(\bar c)\vdash \tp(\bar c/M\smallsetminus P)$, i.e.\ $\tp(\bar c)\wor \tp(M\smallsetminus P)$.
  
(c) It suffices to show that $\at(M)$ is atomic over $M\smallsetminus \at(M)$. Let $\bar c\subseteq \at(M)$. Then $\tp(\bar c)$ is isolated by part (a) and $\tp(\bar c)\wor \tp(M\smallsetminus \at(M))$ by part (b); hence $\tp(\bar c/M\smallsetminus \at(M))$ is isolated.
\end{proof}

The previous lemma guarantees that the decomposition parts of a model are mutually  independent. In the next lemma we will prove that the independence is quite strong;  e.g. by replacing one part of a model by the corresponding part of some other model, we still have a model.
 
\begin{lem}\label{Lema model of alfas}
If $(M_i\mid i< w_T)$ is a sequence of models ($\prec \Mon$) and  $M_{at}$ is an atomic model,  then $M=M_{at}\cup\bigcup_{i<w_T}\alpha_i(M_i)$ is a model of $T$. Moreover,  $\alpha_i(M)=\alpha_i(M_i)$ and $\at(M)=M_{at}$.
\end{lem}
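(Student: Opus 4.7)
The plan is to deduce the lemma directly from Proposition \ref{Prop decomposition}(b) by letting $M_{at}$ serve as the witness for \emph{all} $\nwor$-classes of isolated types simultaneously.

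First I would observe that, under the standing assumptions of this subsection (every $p\in S_1(\emptyset)$ is convex and stationarily ordered), Lemma \ref{Lema_isolated_wor_nonisolated} forces isolation to be constant along $\nwor$-classes of $S_1(\emptyset)$: if $p$ is isolated and $p\nwor q$, then $q$ is isolated, and the contrapositive yields the converse. Hence the $\nwor$-classes of $S_1(\emptyset)$ split into the non-isolated classes $\alpha_i$ ($i<w_T$) together with a remaining collection of \emph{isolated} classes, and for every $N'\prec\Mon$ the union of $\beta(N')$ over the isolated classes $\beta$ equals $\at(N')$. This is the only genuinely non-routine step: without it Proposition \ref{Prop decomposition}(b), which prescribes a separate witness model per $\nwor$-class, could not be specialised to use $M_{at}$ for all isolated classes at once.

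Next I would apply Proposition \ref{Prop decomposition}(b) with the following choice of witnesses: take $M_{at}$ for every isolated $\nwor$-class, and take $M_i$ for each non-isolated class $\alpha_i$. The proposition then produces an elementary submodel $N\prec\Mon$ equal to the union of $\beta(M_\beta)$ over all $\nwor$-classes $\beta$. Evaluating, the isolated contribution is $\bigcup_{\beta\text{ isolated}}\beta(M_{at})=\at(M_{at})=M_{at}$, where the final equality holds because $M_{at}$ is atomic (all of its elements have isolated types). The non-isolated contribution is $\bigcup_{i<w_T}\alpha_i(M_i)$. Hence $N=M_{at}\cup\bigcup_{i<w_T}\alpha_i(M_i)=M$, and so $M\prec\Mon$.

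Finally, the ``moreover'' clauses are routine. The inclusion $\alpha_i(M_i)\subseteq\alpha_i(M)$ is by construction; conversely, if $a\in\alpha_i(M)$, then $\tp(a)\in\alpha_i$ is non-isolated, so $a\notin M_{at}$ and hence $a\in\alpha_j(M_j)$ for some $j<w_T$, and then $\tp(a)\in\alpha_i\cap\alpha_j$ forces $i=j$. For $\at(M)=M_{at}$: $M_{at}\subseteq\at(M)$ because $M_{at}$ is atomic, while any $a\in\at(M)$ has isolated type and therefore cannot lie in any $\alpha_i(M_i)$, so $a\in M_{at}$.
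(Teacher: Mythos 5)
Your proposal is correct and follows essentially the same route as the paper: both reduce the lemma to Proposition \ref{Prop decomposition}(b) by extending the enumeration of non-isolated $\nwor$-classes to all of them and assigning $M_{at}$ as the witness model for every isolated class. The only difference is that you make explicit (via Lemma \ref{Lema_isolated_wor_nonisolated}) why isolation is constant across $\nwor$-classes — a fact the paper uses silently when it speaks of "$\nwor$-classes of isolated types" — and you also spell out the "moreover" clauses, which the paper leaves to the reader; neither changes the substance of the argument.
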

\begin{proof}This is a consequence of  Proposition \ref{Prop decomposition}(b). Here we have an enumeration $(\alpha_i\mid i<w_T)$ of  $\nwor$-classes of non-isolated types, while in \ref{Prop decomposition} all the $\nwor$-classes are enumerated. So, choose an enumeration $(\alpha_j\mid w_T\leqslant j<\kappa)$ of all the $\nwor$-classes of isolated types from $S_1(\emptyset)$; then $(\alpha_i\mid i<\kappa)$ enumerates all the $\nwor$-classes of $S_1(\emptyset)$. Extend our sequence of models by defining $M_j=M_{at}$ for  $w_T\leqslant j<\kappa$. Then the sequence  $(M_i\mid i< \kappa)$ satisfies the assumption of Proposition \ref{Prop decomposition}(b),  so $\bigcup_{i<\kappa}\alpha_i(M_i)$ is a model of $T$. Now, $M_{at}=\bigcup_{w_T\leqslant j <\kappa}\alpha_j(M_j)$ implies $\bigcup_{i<\kappa}\alpha_i(M_i)=M_{at}\cup\bigcup_{i<w_T}\alpha_i(M_i)=M$;  the conclusion follows. 
\end{proof}

\begin{notat}
For each countable model $M\models T$:
\begin{itemize}
\item Let $I_{p}(M)$ be a fixed $\dep$-transversal of $p(M)$ for each $p\in\mathcal F_T$;
\item  $I_{\alpha_i}(M)=\bigcup_{p\in \alpha_i^{\mathcal F}}I_p(M)$ for $i<w_T$; \ \  $I(M)=\bigcup_{i<w_T} I_{\alpha_i}(M)$. 
\end{itemize}
Note that $(I_p(M),<_p)$ has order type $\Inv_{\mathbf p}(M)$ for each $p\in\mathcal F_T$.
\end{notat}

\begin{lem}\label{Lema M prime over I(M)}
If  $M$ is countable, then   $\alpha_i(M)$ is atomic over $I_{\alpha_i}(M)$ ($i<w_T$) and $M$ is prime over $I(M)$.
\end{lem}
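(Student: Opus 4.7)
The plan is to establish (1) first and then derive (2) by combining (1) with Lemma~\ref{Lema decomposition parts}(c) via Lemma~\ref{Lema_Bi subset eps A_i}(c).

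For (1), fix $a\in\alpha_i(M)$, let $q=\tp(a)$, and let $p\in\alpha_i^{\mathcal F}$ be the unique representative with $p\nfor q$ (it lies in $\alpha_i^{\mathcal F}$ since $\nfor$ refines $\nwor$). First locate $b\in I_p(M)\subseteq I_{\alpha_i}(M)$ with $a\dep b$: if $q=p$, maximality of the transversal supplies such a $b$ (or we take $b=a$); if $q\neq p$, Proposition~\ref{prop_bounded case}(b), applied to the convex, directly non-orthogonal \so-pairs $\mathbf p$ and $\mathbf q$, gives some $c\in p(M)$ with $c\dep a$, and then maximality of $I_p(M)$ together with transitivity of $\dep$ (Corollary~\ref{Cor_forking_transitivity}) produce the desired $b$. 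Failure of (C4) now yields that $\tp(a/b)$ is isolated.

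To pass from single elements to tuples, note that $I_{\alpha_i}(M)$ is a pairwise independent set: distinct members of the same $I_r(M)$ are independent by the transversal property, and for distinct $r,r'\in\alpha_i^{\mathcal F}$ one has $\mathcal S(r,r')$, hence realizations are forking-orthogonal. Transitivity of $\dep$ then shows that the sets $\Dpf(\{b\})$ for $b\in I_{\alpha_i}(M)$ are pairwise disjoint. For a tuple $\bar a\subseteq\alpha_i(M)$, partition by $\dep$-class: $\bar a_b=\bar a\cap\Dp(b)$. Within a single $\dep$-class, the elements of $\bar a_b$ pairwise fork and all fork with $b$; binarity of $T$ combined with the (C4)-isolation of each 1-parameter forking extension $\tp(a_j/b)$ and $\tp(a_j/a_k)$ lets us isolate $\tp(\bar a_b/b)$ by the conjunction of the individual isolating formulas. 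Lemma~\ref{Lema_Bi subset eps A_i}(a), applied with $A_b=\{b\}$ and $B_b=\bar a_b\cup\{b\}$, then promotes these into an isolating formula for $\tp(\bar a/I_{\alpha_i}(M))$; this gives~(1).

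For (2), the sets $\Dpf(I_{\alpha_i}(M))$ for $i<w_T$ are pairwise disjoint --- elements from different $\nwor$-classes are weakly orthogonal (Lemma~\ref{Cor_Lpp_north_partition}(a)), hence independent, and transitivity of $\dep$ forbids shared forking partners. Since each $\alpha_i(M)\subseteq\Dpf(I_{\alpha_i}(M))$ and, by~(1), is atomic over $I_{\alpha_i}(M)$, Lemma~\ref{Lema_Bi subset eps A_i}(c) delivers that $\bigcup_{i<w_T}\alpha_i(M)=M\smallsetminus\at(M)$ is atomic over $I(M)$. Combining with Lemma~\ref{Lema decomposition parts}(c), which asserts that $M$ is atomic over $M\smallsetminus\at(M)$, and the weak transitivity of atomicity recalled in Section~1, we conclude that $M$ is atomic over $I(M)$. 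Smallness of $T$ and countability of $M$ then upgrade atomicity to primeness. The main delicate point is the middle step of the third paragraph: combining the (C4)-isolations of 1-parameter forking extensions, via binarity of $T$, into an isolating formula for the conditional type of an entire $\dep$-class of $\alpha_i(M)$ over its representative in $I_{\alpha_i}(M)$; once this is in place, the remainder is a matter of assembling the pieces through Lemma~\ref{Lema_Bi subset eps A_i} and Lemma~\ref{Lema decomposition parts}.
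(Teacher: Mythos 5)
Your proof is correct and follows the same overall strategy as the paper's: for each $a\in\alpha_i(M)$, locate a representative $b\in I_{\alpha_i}(M)$ with $a\dep b$ via Proposition~\ref{prop_bounded case}(b), pass to tuples within a single $\dep$-class, and then assemble through Lemma~\ref{Lema_Bi subset eps A_i}(c) and Lemma~\ref{Lema decomposition parts}(c), finishing by smallness and countability. The one genuinely different step is how you upgrade the isolation of the singleton types $\tp(a_j/b)$ to isolation of the tuple type $\tp(\bar a_b/b)$. The paper appeals to almost $\aleph_0$-categoricity (Lemma~\ref{Prop almost aleph0 cat}): since each $\tp(a_j/b)$ is isolated by some $\psi_j(x_j,b)$, the partial type $\{\psi_j(x_j,b)\mid j\}$ has finitely many completions in $S_{|\bar a_b|}(b)$, all necessarily isolated. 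You instead write down an explicit isolating formula $\bigwedge_j\psi_j(x_j,b)\land\bigwedge_{j<k}\theta_{j,k}(x_j,x_k)$, where $\theta_{j,k}(x_j,a_k)$ isolates $\tp(a_j/a_k)$ (available by the failure of (C4), since $a_j\dep a_k$ and both realize stationarily ordered types over $\emptyset$); given any $\bar d$ satisfying this conjunction, the $\psi_j$ pin down $\tp(d_j,b)$, the $\theta_{j,k}$ pin down $\tp(d_j,d_k)$, and binarity then forces $\tp(b,\bar d)=\tp(b,\bar a_b)$. This is a valid, more hands-on alternative that sidesteps Lemma~\ref{Prop almost aleph0 cat} for this step (though the paper still needs that lemma elsewhere, so there is no net saving in prerequisites). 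One small citation slip: to pass from the local isolation over each $b$ to isolation of $\tp(\bar a/I_{\alpha_i}(M))$, you should invoke Lemma~\ref{Lema_Bi subset eps A_i}(c), not part~(a); part~(a) only yields equality of types, whereas (c) is precisely the statement that atomicity over the pieces assembles into atomicity over the union.
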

\begin{proof}
Suppose that $b\in M\smallsetminus\at(M)$ and let $p\in \mathcal F_T$ be such that $p\nfor \tp(b)$.   Then by  Proposition \ref{prop_bounded case}(b)  $\emptyset\neq \Dp_p^M(b)$ implies that there is $a\in I_p(M)$ with $a\dep b$; in particular,  $\bigcup_{a\in I(M)}\Dp^M(a)=M\smallsetminus \at(M)$. Next we show that $\Dp^M(b)=\Dp^M(a)$ is atomic over $a$:   if $\bar c\subseteq \Dp^M(a)$ and $c_i\in \bar c$, then $c_i\dep a$ implies that 
 $\tp(c_i/a)$ is isolated (by $\psi_i(x_i,a)$ say);  by almost $\aleph_0$-categoricity of $T$, the type $\{\psi_i(x_i,a)\mid  i<|\bar c|\}$ has finitely many completions in $S_{|\bar c|}(a)$,  so all of them are isolated; hence $\tp(\bar c/a)$ is isolated.
We have a pairwise disjoint family $(\Dp^M(a)\mid a\in I(M))$ with each $\Dp^M(a)$ atomic over $a$; in this situation Lemma \ref{Lema_Bi subset eps A_i}(c) applies. First we apply it to $\bigcup_{a\in I_{\alpha_i}(M)}\Dp^M(a)=\alpha_i(M)$ and conclude that $\alpha_i(M)$ is atomic over $I_{\alpha_i}(M)$. Next we apply it to   
$\bigcup_{i<w_T}\alpha_i(M)=M\smallsetminus \at(M)$ and conclude that it is atomic over $I(M)$. By Lemma \ref{Lema decomposition parts}  $M$ is atomic over $M\smallsetminus \at(M)$, so by transitivity of atomicity $M$ is atomic over $I(M)$; being countable, $M$ is prime over $I(M)$.
\end{proof} 

 \begin{proof}[{\it Proof of Theorem 2}]   Assuming  that $M,N\models T$ are countable and $\Inv_{\mathbf p}(M)=\Inv_{\mathbf p}(N)$  for all $p\in \mathcal F_T$, we will prove $M\cong N$. 

{\it Claim.} \   $I_{\alpha_i}(M)$ and $I_{\alpha_i}(N)$ have the same elementary type for all $i<w_T$. 

{\it Proof.} \ Fix $i<w_T$  and for simplicity assume that $\alpha_i^{\mathcal F}=\{p_n\mid n<\beta\}$ where $\beta\leqslant\aleph_0$.  
The case   $| I_{\alpha_i}(M)|=1$ is easy: in this case there is   $p\in\alpha_i$ with $I_{\alpha_i}(M)=I_p(M)=\{c\}$, so the equality of invariants of $M$ and $N$ implies $I_{\alpha_i}(N)=I_p(N)=\{c'\}$, and $\tp(c)=\tp(c')=p$ proves the claim. Assume now $|I_{\alpha_i}(M)|\geqslant 2$.  Recall that types from $\alpha_i^{\mathcal F}$ are pairwise shuffled, so 
  Proposition  \ref{Prop commutativity in M} applies  to $(\mathbf{p}_n=(p_n,<_{p_n})\mid n<\beta)$ and  $M$: for $S_{n,m}^M=S_{\mathbf{p}_n,\mathbf{p}_m}\cap(I_{p_n}(M)\times I_{p_m}(M))$ we have:
\begin{center}
$((I_{p_n}(M),<_{p_n})\mid n<\beta)$ are countable dense  orders shuffled by the family $(S_{n,m}^M\mid n<m<\beta)$.
\end{center}
The same holds for $N$: the orders  $((I_{p_n}(N),<_{p_n})\mid n<\beta)$ are  shuffled by  $(S_{n,m}^N\mid n<m<\beta)$.
 Since the  orders $(I_{p_n}(M),<_{p_n})$ and $(I_{p_n}(N),<_{p_n})$ are isomorphic for all  $n<\beta$, condition (3) of  Proposition \ref{Prop projekcije izomorfne iff} is satisfied and hence condition (2) is satisfied, too: there are order-isomorphisms $f_n:I_{p_n}(M)\to I_{p_n}(N)$ such that for all $n<m<\beta$, $a\in I_{p_n}(M)$ and $a'\in I_{p_m}(M)$ we have: 
\begin{center}
$(a,a')\in S_{n,m}^M$ \ if and only if \ $(f_n(a),f_m(a'))\in S_{n,m}^N$. 
\end{center}
Now it is not hard to verify that $f_{\alpha_i}=\bigcup_{n<\beta}f_n: I_{\alpha_i}(M)\to I_{\alpha_i}(N)$ is an elementary mapping; since $T$ is binary  it suffices to show that for any pair  of  elements $a,a'\in I_{\alpha_i}(M)$ we have $\tp(a,a')=\tp(f_{\alpha_i}(a),f_{\alpha_i}(a'))$.
 If $a,a'\in I_{p_n}(M)$  and   $a<_{p_n}a'$,  then $\tp(a,a')=\tp(f_{\alpha_i}(a),f_{\alpha_i}(a'))$ holds because $f_n$ is an order isomorphism, so both $(a,a')$ and $(f_{\alpha_i}(a),f_{\alpha_i}(a'))$ are Morley sequences in the right globalization of $\mathbf p_n$. If $a\in I_{p_n}(M)$, $a'\in I_{p_m}(M)$ and $n<m<\beta$,  then 
by Lemma \ref{rmk def shuffled}(a) the type  $\tp(a,a')$ is determined by  $p_n(x)\cup p_m(x')$ plus the information whether $ (a,a')\in S_{n,m}^M$ (i.e.\ $a\in\Lp_{\mathbf p_n}(a')$) or
$ (a,a')\notin S_{n,m}^M$ (i.e.\ $a\in\Rp_{\mathbf p_n}(a')$) holds.   Since $f_{\alpha_i}$ induces a bijection of  $S_{n,m}^M$ and $S_{n,m}^N$, the pair $(f_{\alpha_i}(a),f_{\alpha_i}(a'))$ satisfies the same  type as $(a,a')$ does, so $\tp(a,a')=\tp(f_{\alpha_i}(a),f_{\alpha_i}(a'))$. This completes the proof of $\tp(I_{\alpha_i}(M))=\tp(I_{\alpha_i}(N))$. \ $\Box_{Claim}$

To finish the proof of the theorem, it remains to note that  for any pair of elements $a,a'\in I(M)$ we have $\tp(a,a')=\tp(f(a),f(a'))$, where $f=\bigcup\{f_{\alpha_i}\mid i<w_T\}$: if $a,a'$ are from the same $I_{\alpha_i}(M)$ that was shown above and if $a,a'$ are from  distinct $I_{\alpha_i}$ we have $\tp(a)\wor\tp(a')$ and  $\tp(a,a')=\tp(f(a),f(a'))$ holds because $f$ preserves types of elements. This proves that the bijection $f:I(M)\to I(N)$ is (partial)  elementary. By Lemma \ref{Lema M prime over I(M)} $M$ is prime over $I(M)$ and $N$ is prime over $I(N)$, so the uniqueness of prime models implies that $M$ and $N$ are isomorphic. 
\end{proof}
 
\subsection{The number of countable models} 
In this subsection we will keep the assumption  from the previous subsection: $T$ satisfies none of (C1)-(C4). 
In this situation,  Theorem \ref{Thm_intro_main} applies: the isomorphism type of a  countable model $M$ is determined by   $(\Inv_{\mathbf{p}}(M)\mid p\in\mathcal F_T)$.  Define: 
 \begin{center}
 $\mathcal I_{\alpha_i}(M)=(\Inv_{\mathbf{p}}(M)\mid p\in\alpha_i^{\mathcal F})$  \ \ and \ \  $\kappa_i=|\{\mathcal I_{\alpha_i}(M)\mid \mbox{$M\models T$ countable}\}|$  \ \   ($i<w_T$).
 \end{center}

\begin{lem}\label{Lema_no_ofctble1}
$I(\aleph_0,T)=\Pi_{i<w_T}\kappa_i$ and  if $w_T=\aleph_0$ then $I(\aleph_0,T)=2^{\aleph_0}$.
\end{lem}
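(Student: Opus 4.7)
The plan is to view both equalities through the lens of Theorem \ref{Thm_intro_main}, which records the isomorphism type of a countable $M\models T$ by $(\Inv_{\mathbf p}(M)\mid p\in\mathcal F_T)$ and, after splitting the index set according to the $\nwor$-classes, by $(\mathcal I_{\alpha_i}(M)\mid i<w_T)$. Consequently $M\cong N$ iff $\mathcal I_{\alpha_i}(M)=\mathcal I_{\alpha_i}(N)$ for every $i<w_T$, so the upper bound $I(\aleph_0,T)\leqslant\prod_{i<w_T}\kappa_i$ is immediate from the definition of $\kappa_i$.

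For the matching lower bound I would invoke the gluing provided by Lemma \ref{Lema model of alfas}. Given an arbitrary prescribed sequence of countable models $(M_i\mid i<w_T)$, take the countable prime (hence atomic, by Lemma \ref{Lema decomposition parts}(a)) model $M_{at}$, which exists by smallness, and form $M=M_{at}\cup\bigcup_{i<w_T}\alpha_i(M_i)$. Lemma \ref{Lema model of alfas} supplies that $M$ is a countable model of $T$ satisfying $\alpha_i(M)=\alpha_i(M_i)$ for every $i$; since for every $p\in\alpha_i^{\mathcal F}$ we have $p(\Mon)\subseteq\alpha_i(\Mon)$, this forces $p(M)=p(M_i)$ and $\Dp_p^M=\Dp_p^{M_i}$, so $\Inv_{\mathbf p}(M)=\Inv_{\mathbf p}(M_i)$ and hence $\mathcal I_{\alpha_i}(M)=\mathcal I_{\alpha_i}(M_i)$. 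Thus every profile in $\prod_{i<w_T}\{\mathcal I_{\alpha_i}(N)\mid N\text{ countable}\}$ is realized by some countable $M$, and Theorem \ref{Thm_intro_main} guarantees that distinct profiles produce non-isomorphic models. This gives $I(\aleph_0,T)\geqslant\prod_{i<w_T}\kappa_i$, completing the first claim.

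For the case $w_T=\aleph_0$ I would bypass the product formula and appeal directly to Proposition \ref{Prop infmany nwor implies continuum}: pick one representative $p_i\in\alpha_i$ from each $\nwor$-class; each $p_i$ is by definition non-isolated, and for $i\neq j$ the types $p_i,p_j$ lie in distinct $\nwor$-classes, hence $p_i\wor p_j$. Thus $\{p_i\mid i<\omega\}$ is an infinite family of pairwise weakly orthogonal non-isolated $1$-types, and Proposition \ref{Prop infmany nwor implies continuum} yields $I(\aleph_0,T)=2^{\aleph_0}$.

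I do not expect serious obstacles; the only point requiring care is the appeal to Lemma \ref{Lema model of alfas}, which must be invoked with a genuinely atomic $M_{at}$ (so the decomposition of the constructed $M$ behaves as required) and with $M_i\prec\Mon$ chosen as concrete countable elementary submodels realizing each target invariant profile. These are standard consequences of smallness together with the results already established in Section 7 and Subsection \ref{Subsection 8.1}.
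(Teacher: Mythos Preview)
Your argument is correct and follows the same scheme as the paper's: Theorem~\ref{Thm_intro_main} gives the upper bound, and Lemma~\ref{Lema model of alfas} supplies the gluing that realizes every profile in the product, giving the matching lower bound. The one difference is in the second claim: the paper stays inside the product formula just established and simply observes that each $\kappa_i\geqslant 2$ (since the prime model omits every $p\in\alpha_i$ while the prime model over a realization of some $p\in\alpha_i$ does not), whence $\prod_{i<\omega}\kappa_i=2^{\aleph_0}$; you instead reach outside to Proposition~\ref{Prop infmany nwor implies continuum}. Both routes are valid, but the paper's is more self-contained here, while yours makes explicit that the $w_T=\aleph_0$ case is already an instance of condition~(C5).
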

\begin{proof} 
By Lemma \ref{Lema M prime over I(M)} each $\alpha_i(M)$ ($i<w_T$) is atomic over $I_{\alpha_i}(M)$ and by (the claim from the proof of) Theorem 2, the elementary type of $I_{\alpha_i}(M)$ is determined by $\mathcal I_{\alpha_i}(M)$; hence the isomorphism type of $\alpha_i(M)$ is determined by $\mathcal I_{\alpha_i}(M)$.  
 By Lemma \ref{Lema model of alfas}  we may  assemble a countable submodel of $\Mon$ by taking for each $i<w_T$  the $\alpha_i$-part of some countable  model ($\kappa_i$ possibilities) and then add  to the union some atomic submodel; we have \  $I(\aleph_0,T)=\Pi_{i<w_T}\kappa_i$. Since $\alpha_i(M)=\emptyset$ and $\alpha_i(M)\neq \emptyset$ are possible we have $\kappa_i \geqslant 2$; hence $w_T=\aleph_0$ implies $I(\aleph_0,T)=2^{\aleph_0}$.
\end{proof}

In order to compute  $\kappa_i$ in terms of $|\alpha_i^{\mathcal F}|$ we have to distinguish two kinds of classes $\alpha_i$. We   say that $\alpha_i$ is a definable $\nwor$-class if for some (equivalently all by Lemma \ref{Lema left definable nwor})  $p\in\alpha_i$  the \so-pair $\mathbf p$ is (left-)\,right-definable; otherwise, we say that $\alpha_i$ is a non-definable class.

Since the \so-pairs corresponding to   distinct members of $\alpha_i^{\mathcal F}$ are pairwise  shuffled, by Proposition \ref{prop_shuffled case local} we have the following possibilities for the order-types in $\mathcal I_{\alpha_i}(M)$ when $M$ is countable:

 (P1) \  All of them are $\mathbf 0$;

(P2)  \  Exactly one of them is $\mathbf 1$ and all the others are $\mathbf 0$ \  ($|\alpha_i^{\mathcal F}|$ possibilities);

(P3) \  $(\Inv_{\mathbf{p}}(M)\mid p\in\alpha_i^{\mathcal F})$ is a sequence of dense, pairwise shuffled order-types.

\smallskip
\noindent By Remark \ref{Remark posssible order-types} we have at most  $(|\alpha_i^{\mathcal F}|+1)^2$ possibilities in (P3), so $\kappa_i\leqslant |\alpha_i^{\mathcal F}|+3 |\alpha_i^{\mathcal F}|+ 2$.

\begin{lem}\label{Lema_no_ofctble2}
  If $\alpha_i$ is a definable class,  then $\kappa_i=|\alpha_i^{\mathcal F}|+2$.   
\end{lem}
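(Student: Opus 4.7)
The plan is to enumerate the possibilities for $\mathcal{I}_{\alpha_i}(M)$ allowed by Lemma 7.18, then exhibit a countable model for each. By Lemma 7.12 and our choice of orderings (all so-pairs in $\alpha_i$ are directly non-orthogonal), the so-pairs $\mathbf{p}$ for $p\in\alpha_i^{\mathcal F}$ are simultaneously left-definable, simultaneously right-definable, or simultaneously non-definable; since $\alpha_i$ is a definable class we may (up to duality) assume they are all left-definable. Then Lemma 7.18(a) forces
\[
\Inv_{\mathbf p}(M)\in\{\mathbf 0,\bm\eta,\bm\eta+\mathbf 1\}\qquad\text{for every countable }M\text{ and every }p\in\alpha_i^{\mathcal F}.
\]

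Next I would rule out most of the configurations listed just before the statement. Case (P2) requires some invariant to equal $\mathbf 1$, which is impossible by the preceding display. In case (P3), Remark 2.5 lists five possible shapes for a sequence of countable dense, pairwise shuffled order-types; of these, only shape (1) (all components $\bm\eta$) and shape (3) (exactly one component $\bm\eta+\mathbf 1$, all others $\bm\eta$) have every component in $\{\bm\eta,\bm\eta+\mathbf 1\}$ — shapes (2), (4), (5) each contain an order-type of the form $\mathbf 1+\bm\eta$ or $\mathbf 1+\bm\eta+\mathbf 1$, which is forbidden. Counting, this leaves $1$ option from (P1), $1$ option from the "all $\bm\eta$" shape, and $|\alpha_i^{\mathcal F}|$ options from the "unique $\bm\eta+\mathbf 1$" shape (one for each choice of the distinguished $p\in\alpha_i^{\mathcal F}$), giving $\kappa_i\leqslant |\alpha_i^{\mathcal F}|+2$.

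For the reverse inequality I would realize each of these $|\alpha_i^{\mathcal F}|+2$ sequences by an explicit countable model, invoking Lemma 7.7 (which allows us to replace the $\alpha_i$-part of a model at will). The all-zero sequence is realized by the prime model of $T$, in which every non-isolated type is omitted. The sequence with a single $\bm\eta+\mathbf 1$ at a chosen $p\in\alpha_i^{\mathcal F}$ (and $\bm\eta$ elsewhere in $\alpha_i^{\mathcal F}$) is realized by a model prime over one realization of $p$, by the "moreover" clause of Lemma 7.18(a). The step I expect to take most care is the all-$\bm\eta$ sequence: for this I would take $M$ to be the countable saturated model (which exists by smallness). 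In $M$ every $p\in\alpha_i^{\mathcal F}$ is realized so $\Inv_{\mathbf p}(M)\in\{\bm\eta,\bm\eta+\mathbf 1\}$; to exclude the value $\bm\eta+\mathbf 1$ I note that, were $\mathcal D_p^M(a)$ the $<_p$-maximum, the type $\frak p_{r\upharpoonright a}$ would be omitted in $M$, contradicting saturation because Lemma 7.14(b) (applicable since $\mathbf p$ is left-definable and $p$ is non-isolated) guarantees $\frak p_{r\upharpoonright a}$ is non-isolated yet consistent. Hence $\Inv_{\mathbf p}(M)=\bm\eta$ for all $p\in\alpha_i^{\mathcal F}$. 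Combining the bounds yields $\kappa_i=|\alpha_i^{\mathcal F}|+2$, completing the proof.
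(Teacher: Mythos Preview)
Your argument is correct and follows essentially the same approach as the paper: reduce to the left-definable case, use Lemma~\ref{Lema invariants prime model}(a) to restrict each invariant to $\{\mathbf 0,\bm\eta,\bm\eta+\mathbf 1\}$, observe this collapses the (P1)--(P3) trichotomy to exactly $|\alpha_i^{\mathcal F}|+2$ shapes, and realize them via the prime model, prime models over single realizations, and the countable saturated model. Your numbering is off by one in a few places (what you call 7.18, 7.14, 7.12 are Lemmas~\ref{Lema invariants prime model}, \ref{Lema_nonisolated_frakp}, \ref{Lema left definable nwor} respectively), and the invocation of a lemma ``allowing replacement of the $\alpha_i$-part'' is superfluous here, but otherwise your write-up is slightly more detailed than the paper's (notably the saturated-model step, which the paper leaves implicit).
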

\begin{proof}   Without loss   assume that $\mathbf p$ is left-definable for all $p\in \alpha_i$. By Lemma \ref{Lema invariants prime model}(a)  we have $\Inv_{\mathbf p}(M)\in\{\mathbf 0,\bm{\eta}+\mathbf 1,\bm{\eta}\}$; this reduces the possibilities in (P1)-(P3) to: 
 
 (1) \  All of them are $\mathbf 0$;

(2)  \  Exactly one of them is $\bm{\eta}+\mathbf 1$ and all the others are $\bm{\eta}$;

(3) \ All of them are $\bm{\eta}$.\\
Clearly, (1) is realized in the prime model and (3) in a countable, saturated model. By Lemma \ref{Lema invariants prime model}(a), for each $p\in\alpha_i^{\mathcal F}$ the prime model over a realization of $p$ realizes   (2) with $\Inv_{\mathbf p}(M)=\bm{\eta}+\mathbf 1$; altogether we have $|\alpha_i^{\mathcal F}|+2$ possibilities.
\end{proof}

\begin{lem}\label{Lema_no_ofctble3}
If $i<w_T$ and $\alpha_i$ is a non-definable class, then $\kappa_i=|\alpha_i^{\mathcal F}|^2+3|\alpha_i^{\mathcal F}|+2$.
\end{lem}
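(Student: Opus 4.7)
The approach mirrors Lemma \ref{Lema_no_ofctble2}: bound $\kappa_i$ from above, then exhibit a model for each admissible configuration. Set $n:=|\alpha_i^{\mathcal F}|$.

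\emph{Upper bound.} For a countable $M$, each $\Inv_{\mathbf p}(M)$ with $p\in\alpha_i^{\mathcal F}$ is $\mathbf 0$, $\mathbf 1$, or a dense order-type, by Lemma \ref{Lema ainvar of simple}. Proposition \ref{prop_shuffled case local}(b) applied pairwise then shows that as soon as some entry equals $\mathbf 1$ all the others equal $\mathbf 0$. So $\mathcal I_{\alpha_i}(M)$ falls in one of: (P1) all zero ($1$ sequence); (P2) one $\mathbf 1$ and the rest $\mathbf 0$ ($n$ sequences); (P3) all dense. In case (P3), Proposition \ref{Prop commutativity in M} identifies the orders $(I_p(M),<_p)$ as pairwise shuffled dense orders, and Remark \ref{Remark posssible order-types} enumerates $(n+1)^2$ admissible sequences. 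Thus $\kappa_i\leqslant 1+n+(n+1)^2 = n^2+3n+2$.

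\emph{Realizability.} (P1) is realized by $\at(\Mon)$. (P2), for a given $p\in\alpha_i^{\mathcal F}$, is realized by the prime model over a realization of $p$ by Lemma \ref{Lema invariants prime model}(c). The generic (P3) configuration (all $\bm\eta$) is realized by any countable saturated model $M_\omega$, which exists since $T$ is small. For each of the remaining $(n+1)^2-1$ (P3) configurations I would perturb $M_\omega$: depending on which entries must carry a minimum and/or a maximum, pick in $\Mon$ one or two elements among $\p_{l\strok M_\omega}$, $\p_{r\strok M_\omega}$, $\p'_{r\strok M_\omega}$ for appropriate $p,p'\in\alpha_i^{\mathcal F}$, and then take a countable $M_1$, inside $\Mon$, that is atomic over $M_\omega\cup\{a,a'\}$. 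Because $\alpha_i$ is non-definable, Lemma \ref{Lema left definable nwor} makes every $\mathbf q$ with $q\in\alpha_i$ non-definable, and hence by Lemma \ref{Lema_nonisolated_frakp}(c) the left and right globalizations of each such $q$ remain non-isolated over any enlargement of the parameter set; consequently $M_1$ can contain no new $\dep$-independent realizations of any $q\in\alpha_i^{\mathcal F}$. Combined with the forking-orthogonality of distinct members of $\alpha_i^{\mathcal F}$ (so that the added $p$- or $p'$-elements have empty $\dep$-intersection with $q(\Mon)$ for $q\neq p,p'$), this yields $q(M_1)=q(M_\omega)$ for $q\notin\{p,p'\}$, while $p(M_1)/\Dp_p^{M_1}$ (and $p'(M_1)/\Dp_{p'}^{M_1}$) acquires exactly the prescribed extremal $\D$-class(es).

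\emph{Main obstacle.} The delicate point is securing this countable atomic extension $M_1$ while keeping its $\alpha_i$-part tightly controlled: countability, elementarity, and the absence of unplanned $\dep$-independent $\alpha_i$-realizations must all be secured at once. One clean route is to build $\alpha_i(M_1)$ directly as the atomic hull of the prescribed $\dep$-transversal (in the spirit of Lemma \ref{Lema M prime over I(M)}), and then assemble $M_1$ via Lemma \ref{Lema model of alfas} from this $\alpha_i$-piece and arbitrary pieces for the remaining $\nwor$-classes. The non-isolation furnished by Lemma \ref{Lema_nonisolated_frakp}(c) is exactly what prevents unwanted $\alpha_i$-elements from being dragged into the atomic closure.
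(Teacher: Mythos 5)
The upper-bound computation $\kappa_i\leqslant n^2+3n+2$ is correct and matches the paper's setup. The realizability argument for (P1), (P2), and the all-$\bm\eta$ configuration in (P3) is also fine (apart from the slip that $\at(\Mon)$ need not be countable --- one should say \emph{the prime model}). The problem lies in the remaining $(n+1)^2-1$ configurations in (P3), where your approach genuinely diverges from the paper's and has an unfilled hole.

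You ask for ``a countable $M_1$, inside $\Mon$, that is atomic over $M_\omega\cup\{a,a'\}$''. Nothing in the hypotheses guarantees such a model exists: smallness of $T$ yields prime models only over \emph{finite} parameter sets (Fact \ref{Fact_smalltheory}(d)), and $M_\omega\cup\{a,a'\}$ is infinite. In general a countable complete theory admits a prime model over a countable set only when isolated types are dense over that set, and this is not automatic here; none of Lemma \ref{Prop almost aleph0 cat}, Lemma \ref{Lema M prime over I(M)}, or Lemma \ref{Lema model of alfas} supplies it (Lemma \ref{Lema M prime over I(M)} goes in the opposite direction, starting from a model and exhibiting a transversal; Lemma \ref{Lema model of alfas} only reassembles $\alpha_j$-parts of \emph{existing} models). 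Your fallback at the end --- ``build $\alpha_i(M_1)$ directly as the atomic hull of the prescribed transversal'' --- has the same issue: the prescribed transversal is again a countable infinite set, and the existence of its atomic hull is exactly what is in question. A secondary issue is the claim $q(M_1)=q(M_\omega)$ for $q\notin\{p,p'\}$, which need not hold even if $M_1$ exists (new dependent realizations of $q$ can appear between the old ones); the conclusion $\Inv_{\mathbf q}(M_1)=\bm\eta$ still follows, but from Lemma \ref{Lema_nonisolated_frakp}(c) controlling the endpoints rather than from equality of the $q$-parts.

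The paper circumvents both difficulties by a different construction: for each $p,q\in\alpha_i^{\mathcal F}$ it takes $a_p\models p$, $b_q\models q$ with $a_p\in\Lp_{\mathbf p}(b_q)$ and forms $M_{p,q}$ \emph{prime over the pair} $a_pb_q$ (a finite set, so existence is automatic). A coherence argument using $S_{\mathbf q,\mathbf r}\circ S_{\mathbf p,\mathbf q}=S_{\mathbf p,\mathbf r}$ shows that $\r_{r\strok b_q}$ and $\r_{l\strok a_p}$ are omitted for all $r\in\alpha_i$, pinning the extremal classes; and the configurations with only one endpoint are produced by \emph{removing} a single $\D$-class from $M_{p,p}$ and checking the result is still a model. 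If you want to keep your ``perturb the saturated model'' route, you would first have to prove that prime models exist over $M_\omega\cup\{a,a'\}$ under the standing hypotheses (no (C1)--(C4)), which is a substantive additional lemma you have not provided.
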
 
\begin{proof} We will prove that each of the   (P1)-(P3) is realized in some countable model. 
Clearly, (P1) is realized in the prime model and  Lemma \ref{Lema invariants prime model}(c) implies that each of the $|\alpha_i^{\mathcal F}|$ (P2)-possibilities is realized, too. It remains to show that each of the (P3)-possibilities is realized.  
Let  $p,q\in\alpha_i^{\mathcal F}$ (possibly $p=q$) and choose $a_p\models p$ and $b_q\models q$ with $a_p\in\Lp_{\mathbf p}(b_q)$. Since $T$ is small, there is a model  $M_{p,q}$   prime over $a_pb_q$. 

\smallskip
{\it Claim 1.} \ \ If $r\in\alpha_i$, then $\r_{r\,\strok b_q}$ and $\r_{l\,\strok a_p}$ are omitted in $M_{p,q}$. 

 {\it Proof.} \ Fix $r\in \alpha_i$ and let $c\in \Mon$ realize $\r_{r\,\strok b_q}$; then $c\in \Rp_{\mathbf r}(b_q)$ and $\delta(\mathbf q,\mathbf r)$, by Lemma \ref{Lema_direct_non-orth}(b), implies $b_q\in\Lp_{\mathbf q}(c)$; thus
$(a_p,b_q)\in S_{\mathbf p,\mathbf q}$ and $(b_q,c)\in S_{\mathbf q,\mathbf r}$. By Lemma \ref{Lema_delta_implies_S+pq_commute} the direct non-orthogonality of $\mathbf p,\mathbf q$ and $\mathbf r$ implies $S_{\mathbf q,\mathbf r}\circ S_{\mathbf p,\mathbf q}=S_{\mathbf p,\mathbf r}$, 
so $(a_p,c)\in S_{\mathbf p,\mathbf r}$ and $a_p\in\Lp_{\mathbf p}(c)$;   by Lemma \ref{Lema_direct_non-orth} we have $c\in\Rp_{\mathbf r}(a_p)$, so $c\models \r_{r\,\strok a_p}$. Since $T$ is binary: $c\models \r_{r\,\strok a_p}$ and $c\models \r_{r\,\strok b_q}$ imply $c\models \r_{r\,\strok a_pb_q}$. Since $\mathbf r$ is non-definable   Lemma \ref{Lema_nonisolated_frakp}(c) applies, so 
   $\r_{r\,\strok a_pb_q}\in S_1(a_pb_q)$ is   non-isolated and hence omitted in $M_{p,q}$; thus $c\notin M_{p,q}$ and $\r_{r\,\strok b_q}$ is omitted in $M_{p,q}$. Similarly, $\r_{l\,\strok a_p}$ is omitted in $M_{p,q}$. $\Box_{Claim}$ 

\smallskip
{\it Claim 2.} \ If  $p\neq q$, then  $\Inv_\mathbf p(M_{p,q})=\mathbf{1}+\bm{\eta}$, $\Inv_\mathbf q(M_{p,q})=\bm{\eta}+\mathbf{1}$ and $\Inv_\mathbf r(M_{p,q})=\bm{\eta}$ for all $r\in\alpha_i$ with $r\fwor p$ and $r\fwor q$; in particular, $\Inv_\mathbf r(M_{p,q})=\bm{\eta}$ for all $r\in\alpha_i^{\mathcal F}$ with $r\neq p,q$.

{\it Proof.} \ Since $p,q\in \alpha_i^{\mathcal F}$ $p\neq q$ implies $p\fwor q$. By Claim 1 the type $\p_{l\,\strok a_p}$ is omitted in $M_{p,q}$, so $\Dp_p^{M_{p,q}}(a_p)=\min (p(M_{p,q})/\D_p^{M_{p,q}},<_p)$; hence $\Inv_\mathbf p(M_{p,q})$ has minimum. Similarly 
$\Inv_\mathbf q(M_{p,q})$ has a maximum. By Proposition \ref{prop_shuffled case local}  $p\fwor q$ implies that
$(p(M_{p,a})/\D_p^{M_{p,q}},<_p)$ and $(q(M_{p,q})/\D_q^{M_{p,q}},<_q)$ are shuffled, so at most one of them has minimum (maximum). Hence  $\Inv_\mathbf p(M_{p,q})= \mathbf{1}+\bm{\eta}$ and $\Inv_\mathbf q(M_{p,q})=\bm{\eta}+\mathbf{1}$. Similarly, $\Inv_\mathbf r(M_{p,q})=\bm{\eta}$ for any $r\in\alpha_i$ with $r\fwor p$ and $r\fwor q$. $\Box_{Claim}$

\smallskip
{\it Claim 3.} \  $\Inv_\mathbf p(M_{p,p})=\mathbf{1}+\bm{\eta}+\mathbf{1}$  and $\Inv_\mathbf r(M_{p,p})=\bm{\eta}$ for all $r\in\alpha_i$ with $r\fwor p$; in particular, $\Inv_\mathbf r(M_{p,p})=\bm{\eta}$ for all $r\in\alpha_i^{\mathcal F}$ with $r\neq p$.

{\it Proof.} \ Analogous to the proof of Claim 2. $\Box_{Claim}$

\smallskip
{\it Claim 4.} \  $N_p=M_{p,p}\smallsetminus \D(b_p)$ is a model of $T$,  $\Inv_\mathbf p(N_p)=\mathbf{1}+\bm{\eta}$  and $\Inv_\mathbf r(N_p)=\bm{\eta}$ for any $r\in\alpha_i$ with $r\fwor p$; in particular, $\Inv_\mathbf r(N_p)=\bm{\eta}$ for all $r\in\alpha_i^{\mathcal F}$ with $r\neq p$.

{\it Proof.} \  First we prove $N_p\models T$. Suppose that  $\phi(x,\bar c)$ is consistent and $\bar c\subset N_p$.  Since $T$ is small  isolated types are dense in $S_1(\bar c)$, so   there is $d\in M_{p,p}$ with $\tp(d/\bar c)$  isolated; it suffices to prove $d\in N_p$. Let $c_j\in \bar c$ and $s=\tp(c_j)$. First we {\em claim} that $c_j\models \mathfrak{s}_{l\,\strok b_p}$. 
If $s\wor p$ (i.e.\ $s\notin\alpha_i$) the claim trivially holds, so assume $s\nwor p$. Note that $c_j\in N_p$ implies  $c_j\ind b_p$, so either  $c_j\models \mathfrak{s}_{r\,\strok b_p}$ or $c_j\models \mathfrak{s}_{l\,\strok b_p}$. The first option is impossible by Claim 1, so $c_j\models \mathfrak{s}_{l\,\strok b_p}$, proving the {\em claim}.  
Now, let $b'\in \D(b_p)$ and $t=\tp(b')$; then $t\in\alpha_i$. By Lemma \ref{Prop_equivalents_of_a dep b} $b_p\dep b'$ implies $\Lp_{\mathbf s}(b_p)=\Lp_{\mathbf s}(b')$, so $c_j\in \Lp_{\mathbf s}(b')$. 
By  Lemma \ref{Lema_direct_non-orth} 
$\delta(\mathbf{t},\mathbf s)$  implies  $b'\in \Rp_{\mathbf t}(c_j)$, so $b'\models \mathfrak{t}_{r\strok c_j}$; since this holds for all $c_j\in \bar c$, by binarity, we have $b'\models \mathfrak{t}_{r\strok\bar c}$. Since $\alpha_i$ is a non-definable class the type $t\in \alpha_i$ is non-definable  and  $\tp(b'/\bar c)$ is non-isolated by Lemma \ref{Lema_nonisolated_frakp}(c). Hence $d\neq b'$ and $d\in N_p$; $N_p$ is a model of $T$. 
Since $p(N_p)$ is obtained from $p(M_{p,p})$ by removing the maximal  $\D_p^{M_{p,p}}$-class $\Dp^{M_{p,p}}_p(b_p)$ and $\Inv_\mathbf p(M_{p,p})=\mathbf{1}+\bm{\eta}+\mathbf 1 $ by Claim 3, 
we have
$\Inv_\mathbf p(N_p)=\mathbf{1}+\bm{\eta}$. For any $r\in\alpha_i$ with $r\fwor p$  we have $r(M_{p,p})=r(N_p)$, so  $\Inv_\mathbf r(N_p)=\Inv_\mathbf r(M_{p,p})=\bm{\eta}$.
$\Box_{Claim}$ 

\smallskip
{\it Claim 5.} \  $N_p'=M_{p,p}\smallsetminus \D(a_p)$ is a model of $T$,  $\Inv_\mathbf p(N_p')= \bm{\eta}+\mathbf{1}$  and $\Inv_\mathbf r(N_p')=\bm{\eta}$ for any $r\in\alpha_i$ with $r\fwor p$; in particular, $\Inv_\mathbf r(N_p')=\bm{\eta}$ for all $r\in\alpha_i^{\mathcal F}$ with $r\neq p$.

{\it Proof.} \  Analogous to the proof of Claim 4. $\Box_{Claim}$

\smallskip 
 If $M\models T$ is countable and saturated, then $\Inv_\mathbf p(M)=\bm{\eta}$ for all $p\in\alpha_i^{\mathcal F}$. All the other (P3)-possibilities  are realized  by Claims 2-5. Therefore, $\kappa_i=|\alpha_i^{\mathcal F}|^2+3|\alpha_i^{\mathcal F}|+2$.  
\end{proof}

 \begin{thm}\label{Cor exact number of ctbles} If $T$ satisfies none of (C1)-(C4) and $u\subseteq w_T$ is the set of all indices $i<w_T$ corresponding to non-definable classes $\alpha_i$, then
\ 
 $I(\aleph_0,T)=\Pi_{i\in w_T\smallsetminus u}(|\alpha_i^{\mathcal F}|+2)\cdot  \Pi_{j\in u}(|\alpha_j^{\mathcal F}|^2+3|\alpha_j^{\mathcal F}|+2)$.
\end{thm}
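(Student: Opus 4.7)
The plan is to deduce the theorem as a direct combinatorial consequence of the three preceding lemmas that together account for the count: Lemma \ref{Lema_no_ofctble1} gives the product decomposition $I(\aleph_0,T)=\Pi_{i<w_T}\kappa_i$, Lemma \ref{Lema_no_ofctble2} computes $\kappa_i=|\alpha_i^{\mathcal F}|+2$ when $\alpha_i$ is a definable $\nwor$-class, and Lemma \ref{Lema_no_ofctble3} computes $\kappa_j=|\alpha_j^{\mathcal F}|^2+3|\alpha_j^{\mathcal F}|+2$ when $\alpha_j$ is non-definable. Since failure of (C5) (which follows from the stated hypothesis together with the failure of (C1)--(C4); otherwise $w_T=\aleph_0$ would give $I(\aleph_0,T)=2^{\aleph_0}$ by Lemma \ref{Lema_no_ofctble1} and there would be nothing to split) ensures $w_T<\aleph_0$, the product is finite and well-defined.

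First I would note that the assumptions of Subsection \ref{Subsection 8.1} are in force, so Theorem \ref{Thm_intro_main} applies and the isomorphism type of a countable model $M$ is determined by $(\Inv_{\mathbf p}(M)\mid p\in\mathcal F_T)$. Grouping the sequence $(\Inv_{\mathbf p}(M)\mid p\in\mathcal F_T)$ by $\nwor$-classes $\alpha_i$ and using Lemma \ref{Lema model of alfas} (which allows one to assemble a model by independently choosing the $\alpha_i$-parts) one sees that distinct sequences $(\mathcal I_{\alpha_i}(M)\mid i<w_T)$ produce non-isomorphic models and that every admissible sequence is realized. This is exactly Lemma \ref{Lema_no_ofctble1}.

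Next I would split the index set $w_T$ into $u$ (indices of non-definable classes) and $w_T\smallsetminus u$ (indices of definable classes). By Lemma \ref{Lema left definable nwor} the (non-)definability status is uniform on each $\nwor$-class, so this partition is well-defined. Substituting the values of $\kappa_i$ from Lemmas \ref{Lema_no_ofctble2} and \ref{Lema_no_ofctble3} into the formula $I(\aleph_0,T)=\Pi_{i<w_T}\kappa_i$ then yields
\[
I(\aleph_0,T)=\Pi_{i\in w_T\smallsetminus u}(|\alpha_i^{\mathcal F}|+2)\cdot\Pi_{j\in u}(|\alpha_j^{\mathcal F}|^2+3|\alpha_j^{\mathcal F}|+2),
\]
which is the claimed equality.

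There is essentially no obstacle here beyond bookkeeping: the real work has already been absorbed into the preceding three lemmas (in particular into the explicit enumeration of (P1)--(P3) possibilities and the prime-model constructions of Lemma \ref{Lema_no_ofctble3}). The only point worth double-checking is that $w_T$ is finite under our assumptions (so the product makes sense and equals a cardinal arithmetic expression rather than degenerating to $2^{\aleph_0}$); this follows from the stated failure of (C1)--(C4) together with the observation that if (C5) held then $I(\aleph_0,T)=2^{\aleph_0}$, in which case the theorem would either need to be read in the convention $\aleph_0\cdot\aleph_0=2^{\aleph_0}$ or, more naturally, be understood as applying only in the small-$w_T$ regime treated by the cited counting lemmas.
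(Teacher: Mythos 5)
Your proposal is correct and takes essentially the same route as the paper: Theorem~\ref{Cor exact number of ctbles} is stated without a separate proof precisely because it is an immediate substitution of Lemmas~\ref{Lema_no_ofctble2} and \ref{Lema_no_ofctble3} into the product formula $I(\aleph_0,T)=\Pi_{i<w_T}\kappa_i$ from Lemma~\ref{Lema_no_ofctble1}. One small inaccuracy in your parenthetical remark: the failure of (C5) does \emph{not} follow from the failure of (C1)--(C4) --- (C5) is an independent condition --- but this does not damage the argument, since when $w_T=\aleph_0$ the right-hand side is an infinite cardinal product of factors $\geq 2$, which equals $2^{\aleph_0}$ and agrees with $I(\aleph_0,T)=2^{\aleph_0}$ as given by Lemma~\ref{Lema_no_ofctble1}; so the stated formula is valid without assuming $w_T$ finite.
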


\subsection{Proof of Theorem \ref{conclusion}} 
If at least one of  conditions (C1)--(C5) is fulfilled, then   $I(\aleph_0,T)=2^{\aleph_0}$: for condition (C1) by Fact \ref{Fact_smalltheory}, for (C2) and (C3)  by Proposition \ref{Prop weakly_reg_convexsimple}, for (C4) by Proposition \ref{Lema_D(a) atomic over a} and for (C5) by Proposition \ref{Prop infmany nwor implies continuum}. This proves the $(\Leftarrow)$ direction of part (a). For the other direction. assume that none of (C1)-(C5) holds. Then by Theorem \ref{Cor exact number of ctbles} $I(\aleph_0,T)=\Pi_{i\in w_T\smallsetminus u}(|\alpha_i^{\mathcal F}|+2)\cdot  \Pi_{j\in u}(|\alpha_j^{\mathcal F}|^2+3|\alpha_j^{\mathcal F}|+2)$; the failure of (C5) implies that $w_T$ is finite, so   $|\alpha_i^{\mathcal F}|\leqslant \aleph_0$ implies $I(\aleph_0,T)\leqslant \aleph_0$. This proves part (a) of Theorem 1. 

Now, assume that none of (C1)-(C5) holds. Then $w_T$ is finite and
$I(\aleph_0,T)=\Pi_{i\in w_T\smallsetminus u}(|\alpha_i^{\mathcal F}|+2)\cdot  \Pi_{j\in u}(|\alpha_j^{\mathcal F}|^2+3|\alpha_j^{\mathcal F}|+2)\in \omega\cup\{\aleph_0\}$; note that $I(\aleph_0,T)$ is finite if and only if each $\alpha_i^{\mathcal F}=\alpha_i\cap \mathcal F_T$ is a finite set if and only if $\mathcal F_T$ is finite. This completes the proof of Theorem 1.\qed

\begin{cor}\label{Thm_VC for stat theories}
Vaught's conjecture holds for   binary,  stationarily ordered  theories. \qed
\end{cor}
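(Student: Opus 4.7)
The plan is essentially to read off the corollary directly from Theorem \ref{conclusion}, which has just been proved in this section. Vaught's conjecture for $T$ asserts that $I(\aleph_0,T)\in\{n:n<\aleph_0\}\cup\{\aleph_0,2^{\aleph_0}\}$, so it suffices to observe that the trichotomy (a)--(c) of Theorem \ref{conclusion} partitions the possibilities for $I(\aleph_0,T)$ precisely into these cases.

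More concretely, I would split into two cases according to whether at least one of the conditions (C1)--(C5) holds. If some (Ci) holds, then by Theorem \ref{conclusion}(a) we have $I(\aleph_0,T)=2^{\aleph_0}$, and the Vaught dichotomy is satisfied. If none of (C1)--(C5) holds, then by Theorem \ref{conclusion}(b),(c) we have $I(\aleph_0,T)=\aleph_0$ or $I(\aleph_0,T)<\aleph_0$ according as $\mathcal F_T$, the set of $\nfor$-representatives of non-isolated types in $S_1(\emptyset)$, is infinite or finite; in either subcase $I(\aleph_0,T)\leqslant\aleph_0$.

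Combining the two cases one obtains $I(\aleph_0,T)\leqslant\aleph_0$ or $I(\aleph_0,T)=2^{\aleph_0}$, which is exactly Vaught's conjecture for $T$. There is no obstacle to overcome: the corollary is a cosmetic repackaging of the trichotomy produced by the long analysis culminating in Theorem \ref{conclusion}, where the intermediate value $\aleph_1$ has already been ruled out by the exact count $I(\aleph_0,T)=\Pi_{i\in w_T\smallsetminus u}(|\alpha_i^{\mathcal F}|+2)\cdot\Pi_{j\in u}(|\alpha_j^{\mathcal F}|^2+3|\alpha_j^{\mathcal F}|+2)$ from Theorem \ref{Cor exact number of ctbles} (whenever (C1)--(C5) fail, this product lies in $\omega\cup\{\aleph_0\}$). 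Thus the proof reduces to a one-line invocation of Theorem \ref{conclusion}.
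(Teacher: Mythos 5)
Your proof is correct and is essentially the paper's own: the corollary is stamped with \qed precisely because it is an immediate reading of the trichotomy in Theorem \ref{conclusion}, which is how you derive it. Your extra remark invoking Theorem \ref{Cor exact number of ctbles} to rule out intermediate cardinalities is accurate, though strictly speaking Theorem \ref{conclusion} alone already gives $I(\aleph_0,T)\in\omega\cup\{\aleph_0,2^{\aleph_0}\}$ in every case.
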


\begin{cor}\label{Cor nwor=nfor}A binary, stationarily ordered theory $T$ in which $\nwor$  and $\nfor$ agree on $S_1(\emptyset)$  has either $2^{\aleph_0}$ or $3^n\cdot 6^m$ countable models for some $m,n\in\omega$.
\end{cor}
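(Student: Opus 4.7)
The plan is to deduce the dichotomy as an essentially immediate specialization of Theorem~\ref{Cor exact number of ctbles}. First I would split on whether at least one of conditions (C1)--(C5) holds. If so, Theorem~\ref{conclusion}(a) yields $I(\aleph_0,T)=2^{\aleph_0}$, which falls into the first alternative.

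Assume therefore that none of (C1)--(C5) holds. Then Theorem~\ref{Cor exact number of ctbles} applies, giving
\[
I(\aleph_0,T)=\prod_{i\in w_T\smallsetminus u}\bigl(|\alpha_i^{\mathcal F}|+2\bigr)\cdot \prod_{j\in u}\bigl(|\alpha_j^{\mathcal F}|^2+3|\alpha_j^{\mathcal F}|+2\bigr),
\]
where $u\subseteq w_T$ indexes the non-definable $\nwor$-classes of non-isolated types. The failure of (C5) forces $w_T$ (hence both $u$ and $w_T\smallsetminus u$) to be finite.

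The last step exploits the hypothesis. Since $\nwor$ and $\nfor$ coincide on $S_1(\emptyset)$, in particular they coincide on the subset $S_1^{ni}(\emptyset)$ of non-isolated types, so each $\nwor$-class $\alpha_i$ is a single $\nfor$-class. Because $\mathcal F_T$ is a set of representatives of all $\nfor$-classes of non-isolated types and $\alpha_i^{\mathcal F}=\alpha_i\cap\mathcal F_T$, we conclude that $|\alpha_i^{\mathcal F}|=1$ for every $i<w_T$. Substituting $|\alpha_i^{\mathcal F}|=1$ into the formula turns each factor in the first product into $1+2=3$ and each factor in the second into $1+3+2=6$; setting $n=|w_T\smallsetminus u|$ and $m=|u|$ gives $I(\aleph_0,T)=3^n\cdot 6^m$, as required.

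There is essentially no obstacle: the whole argument is a routine substitution once Theorem~\ref{Cor exact number of ctbles} is in hand, and the only conceptual point to record is that the hypothesis $\nwor=\nfor$ collapses each $\nwor$-class to a single member of $\mathcal F_T$.
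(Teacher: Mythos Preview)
Your proof is correct and follows essentially the same approach as the paper: both reduce to Theorem~\ref{Cor exact number of ctbles} and observe that the hypothesis $\nwor=\nfor$ forces $|\alpha_i^{\mathcal F}|=1$, whence each factor becomes $3$ or $6$. Your version is slightly more explicit about the case split on (C1)--(C5), which the paper leaves implicit, but the argument is the same.
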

\begin{proof}
If $\nwor$ and $\nfor$ agree on $S_1(\emptyset)$, then $|\alpha_i^{\mathcal F}|=1$ for all $i<w_T$, so $I(\aleph_0,T)=3^{|w_T|-|u|}\cdot 6^{|u|}$ by Theorem \ref{Cor exact number of ctbles}. 
\end{proof}

\section{Weakly quasi-o-minimal theories and Rubin's theorem}\label{s9}

   Kuda\u\i bergenov in   \cite{Kud} introduced the notion of a {\em weakly quasi-\oo-minimal theory},   generalizing   both weakly \oo-minimal theories and quasi-\oo-minimal theories.  A complete theory $T$ of  linearly  ordered structures is weakly quasi-\oo-minimal if   every parametrically definable subset of $\Mon$ is a finite Boolean combination of $\emptyset$-definable and convex sets. 
So, if $p\in S_1(A)$, then   every relatively $\Mon$-definable subset of   $p(\Mon)$ is  a finite Boolean combination of convex sets. If $C$ is such a combination, then either $C$ or  $p(\Mon)\smallsetminus C$ is left eventual in $p(\Mon)$; similarly for right eventual sets.
Thus, weakly quasi \oo-minimal theories are stationarily ordered in a strong sense.
\begin{lem}\label{qwom fact stat}
Any  complete 1-type  in a weakly quasi-\oo-minimal theory is stationarily ordered.
\end{lem}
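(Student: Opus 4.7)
The plan is to witness stationarity of an arbitrary $p \in S_1(A)$ using the $\emptyset$-definable linear order $<$ of the theory, with $(D_p,<_p)=(\Mon,<)$; this is $A$-definable and contains $p(\Mon)$. What must be verified is that for every $\Mon$-definable set $D$, one of $D, \Mon\smallsetminus D$ is left-eventual in $(p(\Mon),<)$, and similarly for right-eventual.

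First I would unwind weak quasi-\oo-minimality: an arbitrary $\Mon$-definable $D\subseteq\Mon$ is a finite Boolean combination of $\emptyset$-definable sets $E_1,\ldots,E_k$ and convex sets $C_1,\ldots,C_m$. Since $p$ is complete over $A\supseteq\emptyset$, each $E_i$ is either in $p$ or in its negation, so $E_i\cap p(\Mon)\in\{p(\Mon),\emptyset\}$. Consequently, within $p(\Mon)$, the Boolean combination collapses, and $D\cap p(\Mon)$ becomes a Boolean combination of the traces $C_j\cap p(\Mon)$, each of which is a convex subset of $(p(\Mon),<)$.

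Next I would use the standard observation that in a linearly ordered set, Boolean combinations of finitely many convex sets are finite unions of convex sets (the complement of a convex set is the union of at most two convex sets, and finite unions and intersections of finite unions of convex sets are again finite unions of convex sets). Hence $D\cap p(\Mon)=C_1'\cup\cdots\cup C_s'$, with each $C_i'$ convex in $(p(\Mon),<)$.

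For the left end, I would distinguish two cases. If some non-empty $C_i'$ is not $\mathbf p$-left-bounded, then by convexity it contains an initial part of $(p(\Mon),<)$, whence so does $D$; thus $D$ is left-eventual. Otherwise every $C_i'$ is $\mathbf p$-left-bounded, and then so is their finite union $D\cap p(\Mon)$ (take the maximum of finitely many lower bounds in $p(\Mon)$), so $\Mon\smallsetminus D$ contains an initial part of $(p(\Mon),<)$ and is left-eventual. The argument for right-eventuality is symmetric. This verifies the defining condition of an \so-pair for $(p,<)$, so $p$ is stationarily ordered. There is no real obstacle beyond carefully checking that a Boolean combination of convex sets in a linear order is again a finite union of convex sets, which is a routine induction on the complexity of the combination.
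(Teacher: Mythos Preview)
Your proof is correct and follows essentially the same approach as the paper: restrict to $p(\Mon)$ so that $\emptyset$-definable sets become trivial, observe that the trace is then a finite Boolean combination of convex sets (hence a finite union of convex sets), and conclude eventuality at each end. The paper gives this argument in a single sentence just before the lemma; your version simply spells out the routine details.
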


As an immediate  corollary of the previous lemma and Corollary \ref{Thm_VC for stat theories} we have:

\begin{thm}\label{thm vaught bin wqom} Vaught's conjecture holds for binary, weakly quasi-\oo-minimal theories.
\end{thm}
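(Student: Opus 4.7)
The plan is essentially to assemble two ingredients that have already been prepared. First, by Lemma \ref{qwom fact stat}, every complete 1-type in a weakly quasi-\oo-minimal theory $T$ is stationarily ordered; in particular every $p\in S_1(\emptyset)$ is stationarily ordered, so $T$ itself is a stationarily ordered theory in the sense of Definition \ref{Dfn_stat_type}(c). Second, assuming in addition that $T$ is binary, Corollary \ref{Thm_VC for stat theories} (Vaught's conjecture for binary, stationarily ordered theories, itself a consequence of Theorem \ref{conclusion}) directly yields $I(\aleph_0,T)\leq\aleph_0$ or $I(\aleph_0,T)=2^{\aleph_0}$.

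The only content to verify is Lemma \ref{qwom fact stat}, and this is immediate from the definition of weak quasi-\oo-minimality together with Definition \ref{Dfn_stat_type}(a). Indeed, given $p\in S_1(\emptyset)$ and the ambient linear order $(\Mon,<)$, for any $\Mon$-definable $D\subseteq\Mon$ we have a finite Boolean combination presentation $D=\bigcup_{k\leq n}(E_k\cap C_k)$, with each $E_k$   $\emptyset$-definable and each $C_k$  convex. Restricting to $p(\Mon)$, the sets $E_k\cap p(\Mon)$ are either all of $p(\Mon)$ or empty, while each $C_k\cap p(\Mon)$ is convex in $(p(\Mon),<)$; hence $D\cap p(\Mon)$ is a finite union of convex subsets of $p(\Mon)$, so either $D$ or its complement  contains an initial segment of $(p(\Mon),<)$, and similarly for final segments. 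Thus $<$ stationarily orders $p$ in the sense of Definition \ref{Dfn_stat_type}(a), proving Lemma \ref{qwom fact stat}.

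Since no further obstacle intervenes, Theorem \ref{thm vaught bin wqom} follows by the one-line argument: binary + weakly quasi-\oo-minimal implies binary + stationarily ordered (via Lemma \ref{qwom fact stat}), to which Corollary \ref{Thm_VC for stat theories} applies.
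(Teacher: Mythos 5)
Your argument is correct and follows exactly the paper's route: Lemma \ref{qwom fact stat} reduces binary weakly quasi-\oo-minimal theories to binary stationarily ordered ones, and Corollary \ref{Thm_VC for stat theories} then gives Vaught's conjecture. The paper treats Lemma \ref{qwom fact stat} as immediate from the discussion of weak quasi-\oo-minimality preceding it; your DNF rewriting $D=\bigcup_k(E_k\cap C_k)$ and the observation that $D\cap p(\Mon)$ is therefore a finite union of convex subsets of $(p(\Mon),<)$ is precisely the implicit verification, correctly carried out.
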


In \cite{May} Laura Mayer proved that an  \oo-minimal theory has either continuum or $3^n\cdot 6^m$ countable models; this was recently reproved and slightly  generalized  by Rast and Sahota in \cite{Rast2}. It follows from either of these proofs that an \oo-minimal theory $T$ with  $I(\aleph_0,T)<2^{\aleph_0}$ is binary. Notice that in the \oo-minimal context $\wor=\fwor$ holds, i.e.\ there is no pair of shuffled types. Using the notation from Section 8, by Corollary \ref{Cor nwor=nfor}  we have
 $I(\aleph_0,T)= 3^{w_T-|u|}\cdot 6^{|u|}$.

  One could ask whether small, weakly quasi-\oo-minimal theories are also binary. Unfortunately, this is not true, since there are $\aleph_0$-categorical weakly \oo-minimal theories which are not binary. Examples of these can be found in \cite{Her}. Further distinction with the \oo-minimal case is the existence of a weakly \oo-minimal theory with exactly $\aleph_0$ countable models; it was originally found by Alibek and Baizhanov in \cite{AB}. We will sketch a variant of that example.

\subsection{A binary, weakly \oo-minimal theory with $\aleph_0$ countable models} \label{example} 
Let the language $L$ has binary predicate symbols $\{<\}\cup \{S_{ij}\mid i<j\in\omega\}$ and unary predicate symbols $\{O_i\mid i\in\omega\}\cup\{C_{i,n}\mid i,n\in\omega\}$. Consider countable $L$-structures $(M,<,O_i(M),C_{i,j}(M),S_{i,j}^M)_{i,j\in\omega}$ satisfying:
\begin{enumerate} 
\item $(M,<)$ is a dense linear order without endpoints;
\item for each $i\in\omega$ the set $O_i(M)$ is an open convex subset of $M$,  $O_0(M)<O_1(M)<\ldots$ and $O(M)=\bigcup_{i\in\omega}O_i(M)$ is an initial part of $M$ (we emphasize $O\notin L$);
\item for all $i,n\in\omega$, the $C_{i,n}(M)$ are open convex subsets of $O_i(M)$ (called the $n$-th color of elements of $O_i(M)$)  such that $C_{i,0}(M)<C_{i,1}(M)<\ldots$ and $C_i(M)=\bigcup_{n\in\omega}C_{i,n}(M)$ is an initial part of $O_i(M)$ (so that the coloured order $(O_i(M),<,C_{i,n}(M))_{n\in\omega}$ is a model of the theory $T_0$ with 3 countable models  from Example \ref{exm ehr}) (we emphasize $C_i\notin L$) ;
\item $S_{i,j}^M\subseteq O_i(M)\times O_j(M)$  for all $i,j\in \omega$;
\item the family $\mathcal S=(S_{i,j}^M\mid i<j\in\omega)$ shuffles the  sequence $((O_i(M),<,C_{i,n}(M))_{n\in\omega}\mid i\in\omega)$ of colored orders, by which we mean:
\begin{enumerate}
\item   $\mathcal S$ shuffles the sequence of orders  $((O_i(M),<)\mid i\in\omega)$; and 
\item  for each $n\in\omega$ the sequence   $(S_{i,j}^M\cap (C_{i,n}(M)\times C_{j,n}(M))\mid i<j\in\omega) $ shuffles the sequence of $n$-th colors  $((C_{i,n}(M),<)\mid i\in \omega)$.
\end{enumerate}
\end{enumerate}
The above properties are first-order expressible; let $T$ be the corresponding theory. We will show that $T$ is complete, weakly \oo-minimal,   eliminates quantifiers and $I(\aleph_0,T)=\aleph_0$. To describe the isomorphism types of countable models of $T$,  first we show that the $L$-substructure $C(M)=\bigcup_{i\in\omega}C_i(M)$ of a countable $M\models T$ has a unique isomorphism type. Let $N\models T$ be countable. Fix $i\in\omega$. By (5)(b) we have that 
$(S_{i,j}^M\cap (C_{i,n}(M)\times C_{j,n}(M))\mid i<j\in\omega) $ shuffles the sequence   $((C_{i,n}(M),<)\mid i\in \omega)$ and $(S_{i,j}^N\cap (C_{i,n}(N)\times C_{j,n}(N))\mid i<j\in\omega) $ shuffles the sequence   $((C_{i,n}(N),<)\mid i\in \omega)$. Since for each $n\in\omega$ the orders  $(C_{i,n}(M),<)$ and $(C_{i,n}(N),<)$ have the order type $\bm{\eta}$, 
condition (3) of  Proposition \ref{Prop projekcije izomorfne iff} is satisfied. Hence the other two conditions are satisfied   and,  as in the proof of  Theorem \ref{Thm_intro_main}, we may conclude that $C_i(M)$ and $C_i(N)$ are isomorphic substructures;  let $f_i$ be an isomorphism. Then $\bigcup_{i\in\omega}f_i:C(M)\to C(N)$ is an isomorphism of substructures; $C(M)$ is unique, up to isomorphism.  
Now define:
\begin{center}
 $p_i(x)= \{O_i(x)\}\cup \{\neg C_{i,n}(x)\mid n\in\omega\}$ \ ($i\in\omega$)
\ and  \  
$q(x)= \{\neg O_i(x)\mid i\in\omega\}$.
\end{center}
Note that  $M=C(M)\dot{\cup}q(M)\dot{\bigcup}_{i\in\omega}p_i(M)$. If $p_i(M)\neq\emptyset$ for some $i\in\omega$, then $(p_i(M),<)$ is a final part of   $(O_i(M),<)$ having the order-type $\bm{\eta}$ or $\mathbf{1}+\bm{\eta}$ and  by (5) we have: 
 
\smallskip
\ \ (6)  the sequence   $(S_{i,j}^M\cap (p_i(M)\times p_j(M))\mid i<j\in\omega) $ shuffles the sequence
$((p_i(M),<)\mid i\in\omega)$.

\smallskip\noindent 
Note that the only unary predicate with a non-void interpretation on the substructure $p_i(M)$ is $O_i$ (and $p_i(M)\subset O_i(M)$). Also, the substructure  $q(M)$ is a pure linear order  because for any unary predicate $X\in L$ the set $X(M)$ is convex and  $X(M)<q(M)$ (provided that $q(M)\neq \emptyset$); this is the reason why we gave distinct names $C_{i,n}$ and $C_{j,n}$ to $n$-th colors of distinct  $O_i(M)$ and $O_j(M)$. Arguing similarly as in the proof of $C(M)\cong C(N)$ one proves that $M\cong N$ if and only if:  

\ \ (7)  \ $(q(M),<)\cong (q(N),<)$ and $(p_i(M),<)\cong (p_i(N),<)$ for all $i\in\omega$.
 
\noindent 
To sketch the proof, assume that (7) holds and choose  isomorphisms $f:C(M)\to C(N)$ and $g:q(M)\to q(N)$. Using condition (6) and arguing as in the proof of $C(M)\cong C(N)$, we find an isomorphism $h:{\bigcup}_{i\in\omega}p_i(M)\to {\bigcup}_{i\in\omega}p_i(N)$ and conclude that $f\cup g\cup h:M\to N$ is an isomorphism.

The rest of the proof is similar to the one in Example \ref{Exm_shuffled _elquant}. 
If  $M'\models T$ is countable, then it is fairly standard to construct a countable $M^*\succ M'$ satisfying: $(q(M^*),<)$ and each $(p_i(M^*),<)$ have order-type $\bm{\eta}$; by (7) the model $M^*$ is unique up to   isomorphism, so $T$ is complete, small and $M^*$ is its countable, saturated model. 
By a   back-and-forth construction one shows that 
any pair of tuples from $M^*$ with the same quantifier-free type are conjugated by an automorphism of $M^*$; the conclusion is that 
 $T$ eliminates quantifiers.  It follows that  $T$ is binary, weakly o-minimal with  trivial forking ($a\dep b$ implies $a=b$) and with
two $\nwor$-classes of non-isolated types ($w_T=2$): the first one is $\{q\}$ and the second one consists of pairwise shuffled types  $\{p_i\mid i\in\omega\}$. $T$ satisfies none of (C1)-(C5) and by   Theorem \ref{conclusion}(b) we have  $I(\aleph_0,T)=\aleph_0$.

\subsection{Reproving Rubin's theorem} \label{subsection Rubin}
 In this subsection we will reprove Rubin's theorem: every complete theory of coloured orders has either $2^{\aleph_0}$ or finitely many  countable models (we emphasize that the colours are not necessarily  disjoint). 
That any complete theory $T$ of coloured orders satisfies a strong form of binarity  follows from  Rubin's Lemma 7.9 in \cite{Rub} (for a detailed discussion see \cite{TMI}), but here we will refer to Pierre Simon's Proposition 4.1 from \cite{SimLO}: 

\begin{prop}\label{Simon} Let $(M,<,C_i,R_j)_{i\in I,j\in J}$ be a linear order with colors $C_i$ and monotone relations $R_j$.
Assume that each unary $\emptyset$-definable set  is represented by one of the $C_i$ and each monotone 
$\emptyset$-definable binary relation is  represented by one of the $R_j$. Then the structure eliminates
quantifiers.
\end{prop}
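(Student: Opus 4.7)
The plan is to verify the standard syntactic criterion for quantifier elimination: every formula $\exists y\,\chi(y,\bar x)$ with $\chi$ quantifier-free is $T$-equivalent to a quantifier-free formula $\theta(\bar x)$. By passing to disjunctive normal form it suffices to treat $\chi$ as a conjunction of literals, and after extracting those not mentioning $y$ the relevant conditions fall into three packets: the order literals comparing $y$ to each $x_k$, the colour literals $\pm C_i(y)$, and the monotone literals $\pm R_j(y,x_k)$ and $\pm R_j(x_k,y)$.

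The order literals either pin $y$ to some $x_k$ (in which case $\theta(\bar x)$ is obtained by substitution) or place $y$ in a fixed open interval $I$ whose endpoints lie in $\bar x$. The colour literals restrict $y$ to a fixed Boolean combination $C$ of the $C_i$. For each $R_j$, the conjunction of its literals restricts $y$ to a convex subset $J_j\subseteq M$ whose endpoints are ``virtual cuts'' determined by the parameters via $R_j$; monotonicity of $R_j$ is what forces $J_j$ to be convex. The admissible locus for $y$ is the convex set $K:=I\cap\bigcap_j J_j\cap C$, and I need to express ``$K\neq\emptyset$'' by a quantifier-free formula in $\bar x$. To do so I first must compare the virtual endpoints of the $J_j$ against one another and against elements of $\bar x$. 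Each such comparison --- ``the $R_j$-cut determined by $x_k$ lies weakly to the left of the $R_{j'}$-cut determined by $x_{k'}$'' --- is an $\emptyset$-definable binary relation on the pair $(x_k,x_{k'})$, and direct inspection shows it is monotone in each argument (up to reversing one variable if necessary), so by the hypothesis it coincides with a Boolean combination of the $R_j$.

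After a finite case split on the resulting linear order of all endpoints, the non-emptiness of $K$ reduces to a uniform query: given a colour prescription $C$ and an explicitly-compared pair of endpoints $u<v$, does the open interval $(u,v)$ contain a point in $C$? This binary predicate on $(u,v)$ is $\emptyset$-definable and monotone (increasing in $v$, decreasing in $u$), so by hypothesis is itself captured by a Boolean combination of the $R_j$; the degenerate cases of an endpoint at $\pm\infty$ reduce similarly to the unary part of the hypothesis, namely that each $\emptyset$-definable unary set is a Boolean combination of the $C_i$. Assembling these finitely many auxiliary formulas yields the desired $\theta(\bar x)$, and routine induction on quantifier complexity then gives full quantifier elimination. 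The main obstacle is purely combinatorial: one has to track all the virtual endpoints produced by the several monotone relations in play and verify, at each step, that the auxiliary binary predicates conjured up along the way are genuinely monotone --- not merely $\emptyset$-definable --- so that the language hypothesis really applies to them. Once this bookkeeping is set up uniformly in $\bar x$, the rest of the argument reduces to straightforward case analysis.
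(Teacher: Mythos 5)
Note that the paper does not actually prove this proposition: it is imported verbatim as Proposition 4.1 of Pierre Simon's paper \cite{SimLO} and used as a black box, so there is no in-paper proof to compare against. Your sketch is nonetheless sound and is the right argument: reduce to a single existential over a conjunction of literals; note that each order literal and each monotone literal $\pm R_j(y,x_k)$, $\pm R_j(x_k,y)$ confines $y$ to an initial or final segment, so together they cut out an interval; fold the colour literals into a single $C_i$ by the unary hypothesis; and observe that ``the interval meets this colour'' is an $\emptyset$-definable monotone binary relation on the pair of generating parameters, hence a Boolean combination of the $R_j$.

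Two small corrections. First, $K$ as you define it need not be convex: only $I\cap\bigcap_j J_j$ is, and intersecting with a colour can destroy convexity --- this does not affect the argument since you only use $K\neq\emptyset$, but the word should go. Second, the auxiliary ``interval-meets-colour'' predicate must be formulated as a relation on the actual parameters $(x_k,x_{k'})$ that generate the virtual cuts rather than on the cuts $u,v$ themselves, because the hypothesis only covers monotone relations between elements of $M$; that is anyway what your explicit instantiation via $R_j$, $R_{j'}$ amounts to. For example $E(a,b):=\exists y\,(\lnot R_j(y,a)\wedge R_{j'}(y,b)\wedge C(y))$ is monotone because $a'\leq a$ enlarges the final segment $\lnot R_j(\cdot,a)$ and $b\leq b'$ enlarges the initial segment $R_{j'}(\cdot,b)$, so a witness $y$ persists when passing from $(a,b)$ to $(a',b')$.
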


\noindent{\bf Assumption.} From now on let $T$ be a complete theory of coloured orders. 

\smallskip
By Simon's result, if we name all unary definable sets and all monotone definable relations, the structure eliminates quantifiers. Hence $T$ is binary and any $\Mon$-definable subset of $\Mon$ is a Boolean combination of unary $L$-definable sets and fibers ($R(a,\Mon)$ and $R(\Mon,b)$) of   monotone relations. Since the fibers are convex sets, $T$ is  weakly quasi-\oo-minimal. Hence $T$ is a binary,  stationarily ordered theory. 
  
Non-orthogonality of 1-types in coloured orders is closely related to  interval types. Following Rosenstein \cite{Ros}, we say that an {\em interval type}\footnote{Other authors, notably Rubin in \cite{Rub} and Rast in \cite{Rast1}, use a term {\em convex type}.} of $T$ is a maximal partial type consisting of convex $L$-formulae, i.e.\  those defining convex sets.  We denote by $IT(T)$ the set of all interval types of $T$. For each $p\in S_1(\emptyset)$ let $p^{conv}$ be the set of all the convex formulae from $p$. Basic properties of interval types are given in the following lemma.

\begin{lem}\label{lem interval types}
\begin{enumerate}[(a)]
\item For $p\in S_1(\emptyset)$, $p^{conv}\in IT(T)$ and $p^{conv}(\Mon)= conv(p(\Mon))$ (the convex hull of  $p(\Mon)$ in $(\Mon,<)$). 
\item If $\Pi\in IT(T)$, then $\Pi=p^{conv}$ for any $p\in S_1(\emptyset)$ extending $\Pi$.
\item Distinct interval types have disjoint (convex) loci (in $\Mon$). In particular, $<$ naturally orders $IT(T)$.
\end{enumerate}
\end{lem}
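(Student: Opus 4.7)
I will dispatch (c) and (b) first, since they are formal consequences of the maximality clause in the definition of an interval type, and then focus on (a), which carries the substantive content.

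For (c), suppose $a \in \Pi_1(\Mon) \cap \Pi_2(\Mon)$ for two distinct interval types. Then the set of convex $L$-formulas satisfied by $a$ is a consistent partial type of convex formulas containing both $\Pi_1$ and $\Pi_2$, so the maximality clause forces $\Pi_1 = \Pi_2$, a contradiction; the loci being disjoint and convex then yields the natural linear order on $IT(T)$ induced by $<$. For (b), any complete $p \supseteq \Pi$ has $\Pi \subseteq p^{conv}$, because the formulas of $\Pi$ are convex members of $p$; since $p^{conv}$ is itself a consistent partial type of convex formulas extending $\Pi$, maximality of $\Pi$ gives $\Pi = p^{conv}$.

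For (a) I split the claim into (i) $p^{conv}(\Mon) = conv(p(\Mon))$ and (ii) maximality of $p^{conv}$. The inclusion $p^{conv}(\Mon) \supseteq conv(p(\Mon))$ is immediate, since $p^{conv}(\Mon)$ is an intersection of convex sets containing $p(\Mon)$, hence a convex set containing the convex hull. For the reverse inclusion, given $a \in p^{conv}(\Mon)$ with $a < p(\Mon)$ (the case $a > p(\Mon)$ is symmetric), I apply Proposition \ref{Simon} to $T$ in the extended language $L^+$ obtained by naming all $\emptyset$-definable colors and monotone relations of $T$. Since every added symbol is $L$-$\emptyset$-definable, $L$ and $L^+$ have the same $\emptyset$-definable sets, and $L^+$ admits quantifier elimination; thus every $\emptyset$-definable unary set decomposes as a Boolean combination of colors and $\emptyset$-definable convex sets. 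Applying this decomposition to an $\emptyset$-definable formula in $p\setminus\tp(a)$, I will extract a convex $\emptyset$-definable conjunct still belonging to $p$ and excluding $a$, contradicting $a \in p^{conv}(\Mon)$.

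With (i) in hand, (ii) reduces to showing that every convex $\emptyset$-definable $\phi$ with $\phi(\Mon) \cap conv(p(\Mon)) \ne \emptyset$ already lies in $p$. The main obstacle I anticipate is excluding the scenario where $\phi \notin p$: in that case $\neg\phi \in p$, and $\phi(\Mon) \cap conv(p(\Mon))$ becomes a nonempty convex $\emptyset$-invariant ``gap'' inside $conv(p(\Mon))$ avoiding $p(\Mon)$. I plan to rule this out by a second appeal to the Boolean-combination structure from Simon's QE: such a gap would be automorphism-invariant and trapped between realisations of $p$ on both sides, and analysing the possible convex $\emptyset$-definable conjuncts together with the structural constraints of the weakly quasi-$o$-minimal framework will force $\phi$ to meet $p(\Mon)$, completing (ii) and hence (a).
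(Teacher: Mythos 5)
Your treatments of (b) and (c) are correct. For (b) you argue exactly as the paper does. For (c) you take a slightly different and arguably cleaner route than the paper: you argue directly from abstract maximality (a common realization of two interval types realizes a consistent set of convex formulas containing both, so maximality forces them to coincide), whereas the paper derives (c) from (a) and (b) by finding a convex formula $\phi\in\Pi_1\smallsetminus\Pi_2$ and observing $\phi^-\in\Pi_2$ or $\phi^+\in\Pi_2$. Both work; yours needs no appeal to (a).

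Part (a) is where the proposal is deficient, and this is the part that carries the real content. For the reverse inclusion $p^{conv}(\Mon)\subseteq conv(p(\Mon))$ you invoke Simon's quantifier elimination and then say you "will extract a convex $\emptyset$-definable conjunct still belonging to $p$ and excluding $a$". This extraction is never performed, and as stated it does not obviously work: an arbitrary $\emptyset$-definable formula $\phi'\in p\smallsetminus\tp(a)$ need not yield a convex $\emptyset$-definable piece that still misses $a$, because $a$ could sit inside a "gap" of $\phi'(\Mon)$ while belonging to every convex $\emptyset$-definable superset. The paper avoids this with an elementary compactness argument that you do not use: from $a<p(\Mon)$, compactness yields $\phi'\in p$ with $a<\phi'(\Mon)$ (not merely $a\notin\phi'(\Mon)$), and then the $\emptyset$-definable formula defining $conv(\phi'(\Mon))$ is a convex formula in $p$ that still lies entirely above $a$; this needs no QE at all. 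For the maximality clause (ii), you give only a plan ("I plan to rule this out by a second appeal... analysing the possible convex $\emptyset$-definable conjuncts... will force $\phi$ to meet $p(\Mon)$") with no actual argument. The paper's proof here is short and direct: for convex $\phi\notin p$ one has $\lnot\phi\in p$, and $\lnot\phi$ is equivalent to the disjunction of the two convex formulas $\phi^-$ and $\phi^+$ defining $\{x:x<\phi(\Mon)\}$ and $\{x:\phi(\Mon)<x\}$; completeness of $p$ puts exactly one of them in $p$, hence in $p^{conv}$, and then $\phi$ is inconsistent with $p^{conv}$ since $\phi(\Mon)$ is disjoint from both $\phi^-(\Mon)$ and $\phi^+(\Mon)$. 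You should replace the Simon-QE machinery in part (a) with these two elementary steps.
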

\begin{proof}
(a) We prove that $p^{conv}$ is a maximal partial type consisting of convex $L$-formulae. If $\phi(x)$ is a convex $L$-formula not belonging to $p^{conv}(x)$, then $\lnot\phi(x)\in p(x)$ and $\lnot\phi(x)$ is equivalent to $\phi^-(x)\lor\phi^+(x)$, where $\phi^-(x)$ and $\phi^+(x)$ are convex $L$-formulae describing the sets $\{a\in\Mon\mid a<\phi(\Mon)\}$ and $\{a\in \Mon\mid \phi(\Mon)<a\}$, respectively. Since $\lnot\phi(x)$ is consistent, at least one of $\phi^-(x)$ and $\phi^+(x)$ is consistent and exactly one of them belongs to $p(x)$, i.e.\ to $p^{conv}(x)$. Thus $\phi(x)$ is inconsistent with $p^{conx}(x)$ as it is inconsistent with both $\phi^-(x)$ and $\phi^+(x)$. Therefore, $p^{conv}$ is indeed a maximal partial type consisting of convex $L$-formulae.

The set $p^{conv}(\Mon)$ is clearly convex, and since $p(\Mon)\subseteq p^{conv}(\Mon)$ we have $conv(p(\Mon))\subseteq p^{conv}(\Mon)$. If the inclusion is strict, then we can find $a\models p^{conv}$ such that either $a<conv(p(\Mon))$ or $conv(p(\Mon))<a$, e.g.\ $a<conv(p(\Mon))$. Then $a<p(\Mon)$ is also true, so by compactness there is $\phi'(x)\in p(x)$ such that $a<\phi'(\Mon)$. If $\phi(x)$ is the formula describing $conv(\phi'(\Mon))$ then $\phi(x)\in p(x)$, so $\phi(x)\in p^{conv}(x)$, and $a<\phi(\Mon)$ too. But this is not possible as $a\models p^{conv}$. 

(b) If $\Pi\in IT(T)$ and $p\supseteq\Pi$,  then $\Pi\subseteq p^{conv}$. By maximality $\Pi=p^{conv}$.

(c) Let $\Pi_1,\Pi_2\in IT(T)$ be distinct; then there is a convex $L$-formula $\phi(x)\in \Pi_1\smallsetminus\Pi_2$. By (b) we have $\Pi_2=p_2^{conv}$ where $p_2\in S_1(\emptyset)$ extends $\Pi_2$. As in the proof of part (a) we conclude that either $\phi^-(x)$ or $\phi^+(x)$ belongs to $p_2(x)$, and hence to $\Pi_2(x)$, too; say $\phi^-(x)\in\Pi_2(x)$. Now $\Pi_1(\Mon)$ and $\Pi_2(\Mon)$ are subsets of two disjoint convex sets $\phi(\Mon)$ and $\phi^-(\Mon)$, therefore they are disjoint, too.
\end{proof}

\begin{lem}\label{rub lem nwor}
For all non-algebraic types $p,q\in S_1(\emptyset)$: \  $p\nwor q$ \ if and only if \  $p^{conv}=q^{conv}$.
\end{lem}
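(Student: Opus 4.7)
If $p=q$, then since $p$ is non-algebraic $p(x)\cup p(y)$ admits distinct completions corresponding to $x<y$ and to $y<x$, so $p\nwor p$. If $p\neq q$ the loci are disjoint, and Lemma~\ref{lem interval types}(a) gives $p(\Mon),q(\Mon)\subseteq \Pi(\Mon)=\mathrm{conv}(p(\Mon))=\mathrm{conv}(q(\Mon))$, where $\Pi:=p^{conv}=q^{conv}$. Any $a\models p$ lies in $\mathrm{conv}(q(\Mon))\setminus q(\Mon)$, so there exist $b_1<a<b_2$ with $b_1,b_2\models q$; the pairs $(a,b_1)$ and $(a,b_2)$ then realize distinct completions of $p(x)\cup q(y)$ in $S_2(\emptyset)$, witnessing $p\nwor q$.

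\textbf{$(\Rightarrow)$.} I argue the contrapositive. Assume $p^{conv}\neq q^{conv}$; by Lemma~\ref{lem interval types}(c) the loci are disjoint and linearly ordered by $<$, so without loss of generality $p^{conv}(\Mon)<q^{conv}(\Mon)$, and every $a\models p$, $b\models q$ satisfies $a<b$. To show $p\wor q$, I would expand the language by naming all $\emptyset$-definable monotone relations $R_j$; by Proposition~\ref{Simon} this yields quantifier elimination, so $\tp(a,b)$ is determined by its quantifier-free type in the expanded language. Its atomic formulas reduce to: unary formulas (fixed by $p$ and $q$), the order ($a<b$ always), equality ($a\neq b$ always), and the monotone atomics $R_j(a,b)$ and $R_j(b,a)$. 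Hence it suffices to show that for every $\emptyset$-definable monotone $R$, the truth values of $R(a,b)$ and $R(b,a)$ depend only on $p$ and $q$; equivalently, $R(a,\Mon)\cap q(\Mon)\in\{\emptyset,q(\Mon)\}$ and $R(\Mon,a)\cap q(\Mon)\in\{\emptyset,q(\Mon)\}$ for every $a\models p$.

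\textbf{Main obstacle.} The hard step is this last claim: ruling out that the monotone fiber $R(a,\Mon)$ — a final segment of $\Mon$ whose Dedekind cut depends on $a$ — slices $q^{conv}(\Mon)$ into two nonempty pieces. My plan is to consider the set
\[X_R=\{x\in\Mon : R(x,\Mon)\cap q^{conv}(\Mon)\text{ is a proper nonempty subset of }q^{conv}(\Mon)\},\]
observe that $X_R$ is $\emptyset$-invariant (both $R$ and the interval type $q^{conv}$ are invariant under $\Aut(\Mon)$), convex in $\Mon$ (by monotonicity of $R$, which makes the family $x\mapsto R(x,\Mon)$ monotone under inclusion), and type-definable over $\emptyset$; then, using the maximality property of the interval-type decomposition of $\Mon$, argue that $X_R\subseteq q^{conv}(\Mon)$. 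Since $a\in p^{conv}(\Mon)$ is disjoint from $q^{conv}(\Mon)$ this gives $a\notin X_R$, so $R(a,\Mon)$ does not cut $q^{conv}(\Mon)$ and, a fortiori, does not cut $q(\Mon)\subseteq q^{conv}(\Mon)$; the argument for $R(\Mon,a)$ is entirely symmetric, using that $R(\Mon,y)$ is an initial segment. Verifying that $X_R$ cannot straddle several interval types is the technical heart of the proof and is precisely where the special structure of coloured orders (as opposed to a general weakly quasi-$o$-minimal theory) must be invoked.
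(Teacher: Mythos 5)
Your $(\Leftarrow)$ direction is fine and is essentially the paper's argument (the paper exhibits $a<b<a'$ with $a,a'\models p$, $b\models q$; you exhibit $b_1<a<b_2$; either works, and no case split on $p=q$ is really needed since $p\nwor p$ is automatic from non-algebraicity). The $(\Rightarrow)$ direction has a genuine gap, which you have in fact flagged yourself: the ``main obstacle'' --- that a monotone $\emptyset$-definable fiber $R(a,\Mon)$ cannot properly cut $q(\Mon)$ when $a$ lies in a different interval type --- is exactly what needs to be proved, and neither the reduction via Simon's quantifier elimination nor the sketch around $X_R$ accomplishes it. The appeal to ``the maximality property of the interval-type decomposition'' is too weak as stated: by Lemma~\ref{lem interval types} an interval type is a maximal partial type consisting of convex $L$-formulae, which does not immediately bound an $\emptyset$-invariant, convex, type-definable set such as $X_R$, nor does it rule out $X_R\supsetneq q^{conv}(\Mon)$. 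The troublesome case is when the cut $a\mapsto R(a,\Mon)\cap q^{conv}(\Mon)$ varies nontrivially as $a$ ranges over an interval type disjoint from $q^{conv}(\Mon)$; no argument is offered for this case, so the proposed route is not a proof.

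The paper establishes the hard direction by a short automorphism-splicing argument that bypasses quantifier elimination entirely, and this is precisely where the special structure of coloured orders is used in a concrete, checkable way. Given $a\models p$ and $b\equiv b'$ both realizing $q$, with $p^{conv}\neq q^{conv}$, pick $f\in\Aut(\Mon)$ with $f(b)=b'$. Since $q^{conv}(\Mon)$ is a convex $\emptyset$-invariant set, $f$ fixes it setwise, so the map $g$ that agrees with $f$ on $q^{conv}(\Mon)$ and with the identity elsewhere is a bijection of $\Mon$; it preserves $<$ (because $q^{conv}(\Mon)$ is an interval) and every unary predicate, and --- this is the point where the restriction to coloured orders matters --- there is nothing else in the language to check, so $g\in\Aut(\Mon)$. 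As $g$ fixes $a$ (which lies outside $q^{conv}(\Mon)$) and sends $b$ to $b'$, one gets $\tp(a,b)=\tp(a,b')$, i.e., $p\wor q$. In particular this yields, for free, that $R(a,\Mon)\cap q(\Mon)\in\{\emptyset,q(\Mon)\}$ for every monotone $R$, which is exactly the conclusion your $X_R$ plan was aiming at. Note that such a splice would fail for a general binary weakly quasi-$o$-minimal theory, because additional binary relations in the language need not be respected by $g$; so the splice is the correct, and also much shorter and more elementary, way to ``invoke the special structure of coloured orders.''
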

\begin{proof}
($\Leftarrow$) \ Assume $p^{conv}=q^{conv}$. Then by Lemma \ref{lem interval types}(a) $conv(p(\Mon))=conv(q(\Mon))$ and  we can find $a,a'\models p$ and $b\models q$ such that $a<b<a'$;  hence $ab\not\equiv a'b$ and $p\nwor q$ follows.
 
$(\Rightarrow)$ \ 
Suppose  that   $p^{conv}\neq q^{conv}$. Then by Lemma \ref{lem interval types} $p^{conv}(\Mon)$ and $q^{conv}(\Mon)$ are disjoint convex sets so, without loss of generality, assume $p^{conv}(\Mon)<q^{conv}(\Mon)$; in particular,  $p(\Mon)<q(\Mon)$. 
Let   $b ,b' \models q$ and $a\models p$. Choose  $f\in\Aut(\Mon)$   such that $f(b)=b'$. Note that  $f$ fixes setwise $q^{conv}(\Mon)$. 
The mapping defined by $g(x)=x$ for $x\notin q^{conv}(\Mon)$ and $g(x)=f(x)$ for $x\in q^{conv}(\Mon)$ is an  automorphism of $\Mon$  because it preserves $<$ and all the unary predicates. Thus   $\tp(a,b)=\tp(a,b')$ and $p\wor q$. 
\end{proof}

Recall that a type $p\in S_1(\emptyset)$ is convex if there is a $\emptyset$-definable linear order $(D_p,<_p)$ such that $p(\Mon)$ is a $<_p$-convex subset of $D_p$. 

\begin{lem}\label{rub lem convex} 
If some  $\Pi\in IT(T)$ has infinitely many completions in $S_1(\emptyset)$, then one of them is a non-convex type and $I(\aleph_0,T)=2^{\aleph_0}$.
\end{lem}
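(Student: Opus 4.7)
The strategy is to give a topological characterization of convexity on $[\Pi]:=\{q\in S_1(\emptyset)\mid \Pi\subseteq q\}$, namely that for every $p\in[\Pi]$, the type $p$ is convex if and only if $p$ is an isolated point of the closed (hence compact) subspace $[\Pi]\subseteq S_1(\emptyset)$. Given this equivalence, if $[\Pi]$ is infinite then compactness forbids it from being discrete, so some $p\in[\Pi]$ fails to be isolated and is therefore non-convex. This $p$ is stationarily ordered (Lemma \ref{qwom fact stat}) and lies in $S_1(\emptyset)$, so Proposition \ref{Prop weakly_reg_convexsimple} immediately yields $I(\aleph_0,T)=2^{\aleph_0}$, concluding both halves of the lemma.

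To prove the characterization, fix $p\in[\Pi]$ and recall from Lemma \ref{lem interval types}(a) that $\Pi(\Mon)=conv(p(\Mon))$. Suppose first that $p$ is convex; since $(p,<)$ is an \so-pair (as $T$ is a theory of coloured orders), Lemma \ref{Lema_convex_so_pair_witness1} yields a $\emptyset$-definable set $D\supseteq p(\Mon)$ in which $p(\Mon)$ is $<$-convex. Then $\theta(x):=(x\in D)$ belongs to $p$, and for any $q\in[\Pi]$ with $\theta\in q$, taking $c\models q$ we have $c\in D\cap\Pi(\Mon)=D\cap conv(p(\Mon))$, so $a\le c\le b$ for some $a,b\in p(\Mon)$; $<$-convexity of $p(\Mon)$ in $D$ forces $c\in p(\Mon)$, whence $q=p$ and $\theta$ isolates $p$ within $[\Pi]$. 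Conversely, assume $\theta$ isolates $p$ in $[\Pi]$ and set $D:=\theta(\Mon)$; certainly $p(\Mon)\subseteq D$, and if $c\in D$ is $<$-between $a,b\in p(\Mon)$, then $c\in conv(p(\Mon))=\Pi(\Mon)$, so $\tp(c)\in[\Pi]$ and $\theta\in\tp(c)$ combine to give $\tp(c)=p$ and $c\in p(\Mon)$. Hence $p(\Mon)$ is $<$-convex in $D$, making $p$ a convex type.

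No serious obstacle arises beyond this topological bookkeeping; the only subtle point is ensuring that the witnessing order for convexity may be taken to be the natural $<$ of the underlying coloured order (rather than some exotic $\emptyset$-definable order), which is exactly what Lemma \ref{Lema_convex_so_pair_witness1} provides once $T$ is known to be stationarily ordered.
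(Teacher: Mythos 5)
Your proof is correct and takes essentially the same approach as the paper: apply Lemma~\ref{Lema_convex_so_pair_witness1} to $(p,<)$ to get a $\emptyset$-definable $D$ in which $p(\Mon)$ is $<$-convex, note that the formula $x\in D$ isolates $p$ inside $[\Pi]$, conclude by compactness of $[\Pi]$ that infinitely many completions force a non-isolated (hence non-convex) member, and finish with Proposition~\ref{Prop weakly_reg_convexsimple}. The only difference is that you additionally prove the converse of the characterization (isolated in $[\Pi]$ implies convex), which is correct but not needed for the lemma.
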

\begin{proof}
Fix $\Pi\in IT(T)$,  let  $F=\{p\in S_1(\emptyset)\mid \Pi\subseteq p\}$. We will prove that every convex type $p\in F$ is an isolated point of $F$ (viewed as a subspace of  $S_1(\emptyset)$). Assume that $p\in F$ is convex.  Apply  Lemma \ref{Lema_convex_so_pair_witness1} to $\mathbf p=(p,<)$: there is $\phi_p(x)\in p(x)$ such that $p(\Mon)$ is  convex in $(\phi_p(\Mon),<)$.  
We {\it claim} that $\Pi(x)\cup\{\phi_p(x)\}\forces p(x)$. To prove it, assume  $a\in \Pi(\Mon)\cap \phi_p(\Mon)$ and let $q=\tp(a)$. Then $p^{conv}=q^{conv}=\Pi$, so by Lemma \ref{lem interval types}(a) there are $b,b'\models p$ with $b<a<b'$. Since $a,b,b'\in\phi_p(\Mon)$ and $p(\Mon)$ is a convex subset of $\phi_p(\Mon)$, we deduce $c\models p$; this proves the claim which  implies that $p$ is an isolated point of $F$.  

Clearly,  $F$ is a closed subset of $S_1(\emptyset)$ and  we have shown that every convex member of $F$ is an isolated point of $F$. Therefore, if $F$ is infinite, then $F$ contains a non-isolated type, so a non-convex type 
 and $I(\aleph_0,T)=2^{\aleph_0}$ follows by Proposition \ref{Prop weakly_reg_convexsimple}.
\end{proof} 
 
\noindent{\it Proof of Rubin's theorem.} \  Assume  $I(\aleph_0,T)<2^{\aleph_0}$. Then  by Theorem \ref{conclusion}(a) none of the conditions (C1)-(C5) holds; in particular,   the failure of (C5) implies that the cardinal $w_T$, i.e the number of $\nwor$-classes of non-isolated 1-types, is finite. By  Lemma  \ref{rub lem nwor} the  $\nwor$-classes correspond to interval types, so there are only finitely many interval types having a non-isolated completion in $S_1(\emptyset)$. By 
Lemma  \ref{rub lem convex}  each $\nwor$-class is finite, so there are only finitely many   non-isolated types in $S_1(\emptyset)$ (in other words: the Cantor-Bendixson rank of $S_1(\emptyset)$ is at most  $1$). In particular, there are finitely many $\nfor$-classes of non-isolated types in $S_1(\emptyset)$, so by Theorem \ref{conclusion}(c) we have $I(\aleph_0,T)<\aleph_0$. \qed

\end{document}